\documentclass[11pt]{article}
\usepackage{geometry}
\geometry{margin=3.01cm}
\usepackage{amsmath}
\usepackage{amssymb}
\usepackage{amsthm}
\usepackage{amsfonts}
\usepackage[dvipsnames]{xcolor}
\usepackage{tikz}
\usepackage{mathtools}
\usepackage{pifont}
\usepackage{mathabx}
\usepackage{tabularx}
\usepackage{comment}
\usepackage{graphicx}
\usepackage{mathrsfs}

\DeclareMathOperator{\Stab}{Stab}
\DeclareMathOperator{\id}{id}
\DeclareMathOperator{\Isom}{Isom}

\DeclareMathOperator{\red}{red}
\DeclareMathOperator{\Sub}{Sub}
\DeclareMathOperator{\Aut}{Aut}

\DeclareMathOperator{\Cay}{Cay}

\DeclareMathOperator{\Sym}{Sym}

\newcommand{\la}{\langle}
\newcommand{\ra}{\rangle}
\newcommand{\con}{\square}
\newcommand{\twist}{\star}
\newcommand{\f}{\mathfrak{f}}
\newcommand{\g}{\mathfrak{g}}
\newcommand{\h}{\mathfrak{h}}

\newcommand{\aaa}{\mathfrak{a}}
\newcommand{\bbb}{\mathfrak{b}}
\newcommand{\ccc}{\mathfrak{c}}

\newcommand{\acting}{\curvearrowright}


\newtheorem{theorem}{Theorem}[section]
\newtheorem{lemma}[theorem]{Lemma}
\newtheorem{proposition}[theorem]{Proposition}
\newtheorem{corollary}[theorem]{Corollary}
\newtheorem{claim}{Claim}

\newtheorem{theoremintro}{Theorem}

\newtheorem{corollaryintro}[theoremintro]{Corollary}

\theoremstyle{definition}
\newtheorem{definition}[theorem]{Definition}
\newtheorem{example}[theorem]{Example}

\theoremstyle{remark}
\newtheorem{remark}[theorem]{Remark}
\newtheorem{question}[theorem]{Question}

\newtheorem{observation}{Observation}

\title{A guide to constructing free \\ transitive actions on median spaces}
\author{Pénélope Azuelos}
\date{\today}

\usepackage[bookmarks,bookmarksnumbered,pdfstartview=FitBH,backref=page]{hyperref}
\hypersetup{
	colorlinks=true, 
	linkcolor=black,
	citecolor= black,
        urlcolor=black,
	linktoc= all }

 \let\OLDthebibliography\thebibliography
\renewcommand\thebibliography[1]{
  \OLDthebibliography{#1}
  \setlength{\parskip}{0pt}
  \setlength{\itemsep}{2pt plus 0.3ex}
}

\newcommand*{\Lightning}[1][]{%
\text{
  \tikz[
    x=.55 * height("H"),  
    y=height("H"),  
    baseline=(current bounding box.south),
    line width=.02 * height("H"),
    line join=bevel,
  ]
  \filldraw[{#1}]
    (-.5, -.5) -- (.4, -.03) -- (-.1, .06) --
    (.5, .5) -- (-.4, .03) -- (.1, -.06) --
    cycle
    (-.5, 0)
    (.5, 0)
  ;%
}}
\newcommand{\reduced}{{\Lightning}}

\newtheorem*{the-thm}{Theorem \thethmid} 

\begin{document}

\maketitle

\begin{abstract}
    We construct large families of groups admitting free transitive actions on median spaces. In particular, we construct groups which act freely and transitively on the complete universal real tree with continuum valence such that any subgroup of the additive reals is realised as the stabiliser of an axis. We prove a more precise version of this, which implies that there are $2^{2^{\aleph_0}}$ pairwise non-isomorphic groups which admit a free transitive action on this real tree.
    We also construct free transitive actions on products of complete real trees such that any subgroup of $\mathbb{R}^n$ is realised as the stabiliser of a maximal flat, and an irreducible action on the product of two complete real trees. 
    
    To construct each of these groups, we introduce the notion of an \textit{ore}: a set equipped with the structure of a meet semilattice and a cancellative monoid with involution, which verifies some additional axioms. We show that one can \textit{extract} a group from an ore and equip this group with a left-invariant median structure.
\end{abstract}

\tableofcontents

\section{Introduction}

Median spaces are metric spaces which simultaneously generalise both $\mathbb{R}$-trees and CAT(0) cube complexes. 
We will give a number of constructions of free transitive actions on connected median spaces, revealing a great deal of variety among the groups which admit these actions. The study of groups acting freely and transitively on connected metric spaces is in part inspired by the highly fruitful idea that finitely generated groups can be studied via their actions on metric spaces, the most revealing of actions being free (or proper) and transitive (or cocompact). One of the most natural ways to obtain a transitive action on a median space is to take a finitely generated group $G$ which is coarsely median (see \cite{Bowditch-coarse-median}) with respect to some (equivalently any) proper word metric and to consider the action of an ultraproduct $G^*$ of $G$ on an asymptotic cone of $G$ (using the same ultrafilter). This space is bi-Lipschitz equivalent to a median space on which $G^*$ acts by isometries \cite{Bowditch-2018,Zeidler}. It is straightforward to see that such an action is always transitive, however it is never free. On the other hand, Casals-Ruiz, Hagen and Kazachkov have shown that many of these asymptotic cones can be equipped with free transitive actions \cite{CRHK}; their construction involves a precise understanding of the combinatorial structure of the space in question, independently of the fact it arises as an asymptotic cone. In the present paper, we will mostly restrict ourselves to actions on real trees and their products, but, even in this more restricted setting, the actions we construct exhibit entirely new behaviours.

\paragraph{$\mathbb{R}$-trees.}
An $\mathbb{R}$-tree is a geodesic metric space where any pair of points is connected by exactly one simple path, or equivalently, it is a connected median space of rank 1. 
Finitely generated groups which act on $\mathbb{R}$-trees have been extensively studied, and this study has been extremely consequential, particularly for our understanding of hyperbolic groups (see e.g. \cite{Bestvina-Feighn,Morgan-Shalen1984,Rips-Sela,Sela1995,Sela2009}).
The class of finitely generated groups which admit free actions on $\mathbb{R}$-trees includes free groups (which act on their Cayley graphs), free abelian groups (which embed as subgroups of $\mathbb{R}$) and, as was shown by Morgan--Shalen in \cite{Morgan-Shalen1991}, the fundamental group of any closed surface $\Sigma$, unless $\Sigma$ is non-orientable with Euler characteristic $\geq -1$. One can show that any free product of groups which admit free actions on $\mathbb{R}$-trees admits a free action on an $\mathbb{R}$-tree. Conversely, Rips' theorem \cite{GLP} states that any finitely generated group which admits a free action on an $\mathbb{R}$-tree splits as a free product of surface and free abelian groups. Dunwoody \cite{Dunwoody97} and Zastrow \cite{Zastrow98} produced examples which show that this characterisation fails when one drops the finite generation assumption. 
 
 More recently, Berestovki\u{\i}--Plaut \cite{Berestovskii-Plaut} produced a large family of groups which act freely on real trees: they show that every length space $X$ is the quotient of a real tree $T_X$ by the free action of some group $G_X$, where the group $G_X$ can be interpreted as a ``refined fundamental group" of $X$. Every finitely generated subgroup of $G_X$ is free, but they produce examples of spaces $X$ where $G_X$ does not split as a free product of free and surface groups. Their construction can also be used to construct a free transitive action on the tree $T_X$, provided the space $X$ is itself equipped with a free transitive action (see the proof of Theorem~38.24 on page 211 of \cite{CRHK}).

Recall that, for any cardinal $\kappa$ which is not both finite and odd, there is exactly one group which acts freely, transitively and without edge inversions on the regular simplicial tree with valence $\kappa$: namely the free group of rank $\kappa/2$. Given an $\mathbb{R}$-tree $T$ and a point $x \in T$, the \textit{set of directions} of $T$ at $x$ is the set of connected components of $T - \{x\}$.
The \textit{valence} of $T$ at $x$ is the cardinality of the set of directions at $x$. Given a cardinal $\kappa \geq 2$, there is a unique complete $\mathbb{R}$-tree up to isometry such that each point has valence $\kappa$ \cite{Mayer-Nikiel-Oversteegen,Nikiel}, called the \textit{universal real tree with valence} $\kappa$. Unlike in the discrete case, this tree does not always admit a unique group structure. This was proven by Casals-Ruiz, Hagen and Kazachkov in \cite[Section~38]{CRHK}, where they construct, for each $ 2 \leq \kappa \leq 2^{\aleph_0}$, a group $G$ which acts freely and transitively on the universal real tree $T$ with valence $2^{\aleph_0}$ such that there are exactly $\kappa$ conjugacy classes of maximal abelian subgroups $H \leq G$ such that $H \cong \mathbb{R}$. More precisely, there are exactly $\kappa$ $G$-orbits of lines $L \subseteq T$ such that the stabiliser $\Stab_G(L)$ acts transitively on $L$. They also show that any line in $T$ either has transitive,  cyclic or trivial stabiliser (see Proposition~38.28 in loc. sit.). This leads to the natural question of whether any other subgroups of $\mathbb{R}$ can be realised as the stabiliser of a line in a free transitive action on a real tree. Our first result provides a positive answer to this question.

Let $\Sub_{NC}(\mathbb{R})$ denote the set of non-cyclic subgroups of $\mathbb{R}$ and let $\mathcal{K}$ denote the set of cardinals $\kappa$ such that $\kappa \leq 2^{\aleph_0}$.

\begin{theoremintro} [Theorem~\ref{thm: centraliser spectrum}] \label{thmintro A}
    Let $\iota: \Sub_{NC}(\mathbb{R}) \rightarrow \mathcal{K}$ be any map which is supported on $\leq 2^{\aleph_0}$ elements of $\Sub_{NC}(\mathbb{R})$. Then there exists a group $G$ and a free transitive action of $G$ on the universal real tree $T$ with valence $2^{\aleph_0}$ such that the following holds.
    For each $H \leq \mathbb{R}$, let $A_H$ be the set of orbits $G \cdot L$ such that $L \subseteq T$ is a line and the induced action of $\Stab_G(L)$ on $L$ is isomorphic to the action of $H$ on $\mathbb{R}$ by translations. If $H \leq \mathbb{R}$ is non-cyclic then $|A_H| = \iota(H)$.
\end{theoremintro}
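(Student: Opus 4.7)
The plan is to realise the prescribed data $\iota$ through a carefully designed ore $\mathcal{O}_\iota$ and then apply the extraction machinery developed earlier in the paper to obtain $G_\iota$ together with its free transitive action on a median space $T_\iota$. First, I would build a family of \emph{elementary} ores: for each non-cyclic $H \le \mathbb{R}$, let $\mathcal{O}_H$ be an ore whose extracted group is $H$ acting on $\mathbb{R}$ by translations (so $H$ itself stabilises a single line). I would also include a "free" elementary ore whose extracted group is an infinite rank free group acting on a simplicial tree, to be used to fill up the valence. This gives a library of atomic pieces, each prescribing one of the desired local axis types.

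Next, I would assemble $\mathcal{O}_\iota$ as an amalgam of these building blocks in the category of ores: for each $H \in \Sub_{NC}(\mathbb{R})$ take $\iota(H)$ copies of $\mathcal{O}_H$, and adjoin $2^{\aleph_0}$ further copies of the free elementary ore to bring the valence at the identity up to $2^{\aleph_0}$. Because $\iota$ is supported on at most $2^{\aleph_0}$ subgroups and each $\iota(H) \le 2^{\aleph_0}$, the resulting ore has cardinality $2^{\aleph_0}$. The general extraction theorem of the paper then yields a group $G_\iota$ acting freely and transitively on a median space $T_\iota$. To ensure that $T_\iota$ is an $\mathbb{R}$-tree rather than a higher-rank median space, I would impose a rank-one condition on $\mathcal{O}_\iota$, guaranteeing that distinct letters appearing in a reduced expression cannot commute; this should translate into $T_\iota$ having no embedded square, hence rank 1. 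Completeness and valence $2^{\aleph_0}$ at every point should follow by passing to the metric completion, after checking the ore-theoretic analogues of "sufficiently many immediate successors": transitivity of the action propagates both properties from the identity vertex to all of $T_\iota$, and freeness survives completion because stabilisers of points are closed in the metric topology.

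Each embedded copy of $\mathcal{O}_H$ in $\mathcal{O}_\iota$ contributes a line $L_H \subseteq T_\iota$ on which $\Stab_{G_\iota}(L_H)$ acts as $H$ on $\mathbb{R}$ by construction, and two different copies give distinct $G_\iota$-orbits of lines since they lie in different "branches" of the underlying semilattice and no ore element maps one to the other. This yields the lower bound $|A_H| \ge \iota(H)$ for every non-cyclic $H$. The main obstacle, and the technical heart of the argument, is the matching upper bound $|A_H| \le \iota(H)$: one must rule out unintended lines in $T_\iota$ with transitive non-cyclic stabilisers. The key step here is a rigidity result for axes, saying that if $g \in G_\iota$ acts on $T_\iota$ hyperbolically with axis stabiliser isomorphic to some non-cyclic $H \le \mathbb{R}$, then $g$ must be conjugate into one of the prescribed copies of $\mathcal{O}_H$, and its axis must be the canonical line of that copy. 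I would establish this by a normal-form analysis in the ore: using the cancellative monoid structure and the reduction procedure, any element stabilising a line and acting by translation decomposes along a periodic reduced word, whose period is forced by the commutation relations to sit inside a single elementary building block. Given how this rigidity interacts with the rank-one requirement and the completion, verifying it carefully will likely consume the bulk of the work.
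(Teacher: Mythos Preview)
Your high-level intuition—assemble building blocks indexed by the desired subgroups $H$—matches the paper, but the implementation has several concrete gaps that the paper's construction is designed precisely to avoid.

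\textbf{The elementary ores $\mathcal{O}_H$ are incoherent as described.} You want an ore whose extracted group is ``$H$ acting on $\mathbb{R}$ by translations''. But by Theorem~\ref{thm: extracted group} and Proposition~\ref{prop: median metric}, the extracted group acts freely and \emph{transitively} on its median space; if the group is $H \lneq \mathbb{R}$, the space is $H$, not $\mathbb{R}$. The subgroup $H$ cannot be encoded as an extracted group—it must arise as the stabiliser of a line in a larger group. The paper's mechanism is entirely different: one chooses a set $X$ with an action of $\mathbb{R} \rtimes \langle * \rangle$, and for $x \in X$ the constant maps with value $x$ form a line $\mathbf{L}_x$ in the tree $T_X$, whose stabiliser is $\Stab_{\mathbb{R}}(x)$ (Lemma~\ref{lem: stab of standard axis}). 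So $H$ is encoded as an orbit type in $X$ (namely $\mathbb{R}/H$), not as an extracted group.

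\textbf{``Amalgam in the category of ores'' is undefined}, and the paper does not develop such a notion. Instead, one builds a \emph{single} ore $\mathcal{Y}_X$ (or the template sub-ore $\mathcal{Z}_X$) from maps $[0,\ell] \to X$, where $X = \bigsqcup_H X_H$ is assembled as a disjoint union of $\iota(H)$ copies of $\mathbb{R}/H \sqcup (\mathbb{R}/H)'$. All the combinatorics happens in the target set $X$, not by gluing ores.

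\textbf{Passing to the metric completion destroys transitivity.} If the extracted group $G$ is incomplete, then $\overline{G} \supsetneq G$ and $G$ does not act transitively on $\overline{G}$; nor is there any reason for the group structure to extend (right multiplication need not be uniformly continuous). The paper sidesteps this by first constructing a \emph{complete} tree $T_X$ (Lemma~\ref{lem: complete R tree}) and then taking the \emph{closed subgroup} $T_X(Y) = \overline{\langle \cup_y L_y \rangle}$ inside it. Closedness in a complete space gives completeness, and it is still a group acting on itself. The template machinery of Section~\ref{sec: templates} is exactly what is needed to characterise this closure and prove it has valence $2^{\aleph_0}$ (Proposition~\ref{prop: universal real tree})—the valence does not come from adjoining extra free factors, but from the uncountably many ways a template can accumulate at $0$.

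Finally, your rigidity step is the right place to focus, but the paper's argument (Lemma~\ref{lem: controling axis stabilisers}) is not a normal-form analysis: it uses density of the stabiliser to find $g \in \Stab_G(L)$ with $\id \precneq g \precneq f$ for $f \in L$, and then iterates $g^n \con f$ to force $f$ to be constant. This is where the template description of $T_X(Y)$ is essential.
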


An easy consequence of this is that the set of groups which admit free transitive actions on $T$ is not only infinite, it is as large as possible. Recall that $T$ is the asymptotic cone of any non-elementary hyperbolic group \cite{Dyubina-Polterovich} and, since these are countable, it follows that the cardinality of $T$ is $2^{\aleph_0}$. Thus there are at most $2^{2^{\aleph_0}}$ possible group operations on $T$.

\begin{corollaryintro}
    Let $T$ be the universal real tree with valence $2^{\aleph_0}$. Then there are $2^{2^{\aleph_0}}$ pairwise non-isomorphic groups which admit a free transitive action on $T$.
\end{corollaryintro}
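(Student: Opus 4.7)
The plan is to apply Theorem~\ref{thmintro A} to $2^{2^{\aleph_0}}$ different choices of the parameter $\iota$ and then distinguish the resulting groups via an isomorphism invariant that recovers a restriction of $\iota$ from the abstract group structure. The upper bound of $2^{2^{\aleph_0}}$ on the number of isomorphism classes is already observed in the paragraph preceding the corollary, so only the lower bound requires argument.

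First, I would fix a family $\mathcal{F} = \{H_\alpha : \alpha < 2^{\aleph_0}\} \subseteq \Sub_{NC}(\mathbb{R})$ of pairwise non-isomorphic (as abstract groups) non-cyclic subgroups of $\mathbb{R}$; this is possible because Baer's classification yields $2^{\aleph_0}$ isomorphism classes of rank-one torsion-free abelian groups, all realised inside $\mathbb{Q} \subseteq \mathbb{R}$. For each subset $S \subseteq \mathcal{F}$, I would set $\iota_S(H) = 1$ for $H \in S$ and $\iota_S(H) = 0$ otherwise. The support of $\iota_S$ has cardinality at most $2^{\aleph_0}$, so Theorem~\ref{thmintro A} applies and yields a group $G_S$ acting freely and transitively on $T$ with exactly $\iota_S(H)$ line-orbits realising the translation action of $H$ for each non-cyclic $H \leq \mathbb{R}$.

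To separate the groups up to isomorphism, I would use the invariant $M_A(G) := $ the number of conjugacy classes of maximal abelian subgroups of $G$ that are isomorphic to the abstract group $A$, and show that $M_{H_\alpha}(G_S) = \iota_S(H_\alpha)$ for each $H_\alpha \in \mathcal{F}$. The content of this equality is that non-trivial maximal abelian subgroups $K \leq G_S$ are exactly the line stabilisers $\Stab_{G_S}(L)$ for lines $L \subseteq T$ with non-trivial stabiliser, the bijection being $K \leftrightarrow L_K$ where $L_K$ is the common axis of $K$. This uses the standard facts that a non-trivial abelian subgroup of a freely-acting group on an $\mathbb{R}$-tree has a common axis on which it acts by translations, while conversely the stabiliser of a line is abelian and any abelian overgroup must preserve the axis of any non-trivial element of it. Given this correspondence, the support hypothesis on $\iota_S$ and the pairwise non-isomorphism of the members of $\mathcal{F}$ imply that the only contribution to $M_{H_\alpha}(G_S)$ comes from the orbits of lines whose stabiliser is $H_\alpha$ itself as a subgroup of $\mathbb{R}$, giving $M_{H_\alpha}(G_S) = \iota_S(H_\alpha) \in \{0,1\}$. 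Hence $S$ is recoverable from $G_S$ as $\{H_\alpha \in \mathcal{F} : M_{H_\alpha}(G_S) = 1\}$, and the $2^{|\mathcal{F}|} = 2^{2^{\aleph_0}}$ choices of $S$ produce that many pairwise non-isomorphic groups.

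The main obstacle I anticipate is the correspondence between non-trivial maximal abelian subgroups of $G_S$ and orbits of non-trivially stabilised lines, in particular the claim that every non-trivial abelian subgroup of a freely-acting group on an $\mathbb{R}$-tree admits a common axis. This is standard but must be applied to abelian subgroups that can be uncountable and highly divisible (realising arbitrary non-cyclic $H \leq \mathbb{R}$), so the argument should cite the general theory of isometric actions on $\mathbb{R}$-trees rather than rely on discrete-group intuition; the remainder of the argument is cardinal-arithmetic bookkeeping.
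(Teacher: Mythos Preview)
Your argument is correct, but the route differs from the paper's. The paper invokes directly the existence of $2^{2^{\aleph_0}}$ pairwise non-isomorphic subgroups of $\mathbb{R}$ (a nontrivial fact about torsion-free abelian groups of continuum rank, for which it cites an external reference) and, for each such $H$, applies Theorem~\ref{thmintro A} with $\iota = \chi_H$; the groups $G_H$ are then separated by the single bit ``does $G$ have a maximal abelian subgroup isomorphic to $H$?''. You instead use only $2^{\aleph_0}$ pairwise non-isomorphic rank-one subgroups (via Baer's classification, which is more elementary) and manufacture the extra exponential by taking $\iota_S$ to be the indicator of an arbitrary subset $S$ of this family; the invariant you read off is then the whole set $S$. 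Both proofs ultimately rest on the same correspondence between conjugacy classes of maximal abelian subgroups and $G$-orbits of non-trivially stabilised lines, which you spell out more explicitly than the paper does. Your approach buys self-containment (no need for the harder count of subgroups of $\mathbb{R}$); the paper's approach buys a slightly cleaner distinguishing invariant.
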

\begin{proof}
    Let $\mathcal{A}$ be a set of pairwise non-isomorphic elements of $\Sub_{NC}(\mathbb{R})$ with cardinality\footnote{One way of constructing such a family of groups is given by Yves Cornulier here: \url{https://mathoverflow.net/questions/264438/number-of-torsion-free-abelian-groups}.} $|\mathcal{A}| = 2^{2^{\aleph_0}}$, and for each $H \in \mathcal{A}$ let $\chi_H: \Sub_{NC}(\mathbb{R}) \rightarrow \mathcal{K}$ be the characteristic map of $H$. Let $G_H$ be a group satisfying the conclusion of Theorem A with $\iota = \chi_H$. Recall that every maximal abelian subgroup of $G_H$ is the stabiliser of a line in $T$. Therefore, for each $H,K \in \mathcal{A}$ with $H \ncong K$, we have $G_H \ncong G_K$ since $G_H$ has a maximal abelian subgroup isomorphic to $H$ and $G_K$ does not. Conversely, if $G$ is a group acting freely and transitively on $T$ then $|G| = |T| = 2^{\aleph_0}$ and there are at most $2^{2^{\aleph_0}}$ groups with that cardinality.
\end{proof}

The construction of the group $G$ in Theorem~\ref{thmintro A} is inspired by the constructions of universal real trees as function spaces given by Dyubina--Polterovich in \cite{Dyubina-Polterovich}.
In the case of infinite cardinals, their construction can be described as follows.
Fix an infinite cardinal $\mu$ and a set $C_\mu$ of cardinality $\mu$. The universal real tree $A_\mu$ is given as the set of maps $f: [0, \ell_f) \rightarrow C_\mu$ where $\ell_f \geq 0$ and $f$ is \textit{piecewise constant from the right}, meaning that for all $t \in [0,\ell_f)$ there exists $\varepsilon > 0$ such that $f|_{[t, t+ \varepsilon]}$ is constant. If $\ell_f = 0$ then $f $ is the empty map. Given $f,g \in A_\mu$, the distance from $f$ to $g$ is defined by $d(f,g) \coloneqq (\ell_f - s) + (\ell_g - s)$, where $s \coloneqq \sup\{t \in [0, \min\{\ell_f, \ell_g\}) : f|_{[0,t]} = g|_{[0,t]}\}$. The fact that the elements of $A_\mu$ are piecewise constant from the right ensures that the set of directions at any point of $A_\mu$ is in bijection with $C_\mu$. 

The underlying idea for the construction of the group $G$ is to construct a function space $T$ as above but where the set of ``directions" is equipped with an involution and an action of $\mathbb{R}$ which are used to define a group operation on the space, in such a way that the choice of $\mathbb{R}$-action determines the possible axis stabilisers of $T$. The elements of $A_\mu$ in the Dyubina--Polterovich construction are not well suited to this, as the piecewise constant from the right condition, and the fact that the domains are half open intervals, make it difficult to define appropriate inverses of elements. 

Instead, the group $G$ will be a subgroup of a group $T_X$ whose construction we now briefly outline. We start with a set $X$ equipped with an action of $\Isom(\mathbb{R}) = \mathbb{R} \rtimes \la * \ra$, where $\la * \ra \cong \mathbb{Z} / 2\mathbb{Z}$. Two maps $f,g: [0, \ell] \rightarrow X$ are said to be \textit{equivalent} if they agree on all but countably many elements in $[0,\ell]$.
The elements of $T_X$ are equivalence classes of maps $f: [0,\ell_f] \rightarrow X$ satisfying an admissibility criterion. We will define a group operation $\star$ on $T_X$, where the product $\f \star \g$ of two elements $\f,\g \in T_X$ depends on $\ell_f$. The metric on $T_X$ can be defined similarly to the metric on $A_\mu$ to make $T_X$ into an $\mathbb{R}$-tree. Given $x \in X$ such that $x^* \notin \mathbb{R} \cdot x$, there is a line in $T_X$ given by $\mathbf{L}_x \coloneqq \{\f \in T_X: f(t) = x \; \forall t$ or $f(t) = x^* \: \forall t\}$ which has stabiliser isomorphic to $\Stab_\mathbb{R}(x)$. Because we allow so many maps in $T_X$, the set of directions at each point of $T_X$ is much bigger than $X$ itself; it turns out that the valence of $T_X$ is $\geq 2^{2^{\aleph_0}}$ as long as $|X| \geq 2$. Moreover, it may not be possible to control the exact set of orbits of axes in $T_X$ with non-cyclic stabilisers (see Remark~\ref{rem: weird axes}). The right object to consider is instead the smallest subgroup $G \leq T_X$ which is closed with respect to the topology induced by the metric on $T_X$, and which contains all the axes $\mathbf{L}_x$. The construction of $T_X$ is given in Section~\ref{sec: TX construction} using tools developed in Section~\ref{sec: ores} (we will address these later in this introduction).
We will prove in Section~\ref{sec: templates} that $G$ is indeed the universal complete real tree with valence $2^{\aleph_0}$ as long as $2 \leq |X| \leq 2^{\aleph_0}$ and, in Section~\ref{sec: centraliser spectrum}, we will prove Theorem~\ref{thmintro A} using this construction.

We can also use the above ideas to construct groups which act freely and transitively on real trees with valence $3 \leq \kappa < 2^{\aleph_0}$ but these only exist for incomplete real trees:

\begin{theoremintro} [Theorem~\ref{thm: small valence action}]
    Let $3 \leq \kappa < 2^{\aleph_0}$ be a cardinal. There are no free transitive actions on the complete universal $\mathbb{R}$-tree $T_\kappa$ with valence $\kappa$. 

    Let $\kappa \geq 3$ be any cardinal.
    There exists a free transitive action $G \acting S_\kappa$, where $G$ is a group and $S_\kappa$ is an incomplete $\mathbb{R}$-tree with valence $\kappa$, if an only if $\kappa$ is either infinite or even. 
    If $\kappa$ is finite and even, then this action is unique.
\end{theoremintro}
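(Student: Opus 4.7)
The plan is to treat the three assertions---non-existence on complete $T_\kappa$, characterisation for incomplete $S_\kappa$, and uniqueness for finite even $\kappa$---separately, combining explicit constructions adapted from Theorem~\ref{thmintro A} with parity and closure obstructions.

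For the non-existence on the complete $T_\kappa$ with $3 \leq \kappa < 2^{\aleph_0}$, I would suppose a free transitive action $G \acting T_\kappa$ exists and derive a contradiction on valence. Since $T_\kappa$ contains a non-degenerate arc, $|T_\kappa| = 2^{\aleph_0}$ and hence $|G| = 2^{\aleph_0}$. Identifying $G$ with $T_\kappa$ via the orbit of a basepoint $e$ equips $G$ with a complete left-invariant tree metric, and freeness forces every non-trivial $g \in G$ to be hyperbolic with an axis. Adapting the closure analysis from Section~\ref{sec: templates} that underlies the construction of Theorem~\ref{thmintro A}, the topological closure of the subgroup algebraically generated by the axes through $e$ must still act freely and transitively on $T_\kappa$; but such a closure necessarily introduces at least $2^{\aleph_0}$ pairwise inequivalent germs at $e$ (arising as Cauchy limits of axis germs), forcing the valence at $e$ to be at least $2^{\aleph_0}$, a contradiction.

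For the existence of an incomplete $S_\kappa$ with $\kappa$ infinite, the plan is to adapt the construction of Theorem~\ref{thmintro A}: take a set $X$ of cardinality $\kappa$ with an appropriate $\Isom(\mathbb{R})$-action, form $T_X$ as in Section~\ref{sec: TX construction}, and take $G$ to be the subgroup \emph{algebraically} generated by the axes $\mathbf{L}_x$ (crucially without topological closure). The orbit of the empty map is then an incomplete subtree $S_\kappa \subseteq T_X$ with valence $\kappa$ on which $G$ acts freely and transitively. For $\kappa = 2n$ finite and even, an analogous construction with $|X| = 2n$ yields $G \cong F_n$ acting on a version of its Cayley tree. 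For $\kappa \geq 3$ finite and odd, no such action exists: by transitivity and freeness, each direction at $e$ in $S_\kappa$ lies on the axis of some hyperbolic element of $G$, and reversing each such axis yields a fixed-point-free involution on the set $D_e$ of directions at $e$, which is impossible when $|D_e| = \kappa$ is odd.

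Uniqueness in the finite even case then follows from a Bass--Serre-type argument: the involution on the $\kappa = 2n$ directions at $e$ partitions them into $n$ pairs $\{s, s^{-1}\}$; choosing one representative from each pair yields a symmetric generating set, and freeness of the action rules out all relations except $ss^{-1} = 1$, forcing $G \cong F_n$ and determining $S_\kappa$ as its Cayley tree. The main obstacle will be making rigorous the valence bound in the complete case: turning the heuristic ``closure forces $2^{\aleph_0}$ germs at each point'' into a proof that applies to arbitrary (rather than constructed) free transitive actions requires a careful extension of the analysis developed for Theorem~\ref{thmintro A}.
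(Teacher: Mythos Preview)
Your proposal contains a fundamental error in the finite even case. You claim that for $\kappa = 2n$ the group $G$ is the free group $F_n$ acting on ``a version of its Cayley tree''. This is impossible: an $\mathbb{R}$-tree with valence $\kappa \geq 3$ has cardinality $2^{\aleph_0}$, so any group acting transitively on it must be uncountable, whereas $F_n$ is countable. The Cayley graph of $F_n$ is a simplicial tree, not an $\mathbb{R}$-tree. The correct answer, which the paper establishes, is that $G$ is the free product of $n$ copies of the additive reals $\mathbb{R}$; the tree $S_\kappa$ is the one on which $\ast_{i=1}^n \mathbb{R}$ acts via its Lyndon length function (cf.\ Chiswell--M\"{u}ller). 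Your ``Bass--Serre-type'' uniqueness argument, which identifies $n$ generators and concludes $G \cong F_n$, therefore cannot be salvaged as stated. The paper instead shows (via Lemma~\ref{lem: small stabilisers large valence}) that there exist lines $L_1,\dots,L_{\kappa/2}$ through the identity with $\Stab_G(L_i)=L_i\cong\mathbb{R}$ and $L_i\cap L_j=\{\id\}$, and then builds an explicit equivariant isometry to the model group $H_\kappa$.

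Your parity argument for odd $\kappa$ also has a gap: the claim that every direction at $e$ lies on the axis of some hyperbolic element \emph{whose axis passes through $e$} is not automatic from freeness and transitivity. A hyperbolic $g$ with $ge$ in a given direction need not have $e$ on its axis. The paper's route is different: it first uses Lemma~\ref{lem: small stabilisers large valence} to produce a line $L$ with transitive stabiliser, then shows that the set $H$ of elements whose segment to $\id$ meets no translate of $L$ in a non-degenerate arc is a subgroup and a subtree of valence $\kappa-2$, and inducts. For the non-existence on complete $T_\kappa$, you correctly identify this as the main obstacle; the paper's argument is concrete rather than heuristic: it finds two lines $L_1,L_2$ with dense stabilisers meeting only at $\id$ (again via Lemma~\ref{lem: small stabilisers large valence} and Lemma~\ref{lem: intersection of dense axes}), picks short elements $a_{i,n}$ along them, and for each $\theta:\mathbb{N}\to\{1,2\}$ forms a Cauchy product $g^\theta = \lim a_{\theta(1),1}\cdots a_{\theta(n),n}$, yielding $2^{\aleph_0}$ distinct directions at $\id$.
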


In \cite{Chiswell-Muller}, Chiswell--M\"{u}ller show that the free product of $\kappa$ copies of $\mathbb{R}$ acts freely and transitively on an $\mathbb{R}$-tree. Although it is not explicitly mentioned, it is not hard to see from their proof that this real tree is incomplete and has valence $2\kappa$. In fact, the action we construct is isometric to theirs (see Remark~\ref{rem: isomorphic actions}). If one applies their construction to the free product of $2^{\aleph_0}$ copies of $\mathbb{R}$, one obtains the free transitive action on Uryson's $\mathbb{R}$-tree constructed by Berestovski\u{\i}  \cite{Berestovskii1989,Berestovskii2019}.

Another interesting source of free transitive actions on incomplete $\mathbb{R}$-trees is the following result of Chiswell--M\"{u}ller \cite[Theorem~5.4]{Chiswell-Muller2010}: \textit{Any free action on an $\mathbb{R}$-tree is contained in a free transitive action on an $\mathbb{R}$-tree}. More precisely, given a free action $G \acting T$, where $T$ is an $\mathbb{R}$-tree, there exists a group $\widehat{G}$ and an $\mathbb{R}$-tree $\widehat{T}$, such that $G \leq \widehat{G}$ and there is a $G$-equivariant isometric embedding $T \hookrightarrow \widehat{T}$. If one starts with an action $G \acting T$ which is not already transitive then the tree $\widehat{T}$ will not be complete (see Proposition~6.2 in loc. sit.).

\smallskip

In Section~\ref{sec: Lambda tree groups}, we construct free transitive actions on $\Lambda$-trees for any totally ordered abelian group $\Lambda$ (see Definition~\ref{def: Lambda tree}). The study of group actions on $\Lambda$-trees was initiated by Morgan--Shalen in \cite{Morgan-Shalen1984}. Due to the relationship between free actions on $\Lambda$-trees and algorithmic properties of groups, there has been significant interest in producing such actions. Free actions of Lyndon's free $\mathbb{Z}[t]$-group $F^{\mathbb{Z}[t]}$ (and other groups of infinite words over discretely ordered groups) were constructed in \cite{MRS}. In \cite{KMS2}, the authors give a natural folding construction associating free actions on trees to groups of infinite words, and establish a universal embedding property for the resulting trees.

\paragraph{Actions on products of trees.}

Let $T_1, T_2$ be regular, locally finite simplicial trees with even degrees $d_1,d_2 \geq 4$. Unlike the case with a single factor, there are several groups which act freely, transitively and without edge inversions on the vertex set of the product $T_1 \times T_2$. The groups which admit these actions -- called BMW groups, after Burger, Moses and Wise -- can have extremely varied structures, ranging from direct products of free groups to virtually simple groups. Indeed, a great deal of the interest in these groups stems from the discoveries by the aforementioned authors of classes of examples answering longstanding questions in geometric group theory. Wise used them to construct the first example of a group acting properly discontinuously and cocompactly on a CAT(0) space which is not residually finite \cite{Wise-thesis} (see also \cite{Wise-CSC}) and Burger--Moses used them to construct finitely presented simple groups of the form $F_n \ast_{E} F_m$ where $F_n,F_m,E$ are free groups with finite rank and the embedding of $E$ in both $F_n$ and $F_m$ has finite index \cite{Burger-Moses}. A survey on this topic can be found in \cite{Caprace-finite&infinite}. 

Before one can fathom the existence of a simple group acting freely and cocompactly on a product of trees, one must first come to terms with the existence of a group which acts freely and cocompactly on such a space without virtually splitting as a direct product.
A BMW group $G$ is called \textit{reducible} if it contains a finite index subgroup which splits non-trivially as a direct product and \textit{irreducible} otherwise. Examples of irreducible BMW groups include those constructed by Burger--Moses and Wise, as well as some constructed in \cite{Radu,Rattaggi,Rungtanapirom,Stix-Vdovina}.

Using the construction from Section~\ref{sec: higher rank}, we can show that this phenomenon persists in the continuous setting:

\begin{corollaryintro}[Corollary~\ref{cor: irreducible product}] \label{cor: irreducible product intro}
    There exists a group $G$ which admits a free transitive action by isometries on a product $T_1 \times T_2$ of two complete $\mathbb{R}$-trees with valence $2^{\aleph_0}$ such that, for any subgroup $H \leq G$ which splits non-trivially as a direct product, the induced action $H \acting T_1 \times T_2$ is not cobounded.
\end{corollaryintro}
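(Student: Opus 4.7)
My plan is to realise Corollary~C as a specialisation of the main construction of Section~\ref{sec: higher rank}, which produces groups acting freely and transitively on products of complete real trees with prescribed stabilisers of maximal flats — the rank-two analogue of Theorem~\ref{thmintro A}. Accordingly, I would first build, using the ore machinery of Section~\ref{sec: ores}, an ore whose extracted group $G$ acts freely and transitively on $T_1 \times T_2$, where both factors are complete universal real trees of valence $2^{\aleph_0}$.

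The input to the construction would consist of two sets $X_1, X_2$, each equipped with an $\Isom(\mathbb{R})$-action chosen so that the only subgroups of $\mathbb{R}$ that arise as $\Stab_{\mathbb{R}}(x)$ for some $x \in X_i$ lie in a carefully prescribed family. Combining the associated rank-one ores into a product-type ore and extracting the smallest closed subgroup $G$ containing the basic axes in each factor should produce a free transitive action on $T_1 \times T_2$, and the stabiliser of each maximal flat should be isomorphic to a subgroup of $\mathbb{R}^2$ of the form $H_1 \oplus H_2$ with each $H_i$ arising as a point stabiliser in $X_i$.

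For irreducibility, suppose $H \leq G$ splits non-trivially as $H = H_1 \times H_2$ and acts coboundedly on $T_1 \times T_2$. Since $G$ acts freely, each non-trivial $H_i$ contains a hyperbolic isometry; moreover $H_1$ and $H_2$ commute, so their combined action must coarsely preserve a common maximal flat $F \cong \mathbb{R}^2$ on which the two axis directions are independent. The restriction of $H$ to $\Stab_G(F)$ then acts coboundedly on $F$, forcing $\Stab_G(F)$ to contain the direct sum of two non-trivial cobounded subgroups of $\mathbb{R}$. By choosing the input $\Isom(\mathbb{R})$-actions on $X_1, X_2$ so that no flat stabiliser has this decomposable form — for instance, by taking all point stabilisers in $X_1 \cup X_2$ to generate a single non-decomposable subgroup of $\mathbb{R}^2$ under every realisable pairing — we obtain the required contradiction.

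The principal obstacle is the geometric step in the third paragraph: passing from an abstract direct product splitting of a coboundedly acting subgroup of $G$ to an invariant flat on which the factors act coboundedly. In the classical BMW setting this is handled using discreteness of the trees and cocompactness of the action; here one needs to use the median structure of $T_1 \times T_2$ and the rigidity of commuting hyperbolic isometries of median spaces to locate the flat $F$, after which the prescribed structure of flat stabilisers from the construction directly yields the contradiction.
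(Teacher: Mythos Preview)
Your proposal has a genuine gap at precisely the step you flag as the principal obstacle, and the gap is fatal rather than merely technical. You argue that if $H = H_1 \times H_2$ acts coboundedly on $T_1 \times T_2$, then the commuting hyperbolic elements force $H$ to coarsely preserve a maximal flat $F$, after which you control $\Stab_G(F)$. But this implication is false in general: take $F_2 \times F_2$ acting on the product of two regular trees. This is a direct product acting freely and cocompactly, yet it does not preserve (even coarsely) any maximal flat, since neither free factor stabilises a line in its tree. Your intended contradiction --- that $\Stab_G(F)$ would have to be a ``decomposable'' subgroup of $\mathbb{R}^2$ --- never gets off the ground, because there is no invariant $F$ to speak of. No amount of median rigidity for \emph{pairs} of commuting hyperbolic isometries will fix this, since the obstruction is that $H_1$ and $H_2$ are typically nonabelian groups, not single elements.

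The paper's argument is quite different and avoids flats entirely. The construction in Section~\ref{sec: higher rank} equips each $X_i$ with an action of $\mathbf{R} = \mathbb{R}^2$ (not of $\Isom(\mathbb{R})$ on each factor separately), and for Corollary~\ref{cor: irreducible product} the action is deliberately \emph{cross-coupled}: the second coordinate of $\mathbb{R}^2$ acts on $X_1$ and the first on $X_2$. Using Lemma~\ref{lem: well-behaved factors}, one reduces to $H_i \subseteq \ker(p_j)$ for $j \neq i$. The cross-coupling then forces every element of $H_i$ to have signed length $\sigma = 0$ in $T_i$; but constant elements of $T_i$ with image in $X_i$ have $\sigma$ equal to their length, so they lie at distance at least $\ell$ from every element of $H_i$. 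Thus $H_i$ is not cobounded in $T_i$. The mechanism is a homomorphism-type invariant (the signed outline $\tau$), not control of flat stabilisers.
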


Although the group $G$ constructed in the proof or the above corollary does not contain any subgroups which split as direct products and act coboundedly on $T_1 \times T_2$, it does have some rather large proper normal subgroups (see the proof of Corollary~\ref{cor: irreducible product}). Therefore the following question remains mysterious:

\begin{question}
    Does there exist a simple group which acts freely and coboundedly on a product of (complete) $\mathbb{R}$-trees?
\end{question}

By Proposition~\ref{prop: no BMW subgroups}, the group constructed to prove Corollary~\ref{cor: irreducible product intro} does not contain any isometrically embedded irreducible BMW groups. By taking direct unions of certain BMW groups, we can construct groups which act on products of trees with dense orbits and contain isometrically embedded irreducible BMW groups. See Section~\ref{sec: BMW prelims} for the definition of a positive BMW presentation. The group from \cite[Example~4.1]{Wise-CSC} is an example of an irreducible BMW group with such a presentation.

\begin{theoremintro}[Theorem~\ref{thm: BMW metric groups}] \label{thmintro: BMW}
    Let $H$ be a BMW group with a positive BMW presentation $\la A \cup X \; | \; R \ra$ and let $\Cay(H, A \cup X)$ be the corresponding Cayley graph.
    There exists a group $G$ such that the following hold:
    \begin{itemize} 
        \item[i.] there is an injective homomorphism $H \hookrightarrow G$;
        \item[ii.] $G$ acts freely with dense orbits on the $\ell^1$ product of two $\mathbb{R}$-trees $T_1 \times T_2$;
        \item[iii.] there is an isometric embedding $\psi: \Cay(H,A \cup X) \hookrightarrow T_1 \times T_2$ which is equivariant relative to $H \hookrightarrow G$.
    \end{itemize}
    If $H$ is irreducible, then for any subgroup $L \leq G$ which splits non-trivially as a direct product, the induced action of $L$ on $T_1 \times T_2$ does not have dense orbits.
\end{theoremintro}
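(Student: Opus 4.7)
The plan is to construct $G$ as an ascending union $G = \bigcup_n H_n$ of BMW groups, with $H_0 = H$ and each $H_{n+1}$ obtained from $H_n$ by a dyadic refinement of its positive BMW presentation. Given $H_n = \langle A_n \cup X_n \mid R_n \rangle$, I would put $A_{n+1} = \{a^\flat, a^\sharp : a \in A_n\}$ and $X_{n+1} = \{x^\flat, x^\sharp : x \in X_n\}$, make the identifications $a = a^\flat a^\sharp$ and $x = x^\flat x^\sharp$, and refine each commuting-square relation in $R_n$ into the unique $2 \times 2$ grid of commuting squares which these identifications force. A direct check shows that $R_{n+1}$ is again a positive BMW presentation (each pair in $A_{n+1}^{\pm} \times X_{n+1}^{\pm}$ appears in exactly one refined relation), so $H_{n+1}$ is a BMW group acting freely and vertex-transitively on a product of regular trees $T_1^{(n+1)} \times T_2^{(n+1)}$ whose vertex set refines that of $T_1^{(n)} \times T_2^{(n)}$. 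The map $H_n \hookrightarrow H_{n+1}$ sending $a \mapsto a^\flat a^\sharp$ and $x \mapsto x^\flat x^\sharp$ is injective because $H_n$ can be identified with the index-$4$ stabiliser in $H_{n+1}$ of the coarser vertex sub-lattice.

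After rescaling edge lengths in $T_i^{(n)}$ to $2^{-n}$, each inclusion $T_i^{(n)} \hookrightarrow T_i^{(n+1)}$ becomes an isometric subdivision, and I let $T_i$ be the metric completion of $\varinjlim T_i^{(n)}$, which is an $\mathbb{R}$-tree. The group $G = \varinjlim H_n$ then acts by isometries on the $\ell^1$ product $T_1 \times T_2$: the action is free because each $H_n$-action is and the transition maps are compatible, and it has dense orbits because for every $n$ the $H_n$-orbit of a basepoint is already $2^{-n}$-dense in $T_1 \times T_2$. The embedding $\Cay(H, A \cup X) \hookrightarrow T_1 \times T_2$ is then the inclusion of the $1$-skeleton of the $H$-invariant VH-complex, with its $\ell^1$ combinatorial metric, into $T_1^{(0)} \times T_2^{(0)} \subseteq T_1 \times T_2$, and is $H$-equivariant by construction.

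The irreducibility clause is the main obstacle. Assume $H$ is irreducible and that some subgroup $L \leq G$ splits non-trivially as $L = L_1 \times L_2$ and acts on $T_1 \times T_2$ with dense orbits. Using rigidity of direct-product actions on essential products of median spaces with dense orbits, I first reduce, up to swapping factors, to the case where $L_1$ acts trivially on $T_2$ and has dense orbits on $T_1$, and symmetrically for $L_2$. The delicate step is then to exploit the chain $H_n \nearrow G$: the density of orbits of each $L_i$ on $T_i$ should force the subgroup $(L_1 \cap H_n) \cdot (L_2 \cap H_n) \leq H_n$ to have finite index in $H_n$ for $n$ sufficiently large, which together with $[H_n : H] = 4^n < \infty$ would produce a finite-index subgroup of $H$ commensurable with a non-trivial direct product whose factors act on the two tree factors separately, contradicting the irreducibility of $H$. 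I expect the bulk of the technical work to lie in extracting this finite-level direct product structure from the limiting data, and this is where the hypothesis that $H$ is already a BMW group -- rather than an arbitrary product-acting group -- should be essential.
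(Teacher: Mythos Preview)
Your direct-limit-of-refined-BMW-groups architecture is exactly the paper's strategy, but two steps in your construction need repair. First, the interior labels of the $2\times 2$ grid are \emph{not} forced by the identifications $a=a^\flat a^\sharp$, $x=x^\flat x^\sharp$ alone: you must assign elements of $A_{n+1}$ and $X_{n+1}$ to the middle column and row, and nothing in the data singles out a choice. The paper handles this with a different parameterisation --- the refined $A$-generators at level $n$ are the $(1/n)$-rational points on the complete directed graph with vertex set $A$, and each refined generator acts on the $X$-side via the permutation obtained from the cyclic decomposition of $\sigma_a\in\Sym(X)$ by replacing every $k$-cycle with the corresponding $kn$-cycle. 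Some such explicit rule is required to check that the result is again a positive BMW presentation. Second, your index-$4$ argument for injectivity cannot work: since $|A_{n+1}|=2|A_n|$, the tree $T_1^{(n+1)}$ is $4|A_n|$-regular, and the image of $T_1^{(n)}$ in it is an infinite-index subtree, not a finite-index sublattice. Injectivity survives, but via the CAT(0) local-to-global principle the paper uses (the transition map extends to a local isometry of simply connected nonpositively curved complexes, hence is a global isometric embedding).

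The serious gap is the irreducibility clause. Density of $L_i$ in $T_i$ gives no control whatsoever over $L_i\cap H_n$: the elements of $L_i$ may lie in arbitrarily high levels of your tower, and there is no mechanism forcing $(L_1\cap H_n)(L_2\cap H_n)$ to have finite index in $H_n$ for any $n$. The paper does not attempt to descend to a finite level. It instead appeals to the Burger--Mozes criterion \cite[Proposition~1.2]{Burger-Moses}: $H$ is reducible if and only if both closures $\overline{q_A(H)}\leq\Aut(\Gamma_A)$ and $\overline{q_X(H)}\leq\Aut(\Gamma_X)$ are discrete. To verify discreteness of $q_X(H)$, take $h\in H$ whose projection fixes the radius-$1$ ball in $\Gamma_X$; the explicit form of the refinement then shows that $f\coloneqq\psi(h)$ fixes the radius-$1$ ball in $T_X$. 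Now one exploits density of $L_2\subseteq\ker(p_A)$ \emph{directly}: if $f$ already fixes the ball of radius $n$ in $T_X$ and $g$ lies in the ball of radius $n+1$, choose $k\in L_2$ with $d(k,\id)<n$ and $d(k,g)<1$; since $k\in\ker(p_A)$ and $f$ fixes $k$, one gets $[f,k]=1$, whence $f$ fixes $g=k\cdot(k^{-1}g)$ as well. Induction shows $f$ acts trivially on $T_X$, so $q_X(h)=\id$, establishing discreteness. This argument uses density dynamically, to propagate a local fixed set outward, rather than through any intersection with the finite-level groups.
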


This leads to the following question:

\begin{question} \label{Q: BMW embedding}
    For which BMW groups $H$, with BMW presentation $\la A \cup X | R \ra$, does there exist a group $G$ which acts freely and transitively on a product of complete $\mathbb{R}$-trees $T_1 \times T_2$ such that $H \leq G$ and there is an equivariant isometric embedding $\Cay(H,A \cup X) \hookrightarrow T_1 \times T_2$?
\end{question}

The construction used to produce the group in Corollary~\ref{cor: irreducible product intro} can in fact be used to construct free transitive actions on products of arbitrarily many real trees. The flexibility of this construction is illustrated in the theorem below.

Let $N \in \{\mathbb{N}\} \cup \{\{1, \dots, n\}: n \in \mathbb{N}\}$ and let $\mathbf{R} \coloneqq \ell^1(N)$ be equipped with its natural additive group structure and the $\ell^1$ norm. Let $\Sub_{D}(\mathbf{R})$ denote the set of dense subgroups of $\mathbf{R}$ and let $\overline{\Sub}_D(\mathbf{R})$ be the quotient of $\Sub_D(\mathbf{R})$ under linear isometries of $\mathbf{R}$. Let $\mathbf{T}$ be the $\ell^1$ product of $|N|$ copies of the complete universal real tree $T$ with valence $2^{\aleph_0}$.

\begin{theoremintro}[Theorem~\ref{thm: arbitrary flats}] \label{thm: arbitrary flats intro}
    Let $\iota: \overline{\Sub}_{D}(\mathbf{R}) \rightarrow \mathcal{K}$ be any map which is supported on $\leq 2^{\aleph_0}$ elements of $\overline{\Sub}_{D}(\mathbf{R})$.
    Then there exists a group $G$, which acts freely and transitively on $\mathbf{T}$, such that: for each $[H] \in \overline{\Sub}_{D}(\mathbf{R})$, the cardinality of the set of orbits of maximal flats $F \subseteq \mathbf{T}$ such that $\Stab_G(F) \acting F$ is isomorphic to $H \acting \mathbf{R}$ is $\iota([H])$.
\end{theoremintro}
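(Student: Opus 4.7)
The plan is to adapt the strategy used in Section~\ref{sec: centraliser spectrum} to prove Theorem~\ref{thmintro A}, replacing the single-tree construction $T_X$ of Section~\ref{sec: TX construction} by the product construction developed in Section~\ref{sec: higher rank}. Roughly speaking, one-dimensional axes with prescribed stabilisers should be upgraded to maximal flats with prescribed stabilisers, and the axis-closure procedure used in Theorem~\ref{thmintro A} should be carried out verbatim in the higher-rank setting.

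The first step is to choose input data tailored to $\iota$. For each class $[H] \in \overline{\Sub}_D(\mathbf{R})$ with $\iota([H]) \neq 0$, pick a representative $H \leq \mathbf{R}$ and an indexing set $I_{[H]}$ of cardinality $\iota([H])$. Assemble these data into a set $\mathbf{X}$ equipped with an appropriate action of $\Isom(\mathbf{R})$, arranged so that each pair $(H,i)$ with $i \in I_{[H]}$ contributes a distinguished element $x_{H,i} \in \mathbf{X}$ whose $\mathbf{R}$-stabiliser is exactly $H$, with distinct indices lying in distinct $\Isom(\mathbf{R})$-orbits. The support condition on $\iota$ guarantees $|\mathbf{X}| \leq 2^{\aleph_0}$, which will be needed for the valence count.

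Next, apply the construction of Section~\ref{sec: higher rank} to obtain a group $\mathbf{T}_{\mathbf{X}}$ acting by isometries on an $\ell^1$ product of $|N|$ real trees. By the higher-rank analogue of the axis lemma for $T_X$, each distinguished point $x_{H,i}$ should give rise to a canonical maximal flat $\mathbf{F}_{H,i} \subseteq \mathbf{T}_{\mathbf{X}}$ whose setwise stabiliser acts on it as $H \acting \mathbf{R}$. As in the rank-one case (see Remark~\ref{rem: weird axes}), the group $\mathbf{T}_{\mathbf{X}}$ is too large: it will contain additional orbits of flats with uncontrolled stabilisers, and its trees will have valence $\geq 2^{2^{\aleph_0}}$. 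The remedy, following Theorem~\ref{thmintro A}, is to let $G$ be the smallest closed subgroup of $\mathbf{T}_{\mathbf{X}}$ containing every $\mathbf{F}_{H,i}$.

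It then remains to verify the higher-rank analogues of the three key properties from Section~\ref{sec: centraliser spectrum}: that $G$ acts transitively on the $\ell^1$ product $\mathbf{T}$ of complete universal real trees with valence $2^{\aleph_0}$; that every maximal flat of $\mathbf{T}$ whose stabiliser acts by a dense subgroup of $\mathbf{R}$ lies in the $G$-orbit of some $\mathbf{F}_{H,i}$; and that distinct indices $(H,i)$ yield distinct $G$-orbits of flats. The main obstacle will be the second and third of these rigidity claims: one must rule out ``accidental'' flat stabilisers introduced by the closure process, and confirm that coupling distinct $(H,i)$ into the same orbit cannot occur. I expect to handle these by adapting the ``template'' machinery of Section~\ref{sec: templates} to the product setting, with one-dimensional sequences of directions replaced by tuples indexed by $N$. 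The coupling between factors built into Section~\ref{sec: higher rank}, as opposed to a naive product of rank-one constructions, is precisely what allows non-product dense subgroups $H \leq \ell^1(N)$ to appear as flat stabilisers in the first place.
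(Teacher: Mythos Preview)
Your plan misreads what the construction of Section~\ref{sec: higher rank} actually produces. The ore $\mathcal{F}$ in Definition~\ref{def: higher rank ore} is built from the \emph{template} ores $\mathcal{Z}_{Y_n}$, not from the full $\mathcal{Y}_{Y_n}$; consequently (Lemma~\ref{lem: yet another real tree}, part~3), each factor $T_n$ of the extracted group already has valence $2^{\aleph_0}$ whenever $2 \leq |X_n| \leq 2^{\aleph_0}$, and by Proposition~\ref{prop: product of trees} the extracted group $(G,d)$ is already isometric to $\mathbf{T}$. There is no ``too large'' ambient group here, no closure step, and no need to redevelop the template machinery of Section~\ref{sec: templates} for the subgroup --- that work has been absorbed into the definition of $\mathcal{F}$ itself. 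So the second half of your strategy is addressing a problem that does not arise.

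The paper's proof of Theorem~\ref{thm: arbitrary flats} is accordingly much more direct than what you outline. One chooses the sets $X_n$ asymmetrically: a single distinguished coordinate $X_m$ carries all the data encoding $\iota$ (it is a disjoint union of $\iota([H])$ copies of $\mathbf{R}/H$ over the support of $\iota$), while every other $X_n$ is taken to have cardinality~$2$ so that its tree has the correct valence but contributes no constraints on flat stabilisers. Standard flats then have stabilisers computed by Lemma~\ref{lem: standard flat stabilisers}, and the rigidity step --- that every maximal flat with dense stabiliser is a $G$-translate of a standard one --- is Lemma~\ref{lem: only standard flats have fun stabilisers}, proved directly for the full extracted group $G$ without any appeal to a closed-subgroup description. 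Your intuition that the coupling between factors is what allows non-product dense subgroups to appear is correct, but note that the action on each $X_n$ is by $\mathbf{R} \times \langle * \rangle$ rather than $\mathbf{R} \rtimes_\alpha \langle * \rangle$; this is flagged in the Remark following the definition of $Y_n$ and matters for the admissibility arguments.
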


\paragraph{A strategy for constructing free transitive actions on median spaces.}

Each of the existence results mentioned so far involves a different construction of a group acting freely and transitively (or with dense orbits) on a median space. But, in every case involving a transitive action, the proof that what we obtain from the construction really is a group with the required properties follows a similar strategy. Namely, we first define a set $Y$ which we equip with a binary operation $\con$, an involution $-1$ and a relation $\preceq$, and we distinguish an element $\id \in Y$. We then show that $(Y,\con)$ is a cancellative monoid with identity $\id$ and involution $-1$ and $(Y, \preceq)$ is a partially ordered set with minimal element $\id$ which satisfies some extra conditions making it a \textit{median semi-lattice} (see Definition~\ref{def: median semilattice}). These statements, together with a few more technical properties, imply that $(Y, \preceq, \con, \id, -1)$ is an algebraic object called an \textit{ore}\footnote{This is unrelated to the work of \O ystein Ore. The name is in reference to naturally occurring \href{https://en.wikipedia.org/wiki/Ore}{ores} from which one extracts metals.} (Definition~\ref{defn: ore}). This object admits a canonical subset $G \subseteq Y$ of \textit{admissible} elements on which we define a new binary operation $\twist$. We then apply the following result from Section~\ref{sec: ores}:

\begin{theoremintro}[Theorem~\ref{thm: extracted group}, Proposition~\ref{prop: median metric}]
    Let $(Y, \preceq, \con, \id, -1)$ be an ore and $G \subseteq Y$ be the set of admissible elements of $Y$. Then $(G, \twist)$ is a group.

    If $\Lambda$ is a totally ordered abelian group and $\ell:Y \rightarrow \Lambda$ is a length function, define $d: G \times G \rightarrow \Lambda$ by $d(f,g) = \ell(f^{-1} \twist g)$. Then $d$ is a $\Lambda$-metric which is invariant under left multiplication by $G$ and the resulting $\Lambda$-metric space $(G,d)$ is median.
\end{theoremintro}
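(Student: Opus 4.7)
The plan is to verify the group axioms for $(G, \twist)$ first, then deduce that $d$ is a left-invariant $\Lambda$-metric, and finally check the median axiom. Throughout, the key inputs are the ore axioms (A1)--(A4), the semilattice and cancellative monoid structures on $Y$, and the interaction between $\con$ and the involution $-1$.

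For the group structure on $(G, \twist)$, I would establish in turn: closure of $G$ under $\twist$ (using the axioms governing admissibility and reduction, with $f \twist g$ presumably obtained from the concatenation $f \con g$ by cancelling adjacent ``inverse pairs'' as permitted by axiom \Areduction); the fact that $\id$ is admissible and acts as the identity for $\twist$, since the involution fixes $\id$ and no reduction can occur when one factor is trivial; and the fact that inverses are given by the involution, with $f \twist f^{-1} = \id$ following from \Ainverses and \Areduction. The main obstacle here is associativity: the natural approach is to show that $(f \twist g) \twist h$ and $f \twist (g \twist h)$ both equal a canonical reduced form of $f \con g \con h$, via a confluence/diamond-lemma argument. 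Cancellativity of $\con$ should make the local-confluence step tractable, while termination of reductions comes from the length function.

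Once $(G, \twist)$ is a group, the metric axioms for $d$ reduce to standard length-function computations. Nonnegativity is immediate. The equivalence $d(f,g)=0 \iff f=g$ follows from $f^{-1} \twist g = \id \iff f = g$ together with the fact that a length function vanishes only at the identity. Symmetry uses the involution and $\ell(y^{-1}) = \ell(y)$, which is part of the length-function axioms. Left-invariance is a one-line computation: $d(hf, hg) = \ell((hf)^{-1} \twist hg) = \ell(f^{-1} \twist g) = d(f,g)$, after applying associativity and $h^{-1} \twist h = \id$. The triangle inequality reduces, by left-invariance, to the subadditivity $\ell(a \twist b) \leq \ell(a) + \ell(b)$, which should either be built into the length-function definition on $Y$ or follow directly from the ore axioms relating $\ell$ and $\con$.

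The final and most delicate step is showing that $d$ is median. Using left-invariance, it suffices to exhibit a median for every triple $\{\id, f, g\} \subseteq G$. Since $(Y, \preceq)$ is a median semilattice with minimum $\id$, the meet $f \con g$ is defined, and the plan is to identify it as the desired median. Concretely, I would show that $f \con g \in G$ (admissibility is preserved under meets, which is one of the ore axioms), that $f \con g$ lies on geodesics from $\id$ to each of $f$ and $g$ via an additive identity of the form $\ell(f) = \ell(f \con g) + \ell\bigl((f \con g)^{-1} \twist f\bigr)$, and that it likewise lies on a geodesic from $f$ to $g$. The case of an arbitrary triple $\{f,g,h\}$ then follows by translating by $f^{-1}$, giving the median $f \twist \bigl((f^{-1} \twist f) \con (f^{-1} \twist g) \con (f^{-1} \twist h)\bigr) = f \twist \bigl((f^{-1} \twist g) \con (f^{-1} \twist h)\bigr)$. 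The main technical obstacle is coupling the semilattice structure to the monoid structure tightly enough to derive the additive identities for $\ell$ along meets; this is precisely what the ore axioms are engineered to provide, so the argument should follow by direct but careful bookkeeping from the definitions.
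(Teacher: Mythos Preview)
Your outline has a genuine gap in the associativity step. You propose showing that $(f \twist g) \twist h$ and $f \twist (g \twist h)$ both equal ``a canonical reduced form of $f \con g \con h$'' via a confluence/diamond-lemma argument, with termination supplied by the length function. Neither ingredient is available in general. First, an ore need not admit any length function: the group structure on $G$ is established \emph{before} Definition~\ref{def: length function} and does not use one, so you cannot invoke $\ell$ for termination. Second, and more seriously, there is in general no retraction $Y \to G$ commuting with $\con$ and $-1$; the paper says this explicitly in the remark following the definition of the extracted group, and only constructs such a retraction in the special case of Section~\ref{sec: reduction}. So ``the canonical reduced form of $f \con g \con h$'' is not a well-defined object in $G$, and the Newman-style strategy cannot get off the ground.

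The paper's route is quite different. The operation $\twist$ is given by an explicit formula, $f \twist g = (f \dotdiv (f^{-1} \wedge g)^{-1}) \con (g \divdot (f^{-1} \wedge g))$, and associativity (Lemma~\ref{Ore associativity}) is proved by a direct but intricate computation of both bracketings. The computation is organised around the parallel-transport maps $\Phi_y$ of Definition~\ref{def: parallel transport} and relies essentially on axioms (O5) and (O6), which govern how orthogonal admissible elements interact; the supporting Lemmas~\ref{spanning a cube} and~\ref{pre-associative} package the combinatorics of a ``cube'' spanned by three pairwise-orthogonal elements. None of this orthogonality/parallel-transport machinery appears in your sketch, and I do not see how to avoid it.

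Your metric and median paragraphs are closer to the mark, though note two points. You write $f \con g$ where you appear to mean the meet $f \wedge g$; these are very different operations here. And the paper does not argue the median property by exhibiting $f \wedge g$ directly for the triple $\{\id,f,g\}$: instead it uses (O4) and Sholander's theorem (Theorem~\ref{thm: median semilattice}) to get a median algebra structure on $G$, proves in Lemma~\ref{lem: G is median preserving} that left multiplication preserves medians by characterising $\{c : m(a,b,c)=c\}$ as $\{c : (c^{-1}\twist a)\wedge(c^{-1}\twist b)=\id\}$, and then applies Lemma~\ref{lem: algebra to space}. Your reduction to triples containing $\id$ via left-invariance is compatible with this, but the additive identity you need, $d(a,b)=d(a,c)+d(c,b)$ when $m(a,b,c)=c$, is exactly what Lemma~\ref{lem: G is median preserving} is for.
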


In the above theorem a \textit{length function} is defined as a map $\ell:Y \rightarrow \Lambda$ such that, for all $g \in Y$, we have $\ell(g) = \ell(g^{-1}) \geq 0$ with equality if and only if $g = \id$, and $\ell(g \con h) = \ell(g) + \ell(h)$ for all $f,g \in Y$ (Definition~\ref{def: length function}).

\begin{remark}
    All of the actions we construct in this paper are on spaces which are not locally compact. It turns out this is to be expected when looking for interesting free transitive actions on median spaces. Indeed, Messaci proved in \cite{Messaci} that any finite rank locally compact connected median space which admits a transitive action is isometric to $\mathbb{R}^n$ equipped with its $\ell^1$ metric, for some $n$.
\end{remark}

\paragraph{Acknowledgements.}

 I am indebted to Mark Hagen for telling me about Berestovski\u{\i}--Plaut's construction and his, Casals-Ruiz and Kazachkov's work on $\mathbb{R}$-cubings and groups of (proper) headings, which was the starting point of this project. Thanks also for uncountably many discussions since then and for carefully reading many versions of this text. I am grateful to Montse Casals-Ruiz and Ilya Kazachkov for a number of enlightening conversations, Indira Chatterji for many interesting discussions and to the members of the Laboratoire J.A. Dieudonné for their hospitality during a research stay during which some of this work was carried out. This work was funded by a University of Bristol PhD scholarship and a grant from the Académie Syst\`emes Complexes of the Université C\^{o}te d'Azur.
 
\section{Preliminaries}

\subsection{$\Lambda$-metrics} \label{sec: Lambda spaces}

We will at times need to work with generalised metric spaces, where metrics take values in some totally ordered abelian group. We record some definitions and facts about these spaces here; for more details the reader can refer to \cite{Chiswell-intro_to_lambda_trees,GKMS}.

Let $\Lambda$ be an abelian group, which we denote additively. We say that $\Lambda$ is \textit{totally ordered} if there is a total order $\leq$ on $\Lambda$ which is $\Lambda$-invariant (i.e. $x \leq y \Leftrightarrow \lambda + x \leq \lambda + y$ for all $\lambda, x,y \in \Lambda$). It follows immediately from the definition that such a group is torsion-free. Fix a totally ordered abelian group $\Lambda$ for the rest of this section. Note that we can define intervals $[\lambda_1, \lambda_2], (\lambda_1, \lambda_2), (\lambda_1, \lambda_2], [\lambda_1, \lambda_2)$ in exactly the same way as they are defined in $\mathbb{R}$.

\begin{definition}
    Let $X$ be a set. A $\Lambda$-metric on $X$ is a map $d: X \times X \rightarrow \Lambda$ such that, for all $x,y,z \in X$,
    \begin{itemize}
        \item $d(x,y) \geq 0$, with equality if and only if $x=y$;
        \item $d(x,y) = d(y,x)$;
        \item $d(x,y) \leq d(x,z) + d(z,y)$.
    \end{itemize}
    The pair $(X,d)$ is called a $\Lambda$-metric space.
\end{definition}

\begin{example}
\begin{itemize}
    \item The group $\Lambda$ is itself a $\Lambda$-metric space, with $\Lambda$-metric given by $d(x,y) = |x-y| = x-y$ if $x \geq y$ and $y-x$ otherwise, for all $x,y \in \Lambda$.
    \item A metric space in the usual sense is an $\mathbb{R}$-metric space.
    \item The 0-skeleton of a connected graph equipped with the path metric is a $\mathbb{Z}$-metric space. 
    \item If $X$ is a metric space, $\omega$ is an ultrafilter on $\mathbb{N}$ and $X^\omega, \mathbb{R}^\omega$ are the ultrapowers of $X, \mathbb{R}$ with respect to $\omega$, then $\mathbb{R}^\omega$ has a natural group structure and order making it a totally ordered abelian group and $X^\omega$ is an $\mathbb{R}^\omega$-metric space.
\end{itemize}
\end{example}

If $\Lambda = \mathbb{R}$, we will usually write metric rather than $\mathbb{R}$-metric. 

\begin{definition}
    Given two $\Lambda$-metric spaces $(X,d_X), (Y,d_Y)$, an \textit{isometric embedding} $\varphi: X \hookrightarrow Y$ is a map such that $d_Y(\varphi(x), \varphi(y)) = d_X(x,y)$ for all $x,y \in X$. If $\varphi$ is surjective then it is an \textit{isometry}. Let $\Isom(X)$ denote the group of all isometries of $X$.

    A \textit{geodesic} in $X$ is an isometric embedding $[0,\lambda] \hookrightarrow X$ for some $\lambda \in \Lambda$ with $\lambda \geq 0$. The image of a geodesic is called a \textit{segment}. A $\Lambda$-metric space is called \textit{geodesic} if every pair of points is connected by a geodesic.
\end{definition}

\subsection{Median spaces}
\begin{definition}
    A $\Lambda$-metric space $(X,d)$ is \textit{median} if there exists a unique map $m: X^3 \rightarrow X$ such that for all $x_1, x_2, x_3 \in X$, if $i \neq j$ then
    \[ d(x_i,x_j) = d(x_i, m(x_1,x_2,x_3)) + d(m(x_1,x_2,x_3), x_j).\]
    The element $m(x_1, x_2, x_3) \in X$ is called the \textit{median} of $x_1, x_2, x_3$.
\end{definition}

Median $\Lambda$-metric spaces are examples of median algebras:

\begin{definition}
    A \textit{median algebra} is a set $X$ equipped with a symmetric ternary operation $m: X^3 \rightarrow X$ such that, for all $a,b,c,d \in X$, we have
    \begin{itemize}
        \item $m(a,a,b) = a$ and
        \item $m(m(a,b,d),c,d) = m(m(a,c,d),b,d)$.
    \end{itemize}
    A subset $Y \subseteq X$ is a \textit{median subalgebra} if $m(a,b,c) \in Y$ for all $a,b,c \in Y$.
\end{definition}

\begin{definition}
    Given $k \in \mathbb{N}$, a $k$\textit{-cube} is a median algebra of the form $\sigma_k = \{0,1\}^k$ where $m(x,y,z) = m \in \sigma_k$ such that $m$ agrees with at least two elements of $\{x,y,z\}$ on each coordinate. 

    The \textit{rank} of a median $\Lambda$-metric space $(X,d)$ is the supremum over all $k \in \mathbb{N}$ such that there is a $k$-cube $\sigma_k$ which embeds in $X$ as a median subalgebra.
\end{definition}

The following lemma is proven in \cite[Lemma~13.1.1]{Bowditch} in the case where $\Lambda = \mathbb{R}$, but the same proof applies here.

\begin{lemma} \label{lem: algebra to space}
    Let $(X,m)$ be a median algebra and let $d: X \times X \rightarrow \Lambda$ be a $\Lambda$-metric such that: for all $a,b,c \in X$ such that $m(a,b,c) = c$, we have $d(a,b) = d(a,c) + d(c,b)$. Then $(X,d)$ is a median $\Lambda$-metric space.
\end{lemma}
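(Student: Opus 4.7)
The plan is to verify that the median-algebra map $m$ itself serves as the required metric-median operation, and that it is uniquely determined. For $a, b \in X$, write $I_m(a, b) := \{x \in X : m(a, b, x) = x\}$ for the median-algebra interval and $I_d(a, b) := \{x \in X : d(a, x) + d(x, b) = d(a, b)\}$ for the metric interval; the compatibility hypothesis of the lemma reads precisely $I_m(a, b) \subseteq I_d(a, b)$.

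For existence, set $p := m(x_1, x_2, x_3)$. A standard consequence of the two median-algebra axioms is that $p \in I_m(x_i, x_j)$ for each pair $i \neq j$. Compatibility then gives $p \in I_d(x_i, x_j)$, which is exactly the required identity $d(x_i, x_j) = d(x_i, p) + d(p, x_j)$.

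For uniqueness, take any $q \in X$ satisfying $d(x_i, x_j) = d(x_i, q) + d(q, x_j)$ for all $i \neq j$. Fix such a pair and set $r := m(x_i, x_j, q)$. Applying the compatibility hypothesis to the three memberships $r \in I_m(x_i, q)$, $r \in I_m(x_j, q)$, $r \in I_m(x_i, x_j)$, adding the first two and substituting the third, yields
\[ d(x_i, q) + d(x_j, q) = d(x_i, r) + d(x_j, r) + 2 d(r, q) = d(x_i, x_j) + 2 d(r, q). \]
By hypothesis on $q$ the left-hand side equals $d(x_i, x_j)$, so $2 d(r, q) = 0$. Since $\Lambda$ is totally ordered abelian and hence torsion-free, this forces $d(r, q) = 0$, i.e.\ $q = r \in I_m(x_i, x_j)$. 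Since this holds for each of the three pairs, the standard median-algebra fact $I_m(x_1, x_2) \cap I_m(x_1, x_3) \cap I_m(x_2, x_3) = \{m(x_1, x_2, x_3)\}$ forces $q = p$.

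This is essentially Bowditch's argument in the $\mathbb{R}$-case transplanted to general $\Lambda$; the only place where the generality matters is the passage from $2 d(r, q) = 0$ to $d(r, q) = 0$, which is valid because totally ordered abelian groups are torsion-free. No substantive obstacle remains beyond this routine bookkeeping.
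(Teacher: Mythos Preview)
Your proof is correct and is precisely the argument the paper has in mind: the paper does not give its own proof but cites Bowditch's Lemma~13.1.1 for the case $\Lambda = \mathbb{R}$ and remarks that the same proof goes through for general $\Lambda$. You have reproduced that argument, correctly noting that the only place the generality of $\Lambda$ enters is the step $2d(r,q)=0 \Rightarrow d(r,q)=0$, which holds because totally ordered abelian groups are torsion-free.
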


The notion of a $\Lambda$-tree was first introduced by Morgan--Shalen in \cite{Morgan-Shalen1984}. In the case where $\Lambda = \mathbb{R}$, it is equivalent to the older notion of an $\mathbb{R}$-tree first defined by Tits \cite{Tits}.

\begin{definition} \label{def: Lambda tree}
    A $\Lambda$-tree is a geodesic $\Lambda$-metric space such that
    \begin{itemize}
        \item if two segments intersect at a single point, which is an endpoint of both, then their union is a segment;
        \item the intersection of two segments with a common endpoint is itself a segment.
    \end{itemize}
\end{definition}

An immediate consequence of this definition is that, if $T$ is a $\Lambda$-tree, then there is a unique segment connecting any pair of points $x_1, x_2 \in T$. We denote this segment by $[x_1, x_2]$.

As in the real case, this notion can be characterised in terms of Gromov hyperbolicity. The notion of Gromov hyperbolicity was extended to $\Lambda$-metric spaces by Chiswell in \cite{Chiswell-intro_to_lambda_trees} as follows.

Let $\Lambda_\mathbb{Q} \coloneqq \mathbb{Q} \otimes_\mathbb{Z} \Lambda$, where $\Lambda$ and $\mathbb{Q}$ are viewed as $\mathbb{Z}$-modules. The elements of $\Lambda_\mathbb{Q}$ can be viewed as equivalence classes of the equivalence relation $\sim$ on $\Lambda \times (\mathbb{Z} - \{0\})$ given by $(\lambda, m) \sim (\mu, n)$ if and only if $m\lambda = n\mu$. The equivalence class of $(\lambda,m)$ is denoted by $\frac{\lambda}{m}$. We equip $\Lambda_\mathbb{Q}$ with the structure of a totally ordered abelian group with operation given by $\frac{\lambda}{m} + \frac{\mu}{n} = \frac{n \lambda + m \mu}{nm}$ and order given by $\frac{\lambda}{m} > 0$ if and only if $m \lambda > 0$. The map $\Lambda \rightarrow \Lambda_\mathbb{Q}$ given by $\lambda \mapsto \frac{\lambda}{1}$ is an injective homomorphism. We identify $\Lambda$ with its image in $\Lambda_\mathbb{Q}$.

Let $(X,d)$ be a $\Lambda$-metric space and $p,x,y \in X$. The \textit{Gromov product} $(x,y)_p \in \Lambda_\mathbb{Q}$ is given by:
\[
    (x,y)_p \coloneqq \frac{1}{2}(d(x,p) + d(y,p) - d(x,y)).
\]

\begin{definition}
    Let $\delta \geq 0$ be a constant. A $\Lambda$-metric space $(X,d)$ is $\delta$\textit{-hyperbolic with respect to} $p \in X$ if for all $x,y,z \in X$ we have 
    \[
        (x,y)_p \geq \min\{(x,z)_p, (y,z)_p\} - \delta.
    \]
    The space $(X,d)$ is $\delta$\textit{-hyperbolic} if it is $\delta$-hyperbolic with respect to every $p \in X$.
\end{definition}

If $(X,d)$ is $\delta$-hyperbolic with respect to $p_1 \in X$ then it is $2\delta$-hyperbolic with respect to any $p_2 \in X$ \cite[Lemma~1.2.5]{Chiswell-intro_to_lambda_trees}. In particular, $X$ is 0-hyperbolic if and only if $X$ is 0-hyperbolic with respect to some $p \in X$. 

\begin{lemma}[Chiswell, {\cite[Lemmas~2.1.6 and 2.4.3]{Chiswell-intro_to_lambda_trees}}] \label{lem: 0-hyp characterisation}
    Let $(X,d)$ be a geodesic $\Lambda$-metric space. Then $X$ is a $\Lambda$-tree if and only if, for some (equivalently any) $p \in X$, the following hold:
    \begin{itemize}
        \item[(i)] $X$ is 0-hyperbolic with respect to $p$;
        \item[(ii)] for all $x,y \in X$ we have $(x,y)_p \in \Lambda$. 
    \end{itemize}
\end{lemma}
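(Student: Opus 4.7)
The plan is to prove the two directions separately, with the forward implication being essentially a bookkeeping exercise on the two $\Lambda$-tree axioms, and the reverse implication being the substantive step. In both cases I work with a fixed basepoint $p$; that this suffices follows from the fact, recorded just before the lemma, that 0-hyperbolicity at one basepoint implies 0-hyperbolicity at every basepoint (with a factor of $2$, which is irrelevant since $\delta = 0$).

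For the forward direction, assume $X$ is a $\Lambda$-tree. Given $x, y \in X$, the second axiom says $[p,x] \cap [p,y]$ is itself a segment; since it contains $p$, it has the form $[p,m]$ for a unique $m$. The subsegments $[m,x]$ and $[m,y]$ then meet only at $m$, so by the first axiom their union is a segment from $x$ to $y$, which, by uniqueness of segments, must equal $[x,y]$. Therefore $d(x,y) = d(x,m) + d(m,y)$, and a direct rearrangement gives $(x,y)_p = d(p,m) \in \Lambda$, which proves (ii). For (i), apply this construction to each pair among $\{x,y,z\}$, producing median points $m_{xy}, m_{xz}, m_{yz}$. After relabelling so that $(x,z)_p \leq (y,z)_p$, the point $m_{xz}$ lies on $[p, m_{yz}]$, which is contained in $[p,y]$; combined with $m_{xz} \in [p,x]$, this yields $m_{xz} \in [p,x] \cap [p,y] = [p, m_{xy}]$, so $(x,z)_p \leq (x,y)_p$, giving the ultrametric inequality.

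For the reverse direction, assume (i) and (ii). Fix $x, y \in X$. Since $(x,y)_p \in \Lambda$ and lies between $0$ and $\min\{d(p,x), d(p,y)\}$, I may choose points $m_x \in [p,x]$ and $m_y \in [p,y]$ at distance $(x,y)_p$ from $p$ along fixed geodesics. The critical step is to prove $m_x = m_y$: computing the Gromov products $(m_x, y)_p$ and $(m_y, x)_p$ and iterating $\delta = 0$ hyperbolicity on the quadruples involving these points forces $d(m_x, m_y) = 0$. Setting $m := m_x = m_y$, a short computation shows $d(x,m) + d(m,y) = d(x,y)$, so the concatenation $[x,m] \cup [m,y]$ is a geodesic from $x$ to $y$. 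Uniqueness of segments then follows by re-running the same construction with $p$ replaced by any hypothetical middle point of an alternative segment. The two $\Lambda$-tree axioms are then verified by the same median mechanism: the intersection of two segments with a common endpoint is parametrised by the relevant Gromov product and is itself a segment, while two segments meeting only at a common endpoint glue to a single geodesic because the vanishing of the relevant Gromov product places the glueing point exactly at the median.

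The main obstacle is the identification $m_x = m_y$ in the reverse direction: condition (ii) is precisely what allows us to produce candidate median points on each geodesic (otherwise $(x,y)_p$ might lie in $\Lambda_\mathbb{Q} \setminus \Lambda$ and fail to parametrise any point of $X$), while condition (i) is what forces the two candidates to coincide. Once the median point is available, the remaining verifications reduce to the standard fact that the $\Lambda$-tree axioms are equivalent to the existence of medians with the expected additivity of distances along geodesics, which can be extracted directly from Lemma \ref{lem: algebra to space}.
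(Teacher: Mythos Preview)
The paper does not supply its own proof of this lemma; it is quoted from Chiswell's book \cite[Lemmas~2.1.6 and 2.4.3]{Chiswell-intro_to_lambda_trees} as a black box. So there is no in-paper argument to compare against. Your approach is the standard one and is essentially what Chiswell does: construct the ``$Y$-point'' (your $m$) on each pair of geodesics issuing from $p$, use condition~(ii) to guarantee the candidate points $m_x,m_y$ exist in $X$, and use condition~(i) to force $m_x=m_y$. The forward direction is clean and correct as written.

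There is one genuine misstep at the end. Your final sentence appeals to Lemma~\ref{lem: algebra to space} to extract the $\Lambda$-tree axioms from the median structure. That lemma goes in the wrong direction: it takes a median algebra with a compatible metric and concludes the metric space is median; it says nothing about Definition~\ref{def: Lambda tree}. You must verify the two axioms directly. Your one-line sketches are on the right track but underspecified. For the first axiom, if $[a,b]\cap[b,c]=\{b\}$ you need to argue that the median $m(a,b,c)$ lies in this intersection (because it sits on both $[b,a]$ and $[b,c]$), hence equals $b$, hence $(a,c)_b=0$ and the concatenation is a segment. For the second axiom, you need the converse containment $[p,x]\cap[p,y]\subseteq[p,m]$: if $z$ lies on both segments then $(x,y)_z=(x,y)_p-d(p,z)\geq 0$, so $d(p,z)\leq d(p,m)$, and uniqueness of geodesics then places $z$ on $[p,m]$. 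Likewise, your uniqueness-of-segments step (``re-running the same construction'') needs to be spelled out: given $z,z'$ on two segments from $x$ to $y$ at the same distance from $x$, compute $(z',x)_z=(z',y)_z=\tfrac{1}{2}d(z,z')$ and apply 0-hyperbolicity at basepoint $z$ to the triple $x,z',y$ to get $0=(x,y)_z\geq\tfrac{1}{2}d(z,z')$. None of this is hard, but it is not contained in Lemma~\ref{lem: algebra to space}; drop that reference and write out these three short computations.
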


A slightly stronger version of the following characterisation was given by Bowditch \cite[Lemma~15.1.2]{Bowditch} in the case where $\Lambda = \mathbb{R}$ (in that case, it is enough to assume that $X$ is path-connected rather than geodesic).

\begin{lemma} \label{lem: tree characterisation}
    Let $(X,d)$ be a geodesic $\Lambda$-metric space. Then $X$ is a $\Lambda$-tree if and only if $X$ is a rank 1 median $\Lambda$-metric space.
\end{lemma}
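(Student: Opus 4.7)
The plan is to prove each direction of the biconditional separately, deferring to Lemma~\ref{lem: 0-hyp characterisation} for the harder direction.

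For the ``only if'' direction, suppose $X$ is a $\Lambda$-tree. Given $x_1, x_2, x_3 \in X$, the intersection $[x_1, x_2] \cap [x_1, x_3]$ is itself a segment $[x_1, m]$ by the second tree axiom; a symmetric argument then shows $m \in [x_2, x_3]$, so $m$ satisfies the defining metric equations for a median, and uniqueness is immediate from unique geodesics. To verify rank $\leq 1$, I will show that no $2$-cube embeds as a median subalgebra: four distinct points $\{a, b, c, d\}$ satisfying the $2$-cube median identities would force $b, c \in [a, d]$ (from $m(a,b,c) = a$, $m(a, b, d) = b$, $m(a, c, d) = c$) and $d \in [b,c]$ (from $m(b,c,d)=d$), which collapses under unique geodesics in a tree.

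For the ``if'' direction, fix a basepoint $p \in X$ and apply Lemma~\ref{lem: 0-hyp characterisation}. The Gromov product condition is immediate: the defining metric equations of $m := m(p,x,y)$ give $(x,y)_p = d(p,m) \in \Lambda$. The substantive task is $0$-hyperbolicity: given $x,y,z \in X$, set $a := m(p,x,y)$, $b := m(p,x,z)$ and $c := m(p,y,z)$; I must show that the minimum of $\{d(p,a), d(p,b), d(p,c)\}$ is attained at least twice.

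The main obstacle is the intermediate claim that, under the rank $1$ hypothesis, every interval $I(u,v) := \{w : m(u,w,v) = w\}$ is totally ordered by the relation $w_1 \leq_u w_2 \iff w_1 \in I(u, w_2)$. I will argue by contradiction: if $u_1, u_2 \in I(u, v)$ are $\leq_u$-incomparable, set $w := m(u, u_1, u_2)$ and $v' := m(v, u_1, u_2)$, and verify that $\{w, u_1, u_2, v'\}$ embeds as a $2$-cube median subalgebra. Pairwise distinctness follows from straightforward distance computations using incomparability together with the two decompositions $d(u,v) = d(u,w) + d(w, u_i) + d(v', u_i) + d(v',v)$ for $i=1,2$. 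The two ``easy'' cube medians $m(w, u_1, u_2) = w$ and $m(u_1, u_2, v') = v'$ hold because $w, v' \in I(u_1, u_2)$, and the two ``nontrivial'' cube medians $m(w, u_1, v') = u_1$ and $m(w, u_2, v') = u_2$ follow by applying a standard distributive identity for median algebras (twice) to reduce them to $m(u, u_1, v) = u_1$, which holds because $u_1 \in I(u, v)$.

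Granting total orderedness of intervals, the conclusion is short. The standard identity $I(p, m(p,x,y)) = I(p,x) \cap I(p,y)$ in any median space gives
\[
I(p,a) \cap I(p,b) = I(p,x) \cap I(p,y) \cap I(p,z) = I(p,a) \cap I(p,c) = I(p,b) \cap I(p,c),
\]
so the three medians $m(p,a,b)$, $m(p,a,c)$, $m(p,b,c)$ all equal a common point $q$. Since $a, b \in I(p,x)$ are $\leq_p$-comparable, $q = m(p, a, b) \in \{a, b\}$; similarly $q \in \{a, c\} \cap \{b, c\}$. If $a, b, c$ were all distinct, this triple intersection would be empty, so at least two of them coincide. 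A brief case analysis on which pair coincides (using that $q \in \{a,b,c\}$ lies in $[p, \cdot]$ relative to the third point) then confirms that the coincident pair realises the minimum of $\{d(p,a), d(p,b), d(p,c)\}$, yielding $0$-hyperbolicity with respect to $p$.
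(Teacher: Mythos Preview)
Your proof is correct and follows the same overall architecture as the paper's: both directions of the ``only if'' are handled similarly, and the ``if'' direction is reduced to Lemma~\ref{lem: 0-hyp characterisation} via the identity $(x,y)_p = d(p, m(p,x,y))$.

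The one genuine difference is in how you establish $0$-hyperbolicity. The paper assumes $(x,y)_p < \min\{(x,z)_p,(y,z)_p\}$ and directly exhibits the four points $m(p,x,y)$, $m(p,x,z)$, $m(p,y,z)$, $m(x,y,z)$ as a $2$-cube median subalgebra, contradicting rank~$1$ in one step. You instead isolate an intermediate lemma --- that rank~$1$ forces every interval $I(u,v)$ to be totally ordered --- prove \emph{that} by a $2$-cube contradiction (with corners $m(u,u_1,u_2)$, $u_1$, $u_2$, $m(v,u_1,u_2)$), and then deduce $0$-hyperbolicity from the comparability of $a,b\in I(p,x)$, $a,c\in I(p,y)$, $b,c\in I(p,z)$ together with the identity $I(p,m(p,x,y)) = I(p,x)\cap I(p,y)$. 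Your route is a little longer but yields a reusable structural statement (total orderedness of intervals) and makes the cube verification fully explicit via the associativity axiom; the paper's route is shorter but leaves the check that its four medians form a median subalgebra to the reader. Both are valid and closely related: your cube is essentially the paper's cube specialised to the interval $I(p,z)$.
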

\begin{proof}
    Suppose that $X$ is a $\Lambda$-tree. Let $x_1, x_2, x_3 \in X$ and let $m \in X$ be the unique point such that $[x_1,x_2] \cap [x_1, x_3] = [x_1,m]$. Then $[x_2,m] \cup [m,x_3]$ is a segment so it follows that $m$ is a median of $x_1,x_2,x_3$. Uniqueness of $m$ follows from the uniqueness of the segments $[x_i,x_j]$. If the rank of $X$ is $\geq 2$ then there exists an isometric embedding $\varphi: \sigma_2 \rightarrow X$, where $\sigma_2$ is a 2-cube. For each $(\varepsilon_1, \varepsilon_2) \in \sigma_2$, let $x_{\varepsilon_1,\varepsilon_2} \coloneqq \varphi((\varepsilon_1, \varepsilon_2))$. Then $[x_{0,0}, x_{0,1}] \cup [x_{0,1}, x_{1,1}]$ is a segment and $[x_{0,0}, x_{1,0}] \cup [x_{1,0}, x_{1,1}]$ is a segment but their intersection is $\{x_{0,0}, x_{1,1}\}$ which is not a segment so this is a contradiction. Thus $X$ has rank~1.

    Conversely, suppose that $X$ is a geodesic rank 1 median $\Lambda$-metric space, and let $p,x,y,z \in X$. If $(x,y)_p < \min\{(x,z)_p, (y,z)_p\}$, then let $x_{0,0} \coloneqq m(p,x,y), \; x_{1,0} \coloneqq m(p,x,z)$, $x_{1,1} \coloneqq m(x,y,z), \; x_{0,1} \coloneqq m(p,y,z)$. Upon observing that, for any $a,b,c \in X$ we have $(a,b)_c = d(c, m(a,b,c))$, it is not hard to show that $\{x_{0,0}, x_{0,1}, x_{1,0}, x_{1,1}\}$ are pairwise distinct. It follows that $\{x_{0,0}, x_{0,1}, x_{1,0}, x_{1,1}\}$ is a 2-cube embedded in $X$ as a median subalgebra. This contradicts the assumption that $X$ has rank 1 so $X$ must be 0-hyperbolic. Lastly, if $x,y,p \in X$ then $(x,y)_p = d(p, m(x,y,p)) \in \Lambda$ so, by Lemma~\ref{lem: 0-hyp characterisation}, $X$ is a $\Lambda$-tree.
\end{proof}

\begin{definition}
    Let $(X,d)$ be a $\Lambda$-tree and $x \in X$. The \textit{valence of $x$} is the cardinality of the set of geodesic-connected components of $X - \{x\}$. Given a cardinal $\kappa$, we say that $X$ \textit{has valence $\kappa$} if every every point in $X$ has valence $\kappa$.
\end{definition}

For any cardinal $\kappa$, there is a unique complete $\mathbb{R}$-tree with valence $\kappa$ \cite{Mayer-Nikiel-Oversteegen,Nikiel}.

\subsection{Products}

The usual ways of constructing product metric spaces apply to $\Lambda$-metric spaces. Of particular interest to us is the $\ell^1$ product: Given $N \in \{\{1, \dots, n\} : n \in \mathbb{N}\} \cup \{\mathbb{N}\}$ and a family of $\Lambda$-metric spaces $\mathcal{X} \coloneqq ((X_n, d_n))_{n \in N}$, the \textit{$\ell_1$ product of $\mathcal{X}$ based at $z = (z_n)_{n \in N} \in \prod_{n \in N} X_n$} is the set 
\[
\ell^1(\mathcal{X}, z) \coloneqq \Big\{ x \in \prod_{n \in N} X_n : \sum_{n \in N} d_n(x_n, z_n) < \infty \Big\}
\]
equipped with the $\Lambda$-metric $d(x,y) = \sum_{n \in N} d_n(x_n,y_n)$.
 If each $(X_n,d_n)$ is median then $\prod_n X_n$ is median and the median map is given by $m(x,y,z) = (m(x_n,y_n,z_n))_{n \in N}$.

\subsection{BMW groups} \label{sec: BMW prelims}

Let $\Gamma_1 = (V_1, E_1), \Gamma_2 = (V_2, E_2)$ be simple graphs. The \textit{Cartesian product} $\Gamma$ of $\Gamma_1$ and $\Gamma_2$ is the graph with vertex set $V \coloneqq V_1 \times V_2$ and edge set $E \coloneqq \{\{(v_1,v_2), (w_1, w_2)\} : \{v_1, w_1\} \in E_1$ and $v_2 = w_2$ or $\{w_1, w_2\} \in E_2$ and $v_1 = w_1\}$. Equivalently, the $\ell^1$ product $\Gamma_1 \times \Gamma_2$ admits a natural cell structure where all the closed cells are either points, closed intervals or squares, and $\Gamma$ is the 1-skeleton of $\Gamma_1 \times \Gamma_2$.

A \textit{BMW group} is a group $G$ which admits a free transitive action on the vertex set of the Cartesian product of two locally finite simplicial trees $T_1, T_2$ such that the action preserves the factors, meaning that $G \leq \Isom(T_1) \times \Isom(T_2)$. All BMW groups admit a specific type of presentation:

\begin{definition}
    A \textit{BMW presentation} is a group presentation of the form $\la A \cup X | R \ra$ where $A,X$ are disjoint finite sets and the set of relations $R$ satisfies the following:
    \begin{itemize}
        \item $R = R_2 \sqcup R_4$ where each $r \in R_2$ is of the form $r = t^2$ for some $t \in A \cup X$ and each $r \in R_4$ is of the form $r = axa'x'$ for some $a,a' \in A \cup A^{-1}, x,x' \in X \cup X^{-1}$.
        \item For all $a \in A \cup A^{-1}, x \in X \cup X^{-1}$ there exists a unique $a' \in A \cup A^{-1}$ and a unique $x' \in X \cup X^{-1}$ such that $axa'x'$ or $a'x'ax$ or $a^{-1}x'a'x^{-1}$ or $a'x^{-1}a^{-1}x'$ belongs to $R_4$.
    \end{itemize}
\end{definition}

The following proposition is from \cite[Proposition~4.2]{Caprace-finite&infinite}.

\begin{proposition} \label{prop: BMW basics}
    Every BMW group admits a BMW presentation. Conversely, let $G = \la A \cup X | R \ra$ be a BMW presentation with $R = R_2 \cup R_4$ as above. Let $A' \coloneqq \{a \in A : a^2 \in R_2\}, X' \coloneqq \{x \in X : x^2 \in R_2\}$ and $m \coloneqq |A - A'|, m' \coloneqq |A'|, n \coloneqq |X - X'|, n' \coloneqq |B'|$. Then the Cayley graph of $G$ with respect to $A \cup X$ is the Cartesian product of two simplicial trees $T_A, T_X$ with degree $2m+m', 2n +n'$ respectively. The action of $G$ on its Cayley graph preserves the factors and is free and transitive on the vertex set so in particular $G$ is a BMW group.
\end{proposition}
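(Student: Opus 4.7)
The proof has two directions. For the forward direction, starting from a BMW group $G$ acting freely and transitively on the vertex set $V$ of $T_1 \times T_2$ while preserving the factors, I would fix a base vertex $v = (v_1, v_2) \in V$. The edges of $T_1 \times T_2$ at $v$ split naturally into horizontal edges (lying in $T_1 \times \{v_2\}$) and vertical edges (lying in $\{v_1\} \times T_2$); since the action is simply transitive, each such edge corresponds to a unique element of $G$, giving disjoint generating sets $A$ (horizontal) and $X$ (vertical). Because the action preserves the factors, the inverse of a horizontal generator is horizontal, yielding an involutory subset $A'$ and the $R_2$ relations $\{a^2 : a \in A'\} \cup \{x^2 : x \in X'\}$. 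For any $a \in A, x \in X$, the diagonal neighbour $vax$ is the opposite corner of a unique square at $v$, so there is a unique relation $axa'x' = 1$ traversing the other corner; these give $R_4$. To verify that $R = R_2 \cup R_4$ defines $G$, I would attach $2$-cells to $\Cay(G, A \cup X)$ along these relations and observe that the resulting complex is simply connected, because every loop projects to loops in the trees $T_1$ and $T_2$, which are nullhomotopic, and this homotopy lifts using the square cells ($R_4$) together with backtracking cancellations ($R_2$).

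For the converse direction, given a BMW presentation $\la A \cup X \mid R\ra$, the plan is to build a square complex $Y$ with one $0$-cell, one $1$-cell for each element of $A \cup X$ (identifying $t$ with $t^{-1}$ precisely when $t^2 \in R_2$), and one $2$-cell for each element of $R_4$. By van Kampen's theorem $\pi_1(Y) \cong G$. The link of the unique vertex of $Y$ is a bipartite graph on the vertex set $(A \cup A^{-1})/{\sim}\; \sqcup\; (X \cup X^{-1})/{\sim}$, where $\sim$ identifies $t$ with $t^{-1}$ whenever $t \in A' \cup X'$. The uniqueness clause in the definition of a BMW presentation makes this link a simple bipartite graph (no multi-edges), and bipartiteness forbids triangles, so Gromov's link condition is satisfied. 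Hence $\widetilde Y$ is a CAT(0) square complex, and its $1$-skeleton is canonically identified with $\Cay(G, A \cup X)$.

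The decisive and most delicate step is showing that $\widetilde Y$ splits as a Cartesian product $T_A \times T_X$ of two trees. The edges of $\widetilde Y$ carry a canonical two-colouring by $A$ and $X$, and the shape of the relations in $R_4$ ensures that every square has two opposite $A$-edges and two opposite $X$-edges (rather than two adjacent edges of the same colour). Consequently, hyperplanes of the same colour never cross each other, whereas hyperplanes of different colours mutually cross. This is the standard criterion identifying a CAT(0) cube complex as a product of two complexes dual to the two colour classes, each factor being a tree because its hyperplanes are pairwise disjoint and the complex is one-dimensional in that direction. Counting generators of each colour at a vertex gives the valences $2m + m'$ and $2n + n'$. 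Finally, the deck-transformation action of $G$ on $\widetilde Y$ preserves the colouring and hence the factors, is free because $\widetilde Y$ is simply connected, and is transitive on vertices since $Y$ has a single $0$-cell.
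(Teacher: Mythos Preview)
The paper does not prove this proposition at all: it is quoted verbatim from \cite[Proposition~4.2]{Caprace-finite&infinite} and simply cited. So there is no ``paper's own proof'' to compare against; your task was to reconstruct a proof of a result the author imported as a black box.

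Your outline is the standard argument and is essentially correct. One point deserves sharpening. In the converse direction you write that the uniqueness clause in the definition of a BMW presentation makes the link simple, and that bipartiteness then rules out triangles. That is fine for the CAT(0) conclusion, but it is not yet enough for the product decomposition. What drives the splitting $\widetilde Y \cong T_A \times T_X$ is that the link is \emph{complete} bipartite: every $A$-half-edge is joined to every $X$-half-edge. This comes from the \emph{existence} clause in the BMW presentation (for every pair $(a,x)$ there is a relation), which you use implicitly but never name. Relatedly, your sentence ``hyperplanes of different colours mutually cross'' is the right criterion for a product, but it is not an immediate consequence of the shape of $R_4$ alone; it follows from the complete-bipartite link condition together with CAT(0). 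The cleanest route is: existence $+$ uniqueness $\Rightarrow$ link is complete bipartite $\Rightarrow$ $\widetilde Y$ is locally a product of two $1$-complexes $\Rightarrow$ (by simple connectedness) globally a product; then pairwise non-crossing within each colour forces each factor to be a tree.
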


\begin{definition}
    A group presentation $\la S | R \ra$ is \textit{positive} if every element $r \in R$ is of the form $r = xy$ where $x$ is a non-trivial word in $S$ and $y$ is a non-trivial word in $S^{-1}$. 
\end{definition}

We will only consider BMW groups which admit positive BMW presentations. In particular, if $G = \la A \cup X | R \ra$ is a positive BMW presentation then $R_2 = \emptyset$ and for each $r = axa'x' \in R_4$ we have $|\{a,a'\} \cap A| = |\{a,a'\} \cap A^{-1}| = |\{x,x'\} \cap X| = |\{x,x'\} \cap X^{-1}| = 1$ so it follows that $|R| =  mn$ and $G$ is torsion-free \cite[Proposition~4.2(ii)]{Caprace-finite&infinite}.

\begin{definition}
    A BMW group $G$ is \textit{reducible} if there is a finite index subgroup of $G$ which splits non-trivially as a direct product. Otherwise $G$ is \textit{irreducible}.
\end{definition}

\subsection{Cantor--Bendixson rank}
Let $Y$ be a Polish space (i.e. separable and completely metrisable). The Cantor--Bendixson theorem (see e.g. \cite{Kechris}) states that there is a unique decomposition of $Y$ as a disjoint union $\mathcal{K}(Y) \sqcup C$, where $\mathcal{K}(Y)$ is perfect (i.e. closed with no isolated points) and $C$ is countable. Any non-empty perfect Polish space contains a Cantor set, so the perfect subspace $\mathcal{K}(Y)$ is empty if and only if $Y$ is countable. 

The Cantor--Bendixson derivatives of $Y$ are defined by transfinite induction. The first derivative $Y^{(1)}$ is the set consisting of all the points of $Y$ which are not isolated. If $\alpha$ is an ordinal for which $Y^{(\alpha)}$ is defined then $Y^{(\alpha+1)}$ is the set of non-isolated points of $Y^{(\alpha)}$. If $\beta$ is a limit ordinal such that $Y^{(\alpha)}$ is defined for all $\alpha < \beta$ then $Y^{(\beta)} \coloneqq \cap_{\alpha < \beta} Y^{(\alpha)}$. The Cantor--Bendixson theorem implies that there is a countable ordinal $\alpha$ such that $Y^{(\alpha + 1)} = Y^{(\alpha)}$ (i.e. $Y^{(\alpha)} = \mathcal{K}(Y)$). The minimal such $\alpha$ is called the  \textit{Cantor--Bendixson rank} (or \textit{CB-rank}) of $Y$.

\begin{remark} \label{rem: CB-rank succesor}
    If $Y$ is a countable compact metrisable space then its CB-rank must be a successor ordinal. Indeed, if $\alpha$ is a limit ordinal and the CB-rank of $Y$ is greater than or equal to $\alpha$, then there exists $y_\beta \in Y^{(\beta)}$ for all $\beta < \alpha$ and, by compactness, this implies that there is a sequence in $Y$ which converges to a point in $\cap_{\beta < \alpha} Y^{(\beta)} = Y^{(\alpha)}$. Since $Y$ is countable, $\mathcal{K}(Y) = \emptyset$ so $\alpha$ is the not the CB-rank of $Y$.
\end{remark}

\section{Ores and their extracted groups} \label{sec: ores}

The goal of this section is to introduce an abstract construction which we will use to produce groups which act freely and transitively on various median metric spaces. A good example to keep in mind while reading this section is the construction of a free group using words in an alphabet $A \cup A^{-1}$.

We start with some notions and results from order theory. Recall that a \textit{partially ordered set} is a set $Y$ equipped with a reflexive, antisymmetric and transitive relation $\preceq$.

\begin{definition} [Semilattice, meet, join, bottom] \label{def: semilattice}
Let $(Y, \preceq)$ be a partially ordered set. Fix $x,y \in Y$.
\begin{itemize}
    \item Suppose there exists an element $z \in Y$ such that $z \preceq x$, $z \preceq y$ and, for all $s \in Y$ such that $s \preceq x$ and $s \preceq y$, we have that $s \preceq z$. Then $z$ is called the \textit{meet} of $x$ and $y$ and we write $x \wedge y \coloneqq z$.
    \item Suppose there exists an element $z \in Y$ such that $x \preceq z$, $y \preceq z$ and for all $s \in Y$ such that $x \preceq s$ and $y \preceq s$ we have that $z \preceq s$. Then $z$ is called the \textit{join} of $x$ and $y$ and we write $x \vee y \coloneqq z$.
    \item Suppose there exists $z \in Y$ such that $z \preceq s$ for all $s \in Y$. Then $z$ is called the \textit{bottom element} of $Y$ (by the antisymmetry of $\preceq$, such an element must be unique).
\end{itemize}
The partially ordered set $Y$ is a \textit{meet semilattice} if, for all $x,y \in Y$, the meet $x \wedge y \in Y$ exists.
\end{definition}

Where they are defined, the operations $\wedge$ and $\vee$ are clearly associative, so we will write $x \wedge y \wedge z = x \wedge (y \wedge z) = (x \wedge y) \wedge z$ and $x \vee y \vee z = x \vee (y \vee z) = (x \vee y) \vee z$.

\begin{definition}[Orthogonality]
    Let $Y$ be a meet semilattice with a bottom element $\id$. We say that two elements $x,y \in Y$ are \textit{orthogonal}, denoted $x \perp y$, if $x \wedge y = \id$ and $x \vee y$ exists.
\end{definition}

\begin{remark}
The relation $\perp$ is symmetric and irreflexive on $Y - \{\id\}$.
\end{remark}

\begin{definition}[Median semilattice] \label{def: median semilattice}
A meet semilattice $(Y, \preceq)$ is \textit{median} if the following holds. For any $a,b,c \in Y$, the join $(a \wedge b) \vee (b \wedge c) \vee (a \wedge c)$ exists and, for any $x \in Y$, 
\[ (x \wedge a \wedge b) \vee (x \wedge b \wedge c) \vee (x \wedge a \wedge c) = x \wedge ((a \wedge b) \vee (b \wedge c) \vee (a \wedge c)).\]
\end{definition}
    
The term median is justified by the following result of Sholander.

\begin{theorem}[Sholander, {\cite{Sholander}}] \label{thm: median semilattice}
    Let $(Y, \preceq)$ be a median semilattice and define $m:Y^3 \rightarrow Y$ by $m(a,b,c) \coloneqq (a \wedge b) \vee (b \wedge c) \vee (a \wedge c)$. Then $(Y,m)$ is a median algebra.
\end{theorem}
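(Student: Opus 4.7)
The plan is to verify the three axioms of a median algebra: symmetry, $m(a,a,b)=a$, and the associativity-like identity $m(m(a,b,d),c,d)=m(m(a,c,d),b,d)$. Symmetry of $m$ is immediate from the symmetry of $\wedge$ and $\vee$. For $m(a,a,b)$, the definition gives $(a\wedge a)\vee(a\wedge b)\vee(a\wedge b)=a\vee(a\wedge b)$, and since $a\wedge b\preceq a$ the join is just $a$ (using the basic absorption identity $x\vee y=y$ whenever $x\preceq y$, which follows from the universal property defining $\vee$).

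The substantive work is the last axiom. First I would rewrite both sides in a ``normal form.'' Set $p\coloneqq m(a,b,d)=(a\wedge b)\vee(b\wedge d)\vee(a\wedge d)$, whose existence is guaranteed by Definition~\ref{def: median semilattice}. To expand $m(p,c,d)=(p\wedge c)\vee(c\wedge d)\vee(p\wedge d)$ I need to compute $p\wedge c$ and $p\wedge d$. Applying the defining identity of a median semilattice to the triple $(a,b,d)$ with $x=c$ yields
\[
p\wedge c=(a\wedge b\wedge c)\vee(b\wedge c\wedge d)\vee(a\wedge c\wedge d),
\]
and with $x=d$ it yields $p\wedge d=(a\wedge b\wedge d)\vee(b\wedge d)\vee(a\wedge d)=(b\wedge d)\vee(a\wedge d)$, the last equality because $a\wedge b\wedge d\preceq b\wedge d$ is absorbed into the join. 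Substituting into $m(p,c,d)$, the terms $b\wedge c\wedge d$ and $a\wedge c\wedge d$ are absorbed into $c\wedge d$, leaving
\[
m(m(a,b,d),c,d)=(a\wedge b\wedge c)\vee(a\wedge d)\vee(b\wedge d)\vee(c\wedge d).
\]
The right-hand side $m(m(a,c,d),b,d)$ is obtained from this expression by swapping $b$ and $c$, but the four-term join is symmetric in $b$ and $c$, so the two sides coincide.

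Throughout I must check that every join I write down actually exists. The joins appearing in $p\wedge c$ and $p\wedge d$ exist because they equal the meets $c\wedge p$ and $d\wedge p$ respectively (by the median semilattice axiom used in the reverse direction), and meets always exist in $Y$. Existence of the outer three-term join $(p\wedge c)\vee(c\wedge d)\vee(p\wedge d)$ follows from the median semilattice property applied to the triple $(p,c,d)$. I would also note the elementary absorption lemma: in any meet semilattice, if $x\preceq y$ and both $x\vee z$ and $y\vee z$ exist, then $x\vee z\preceq y\vee z$, and in particular $y\vee x=y$. This is what lets me simplify the four-term joins above.

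The main obstacle is really just bookkeeping: keeping track of which joins one is allowed to form, and correctly applying the distributive-like identity of Definition~\ref{def: median semilattice} to rewrite each nested meet of the form $x\wedge m(\cdot,\cdot,\cdot)$ as a join of triple meets. Once the expansion is done cleanly, the symmetry of the resulting normal form in $b$ and $c$ makes the identity transparent.
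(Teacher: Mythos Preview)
The paper does not prove this theorem; it is stated with attribution to Sholander \cite{Sholander} and used as a black box. Your argument is therefore not being compared against anything in the paper itself.

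That said, your proof is correct. The verification of symmetry and of $m(a,a,b)=a$ is immediate. For the four-variable identity, your strategy of using the defining distributive law of a median semilattice to expand $p\wedge c$ and $p\wedge d$ as joins of triple meets, and then absorbing redundant terms to reach the symmetric normal form
\[
(a\wedge b\wedge c)\vee(a\wedge d)\vee(b\wedge d)\vee(c\wedge d),
\]
is exactly the standard route and works. One point worth making fully explicit: you do not actually need the intermediate pairwise joins such as $(p\wedge c)\vee(c\wedge d)$ to exist separately. What you need is that $m(p,c,d)$, which exists by the median semilattice hypothesis applied to the triple $(p,c,d)$, is the least upper bound of the four-element set $\{a\wedge b\wedge c,\,a\wedge d,\,b\wedge d,\,c\wedge d\}$. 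This follows directly: $m(p,c,d)$ is an upper bound of each of the four (each lies below one of $p\wedge c$, $c\wedge d$, $p\wedge d$), and conversely any upper bound $w$ of the four dominates every triple meet appearing in your expansions of $p\wedge c$ and $p\wedge d$, hence dominates those elements themselves, hence dominates $m(p,c,d)$. With that observation your ``bookkeeping'' concern about existence of joins disappears entirely.
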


\begin{definition} \label{def: monoids}
    Let $(M ,\; \cdot \;, 1)$ be a monoid.
    \begin{itemize}
        \item $M$ is \textit{cancellative} if, for all $x,y,z \in M$, we have that $x \cdot y = x \cdot z$ implies that $y = z$ and $x \cdot y = z \cdot y$ implies that $x = z$. 
        \item Let $*: M \rightarrow M$ be an involution. Then $M$ is a \textit{monoid with involution} $*$ if $(x \cdot y)^* = y^* \cdot x^*$ for all $x,y \in M$, where we denote by $m^*$ the $*$-image of $m \in M$.
    \end{itemize}
\end{definition}

\begin{remark}
    If $M$ is a cancellative monoid with involution $*$ then $1^* = 1$. Indeed, for any $m \in M$, we have $1 \cdot m^* = m^* = (m \cdot 1)^* = 1^* \cdot m^*$ so $1=1^*$ by right cancellation .
\end{remark}
\begin{definition}[Ore] \label{defn: ore}
An \textit{ore} is a tuple $(Y, \preceq, \id, \con, -1)$ such that the axioms (O1) -- (O6) below hold, where $Y$ is a set, $\preceq$ is a partial order on $Y$, $\id \in Y$, $\con$ is a binary operation on $Y$ and $-1:Y \rightarrow Y$ is a map.

\begin{enumerate} 
\item[(O1)] $(Y,\preceq)$ is a meet semilattice with bottom element $\id$.
\item[(O2)] $(Y,\con)$ is a cancellative monoid with involution $-1$ and identity $\id$.
\item[(O3)] For all $x,y \in Y$ we have that $x \preceq y$ if and only if there exists $z \in Y$ such that $y = x \con z$.
\end{enumerate}
Let $y^{-1}$ denote the $-1$-image of each $y \in Y$.
An element $f \in Y$ is called \textit{inadmissible} if there exist $x,y,z \in Y$ such that $y \neq \id$ and $f = x \con y \con y^{-1} \con z$. Otherwise $f$ is \textit{admissible}. 

Let $G \subseteq Y$ be the set of admissible elements.  It follows from (O3) that if $y \in G$ and $x \preceq y$ then $x \in G$. Therefore $(G,\preceq)$ is a meet semilattice.
\begin{enumerate}
\item[(O4)] $(G,\preceq)$ is a median semilattice.
\item[(O5)] Suppose that $x,y \in Y$ are orthogonal, $x$ is admissible and there exists $y' \in G$ is such that $x \vee y = x \con y'$. Then $y$ is admissible.
\item[(O6)] Let $x,y \in G$ be orthogonal and, using (O3), let $x', y' \in Y$ be such that $x \vee y = x \con y' = y \con x'$. Then $x',y'$ are admissible, $x^{-1} \perp y'$ and $y^{-1} \perp x'$, and
\[
    x^{-1} \vee y' = x^{-1} \con y = y' \con (x')^{-1} = (y^{-1} \vee x')^{-1}.
\]
\end{enumerate}
\end{definition}

\begin{example} \label{ex: free group from an ore}
    Let $A$ be a non-empty set and $A^{-1}$ be the set of formal inverses of elements in $A$. Let $\mathcal{W} = \mathcal{W}(A \cup A^{-1})$ be the set of words in $A \cup A^{-1}$. Given $w,w' \in \mathcal{W}$, let $w \con w'$ be the concatenation of $w$ and $w'$. We say that $w \preceq w'$ if and only if $w' = w \con v$ for some $v \in \mathcal{W}$. If $w = a_1^{\varepsilon_1} \con \dots \con a_n^{\varepsilon_n}$, where $a_i \in A$ and $\varepsilon_i \in \{1,-1\}$ for each $i$, then $w^{-1} \coloneqq a_n^{-\varepsilon_n} \con \dots \con a_1^{-\varepsilon_1}$. Let $\id \in \mathcal{W}$ be the empty word. Then $(\mathcal{W}, \preceq, \id, \con, -1)$ is an ore.
\end{example}

\begin{lemma} \label{lem: existence of joins}
    Let $(Y, \preceq, \id, \con, -1)$ satisfy (O1), (O2) and (O3). Let $x,y \in Y$ and suppose  there exists $z \in Y$ such that $x \preceq z$ and $y \preceq z$. Then there exists $x \vee y \in Y$.
\end{lemma}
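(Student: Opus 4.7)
The plan is to use the involution to convert the join problem into a meet problem. The point is that the join $x \vee y$ should arise from ``removing the greatest common right-divisor of $a$ and $b$ from $z$'', where $z = x \con a = y \con b$ by (O3). Since we only have meets in the left-divisibility order $\preceq$, and right-divisibility corresponds to left-divisibility of inverses, the greatest common right-divisor of $a, b$ is naturally given by $r := (a^{-1} \wedge b^{-1})^{-1}$, using (O1) to take the meet.

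First I would write $z = x \con a = y \con b$ via (O3) and form $r$ as above. From $a^{-1} \wedge b^{-1} \preceq a^{-1}, b^{-1}$, (O3) produces $a_0', b_0' \in Y$ with $a^{-1} = (a^{-1} \wedge b^{-1}) \con a_0'$ and $b^{-1} = (a^{-1} \wedge b^{-1}) \con b_0'$; applying the involution from (O2) yields $a = a_0 \con r$ and $b = b_0 \con r$ with $a_0 := (a_0')^{-1}$ and $b_0 := (b_0')^{-1}$. Then $x \con a_0 \con r = z = y \con b_0 \con r$, so right-cancellation in (O2) yields a well-defined element $v := x \con a_0 = y \con b_0$, which is a common upper bound of $x$ and $y$ by (O3).

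To prove $v = x \vee y$, let $u$ be an arbitrary common upper bound of $x$ and $y$, and set $u' := u \wedge z$, which exists by (O1) and satisfies $x, y \preceq u' \preceq z$ (meets of things above $x, y$ are above $x, y$). Using (O3), write $u' = x \con a' = y \con b'$ and $z = u' \con c$. Then $z = x \con a' \con c = x \con a$, so left-cancellation gives $a = a' \con c$, and similarly $b = b' \con c$. Hence $c$ is a common right-divisor of $a$ and $b$; taking inverses, $c^{-1} \preceq a^{-1}$ and $c^{-1} \preceq b^{-1}$, so $c^{-1} \preceq a^{-1} \wedge b^{-1} = r^{-1}$. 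By (O3), $r^{-1} = c^{-1} \con e$ for some $e \in Y$, and applying the involution gives $r = f \con c$ with $f := e^{-1}$. Then $v \con r = z = u' \con c$ combined with $r = f \con c$ and right-cancellation of $c$ yields $v \con f = u'$, i.e., $v \preceq u'$. Since $u' \preceq u$, we conclude $v \preceq u$, completing the proof that $v = x \vee y$.

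The main subtlety is recognising that the correct quantity to divide out of $z$ is the greatest common right-divisor of $a$ and $b$, rather than their meet $a \wedge b$ (which would be correct only in the commutative setting); once one sees this, the involution axiom in (O2) gives access to it via $(a^{-1} \wedge b^{-1})^{-1}$, and the rest is careful bookkeeping with (O3) and cancellation. A minor point worth noting in the write-up is that the argument shows $v$ is independent of the choice of common upper bound $z$, which is essential for minimality.
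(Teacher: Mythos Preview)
Your proof is correct and follows essentially the same approach as the paper: both construct the candidate join by removing the greatest common right-divisor $(a^{-1}\wedge b^{-1})^{-1}$ from $z$, using the involution to access right-divisibility via the meet semilattice structure. The only cosmetic difference is in the minimality step: the paper intersects an arbitrary upper bound with the candidate $v$ and shows the resulting ``defect'' element is $\id$, while you intersect with the original $z$ and directly exhibit $v$ as a prefix of $u'$; these are equivalent bookkeeping choices.
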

\begin{proof}
    By (O3) there exists $a,b \in Y$ such that $z = x \con a = y \con b$. By (O1) there exists $c \coloneqq a^{-1} \wedge b^{-1}$ and by (O3) again there exist $a', b' \in Y$ such that $a^{-1} = c \con {a'}^{-1}$ and $b^{-1} = c \con {b'}^{-1}$. By (O2) this implies that $a = a' \con c^{-1}$ and $b = b' \con c^{-1}$. Note that $x \con a' \con c^{-1} = z = y \con b' \con c^{-1}$ so by (O2) $x \con a' = y \con b'$. Let $w \coloneqq x \con a'$. Then $x \preceq w$ and $y \preceq w$ by (O3). Let $z' \in Y$ be such that $x \preceq z'$ and $y \preceq z'$ and let $w' \coloneqq z' \wedge w$. Then there exists $c' \in Y$ such that $w = w' \con c'$ and, by the above argument there exist $a'',b'' \in Y$ such that $w' = x \con a'' = y \con b''$. We then have
    \begin{align*} 
        z &= x \con a = x \con a' \con c^{-1} = x \con a'' \con c' \con c^{-1} \\
        &= y \con b = y \con b' \con c^{-1} = y \con b'' \con c' \con c^{-1}.
    \end{align*}
    Since $c = a^{-1} \wedge b^{-1}$, this implies that $c' = \id$ and therefore $w' = w$. Therefore $w \preceq z'$ and $w = x \vee y$. 
\end{proof}

\begin{remark} \label{rem: orthogonal things}
    Suppose $(Y, \preceq, \id, \con, -1)$ is as in Lemma~\ref{lem: existence of joins}. Let $x,y,z \in Y$ be such that $x \preceq y$ and $y \perp z$. Then $x \wedge z = \id$ and $x, z \preceq y \vee z$, so $x \perp z$ by Lemma~\ref{lem: existence of joins}.
\end{remark}
For the rest of the section, we assume that $(Y, \preceq, \id, \con, -1)$ is an ore and $G \subseteq Y$ is the set of admissible elements in $Y$.

\begin{definition}[Median]
    Define a map $m: G^3 \rightarrow G$ by $m(f,g,h) \coloneqq (f \wedge g) \vee (g \wedge h) \vee (f \wedge h)$ for all $f,g,h \in G$. The map $m$ is called the \textit{median map} and for all $f,g,h \in G$, the point $m(f,g,h)$ is the \textit{median of} $f,g,h$.
\end{definition}

\begin{remark} \label{rem: G is median}
    The map $m$ is well-defined by (O4). By Theorem~\ref{thm: median semilattice}, $(G,m)$ is a median algebra.
\end{remark}

\begin{definition}[Orthogonal complement, parallel transport] \label{def: parallel transport}
Fix $y \in Y$. The \textit{orthogonal complement} of $y$ is the set $y^\perp \coloneqq \{x \in Y : x \perp y\}$. Define a map $\Phi_y: y^\perp \rightarrow Y$ by $\Phi_y(x) = x'$, where $x' \in Y$ is the unique element such that $y \vee x = y \con x'$ (existence is given by (O3) and uniqueness by (O2)). The element $\Phi_y(x)$ is called the \textit{parallel transport of $x$ along $y$}.
\end{definition}

\begin{remark} \label{rem: admissible factors}
    It follows from (O6) that $\Phi_g(G \cap g^\perp) = G \cap (g^{-1})^\perp$ and $\Phi_{g^{-1}}$ is the inverse map of $\Phi_g$ for all $g \in G$.
\end{remark}

\begin{definition}[Faces]
Given $x,y \in Y$, we say that $x$ is a \textit{face} of $y$ if $x \preceq y$. Given $y = x \con z$ with $x,z \in Y$, let $y \divdot x \coloneqq z$ and let $y \dotdiv z \coloneqq x$.
\end{definition}

\begin{remark} \label{rem: transports preserve the face relation}
    The parallel transport maps preserve the face relation.
    Indeed, suppose that $a,x,y \in Y$ with $x \perp y$ and $a \preceq x$. Then $a \wedge y = \id$ and $a \preceq x \vee y$, so $a \perp y$. Moreover $y \con \Phi_y(a) = y \vee a \preceq y \vee x = y \con \Phi_y(x)$ by Definitions~\ref{def: semilattice} and \ref{def: parallel transport}. By (O2) and (O3), we then have that $\Phi_y(a) \preceq \Phi_y(x)$.
    
    In particular, it follows from Remark~\ref{rem: admissible factors} that $x = \id$ if and only if $\Phi_y(x) = \id$.
\end{remark}

\begin{lemma}[Spanning a cube] \label{spanning a cube}
    Suppose that $a,b,c \in G$ are pairwise orthogonal. Then $\Phi_a(b) \perp \Phi_a(c)$, $\Phi_b(a) \perp \Phi_b(c)$, $\Phi_c(a) \perp \Phi_c(b)$ and we have the following equalities:
    \begin{align*}
        \Phi_{\Phi_c(b)}(\Phi_c(a)) &= \Phi_{\Phi_b(c)}(\Phi_b(a)) \\
        \Phi_{\Phi_a(b)}(\Phi_a(c)) &= \Phi_{\Phi_b(a)}(\Phi_b(c)) \\
        \Phi_{\Phi_c(a)}(\Phi_c(b)) &= \Phi_{\Phi_a(c)}(\Phi_a(b)).
    \end{align*}
\end{lemma}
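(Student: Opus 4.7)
The strategy is to show that $E \coloneqq a \vee b \vee c$ exists in $Y$, factor $E$ in multiple ways via (O6), and read off both the orthogonalities and the three equations via left-cancellation (O2). The main input is distributivity of meet over join in the median semilattice $G$ (a standard consequence of (O4) together with (O1)--(O3)): whenever $u \vee v$ exists, $w \wedge (u \vee v) = (w \wedge u) \vee (w \wedge v)$. This immediately gives $(a \vee b) \wedge c = (a \wedge c) \vee (b \wedge c) = \id$, and the median-semilattice structure further implies that pairwise orthogonal elements of $G$ have a common upper bound, so $E$ exists in $G$ and the pairs $(a \vee b, c)$, $(a \vee c, b)$, $(b \vee c, a)$ are orthogonal.

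Setting $b' \coloneqq \Phi_a(b)$ and $c'' \coloneqq \Phi_a(c)$, any $w \preceq b' \wedge c''$ satisfies $a \con w \preceq (a \vee b) \wedge (a \vee c) = a \vee ((a \vee b) \wedge c) = a$ by two applications of distributivity, so $w = \id$ by left-cancellation (O2). The element $E \divdot a$, defined by $E = a \con (E \divdot a)$ via (O3), dominates both $b'$ and $c''$, so $b' \vee c''$ exists in $Y$ by Lemma~\ref{lem: existence of joins}, giving $b' \perp c''$; the other two orthogonality statements are obtained by permuting $a, b, c$. With these in hand, (O6) provides two factorisations of $E$. Using $a \perp b$ followed by $(a \vee b) \perp c$:
\[ E = (a \vee b) \con \Phi_{a \vee b}(c) = a \con \Phi_a(b) \con \Phi_{a \vee b}(c). \]
Identifying $E \divdot a = b' \vee c''$ as the least upper bound of $b'$ and $c''$, and applying (O6) to $b' \perp c''$:
\[ E = a \con (E \divdot a) = a \con \Phi_a(b) \con \Phi_{\Phi_a(b)}(\Phi_a(c)). \]
Left-cancellation (O2) yields $\Phi_{a \vee b}(c) = \Phi_{\Phi_a(b)}(\Phi_a(c))$; exchanging the roles of $a$ and $b$ so that $a \vee b = b \con \Phi_b(a)$ produces $\Phi_{a \vee b}(c) = \Phi_{\Phi_b(a)}(\Phi_b(c))$, and chaining these two identities gives the second equation of the lemma. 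The remaining two equations follow by cyclically permuting $(a, b, c)$.

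The principal obstacle is the first step, namely existence of $E$ together with distributivity of meet over join in $G$: both are standard facts about median semilattices implicit in (O4), but they must be established without circular use of the orthogonalities we are trying to prove. Once these are in hand, the rest of the argument is a direct bookkeeping exercise with (O6) and (O2).
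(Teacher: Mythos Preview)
Your outline is correct and follows the same overall strategy as the paper: produce $E = a \vee b \vee c$, factor it through each pair, and cancel. The paper writes $E = m(a\vee b,\,a\vee c,\,b\vee c)$ and then identifies the tails $x,y,z$ with the iterated transports, which is exactly your ``factor $E$ via (O6) and cancel'' step; your intermediate identity $\Phi_{a\vee b}(c)=\Phi_{\Phi_a(b)}(\Phi_a(c))$ is implicit in the paper's computation $\Phi_a(b)\con x = \Phi_a(b)\vee\Phi_a(c)$.

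The one substantive difference is how $\Phi_a(b)\wedge\Phi_a(c)=\id$ is obtained. You deduce it from distributivity, via $(a\vee b)\wedge(a\vee c)=a$; but for the principal-ideal argument to apply one needs $a\vee b,\,a\vee c$ to lie in the median semilattice $G$, which is precisely the delicate point you flag at the end. The paper sidesteps this: by (O6) one has $a^{-1}\perp\Phi_a(b)$, $a^{-1}\perp\Phi_a(c)$ and $\Phi_{a^{-1}}$ inverts $\Phi_a$, so any nontrivial common face of $\Phi_a(b)$ and $\Phi_a(c)$ transports along $a^{-1}$ (using that transports preserve faces) to a nontrivial common face of $b$ and $c$. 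That argument needs only (O6) and the earlier remarks, not distributivity or admissibility of $a\vee b$. For the existence of $E$ itself, both arguments rest on the same median-semilattice input (pairwise upper bounds yield a triple upper bound), and both leave the admissibility of the pairwise joins implicit.
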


\begin{proof}
    By (O6) we have that $a^{-1} \perp \Phi_a(b)$, $a^{-1} \perp \Phi_a(c)$ and $b = \Phi_{a^{-1}}(\Phi_a(b))$, $c = \Phi_{a^{-1}}(\Phi_a(c))$. Therefore, using Remark~\ref{rem: transports preserve the face relation}, if $\Phi_a(b) \wedge \Phi_a(c) \neq \id$ then $b \wedge c \neq \id$, which is a contradiction. Thus $\Phi_a(b) \wedge \Phi_a(c) = \id$ and, by a similar argument, $\Phi_b(a) \wedge \Phi_b(c) = \Phi_c(a) \wedge \Phi_c(b) = \id$.

    Let $m \coloneqq m(a \vee b, a \vee c, b \vee c) = a \vee b \vee c \in G$ and let $x,y,z \in G$ be such that $m = (a \vee b) \con x = (a \vee c) \con y = (b \vee c) \con z$. Then $\Phi_a(b) \con x = \Phi_a(c) \con y$, $\Phi_b(a) \con x = \Phi_b(c) \con z$ and $\Phi_c(a) \con y = \Phi_c(b) \con z$. Thus $\Phi_a(b) \perp \Phi_a(c)$, $\Phi_b(a) \perp \Phi_b(c)$ and $\Phi_c(a) \perp \Phi_c(b)$ by Lemma~\ref{lem: existence of joins}. Moreover $a,b,c \preceq a \con (\Phi_a(b) \vee \Phi_a(c))$ so 
    \[ a \con \Phi_a(b) \con x = a \con \Phi_a(c) \con y = m \preceq a \con (\Phi_a(b) \vee \Phi_a(c)). \]
    Therefore $\Phi_a(b) \con x = \Phi_a(c) \con y = \Phi_a(b) \vee \Phi_a(c)$, which implies that $\Phi_{\Phi_a(b)}(\Phi_a(c)) = x$ and $\Phi_{\Phi_a(c)}(\Phi_a(b)) = y$. By a similar argument, $\Phi_{\Phi_b(c)}(\Phi_b(a)) = z$, $\Phi_{\Phi_b(a)}(\Phi_b(c)) = x$, $\Phi_{\Phi_c(a)}(\Phi_c(b)) = y$ and $\Phi_{\Phi_c(b)}(\Phi_c(a)) = z$.
\end{proof}

\begin{definition}
    Let $f, g \in G$. We say that $f$ and $g$ \textit{concatenate geodesically} if $f^{-1} \wedge g = \id$.
\end{definition}

\begin{lemma} \label{geodesic concatenation lemma}
    Let $f,g \in G$ and suppose that $f$ and $g$ concatenate geodesically. Then $f \con g \in G$.
\end{lemma}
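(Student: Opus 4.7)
The plan is to prove the contrapositive: assume $f \con g \notin G$ and deduce $f^{-1} \wedge g \neq \id$. Fix an inadmissibility witness $f \con g = x \con y \con y^{-1} \con z$ with $y \neq \id$, set $A := x \con y$, and consider the meet $m := f \wedge A$ in $Y$, which exists by (O1); by Lemma~\ref{lem: existence of joins}, the join $f \vee A$ also exists since both $f$ and $A$ lie below $f \con g$. Using (O3) I would write $f = m \con f_1$ and $A = m \con s$, and cancelling $m$ on the left of $m \con f_1 \con g = m \con s \con y^{-1} \con z$ gives the fundamental equation $f_1 \con g = s \con y^{-1} \con z$. By the definition of the meet, $f_1 \wedge s = \id$, and since $f_1$ and $s$ share the upper bound $f_1 \con g$, they are orthogonal in $Y$.

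Next I would lift this orthogonality to $G$ via the ore axioms. Admissibility of $f$ forces $m, f_1 \in G$. Writing $f_1 \vee s = f_1 \con s_1 = s \con f_1'$ via (O3), the factor $s_1$ lies below $g \in G$, whence $s_1 \in G$; axiom (O5) then yields $s \in G$, and (O6) applied to the orthogonal pair $(f_1, s) \in G$ produces $s_1, f_1' \in G$, $f_1^{-1} \perp s_1$, $s^{-1} \perp f_1'$, and the crucial identities
\[
f_1^{-1} \vee s_1 \;=\; f_1^{-1} \con s \;=\; s_1 \con (f_1')^{-1}, \qquad s^{-1} \vee f_1' \;=\; s^{-1} \con f_1 \;=\; f_1' \con s_1^{-1}.
\]

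The remainder is a case analysis on the comparability of $f$ and $A$. In the case $A \preceq f$ (i.e.\ $s = \id$), the fundamental equation becomes $f_1 \con g = y^{-1} \con z$, and I would first show $f_1 \wedge y^{-1} = \id$: otherwise, letting $w := f_1 \wedge y^{-1} \neq \id$ and using (O3), one rewrites $f$ so that the forbidden subword $w^{-1} \con w$ appears, contradicting $f \in G$. Hence $f_1 \perp y^{-1}$, and applying (O5)--(O6) to this pair yields $v := (f_1 \vee y^{-1}) \divdot f_1 \in G \setminus \{\id\}$ with $v \preceq f_1^{-1} \con y^{-1} \preceq f^{-1}$ and $v \preceq g$, hence $\id \neq v \preceq f^{-1} \wedge g$. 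The case $f \preceq A$ is handled by a parallel sub-analysis on the relative position of $x$ and $f$ inside $A$: each sub-case either contradicts the admissibility of $f$ or of $g$, or produces a non-identity $f_2^{-1} \preceq f^{-1} \wedge g$ via the analogous (O6)-identity.

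The hard part is the incomparable case $f_1, s \neq \id$, where the forbidden pattern $y \con y^{-1}$ straddles the cut between $f$ and $g$. Here I plan to use the (O6)-identity $s_1 \con (f_1')^{-1} = f_1^{-1} \con s$ to reduce to the sub-equation $f_1' \con g' = y^{-1} \con z$ (with $g = s_1 \con g'$ and $f_1', g' \in G$), and iterate the preceding argument on the pair $(f_1', g')$; any witness produced for the sub-problem can then be transported back to a common lower bound of $f^{-1}$ and $g$ through the join identity above. The step I would spend the most care on is verifying that this descent is well-founded: one must identify a natural complexity measure (attached, for instance, to the straddling element $y$ or to the admissible factors at each stage) that strictly decreases, so that after finitely many reductions we fall into one of the comparable cases already handled.
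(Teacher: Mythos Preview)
Your treatment of the case $A \preceq f$ is correct and essentially matches the spirit of the paper's argument at that stage. The genuine gap is in the incomparable case, and it is exactly the one you flag yourself: the well-foundedness of the descent. In a general ore there is no length function, no descending chain condition on prefixes, and no Noetherian hypothesis anywhere in (O1)--(O6); so there is simply no complexity measure available to make your iteration terminate. Worse, the iteration as you describe it does not obviously make progress: passing from $(f_1,g)$ with $f_1 \con g = s \con y^{-1} \con z$ to $(f_1',g')$ with $f_1' \con g' = y^{-1} \con z$ leaves the same $y$ in play, and to continue you would need to control $f_1' \wedge y^{-1}$; but unlike in the case $A \preceq f$ (where $f = x \con y \con f_1$ forced $f_1 \wedge y^{-1} = \id$ by admissibility of $f$), there is no ambient admissible element in which $y$ and $f_1'$ sit adjacently, so you cannot run the same trick. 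Your sketch for the case $f \preceq A$ has the same problem once $f$ and $x$ are incomparable below $A$.

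The paper avoids iteration entirely. First it makes the WLOG reduction $a \wedge f = \id = c^{-1} \wedge g^{-1}$ (by stripping the common prefix of $a$ and $f$, and symmetrically on the other side), which already collapses most of your case analysis. After that reduction it runs a single, non-recursive argument: it sets up an orthogonal triple in $G$ and applies the ``spanning a cube'' Lemma~\ref{spanning a cube} to that triple, which produces equalities among iterated parallel transports that force the offending factor $b$ to be $\id$. This cube lemma is the missing ingredient in your approach --- it is what closes the loop in one step rather than by descent --- and I do not see how to finish the incomparable case without it or an equivalent.
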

\begin{proof}
    Let $a,b,c \in Y$ be such that $f \con g = a \con b \con b^{-1} \con c$. Using the fact that admissibility is preserved by passing to faces, we can assume without loss of generality that $a \wedge f = c^{-1} \wedge g^{-1} = \id$. Note that $a, f \preceq f \con g$ and $c^{-1}, g^{-1} \preceq g^{-1} \con f^{-1}$ so $a \perp f$ and $c^{-1} \perp g^{-1}$ by Lemma~\ref{lem: existence of joins}. Moreover, $f \vee a \preceq f \con g$ so $\Phi_{f}(a) \preceq g$. Therefore $\Phi_f(a)$ is admissible, which implies by (O5) that $a$ is admissible. 
    Let $f_1 \coloneqq f \wedge (a \con b), f_2 \coloneqq f \divdot f_1$, $g_1 \coloneqq \Phi_f(a), g_2 \coloneqq g \divdot g_1$ and $x \coloneqq \Phi_{f_1}(a)$ (this exists by Remark~\ref{rem: orthogonal things}). By (O6) we have $x \in G$. Note that $f_1 \con x \preceq a \con b$ so by definition $f_2 \wedge x = \id$ and $f_2, x \preceq f_2 \con g$, so $f_2 \perp x$. By (O6) again, $\Phi_x(f_2) \in G$ and $x^{-1} \perp \Phi_x(f_2)$.

    Since $(Y, \con)$ is cancellative, $\Phi_a(f_1) \preceq b$. Let $b_1 \coloneqq \Phi_a(f_1), b_2 \coloneqq b \divdot b_1$. Then $b_2 \perp \Phi_x(f_2)$ and $\Phi_{\Phi_x(f_2)}(b_2) \preceq g_2$. Let $y \coloneqq x \con b_2$ and note that $y \perp f_2$. Let $z \coloneqq \Phi_{\Phi_x(f_2)}(b_2)$ and $g_2' \coloneqq g_2 \divdot z$.
    Since $y \con b^{-1} \con c = f_2 \con g$, it follows from (O5) that $y$ is admissible and from (O6) that $y^{-1} \perp \Phi_y(f_2)$ so $b_2^{-1} \perp \Phi_y(f_2)$. Now $b^{-1} \vee \Phi_y(f_2) \preceq b^{-1} \con c = \Phi_y(f_2) \con g_2'$ so $z^{-1} \preceq \Phi_{\Phi_y(f_2)}(b^{-1}) \preceq g_2'$ which implies that $g = g_1 \con z \con z^{-1} \con (g_2' \divdot z^{-1})$.
    Since $g \in G$, $z = \id$ so $b_2 = \id$.
    
    Therefore $\Phi_a(f_1) = b$. It follows that $b, b^{-1} \in G$ and $b^{-1} \perp x^{-1}$. The admissibility of $f$ implies that $b^{-1} \wedge \Phi_x(f_2) = \id$ so, since $b^{-1} \con c = \Phi_x(f_2) \con (g \divdot \Phi_{f_2}(x))$, we have $b^{-1} \perp \Phi_x(f_2)$. Applying Lemma ~\ref{spanning a cube} to the triple $b^{-1}, \Phi_x(f_2), x^{-1}$, we find that $f_1^{-1} \perp f_2, g_1^{-1} \perp g_2$ and $\Phi_{f_2}(f_1^{-1}) = \Phi_{g_1^{-1}}(g_2)$. By (O6), $f^{-1} = f_2^{-1} \con f_1^{-1} = \Phi_{f_2}(f_1^{-1}) \con \Phi_{f_1^{-1}}(f_2)^{-1}$ and $g = g_1 \con g_2 = \Phi_{g_1^{-1}}(g_2) \con \Phi_{g_2}(g_1^{-1})^{-1}$ so, since $f$ and $g$ concatenate geodesically, $\Phi_{f_2}(f_1^{-1}) = \id$. This implies that $f_1 = \id$ and therefore $b = \id$. 
\end{proof}

Given $f,g \in G$, observe that $f \dotdiv (f^{-1} \wedge g)^{-1}$ and $g \divdot (f^{-1} \wedge g)$ concatenate geodesically.
We can therefore define an operation $\twist: G \times G \rightarrow G$ by 
\[
    f \twist g \coloneqq  (f \dotdiv (f^{-1} \wedge g)^{-1}) \con (g \divdot (f^{-1} \wedge g)).
\]

We will prove that $\twist$ defines a group operation on $G$. To this end, let us prove some supporting lemmas.

 \begin{lemma} \label{pre-associative}
Let $a,b,c \in G$ and suppose $a$ and $b$ concatenate geodesically. Let $x, c_2, c_3 \in G$ be such that $b^{-1} = (b^{-1} \wedge c) \con x$, $c \wedge (b^{-1} \con a^{-1}) = (c \wedge b^{-1}) \con c_2$ and $c = (c \wedge b^{-1}) \con c_2 \con c_3$.
Then 
\begin{enumerate}
    \item[(i)] $x \perp c_2$;
    \item[(ii)] $\Phi_x(c_2) \preceq a^{-1}$;
    \item[(iii)] $(a \con b) \twist c = (a \dotdiv \Phi_x(c_2)^{-1}) \con \Phi_{c_2}(x)^{-1} \con c_3$.
\end{enumerate}
 \end{lemma}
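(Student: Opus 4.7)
The plan is to unwind the definition of $\twist$ and the notations $\dotdiv, \divdot$ in light of the hypotheses, then assemble the conclusions using left cancellation in $(Y, \con)$ (provided by (O2)) together with the basic identity $x \con \Phi_x(c_2) = c_2 \con \Phi_{c_2}(x)$ coming from commutativity of $\vee$. Throughout I set $m \coloneqq c \wedge b^{-1}$, so the hypotheses read $b^{-1} = m \con x$, $c \wedge (b^{-1} \con a^{-1}) = m \con c_2$ and $c = m \con c_2 \con c_3$.

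For part (i), the key is to show that both $c_2$ and $x$ are faces of $x \con a^{-1}$. The chain $m \con c_2 = c \wedge (b^{-1} \con a^{-1}) \preceq b^{-1} \con a^{-1} = m \con x \con a^{-1}$ combined with left cancellation yields $c_2 \preceq x \con a^{-1}$; trivially $x \preceq x \con a^{-1}$. Hence Lemma~\ref{lem: existence of joins} supplies $x \vee c_2 \in Y$. To see $x \wedge c_2 = \id$, I would note that $m \con c_2 \preceq c$ and $m \con x = b^{-1}$, so $m \con (x \wedge c_2) \preceq (m \con c_2) \wedge (m \con x) \preceq c \wedge b^{-1} = m$; left cancellation then forces $x \wedge c_2 = \id$. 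Therefore $x \perp c_2$.

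For part (ii), since $x, c_2 \preceq x \con a^{-1}$, we have $x \con \Phi_x(c_2) = x \vee c_2 \preceq x \con a^{-1}$ by Definition~\ref{def: parallel transport}, and left cancellation of $x$ gives $\Phi_x(c_2) \preceq a^{-1}$.

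For part (iii), the hypotheses identify $(a \con b)^{-1} \wedge c = (b^{-1} \con a^{-1}) \wedge c = m \con c_2$, so
\[
(a \con b) \twist c = \bigl( (a \con b) \dotdiv (c_2^{-1} \con m^{-1}) \bigr) \con \bigl( c \divdot (m \con c_2) \bigr),
\]
and the second factor is $c_3$ by hypothesis. For the first factor, I would use $b = x^{-1} \con m^{-1}$ (from $b^{-1} = m \con x$ and (O2)) to write $a \con b = a \con x^{-1} \con m^{-1}$; part (ii) lets me factor $a = (a \dotdiv \Phi_x(c_2)^{-1}) \con \Phi_x(c_2)^{-1}$; and inverting the equality $x \con \Phi_x(c_2) = c_2 \con \Phi_{c_2}(x)$ yields $\Phi_x(c_2)^{-1} \con x^{-1} = \Phi_{c_2}(x)^{-1} \con c_2^{-1}$. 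Chaining these identities,
\[
a \con b = (a \dotdiv \Phi_x(c_2)^{-1}) \con \Phi_x(c_2)^{-1} \con x^{-1} \con m^{-1} = (a \dotdiv \Phi_x(c_2)^{-1}) \con \Phi_{c_2}(x)^{-1} \con c_2^{-1} \con m^{-1},
\]
so $(a \con b) \dotdiv (c_2^{-1} \con m^{-1}) = (a \dotdiv \Phi_x(c_2)^{-1}) \con \Phi_{c_2}(x)^{-1}$ and the claimed formula follows. The main obstacle is careful bookkeeping of left versus right factors throughout and cancelling on the correct side at each step; once the transport identity $\Phi_x(c_2)^{-1} \con x^{-1} = \Phi_{c_2}(x)^{-1} \con c_2^{-1}$ is in place, (iii) reduces to a direct substitution.
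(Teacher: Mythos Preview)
Your proof is correct and follows essentially the same approach as the paper's: both arguments hinge on left cancellation in $(Y,\con)$, the existence-of-joins lemma applied to $x, c_2 \preceq x \con a^{-1}$, and the transport identity $x \con \Phi_x(c_2) = c_2 \con \Phi_{c_2}(x) = x \vee c_2$. Your computation for (iii) is arranged slightly differently---you factor $a \con b$ directly so as to peel off $c_2^{-1} \con m^{-1}$ on the right, whereas the paper computes $(b^{-1} \con a^{-1}) \divdot U$ and then inverts---but the two calculations are formally equivalent and use the same ingredients.
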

\begin{proof}
    Let $F \coloneqq x \wedge c_2$. Then $b^{-1} = (b^{-1} \wedge c) \con F \con (x \divdot F)$ and $c = (c \wedge b^{-1}) \con F \con (c_2 \divdot F) \con c_3$. Therefore $(c \wedge b^{-1}) \con F \preceq c \wedge b^{-1}$ which implies that $(c \wedge b^{-1}) \con F = c \wedge b^{-1}$ and then $F = \id$ by left cancellation.
    Let $U \coloneqq c \wedge (b^{-1} \con a^{-1})$. Then
    \begin{align*}
        b^{-1} \con a^{-1} &= (c \wedge b^{-1}) \con x \con a^{-1} \\
        &= (c \wedge b^{-1}) \con c_2 \con ((b^{-1} \con a^{-1}) \divdot U).
    \end{align*}
   Since $(Y,\con)$ is cancellative, this implies that $x \con a^{-1} = c_2 \con ((b^{-1} \con a^{-1}) \divdot U)$, so $x \vee c_2$ exists by Lemma~\ref{lem: existence of joins}. This completes the proof of (i).

    We have that $x \con a^{-1} = (b^{-1} \con a^{-1}) \divdot (c \wedge b^{-1}) = c_2 \con ((b^{-1} \con a^{-1}) \divdot U)$ so there exists $A \in G$ such that 
    \[ 
    (b^{-1} \con a^{-1}) \divdot (c \wedge b^{-1}) = (x \vee c_2) \con A = x \con \Phi_x(c_2) \con A.\]
    Thus $a^{-1} = \Phi_x(c_2) \con A$ by left cancellation, which proves (ii).

    We have that 
    \begin{align*}
        (b^{-1} \con a^{-1}) \divdot U &= (x \con a^{-1}) \divdot c_2 \\
        &= (x \con \Phi_x(c_2) \con A) \divdot c_2 \\
        &= (c_2 \con \Phi_{c_2}(x) \con A) \divdot c_2 \\
        &= \Phi_{c_2}(x) \con A
    \end{align*}
    and $A = a^{-1} \divdot \Phi_x(c_2)$ so $A^{-1} = a \dotdiv \Phi_x(c_2)^{-1}$. Therefore
    \begin{align*}
        (a \con b) \twist c &= ((a \con b) \dotdiv ((a \con b)^{-1} \wedge c)^{-1}) \con (c \divdot ((a \con b)^{-1} \wedge c)) \\
        &= ((b^{-1} \con a^{-1}) \divdot U)^{-1} \con (c \divdot U) \\
        &= (a \dotdiv \Phi_x(c_2)^{-1}) \con \Phi_{c_2}(x)^{-1} \con c_3
    \end{align*}
    which completes the proof of (iii).
\end{proof}
 
\begin{lemma} \label{Ore associativity}
    The operation $\twist$ is associative.
\end{lemma}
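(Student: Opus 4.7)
The strategy is to compute both $(f \twist g) \twist h$ and $f \twist (g \twist h)$ explicitly via Lemma~\ref{pre-associative}, and then to verify that the two resulting expressions agree. A preliminary observation that makes the second computation tractable is the identity
\[
p \twist q \;=\; (q^{-1} \twist p^{-1})^{-1} \qquad \text{for all } p, q \in G,
\]
which follows directly from the definition of $\twist$ together with the relation $(y \dotdiv z)^{-1} = y^{-1} \divdot z^{-1}$, itself an immediate consequence of (O2). Crucially, this identity does not require associativity, so it is available to us from the outset.

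To handle $(f \twist g) \twist h$, I would set $u_1 := f^{-1} \wedge g$ and write $f^{-1} = u_1 \con v_1$, $g = u_1 \con w_1$, so that $f \twist g = v_1^{-1} \con w_1$. The maximality of $u_1$ as a meet, combined with (O3) and the cancellativity in (O2), forces $v_1 \wedge w_1 = \id$; hence $v_1^{-1}$ and $w_1$ concatenate geodesically and Lemma~\ref{pre-associative} applies with $a = v_1^{-1}$, $b = w_1$, $c = h$. This yields the explicit formula
\[
(f \twist g) \twist h \;=\; \bigl(v_1^{-1} \dotdiv \Phi_{x_1}(c_{1,2})^{-1}\bigr) \con \Phi_{c_{1,2}}(x_1)^{-1} \con c_{1,3},
\]
where $x_1, c_{1,2}, c_{1,3}$ are determined by the meets of $h$ with $w_1^{-1}$ and with $w_1^{-1} \con v_1$.

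For $f \twist (g \twist h)$ I would use the mirror identity above to rewrite it as $((h^{-1} \twist g^{-1}) \twist f^{-1})^{-1}$. Writing $h^{-1} \twist g^{-1}$ in its reduced form via $u_2 := h \wedge g^{-1}$ (so that $h^{-1} = u_2 \con w_2$ and $g = u_2 \con v_2$, giving $h^{-1} \twist g^{-1} = w_2^{-1} \con v_2$ with $v_2 \wedge w_2 = \id$), and then applying Lemma~\ref{pre-associative} a second time, with the roles shifted accordingly and $c = f^{-1}$, produces, after inversion, a second explicit expression for $f \twist (g \twist h)$ in terms of parallel transports of pieces of $f, g, h$.

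The remaining task, which I expect to be the principal obstacle, is to show that these two explicit expressions coincide. Conceptually, both should equal a single canonical reduction of $f \con g \con h$ in which $g$ is split as $g = g_L \con g_M \con g_R$, with $g_L$ cancelling against a suffix of $f$ and $g_R$ cancelling against a prefix of $h$ up to appropriate parallel transports, and $f, h$ split to match. The actual verification should proceed by matching the decomposition pieces arising on each side, with repeated appeals to (O6) and Lemma~\ref{spanning a cube} in order to commute parallel transports through triples of pairwise orthogonal admissible elements; (O4) will be invoked to guarantee the existence of the intermediate joins, and the cancellativity from (O2) will be used throughout to identify factors. The real difficulty is bookkeeping: identifying precisely which pairwise orthogonal triples arise so that Lemma~\ref{spanning a cube} can be applied, and keeping track of the order in which pieces of $f, g, h$ are transported past one another so that the two sides are matched piece by piece.
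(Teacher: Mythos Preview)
Your overall strategy is sound and close to the paper's, but the route for the right-hand side genuinely differs. The paper does \emph{not} use the mirror identity $p \twist q = (q^{-1} \twist p^{-1})^{-1}$; instead it computes $h_1 \twist (h_2 \twist h_3)$ directly by first writing out $h_2 \twist h_3$ from the decomposition already in hand and then performing a chain of explicit $\twist$-reductions, simplifying at each step via (O6). Your mirror trick is a legitimate alternative and reduces the second computation to a second instance of Lemma~\ref{pre-associative}, which is conceptually pleasant; the cost is that the pieces produced on the two sides are indexed by different meets (those of $h$ with tails of $g^{-1}$ on one side, those of $f^{-1}$ with tails of $g$ on the other), so the matching step does not become shorter.

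One small slip: with $u_2 := h \wedge g^{-1}$ you have $u_2 \preceq h$ and $u_2 \preceq g^{-1}$, so the correct splittings are $h = u_2 \con w_2$ and $g^{-1} = u_2 \con v_2$ (not $h^{-1} = u_2 \con w_2$ and $g = u_2 \con v_2$); this then gives $h^{-1} \twist g^{-1} = w_2^{-1} \con v_2$ as you want.

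On the matching step, where you correctly locate the difficulty: the paper's decisive move is to introduce a single auxiliary element $E$ defined by $h_2^{-1} \wedge h_3 = D \con E$ (with $D = h_3 \wedge \widetilde{h}_2^{-1}$), show via two short claims that the triple $A, E, W$ is pairwise orthogonal, and then apply Lemma~\ref{spanning a cube} to \emph{that} specific triple. All the parallel-transport identities needed to collapse the right-hand side to $h_1' \con h_2' \con h_3'$ fall out of that one cube. If you pursue the mirror route, you will need to identify the analogous orthogonal triple on your side and check it maps to the same cube; without naming that triple explicitly, the ``bookkeeping'' you anticipate remains the entire content of the proof.
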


\begin{figure}
    \centering
    \scalebox{1.3}[1.2]{
    \begin{tikzpicture}
        \draw[Thistle!60] (1,0.2722) -- (2.97,0.9176);
        \draw[fill=Thistle!23,Thistle!23] (1,0.2222) -- (3,0) -- (6,1) -- (4.00029,1.2051) -- (1,0.2222);
        \draw[fill=YellowGreen!30,YellowGreen!30] (1,1.9) -- (3,1.7) -- (6,2.66667) -- (4.00029,2.85041);
        \draw[thick,YellowGreen!130,-stealth] (3.08,2.55905) -- (4.00029,2.85041) -- (5.3,3.8);
        \draw[YellowGreen!80,-stealth] (3.08,2.60905) -- (3.99,2.8975) -- (5.27,3.83);
        \draw[YellowGreen!80] (1.03,1.95) -- (2.945,2.56613);
        \draw[fill=MidnightBlue!23,MidnightBlue!23] (3,0) -- (3,1.7) -- (6,2.66667) -- (6,1);
        \draw[fill=Plum!25,Plum!25] (3,0) -- (6,1) -- (4.00029,1.2051) -- (3,0.8774);
        \draw[thick,Plum!80] (3.08,0.9036) -- (4.00029,1.2051) -- (6,1);
        \draw[thick,Thistle] (1,0.2222) -- (2.97,0.8676);
        \draw[thick,YellowGreen!130] (2.975,4) -- (2.975,1.7);
        \draw[Gray!140,-stealth] (1,1.9) -- (1,0.2222);
        \node[anchor=west,color=MidnightBlue!80] at (3.02,0.66) {$W$};
        \node[anchor=east,color=Gray!140] at (1,1.1) {$\Phi_A(W)$};
        \draw[YellowGreen!80,-stealth] (2.97,1.64995) -- (1.03,1.85);
        \node[anchor=north,color=YellowGreen!80] at (1.9,1.75) {$A$};
        \node[anchor=west,color=MidnightBlue!60] at (3,3.1) {$D$};
        \draw[thick,YellowGreen!130] (4.00029,2.85041) -- (5.975,2.660) -- (3,1.7) -- (1.03,1.9) -- (2.945,2.51613);
        \draw[thick,MidnightBlue,-stealth] (7.5,1.025) -- (6,1.025) -- (3,0.025) -- (3,4);
        \draw[MidnightBlue!60,-stealth] (3.05,4) -- (3.05,1.755);
        \draw[MidnightBlue!60,-stealth] (3.05,1.625) -- (3.05,0.075);
        \node[color=Gray!140,anchor=east] at (4.05,2.3) {$Y$};
        \draw[thick,MidnightBlue] (6,1) -- (6,2.64167) -- (3,1.675);
        \draw[MidnightBlue!60,-stealth] (3.045,1.625) -- (5.97,2.5675);
        \node[anchor=south,color=MidnightBlue] at (2.99,4) {$h_2$};
        \draw[thick,Thistle,-stealth] (-1,0.2222) -- (1,0.2222) -- (3,0) -- (6,1) -- (7.515,1);
        \node[anchor=west,color=Thistle] at (7.5,1) {$h_1$};
        \node[anchor=south,color=YellowGreen!130] at (5.45,3.75) {$h_3$};
        \draw[color=Thistle!60,-stealth] (7.5,0.95) -- (6,0.95) -- (3.03,-0.049);
        \node[anchor=north,color=Thistle!60] at (4.9,0.625) {$B$};
        \node[color=MidnightBlue!80,anchor=north] at (4.7,2.2) {$E$};
        \draw[color=Thistle!60,-stealth] (-1,0.1722) -- (1,0.1722);
        \node[anchor=north,color=Thistle!60] at (-0.1,0.18) {$h_1'$};
        \draw[Plum!60,-stealth] (3.08,0.9536) -- (4,1.255);
        \draw[Gray!140,-stealth] (4.00029,2.05) -- (4.00029,2.84);
        \draw[Gray!140] (4.00029,1.2051) -- (4.00029,1.915);
        \node[color=YellowGreen!80] at (2.4,2.75) {$h_3'$};
        \node[color=Thistle!60] at (2.5,1.05) {$X$};
    \end{tikzpicture}}
    \caption{Notation for the proof of Lemma~\ref{Ore associativity}. Elements of $G$ are represented by paths, two paths represent the same element if they have the same endpoints and the operation $\con$ is represented by path concatenation.}
    \label{fig: proof of associativity}
\end{figure}

The strategy of the proof of this lemma is inspired by the proof of Lemma~40.141 in \cite{CRHK}.

\begin{proof}
    Fix $h_1,h_2,h_3 \in G$ and let $B \coloneqq h_2 \wedge h_1^{-1}$. Then $h_1 \twist h_2 = \widetilde{h}_1 \con \widetilde{h}_2$, where $h_1 = \widetilde{h}_1 \con B^{-1}$ and $h_2 = B \con \widetilde{h}_2$. Let $D \coloneqq h_3 \wedge \widetilde{h}_2^{-1}$ and $A \coloneqq (h_3 \wedge  (\widetilde{h}_1 \con \widetilde{h}_2)^{-1}) \divdot D$. Then there exist $W, h_3' \in G$ such that $\widetilde{h}_2^{-1} = D \con W$ and $h_3 = D \con A \con h_3'$. By Lemma~\ref{pre-associative}
    \begin{itemize}
        \item[(i)] $W \perp A$,
        \item[(ii)] there exists $h_1' \in G$ such that $\widetilde{h}_1 = h_1' \con \Phi_W(A)^{-1}$ and
        \item[(iii)] $(h_1 \twist h_2) \twist h_3 = h_1' \con h_2' \con h_3'$, where $h_2' \coloneqq \Phi_A(W)^{-1}$.
    \end{itemize}
    Since $D \preceq \tilde{h}_2^{-1} \preceq h_2^{-1}$ and $D \preceq h_3$, we have $D \preceq h_2^{-1} \wedge h_3$. So there exists $E \in G$ such that $h_2^{-1} \wedge h_3 = D \con E$. Then $D \con E \preceq h_3 = D \con A \con h_3'$ so $E \preceq A \con h_3'$. Also $D \con E \preceq h_2^{-1} = \widetilde{h}_2^{-1} \con B^{-1} = D \con W \con B^{-1}$ so $E \preceq W \con B^{-1}$.

    \begin{claim} \label{Claim 1}
        $E \perp W$ and $\Phi_W(E) \preceq B^{-1}$.
    \end{claim}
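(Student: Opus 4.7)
The plan is to extract everything from the defining equations $h_3 \wedge \tilde{h}_2^{-1} = D$, $\tilde{h}_2^{-1} = D \con W$, and $h_3 = D \con A \con h_3'$, using only the two bounds $E \preceq A \con h_3'$ and $E \preceq W \con B^{-1}$ that were already established just before the claim. The proof should split cleanly into three short steps.

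First I would establish the existence of $E \vee W$: since $W \preceq W \con B^{-1}$ trivially and $E \preceq W \con B^{-1}$, both $E$ and $W$ sit below a common element of $Y$, so Lemma~\ref{lem: existence of joins} supplies $E \vee W \in Y$.

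The main point is to verify $E \wedge W = \id$. Rather than attacking this directly, I would prove the stronger identity
\[
(A \con h_3') \wedge W = \id,
\]
from which $E \wedge W \preceq (A \con h_3') \wedge W = \id$ follows at once because $E \preceq A \con h_3'$. To prove the stronger identity, suppose $X' \preceq (A \con h_3') \wedge W$ and set $X \coloneqq D \con X'$. Then $X \preceq D \con A \con h_3' = h_3$ and $X \preceq D \con W = \tilde{h}_2^{-1}$, so
\[
X \preceq h_3 \wedge \tilde{h}_2^{-1} = D.
\]
Combined with $D \preceq X$, this forces $X = D$, and left cancellation in the monoid $(Y, \con)$ gives $X' = \id$. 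Hence $E \perp W$.

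Finally, once orthogonality is in hand, $E \vee W = W \con \Phi_W(E)$ by definition of parallel transport. Combining this with the inclusion $E \vee W \preceq W \con B^{-1}$ (both sides have $W$ and $B^{-1}$ above them, so this follows from the upper bound used in step one), another application of left cancellation yields $\Phi_W(E) \preceq B^{-1}$, completing the claim. No step looks genuinely difficult; the only decision to make is spotting that $\tilde{h}_2^{-1} \wedge h_3 = D$ (rather than $h_2^{-1} \wedge h_3 = D \con E$) is the right meet to exploit, so that the auxiliary element $X = D \con X'$ can be squeezed between $D$ and itself.
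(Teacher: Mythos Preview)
Your proof is correct and follows essentially the same approach as the paper: both arguments squeeze an element of the form $D \con (\text{something})$ between $D$ and $\widetilde{h}_2^{-1} \wedge h_3 = D$ using the two defining equations $\widetilde{h}_2^{-1} = D \con W$ and $h_3 = D \con A \con h_3'$, then use left cancellation. The paper works directly with $E \wedge W$ (using $D \con (E \wedge W) \preceq D \con E \preceq h_3$ and $D \con (E \wedge W) \preceq D \con W = \widetilde{h}_2^{-1}$), whereas you prove the marginally stronger fact $(A \con h_3') \wedge W = \id$ first; the remaining steps for the join and for $\Phi_W(E) \preceq B^{-1}$ are identical. Your parenthetical in the last step is slightly garbled---what you mean is simply that $E \preceq W \con B^{-1}$ and $W \preceq W \con B^{-1}$ force $E \vee W \preceq W \con B^{-1}$ by the definition of join.
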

    \begin{proof}
    \renewcommand{\qedsymbol}{$\blacksquare$}
    Note that $D \con (E \wedge W) \preceq D \con W = \widetilde{h}_2^{-1}$ so, since $D \con E = h_2^{-1} \wedge h_3 \preceq h_3$, we have that $D \con (E \wedge W) \preceq \widetilde{h}_2^{-1} \wedge h_3 = D$. Therefore $E \wedge W = \id$. Moreover $h_2^{-1} = D \con W \con B^{-1}$ and $h_2^{-1} \wedge h_3 = D \con E$, so $E, W \preceq W \con B^{-1}$, which implies that $E \perp W$. Finally, $E \vee W = W \con \Phi_W(E) \preceq W \con B^{-1}$ so $\Phi_W(E) \preceq B^{-1}$ as required.
    \end{proof}
    
    \begin{claim} \label{Claim 2}
        $E \perp A$.
    \end{claim}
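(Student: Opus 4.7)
The plan is to establish $E \perp A$ by verifying the two defining conditions separately: $E \wedge A = \id$ and the existence of $E \vee A$. The join is the easy part: from the definition of $E$ we have $D \con E = h_2^{-1} \wedge h_3 \preceq h_3 = D \con A \con h_3'$, so left cancellation (axiom (O2)) gives $E \preceq A \con h_3'$. Since $A \preceq A \con h_3'$ as well, Lemma~\ref{lem: existence of joins} produces $E \vee A$.

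The meet will be killed by transporting along $W$ and then invoking admissibility of $h_1$. Set $F := E \wedge A$. Since $F \preceq A$ and $A \wedge W = \id$ (by (i) of Lemma~\ref{pre-associative}), we get $F \wedge W = \id$. On the other hand, Claim~\ref{Claim 1} gives $E \vee W = W \con \Phi_W(E) \preceq W \con B^{-1}$, so $F \preceq E \preceq W \con B^{-1}$, and Lemma~\ref{lem: existence of joins} yields $F \perp W$. Then Remark~\ref{rem: transports preserve the face relation}, applied to the inclusions $F \preceq E$ and $F \preceq A$ (all orthogonal to $W$), gives $\Phi_W(F) \preceq \Phi_W(E) \preceq B^{-1}$ and $\Phi_W(F) \preceq \Phi_W(A)$. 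Hence $\Phi_W(F) \preceq \Phi_W(A) \wedge B^{-1}$.

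The crux is to prove $\Phi_W(A) \wedge B^{-1} = \id$, which will force $\Phi_W(F) = \id$ and so, by the last sentence of Remark~\ref{rem: transports preserve the face relation}, $F = \id$. From item (ii) of Lemma~\ref{pre-associative} we have $\widetilde{h}_1 = h_1' \con \Phi_W(A)^{-1}$, and by construction $h_1 = \widetilde{h}_1 \con B^{-1} = h_1' \con \Phi_W(A)^{-1} \con B^{-1}$. Suppose for contradiction that $C := \Phi_W(A) \wedge B^{-1} \neq \id$. Using (O3) write $\Phi_W(A) = C \con X$ and $B^{-1} = C \con Y$ for some $X, Y \in Y$; then $\Phi_W(A)^{-1} = X^{-1} \con C^{-1}$ by (O2), and substitution yields
\[
  h_1 = h_1' \con X^{-1} \con C^{-1} \con C \con Y,
\]
which exhibits $h_1$ as $x \con y \con y^{-1} \con z$ with $y = C^{-1} \neq \id$, contradicting admissibility of $h_1$. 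Therefore $\Phi_W(A) \wedge B^{-1} = \id$, and the argument closes.

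The only subtlety I foresee is verifying that the various orthogonality and face relations (notably $F \perp W$ and the preservation under $\Phi_W$) genuinely hold in the right order; but each step is an immediate application of Lemma~\ref{lem: existence of joins} or Remark~\ref{rem: transports preserve the face relation}, so the real content of the claim is the short admissibility argument in the last paragraph.
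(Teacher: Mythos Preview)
Your proof is correct and follows essentially the same strategy as the paper's: transport $F = E \wedge A$ along $W$, bound $\Phi_W(F)$ by both $\Phi_W(A)$ and $B^{-1}$, and then use admissibility of $h_1 = h_1' \con \Phi_W(A)^{-1} \con B^{-1}$ to force it to vanish. The only cosmetic differences are that you prove the slightly stronger intermediate fact $\Phi_W(A) \wedge B^{-1} = \id$ (the paper works directly with $\Phi_W(F)$ in the decomposition of $h_1$), and you establish the join $E \vee A$ at the start rather than at the end.
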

    \begin{proof}
    \renewcommand{\qedsymbol}{$\blacksquare$}
    Let $F \coloneq E \wedge A$. Then, by Remark~\ref{rem: transports preserve the face relation}, $\Phi_W(F) \preceq \Phi_W(A)$ and, by Remark~\ref{rem: transports preserve the face relation} and Claim~\ref{Claim 1}, $\Phi_W(F) \preceq \Phi_W(E) \preceq B^{-1}$. Therefore:
    \[
        B^{-1} = \Phi_W(F) \con (B^{-1} \divdot \Phi_W(F)) \quad \text{and} \quad \Phi_W(A)^{-1} = (\Phi_W(A)^{-1} \dotdiv \Phi_W(F)^{-1}) \con \Phi_W(F)^{-1}.
    \]
    But then
    \begin{align*}
        h_1 &= \widetilde{h}_1 \con B^{-1} \\
        &= h_1' \con (\Phi_W(A)^{-1} \dotdiv \Phi_W(F)^{-1}) \con \Phi_W(F)^{-1} \con \Phi_W(F) \con (B^{-1} \divdot \Phi_W(F)).
    \end{align*}
    Since $h_1$ is admissible this implies that $\Phi_W(F) = \Phi_W(F)^{-1} = \id$ and therefore $F = \id$.
    Since $E \preceq A \con h_3'$, we then have that $E \perp A$.
    \end{proof}
    
In summary:
\[
    \begin{array}{lllll}
        h_1 &= &\widetilde{h}_1 \con B^{-1} &= &h_1' \con \Phi_W(A)^{-1} \con \Phi_W(E) \con (B^{-1} \divdot \Phi_W(E)), 
        \vspace{3pt} \\
        h_2 &= &B \con \widetilde{h}_2 &= &(B \dotdiv \Phi_W(E)^{-1}) \con \Phi_W(E)^{-1} \con W^{-1} \con D^{-1} \\
        && &= &(B \dotdiv \Phi_W(E)^{-1}) \con \Phi_E(W)^{-1} \con E^{-1} \con D^{-1},
        \vspace{3pt} \\
        h_3 &= &D \con A \con h_3' &= &D \con A \con \Phi_A(E) \con (h_3' \divdot \Phi_A(E)) \\
        && &= &D \con E \con \Phi_E(A) \con (h_3' \divdot \Phi_A(E)),
\end{array}
\]
$A \perp W$, $E \perp A$ and $E \perp W$.

Recall that $h_3 \wedge h_2^{-1} = D \con E$. We therefore have:
\begin{align*}
    h_2 \twist h_3 = \;&\big[h_2 \dotdiv (h_3 \wedge h_2^{-1})^{-1}\big] \con \big[h_3 \divdot (h_3 \wedge h_2^{-1})\big] \\
    = \; &\big[(B \dotdiv \Phi_W(E)^{-1}) \con \Phi_E(W)^{-1}\big] \con \big[\Phi_E(A) \con (h_3' \divdot \Phi_A(E))\big]
\end{align*}
Let $X \coloneqq \Phi_{\Phi_W(A)}(\Phi_W(E))$ and $Y \coloneqq \Phi_{\Phi_E(A)}(\Phi_E(W))^{-1}$. 
We then have:
\begin{align*}
    h_1 \twist (h_2 \twist h_3) 
    = \; &\big[ h_1' \con \Phi_W(A)^{-1} \con \Phi_W(E) \con (B^{-1} \divdot \Phi_W(E)) \big] \\
    &\twist \big[(B \dotdiv \Phi_W(E)^{-1}) \con \Phi_E(W)^{-1} \con \Phi_E(A) \con (h_3' \divdot \Phi_A(E))\big] \\
    = \; &\big[ h_1' \con \Phi_W(A)^{-1} \con \Phi_W(E) \big] \twist \big[ \Phi_E(W)^{-1} \con \Phi_E(A) \con (h_3' \divdot \Phi_A(E))\big] \\
    = \; &\big[ h_1' \con \Phi_{\Phi_W(A)}(\Phi_W(E)) \con \Phi_{\Phi_E(W)}(\Phi_E(A))^{-1} \big] \\
    &\twist \big[ \Phi_{\Phi_E(W)}(\Phi_E(A)) \con \Phi_{\Phi_E(A)}(\Phi_E(W))^{-1} \con (h_3' \divdot \Phi_A(E)) \big] \\
    = \; &\big[ h_1' \con X \big] \twist \big[ Y \con (h_3' \divdot \Phi_A(E) \big],
     \end{align*}
    where we use (O6) for the third equality. Let $Z \coloneqq X^{-1} \wedge (Y \con (h_3' \divdot \Phi_A(E)))$. Then $Z \preceq X^{-1}$ and $X^{-1} \perp Y$ by Lemma~\ref{spanning a cube}, so $Z \perp Y$ by Remark~\ref{rem: orthogonal things}. It follows that $\Phi_Y(Z) \preceq h_3' \divdot \Phi_A(E)$. But $\Phi_Y(Z) \preceq \Phi_Y(X^{-1}) = \Phi_A(E)^{-1}$ by Lemma~\ref{spanning a cube}. Therefore, since 
    \[
        h_3' = (\Phi_A(E) \dotdiv \Phi_Y(Z)^{-1}) \con \Phi_Y(Z)^{-1} \con \Phi_Y(Z) \con ((h_3' \divdot \Phi_A(E)) \divdot \Phi_Y(Z)),
    \]
    and $h_3'$ is admissible, we have $\Phi_Y(Z) = \Phi_Y(Z)^{-1} = \id$, which implies that $Z = \id$. Moreover, Lemma~\ref{spanning a cube} implies that $X \con Y = \Phi_A(W)^{-1} \con \Phi_A(E)$ so

    \begin{align*}
    h_1 \twist (h_2 \twist h_3) = &h_1' \twist \big[ X \con Y \con (h_3' \divdot \Phi_A(E)) \big] \\
    = &h_1' \twist \big[\Phi_A(W)^{-1} \con \Phi_A(E) \con (h_3' \divdot \Phi_A(E))\big] \\
    = &h_1' \twist (h_2' \con h_3') \\
    = &h_1' \con h_2' \con h_3' \\
    = &(h_1 \twist h_2) \twist h_3.
    \qedhere
\end{align*}
\end{proof}

\begin{theorem} \label{thm: extracted group}
    $(G,\twist)$ is a group.
\end{theorem}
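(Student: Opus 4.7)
The plan is to verify the remaining group axioms beyond associativity, which is already established by Lemma~\ref{Ore associativity}. The three things to check are that $\twist$ is well-defined as an operation $G \times G \to G$, that $\id$ is a two-sided identity, and that the involution $^{-1}$ provides two-sided inverses. None of these should be hard; the bulk of the work has been absorbed into Lemma~\ref{Ore associativity} and Lemma~\ref{geodesic concatenation lemma}.

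First, for well-definedness: given $f,g \in G$ and writing $h \coloneqq f^{-1} \wedge g$, I would observe that $h \preceq f^{-1}$ gives $f^{-1} = h \con z$ for some $z \in Y$, and applying the involution via (O2) yields $f = z^{-1} \con h^{-1}$, so $h^{-1}$ really is a right face of $f$ and $f \dotdiv h^{-1} = z^{-1}$. The fact that admissibility is preserved under passing to faces (noted in Definition~\ref{defn: ore}) shows $f \dotdiv h^{-1} \in G$ and $g \divdot h \in G$. Geodesic concatenation is the trivial observation that any common face $e$ of $f^{-1} \divdot h$ and $g \divdot h$ would yield $h \con e \preceq f^{-1} \wedge g = h$, forcing $e = \id$. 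Lemma~\ref{geodesic concatenation lemma} then gives $f \twist g \in G$.

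Second, the identity: since $\id^{-1} = \id$ (by the cancellation remark following Definition~\ref{def: monoids}) and $\id$ is the bottom element, one directly computes $\id \twist g = (\id \dotdiv \id) \con (g \divdot \id) = g$ and $f \twist \id = f \con \id = f$.

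Third, inverses: for $f \in G$ I first want to argue that $f^{-1}$ is itself admissible, which follows because any inadmissible factorisation $f^{-1} = x \con y \con y^{-1} \con z$ would, after applying $^{-1}$ using (O2), produce an inadmissible factorisation of $f$. Then $f^{-1} \wedge f^{-1} = f^{-1}$, so both $f \dotdiv f$ and $f^{-1} \divdot f^{-1}$ equal $\id$ by cancellation, giving $f \twist f^{-1} = \id$; the computation of $f^{-1} \twist f$ is symmetric.

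The main subtlety — really the only nontrivial step — is already dispatched by Lemma~\ref{geodesic concatenation lemma} and associativity by Lemma~\ref{Ore associativity}; what remains, as outlined above, is essentially bookkeeping with (O2) and (O3). I expect no genuine obstacle; the theorem is a packaging statement which harvests the work done in the preceding lemmas.
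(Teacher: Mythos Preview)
Your proposal is correct and follows the same approach as the paper: associativity is delegated to Lemma~\ref{Ore associativity}, and the identity and inverse axioms are handled by direct computation using the definition of $\twist$. The paper's proof is even terser than yours---well-definedness of $\twist$ as a map $G \times G \to G$ is dispatched in the sentence immediately preceding the definition of $\twist$ rather than inside the proof, and admissibility of $f^{-1}$ is left implicit---but the substance is identical.
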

\begin{proof}
    By Lemma~\ref{Ore associativity}, $\twist$ is an associative operation. It is clear from the definition that $\id$ is a two-sided identity for $\twist$. For any $g \in G$, $g \twist g^{-1} = (g^{-1} \divdot ((g^{-1})^{-1} \wedge g))^{-1} \con (g \divdot ((g^{-1})^{-1} \wedge g) = \id \con \id = \id$ and similarly $g^{-1} \twist g = \id$ so $g^{-1}$ is a two-sided inverse for $g$.
\end{proof}

\begin{definition} 
We will call $(G,\twist)$ the \textit{group extracted from} $Y$. 
\end{definition}

\begin{example}
    Let $(\mathcal{W}, \preceq, \id, \con, -1)$ be the ore defined in Example~\ref{ex: free group from an ore}. Then the group extracted from $\mathcal{W}$ is the free group $F(A)$.
\end{example}

\begin{remark}
    In general, there is no reason for a retraction $Y \rightarrow G$ which commutes with $\con$ and $-1$ to exist. In certain nice settings however, it is possible to construct such a retraction (see Sections~\ref{sec: reduction}).
\end{remark}

\begin{lemma} \label{lem: G is median preserving}
    For each $a,b \in G$, define:
    \[
        I(a,b) \coloneqq \{c \in G : (c^{-1} \twist a) \wedge (c^{-1} \twist b) = \id \}.
    \]
    Then $I(a,b) = \{c \in G : m(a,b,c) = c\}$.
    It follows that the action of $G$ on itself by left multiplication is median preserving.
\end{lemma}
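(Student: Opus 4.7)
The plan is to prove the set equality $I(a,b) = \{c \in G : m(a,b,c) = c\}$ by a direct computation using an explicit formula for $c^{-1} \twist a$, and then to deduce median preservation by a formal transfer argument using the interval characterization.

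First, I would establish the explicit formula $c^{-1} \twist a = (c \divdot u)^{-1} \con (a \divdot u)$, where $u = c \wedge a$, by unpacking the definition of $\twist$ and noting that the concatenation is admissible by Lemma~\ref{geodesic concatenation lemma}, since $(c \divdot u) \wedge (a \divdot u) = \id$ by maximality of $u$. Setting $p = a \wedge b \wedge c$ and writing $a = p \con a'$, $b = p \con b'$, $c = p \con c'$, this formula yields $c^{-1} \twist a = (c')^{-1} \twist a'$ (and similarly for $b$), while the median formula gives $m(a,b,c) = p \con m(a', b', c')$ after observing that meets and joins respect the common prefix. Both sides of the desired equivalence therefore reduce to the case $a \wedge b \wedge c = \id$.

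With this reduction in place, set $u = a \wedge c$, $v = b \wedge c$, so $u \wedge v = \id$. For the $(\Leftarrow)$ direction, if $m(a,b,c) = c$ then $(a \wedge b) \vee u \vee v = c$ forces $a \wedge b \preceq c$ (hence $a \wedge b = \id$ by the reduction) and $u \vee v = c$, so $u \perp v$. Applying (O6) to give $c = u \con \Phi_u(v) = v \con \Phi_v(u)$, with $u^{-1} \perp \Phi_u(v)$ and $v^{-1} \perp \Phi_v(u)$, lets me verify the meet condition directly from the orthogonality relations together with $a \wedge b = \id$. For the $(\Rightarrow)$ direction, the formulas
\[
    c^{-1} \twist a = (c \divdot u)^{-1} \con (a \divdot u), \qquad c^{-1} \twist b = (c \divdot v)^{-1} \con (b \divdot v)
\]
reveal that $\xi^{-1}$ appears as a common initial segment of both elements when $\xi := c \divdot (u \vee v) \neq \id$, so the hypothesis forces $u \vee v = c$ and consequently $u \perp v$. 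The remaining step, showing $a \wedge b = \id$, proceeds by assuming $\eta := a \wedge b \neq \id$ and decomposing $\eta$ via $\eta \wedge u$ and $\eta \wedge v$; using axiom (O4) together with the parallel transport identities from Lemma~\ref{spanning a cube} and (O6), this overhang produces a nontrivial element below both $c^{-1} \twist a$ and $c^{-1} \twist b$, contradicting the hypothesis.

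Finally, median preservation follows formally. Associativity of $\twist$ (Lemma~\ref{Ore associativity}) gives $(g \twist c)^{-1} \twist (g \twist a) = c^{-1} \twist a$, so directly from the definition of $I$ we have $g \twist I(a,b) = I(g \twist a, g \twist b)$. Since in any median algebra the element $m(x,y,z)$ is the unique point of $I(x,y) \cap I(y,z) \cap I(x,z)$, applying the bijection $L_g \colon h \mapsto g \twist h$ to this triple intersection yields $g \twist m(a,b,c) = m(g \twist a, g \twist b, g \twist c)$. The hardest part will be the derivation of $a \wedge b = \id$ in the forward direction of the equivalence: translating a hypothetical nontrivial overlap of $a$ and $b$ into a forbidden common prefix requires careful bookkeeping with the parallel transport maps $\Phi$ and the full strength of the median semilattice axiom (O4).
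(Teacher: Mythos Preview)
Your proposal is correct and follows essentially the same route as the paper. Both arguments reduce the two inclusions to the situation where $u = a\wedge c$, $v = b\wedge c$, and $a\wedge b$ are pairwise orthogonal, then use the common-prefix $\xi^{-1}$ to force $u\vee v = c$, and finally apply Lemma~\ref{spanning a cube} to the triple $(a\wedge b,\,u,\,v)$ to force $a\wedge b=\id$; the deduction of median preservation via $g\twist I(a,b)=I(g\twist a,g\twist b)$ is identical. The only organisational difference is that you factor out $p=a\wedge b\wedge c$ at the outset, whereas the paper carries the general case and only at the end strips off $a\wedge b$; both amount to the same computation.

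One wording point: your phrase ``decomposing $\eta$ via $\eta\wedge u$ and $\eta\wedge v$'' is misleading, since after your reduction these meets are already $\id$. What you actually need (and what the paper does) is to observe that $\eta,u,v$ are pairwise \emph{orthogonal}---$\eta\wedge u=\eta\wedge v=\id$ from the reduction, and the joins exist because $\eta,u\preceq a$ and $\eta,v\preceq b$---so that Lemma~\ref{spanning a cube} applies to produce the element $\Phi_{\Phi_u(v)}(\Phi_u(\eta))=\Phi_{\Phi_v(u)}(\Phi_v(\eta))$, which one then checks (via (O6)) is a common prefix of $c^{-1}\twist a$ and $c^{-1}\twist b$. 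This is exactly the ``hard part'' you flagged, and your plan for it is the right one once the phrasing is corrected.
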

\begin{proof}
    If the first part of the lemma holds, then $m(a,b,c)$ is the unique element in the intersection $I(a,b) \cap I(a,c) \cap I(b,c)$ for any $a,b,c \in G$ (see \cite[Lemma~3.2.1]{Bowditch}). It is immediate from the definition of $I(a,b)$ that $g \twist I(a,b) = I(g \twist a, g \twist b)$ for all $a,b,g \in G$, so this implies that $g \twist m(a,b,c) = m(g \twist a, g \twist b, g \twist c)$ for all $a,b,c,g \in G$.

    We now prove the first part of the lemma. Let $a,b,c \in G$ be such that $c \in I(a,b)$. Suppose that $a \wedge b = \id$. Then $a \wedge b, b \wedge c, a \wedge c \preceq c$, so $m \coloneqq m(a,b,c) \preceq c$. Let $D \in G$ be such that $c = m \con D$. Observe that $(a \wedge c) \perp (b \wedge c)$, so $m = (a \wedge c) \vee (b \wedge c) = (a \wedge c) \con \Phi_{a \wedge c}(b \wedge c) = (b \wedge c) \con \Phi_{b \wedge c}(a \wedge c)$. We then have:
    \begin{align*}
        c^{-1} \twist a 
        &= D^{-1} \con \Phi_{a \wedge c}(b \wedge c)^{-1} \con (a \divdot (a \wedge c)) \\
        c^{-1} \twist b
        &= D^{-1} \con \Phi_{b \wedge c}(a \wedge c)^{-1} \con (b \divdot (b \wedge c)).
    \end{align*}
    Therefore $D^{-1} \preceq (c^{-1} \twist a) \wedge (c^{-1} \twist b)$ which implies that $D = \id$ and $m = c$.

    In the general case, let $D,E,F \in G$ be such that 
    \[
        a \wedge b = (a \wedge b \wedge c) \con D, \quad b \wedge c = (a \wedge b \wedge c) \con E, \quad a \wedge c = (a \wedge b \wedge c) \con F.
    \]
    Observe that $D,E$ and $F$ are pairwise orthogonal, so by Lemma~\ref{spanning a cube}, $\Phi_E(D) \perp \Phi_E(F)$ and $\Phi_F(D) \perp \Phi_F(E)$. Let $a',a'',b',b'', c',c'' \in G$ be such that 
    \[
        a = (a \wedge c) \con a' = (a \wedge b) \con a'', \quad b = (b \wedge c) \con b' = (a \wedge b) \con b'', \quad c = (a \wedge c) \con c' = (b \wedge c) \con c''.
    \]
    Then $E \con c'' = F \con c' = (E \vee F) \con c'''$ for some $c''' \in G$ such that ${c'''}^{-1} \preceq {c'}^{-1}$ and ${c'''}^{-1} \preceq {c''}^{-1}$. But then
    \[
        {c'''}^{-1} \preceq {c'}^{-1} \con a' = c^{-1} \twist a \quad \text{and} \quad {c'''}^{-1} \preceq {c''}^{-1} \con b' = c^{-1} \twist b,
    \]
    so the fact that $c \in I(a,b)$ implies that $c''' = \id$. Therefore $c' = \Phi_F(E),$ and $c'' = \Phi_E(F)$. Moreover, 
    \[
        b = (a \wedge b \wedge c) \con E \con b' = (a \wedge b \wedge c) \con D \con b''
    \]
    so $E \con b' = D \con b'' = (E \vee D) \con b'''$ for some $b''' \in G$.
    Therefore $b' = \Phi_E(D) \con b'''$ and $c^{-1} \star b = \Phi_E(F)^{-1} \con \Phi_E(D) \con b'''$. By (O6), $\Phi_E(F)^{-1} \perp \Phi_{\Phi_E(F)}(\Phi_E(D))$ and 
    \[
        \Phi_E(F)^{-1} \vee \Phi_{\Phi_E(F)}(\Phi_E(D)) = \Phi_E(F)^{-1} \con \Phi_E(D) = \Phi_{\Phi_E(F)}(\Phi_E(D)) \con (\Phi_{\Phi_E(D)}(\Phi_E(F)))^{-1}.
    \]
    Thus $\Phi_{\Phi_E(F)}(\Phi_E(D)) \preceq \Phi_E(F)^{-1} \con \Phi_E(D) \con b''' = c^{-1} \star b$.
    By a symmetric argument, $\Phi_{\Phi_F(E)}(\Phi_F(D)) \preceq c^{-1} \twist a$. By Lemma~\ref{spanning a cube}, $\Phi_{\Phi_E(F)}(\Phi_E(D)) = \Phi_{\Phi_F(E)}(\Phi_F(D))$, so this implies that $D = \id$ since $c \in I(a,b)$. Thus $a \wedge b \preceq c$.

    Now let $A,B,C \in G$ be such that $a = (a \wedge b) \con A, b = (a \wedge b) \con B, c = (a \wedge b) \con C$. Then $c^{-1} \twist a = C^{-1} \twist A$ and $c^{-1} \twist b = C^{-1} \twist B$ so, by the above argument, we have:
    \[
        m(a,b,c) = (a \wedge b) \con m(A,B,C) = (a \wedge b) \con C = c.
    \]

    Conversely, let $a,b,c \in G$ be such that $m(a,b,c) = c$. Suppose once more that $a \wedge b = \id$, so $(a \wedge c) \perp (b \wedge c)$ and $c = (a \wedge c) \con \Phi_{a \wedge c}(b \wedge c) = (b \wedge c) \con \Phi_{b \wedge c}(a \wedge c)$. Then 
    \[
        c^{-1} \twist a = \Phi_{a \wedge c}(b \wedge c)^{-1} \con (a \divdot (a \wedge c)), \quad c^{-1} \twist b = \Phi_{b \wedge c}(a \wedge c)^{-1} \con (b \divdot (b \wedge c)).
    \]
    Let $D \in G$ be such that $D \preceq (c^{-1} \twist a) \wedge (c^{-1} \twist b)$ and let $D_1 \coloneqq D \wedge \Phi_{a \wedge c}(b \wedge c)^{-1}, D_2 \coloneqq D \wedge \Phi_{b \wedge c}(a \wedge c)^{-1}$. Then $D_1 \perp \Phi_{b \wedge c}(a \wedge c)^{-1}$ and $\Phi_{\Phi_{b \wedge c}(a \wedge c)^{-1}}(D_1) \preceq (b \wedge c)^{-1} \wedge (b \divdot (b \wedge c))$ which, by the admissibility of $b$, implies that $D_1 = \id$. By a symmetric argument $D_2 = \id$, so $D \perp \Phi_{a \wedge c}(b \wedge c)^{-1}$ and $D \perp \Phi_{b \wedge c}(a \wedge c)^{-1}$. Let $E,F \in G$ be such that 
    \[
        D \vee \Phi_{a \wedge c}(b \wedge c)^{-1} = \Phi_{a \wedge c}(b \wedge c)^{-1} \con E, \quad D \vee \Phi_{b \wedge c}(a \wedge c)^{-1} = \Phi_{b \wedge c}(a \wedge c)^{-1} \con F.
    \]
    Then $(b \wedge c) \con F \preceq b$ and $(a \wedge c) \con E \preceq a$. By Lemma~\ref{spanning a cube} it follows that $\Phi_{(b \wedge c)^{-1}}(F) = \Phi_{(a \wedge c)^{-1}}(E) \preceq a \wedge b$ so $E = F = D = \id$.

    If we don't assume that $a \wedge b = \id$, we have $a \wedge b \preceq c$ by assumption. So let $A,B,C \in G$ be such that $a = (a \wedge b) \con A$, $b = (a \wedge b) \con B$, $c = (a \wedge b) \con C$. Then $(c^{-1} \twist a) \wedge (c^{-1} \twist b) = (C^{-1} \twist A) \wedge (C^{-1} \twist B) = \id$ and $c \in I(a,b)$.
\end{proof}

\begin{definition} \label{def: length function}
    Let $\Lambda$ be a totally ordered abelian group. A $\Lambda$\textit{-length function on $Y$} is a map $\ell:Y \rightarrow \Lambda$ such that 
    \begin{itemize}
        \item $\ell$ is positive definite:  $\ell(\id) = 0$ and $\ell(f) > 0$ for all $f \neq \id$;
        \item $\ell$ is symmetric: $\ell(f^{-1}) = \ell(f)$ for all $f \in Y$;
        \item $\ell(f \con g) = \ell(f) + \ell(g)$ for all $f,g \in Y$.
    \end{itemize}
    If $\Lambda = \mathbb{R}$, we say that $\ell$ is a \textit{length function}.
\end{definition}

Suppose that $Y$ admits a $\Lambda$-length function $\ell$ and define $d:G \times G \rightarrow \mathbb{R}$ by $d(f,g) \coloneqq \ell(f^{-1} \twist g) = \ell(f) + \ell(g) - 2\ell(f \wedge g)$ for all $f,g \in G$.

\begin{proposition} \label{prop: median metric}
    The map $d$ is a $\Lambda$-metric and the $\Lambda$-metric space $(G,d)$ is median.
\end{proposition}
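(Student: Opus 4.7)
The plan is to verify the axioms of a $\Lambda$-metric directly from the definition of $\ell$ and then invoke Lemma~\ref{lem: algebra to space} to upgrade the median algebra structure on $G$ (guaranteed by Remark~\ref{rem: G is median}) to a median $\Lambda$-metric structure. The key computational identity, which I will use throughout, is that for any $a,b \in G$, writing $c \coloneqq a^{-1} \wedge b$, we have
\[
    a \twist b = (a \dotdiv c^{-1}) \con (b \divdot c),
\]
so multiplicativity of $\ell$ yields $\ell(a \twist b) = \ell(a) + \ell(b) - 2\ell(c)$. In particular $\ell(a \twist b) \leq \ell(a) + \ell(b)$, with equality exactly when $a^{-1} \wedge b = \id$.

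For the metric axioms, positive definiteness of $d$ is immediate from positive definiteness of $\ell$ together with the fact that $f^{-1} \twist g = \id$ forces $f = g$ (by Theorem~\ref{thm: extracted group}). Symmetry follows since $(f^{-1} \twist g)^{-1} = g^{-1} \twist f$ and $\ell$ is $-1$-invariant. For the triangle inequality, given $f,g,h \in G$ I set $a \coloneqq f^{-1} \twist g$ and $b \coloneqq g^{-1} \twist h$; since $(G,\twist)$ is a group, $a \twist b = f^{-1} \twist h$, and the inequality $\ell(a \twist b) \leq \ell(a) + \ell(b)$ from the identity above gives $d(f,h) \leq d(f,g) + d(g,h)$. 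A near-identical computation shows that $d$ is invariant under left multiplication by $G$.

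To show $(G,d)$ is median, I apply Lemma~\ref{lem: algebra to space} to the median algebra $(G,m)$ from Remark~\ref{rem: G is median}. I must verify that whenever $a,b,c \in G$ satisfy $m(a,b,c) = c$, the equality $d(a,b) = d(a,c) + d(c,b)$ holds. By Lemma~\ref{lem: G is median preserving}, the hypothesis $m(a,b,c) = c$ is equivalent to $c \in I(a,b)$, i.e.\ $(c^{-1} \twist a) \wedge (c^{-1} \twist b) = \id$. Setting $x \coloneqq a^{-1} \twist c$ and $y \coloneqq c^{-1} \twist b$, this says exactly that $x^{-1} \wedge y = \id$, so by the identity above $\ell(x \twist y) = \ell(x) + \ell(y)$. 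Since $x \twist y = a^{-1} \twist b$ in the group $(G,\twist)$, this gives $d(a,b) = d(a,c) + d(c,b)$, as required.

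No step here presents a substantive obstacle: all the real work has already been done in Theorem~\ref{thm: extracted group} (group structure), Lemma~\ref{lem: G is median preserving} (characterisation of $I(a,b)$), and Lemma~\ref{lem: algebra to space} (median upgrade). The only mildly delicate point is keeping track of which inverses appear under $\twist$ when comparing the hypothesis $(c^{-1} \twist a) \wedge (c^{-1} \twist b) = \id$ to the form $x^{-1} \wedge y = \id$ needed to apply the identity; this is a straightforward bookkeeping issue resolved by the choice $x = a^{-1} \twist c$, $y = c^{-1} \twist b$.
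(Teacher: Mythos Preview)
Your proof is correct and follows essentially the same route as the paper: both arguments reduce the median property to Lemma~\ref{lem: G is median preserving} (to get $(c^{-1}\twist a)\wedge(c^{-1}\twist b)=\id$) and then invoke Lemma~\ref{lem: algebra to space}. The only cosmetic difference is that the paper derives the triangle inequality as a consequence of the median identity $d(a,b)=d(a,c)+d(c,b)$, whereas you prove it directly from the subadditivity $\ell(a\twist b)\leq \ell(a)+\ell(b)$ before turning to the median property; both are immediate from your key computational identity.
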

\begin{proof}
Symmetry and positive definitiveness of $d$ follows immediately from the definition of a length function. The triangle inequality follows from median property, which we now prove. Let $a,b,c \in G$ be such that $m(a,b,c) = c$. By Lemma~\ref{lem: G is median preserving} $(c^{-1} \twist a) \wedge (c^{-1} \twist b) = \id$ so $a^{-1} \twist b = (a^{-1} \twist c) \con (c^{-1} \twist b)$. Therefore $d(a,b) = d(a,c) + d(c,b)$ and, by Lemma~\ref{lem: algebra to space}, $(G,d)$ is a median $\Lambda$-metric space with median map $m$.
\end{proof}

\begin{remark}
    It is immediate from the definition of the $\Lambda$-metric that the action of $G$ on itself by left multiplication is by isometries.
\end{remark}

\begin{lemma} \label{lem: rank}
    The rank of $(G,d)$ is 
    \[\mathrm{rk}(G) = \sup\{k \in \mathbb{N} : \exists \; g_1, \dots, g_k \in G - \{\id\} \text{ such that } g_i \perp g_j \; \forall i \neq j\}.\]
\end{lemma}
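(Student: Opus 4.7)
Let $R$ denote the supremum on the right-hand side of the displayed equation. I would prove both inequalities $R \leq \mathrm{rk}(G)$ and $\mathrm{rk}(G) \leq R$.

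For the first inequality, given pairwise orthogonal non-identity elements $g_1, \ldots, g_k \in G$, the plan is to construct a median subalgebra embedding $\sigma : \sigma_k \to G$ by $\sigma(\varepsilon) \coloneqq \bigvee_{i : \varepsilon_i = 1} g_i$. I would first establish by induction on $|S|$ that for any $S \subseteq \{1, \ldots, k\}$ the join $g_S \coloneqq \bigvee_{i \in S} g_i$ exists in $G$ and that the family $\{g_S\}_{S \subseteq \{1, \ldots, k\}}$ is order-isomorphic to the Boolean lattice $\{0,1\}^k$, with $g_S \wedge g_T = g_{S \cap T}$. Existence at each inductive step follows from Lemma~\ref{lem: existence of joins} once an upper bound is produced, and Lemma~\ref{spanning a cube} guarantees that the appropriate parallel transports of the remaining $g_i$'s stay pairwise orthogonal so the induction propagates. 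Injectivity of $\sigma$ is then clear from the order isomorphism, and median-preservation reduces via Definition~\ref{def: median semilattice} to verifying the identity $m(g_S, g_T, g_U) = g_{(S \cap T) \cup (S \cap U) \cup (T \cap U)}$, which is exactly the coordinate-wise median formula on $\sigma_k$.

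For the reverse inequality, suppose $\varphi : \sigma_k \hookrightarrow G$ is a median-subalgebra embedding. By Lemma~\ref{lem: G is median preserving}, left multiplication by $\varphi(\mathbf{0})^{-1}$ is a median automorphism of $G$, so after translating I may assume $\varphi(\mathbf{0}) = \id$. Setting $a_i \coloneqq \varphi(e_i)$ for each standard basis vector $e_i$, injectivity of $\varphi$ gives $a_i \neq \id$. For $i \neq j$, the cube identity $m_{\sigma_k}(\mathbf{0}, e_i, e_j) = \mathbf{0}$ transports to $m(\id, a_i, a_j) = \id$, and the first half of Lemma~\ref{lem: G is median preserving} (with $c = \id$, so that $\id \in I(a_i, a_j)$) then yields $a_i \wedge a_j = \id$. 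Similarly, $m_{\sigma_k}(\mathbf{0}, e_i, e_i + e_j) = e_i$ gives $a_i \preceq \varphi(e_i + e_j)$, and by symmetry $a_j \preceq \varphi(e_i + e_j)$; Lemma~\ref{lem: existence of joins} then produces the join $a_i \vee a_j$, which lies in $G$ as a face of the admissible element $\varphi(e_i + e_j)$. Hence $a_i \perp a_j$, providing $k$ pairwise orthogonal non-identity elements of $G$, so $R \geq k$.

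The main obstacle is the induction used in the first direction: although Lemma~\ref{spanning a cube} gives cube-spanning for three elements, extending to $k$ pairwise orthogonal elements requires verifying coherently that every partial join exists, that the Boolean lattice order is preserved, and that meets of joins equal joins of intersections. The key insight is that each inductive step can be reduced to a triple configuration handled by Lemma~\ref{spanning a cube}, but organising the parallel transports so that the identifications in that lemma glue together is where the bulk of the bookkeeping lies. Once this sub-Boolean-lattice structure is in hand, the remainder of the argument, in both directions, is a direct computation using the definitions.
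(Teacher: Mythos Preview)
Your proposal is correct and follows essentially the same approach as the paper: translate an embedded $k$-cube so that $\mathbf{0}$ maps to $\id$ and read off pairwise orthogonal images of the basis vectors, and conversely build the cube from pairwise orthogonal elements via their joins $\bigvee_{i \in I} g_i$. The paper's proof is considerably terser and simply asserts both directions; you have correctly identified and fleshed out the one genuine technical point the paper glosses over, namely the inductive existence of the iterated joins and the verification that the resulting family is a median subalgebra.
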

\begin{proof}
    Let $k \in \mathbb{N}$, let $\sigma_k = \{0,1\}^k$ be a $k$-cube and let $\varphi:\sigma_k \rightarrow G$ be a median-preserving embedding. Up to composing with an element of $G$, we can assume that $\varphi(0) = \id$. For each $i$, let $\chi_i:\{1, \dots, k\} \rightarrow \{0,1\}$ be the characteristic map of $i$. Then $\{\varphi(\chi_i): 1 \leq i \leq k\} \subseteq G - \{\id\}$ are pairwise orthogonal. Conversely, if $\{x_1, \dots, x_k\} \subseteq G - \{\id\}$ are pairwise orthogonal then the set $\{\id, \bigvee_{i \in I} x_i : I \subseteq \{1, \dots, k\}\}$ is an embedded $k$-cube.
\end{proof}

\section{Actions on $\mathbb{R}$-trees with prescribed axis stabilisers} \label{sec: TX construction sum}

We will use the framework established in the previous section to construct groups which act freely and transitively on $\mathbb{R}$-trees. The main results of this section are stated below and will be proven in Section~\ref{sec: centraliser spectrum}. In Section~\ref{sec: reduction} we will show that some of the ores we construct admit retractions to their extracted groups which commute with the operations.

Let $\Sub_{NC}(\mathbb{R})$ denote the set on non-cyclic subgroups of $\mathbb{R}$ and let $\mathcal{K}$ denote the set of cardinals $\kappa$ such that $\kappa \leq 2^{\aleph_0}$. 

\begin{theorem} \label{thm: centraliser spectrum}
    Let $\iota: \Sub_{NC}(\mathbb{R}) \rightarrow \mathcal{K}$ be any map which is supported on $\leq 2^{\aleph_0}$ elements of $\Sub_{NC}(\mathbb{R})$. Then there exists a group $G$ and a free transitive action of $G$ on the complete universal real tree $T$ with valence $2^{\aleph_0}$ such that the following holds.
    For each $H \leq \mathbb{R}$, let $A_H$ be the set of orbits $G \cdot L$ such that $L \subseteq T$ is a line and the action $\Stab_G(L) \acting L$ is isomorphic to $H \acting \mathbb{R}$.
    If $H \in \Sub_{NC}(\mathbb{R})$ then $|A_H| = \iota(H)$.
\end{theorem}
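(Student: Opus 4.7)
The plan is to specialize the general construction sketched in the introduction to a carefully chosen $\Isom(\mathbb{R})$-set $X$. For each $H \in \Sub_{NC}(\mathbb{R})$ with $\iota(H) \geq 1$, I would include $\iota(H)$ disjoint $\Isom(\mathbb{R})$-orbits in $X$, each isomorphic as an $\Isom(\mathbb{R})$-set to $(\mathbb{R}/H) \sqcup (\mathbb{R}/H)$, with $\mathbb{R}$ acting by translation on each copy and the involution $*$ swapping the two copies. This guarantees that for every $x$ in such an orbit we have $\Stab_\mathbb{R}(x) = H$ and $x^* \notin \mathbb{R} \cdot x$, so that $\mathbf{L}_x$ is a genuine line with the desired stabilizer. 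Since each orbit has cardinality at most $2 \cdot |\mathbb{R}| = 2^{\aleph_0}$ and the support of $\iota$ has cardinality at most $2^{\aleph_0}$, the resulting set satisfies $2 \leq |X| \leq 2^{\aleph_0}$, after adding a small dummy orbit if necessary.

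With $X$ fixed, the construction of Section~\ref{sec: TX construction} yields the ore $T_X$ and its extracted group $(T_X, \star)$. I would then define $G$ to be the closure, with respect to the $\mathbb{R}$-tree metric on $T_X$, of the subgroup generated by the lines $\{\mathbf{L}_x : x \in X\}$. The results of Section~\ref{sec: templates} guarantee that the resulting action of $G$ on the underlying space is free and transitive and that this space is isometric to the complete universal $\mathbb{R}$-tree $T$ with valence $2^{\aleph_0}$, giving the ambient setting for the statement.

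The bulk of the argument is then devoted to computing the axis-stabilizer spectrum. For each orbit used in the construction, the line $\mathbf{L}_x$ provides an explicit axis whose stabilizer in $G$ acts as $\Stab_\mathbb{R}(x) = H$ by translations, and one checks that $G \cdot \mathbf{L}_x = G \cdot \mathbf{L}_y$ if and only if $x$ and $y$ lie in the same $\Isom(\mathbb{R})$-orbit; this yields the lower bound $|A_H| \geq \iota(H)$ for each non-cyclic $H$. The matching upper bound is the heart of the theorem, and the reason $G$ is defined as a closure of the subgroup generated by the $\mathbf{L}_x$ rather than all of $T_X$ (cf.\ the warning in Remark~\ref{rem: weird axes}): one must show that \emph{every} line $L \subseteq T$ for which $\Stab_G(L) \acting L$ is isomorphic to a non-cyclic subgroup of $\mathbb{R}$ is already a $G$-translate of some $\mathbf{L}_x$.

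The main obstacle will be this upper bound, which I would approach by a structural analysis of elements of $G$ admitting a translation axis. Using the ore machinery of Section~\ref{sec: ores}, every element of $G$ is approximated by admissible words built from segments of the $\mathbf{L}_x$; if $g \in G$ translates along a line $L$ and its stabilizer is non-cyclic, then by density one can find arbitrarily small translations along $L$. Combining this with the rigidity of how admissible elements are assembled — specifically, that translations of $\mathbf{L}_x$-segments are controlled by the $\mathbb{R}$-action on the single point $x \in X$ — should force $L$ to agree with a translate of some $\mathbf{L}_x$, thereby completing the count.
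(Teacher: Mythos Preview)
Your proposal is correct and matches the paper's approach almost exactly: the choice of $X$ as a disjoint union of $\iota(H)$ copies of $(\mathbb{R}/H) \sqcup (\mathbb{R}/H)'$, the definition of $G$ as the closed subgroup $T_X(X)$ generated by the $\mathbf{L}_x$, and the identification of the upper bound as the crux all agree with the paper. The paper carries out the upper bound as Lemma~\ref{lem: controling axis stabilisers}, using the explicit template description of elements of $T_X(X)$ (Proposition~\ref{prop: template generation}) to show directly that any line with dense stabilizer is a translate of a standard axis; your ``dummy orbit'' corresponds to the paper's separate treatment of the cases $\iota \equiv 0$ and $\iota = \chi_{\mathbb{R}}$, where an extra orbit with \emph{cyclic} stabilizer is added so as not to disturb the count for non-cyclic $H$.
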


\begin{theorem} \label{thm: small valence action}
    Let $3 \leq \kappa < 2^{\aleph_0}$ be a cardinal. There are no free transitive actions on the complete universal $\mathbb{R}$-tree $T_\kappa$ with valence $\kappa$. 
    
    Let $\kappa \geq 3$ be any cardinal. There exists an incomplete $\mathbb{R}$-tree $S_\kappa$ with valence $\kappa$ and a free transitive action $G \acting S_\kappa$, for some group $G$, if and only if $\kappa$ is either infinite or even. 
    If $\kappa$ is finite and even, then this action is unique: if $S$ is an $\mathbb{R}$-tree with valence $\kappa$, and $H \acting S$ is a free transitive action of a group $H$, then there is a group isomorphism $G \rightarrow H$ and an isometry $S_\kappa \rightarrow S$ which is equivariant relative to $G \rightarrow H$.
\end{theorem}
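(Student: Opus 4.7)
My plan is to handle existence, non-existence for $\kappa$ odd finite, and the combination of uniqueness with non-existence on the complete tree as three separate arguments, all built on the ore framework of Section~\ref{sec: ores}.

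\textbf{Existence for $\kappa$ infinite or finite even.} Write $\kappa = 2m$ when $\kappa$ is finite even. I will take $X$ of cardinality $m$ (resp.\ of cardinality $\kappa$ for infinite $\kappa$) equipped with a fixed-point-free involution $*$ and the trivial $\mathbb{R}$-action, and restrict to the sub-ore of $T_X$ whose elements are represented by step functions with only finitely many breakpoints. Extracting the group yields a free product of $m$ (resp.\ $\kappa$) copies of $\mathbb{R}$, and the induced length function realises its free transitive action on an $\mathbb{R}$-tree of valence $2m$ (resp.\ $2\kappa = \kappa$). This recovers and algebraises the classical Chiswell--M\"uller construction; the resulting tree is incomplete by design, as its points correspond to finitely supported data.

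\textbf{Parity obstruction for $\kappa$ finite odd.} Suppose $G$ acts freely transitively on an $\mathbb{R}$-tree $T$ with finite valence $\kappa \geq 3$, and identify $T$ with $G$ via the orbit map based at the identity $e$. Each non-trivial $g \in G$ is a hyperbolic isometry with axis $A_g$. Call a direction at $e$ \emph{axial} if it is the initial direction of some axis $A_g$ passing through $e$. Each such axis determines exactly two opposite axial directions at $e$, and distinct axes through $e$ yield disjoint pairs (two axes sharing a direction at $e$ would share an initial segment and, by the tree property, coincide). Hence the set of axial directions has even cardinality. The crux is to show that \emph{every} direction at $e$ is axial: given a direction $d$, use transitivity to obtain elements $g_n \in G$ whose orbits $g_n \cdot e$ lie on a fixed ray from $e$ in direction $d$ with $d(e, g_n \cdot e) \to 0$, then iterate closest-point projections of $e$ onto $A_{g_n}$, tracked via (O4)--(O6), until the projection is $e$ itself, producing an axis through $e$ in direction $d$. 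This forces $\kappa$ to be even, contradicting the assumption.

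\textbf{Uniqueness and non-existence on $T_\kappa$.} For odd $\kappa \geq 3$, the parity step already rules out free transitive actions on any $\mathbb{R}$-tree of valence $\kappa$. For $\kappa = 2m$ finite even, the $2m$ directions at $e$ partition into $m$ axial pairs; transitivity of $G$ on the tree forces the stabiliser of each axis to act transitively on it, hence to be isomorphic to $\mathbb{R}$ (no proper subgroup of $\mathbb{R}$ acts transitively on the line), so $G$ is the free product of $m$ copies of $\mathbb{R}$ and the tree is determined by this group structure (giving uniqueness); since the tree from the existence step is incomplete, no action exists on the complete $T_\kappa$. For $\aleph_0 \leq \kappa < 2^{\aleph_0}$, completeness of $T_\kappa$ would let one form Cauchy sequences of group elements making distinct choices of direction at arbitrarily small scales along a geodesic ray, yielding $2^{\aleph_0}$ pairwise distinct limit directions at $e$ and contradicting the assumed valence. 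The hard parts will be the axial-covering step in the parity argument and making precise this ``$2^{\aleph_0}$-directions-from-completion'' construction for countable $\kappa$.
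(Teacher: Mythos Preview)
Your parity argument has a genuine error: the claim that ``distinct axes through $e$ yield disjoint pairs'' of directions is false. In $\mathbb{R} * \mathbb{R}$ (the valence-$4$ case), writing $a^t, b^s$ for the two $\mathbb{R}$-factors, the axis of $a^t$ passes through $e$ with direction pair $\{a^+, a^-\}$, while the axis of $a^t b^s$ (for $t,s>0$) also passes through $e$ with direction pair $\{a^+, b^-\}$. These axes are distinct but share the direction $a^+$, so no well-defined fixed-point-free involution on directions arises this way. The follow-up step (``iterate closest-point projections \dots\ tracked via (O4)--(O6)'') does not repair this: conjugating $g$ by the projection $p$ produces an axis through $e$, but in the direction of $A_g$ at $p$ transported by $p^{-1}$, which need not be $d$. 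Note also that (O4)--(O6) are axioms for an ore, whereas here you only have an arbitrary group acting freely and transitively; you have the semilattice $\preceq$ but no $\con$.

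Your uniqueness and completeness arguments inherit this gap. The assertion that ``transitivity of $G$ on the tree forces the stabiliser of each axis to act transitively on it'' is not justified --- transitivity on $T$ says nothing directly about $\Stab_G(L) \acting L$. And for the $2^{\aleph_0}$-directions-from-completion step, you need a supply of arbitrarily short group elements $a_{1,n}, a_{2,n}$ that genuinely branch (i.e.\ with $a_{1,n}^{-1} \wedge a_{2,m} = \id$ for all $n,m$); you have not said where these come from. The paper obtains all of this from a single counting lemma (Lemma~\ref{lem: small stabilisers large valence}): if every line $L$ satisfies $|\Stab_G(L)\backslash L| \geq \kappa$ then the valence is $\geq \kappa$. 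Contrapositively, small valence forces the existence of a line with dense --- in fact, in the finite case, transitive --- stabiliser. One then peels this line off (the set of elements whose segment to $\id$ meets no translate of $L$ in a nondegenerate arc is a subgroup and a subtree of valence $\kappa - 2$) and inducts, giving parity, uniqueness, and the two branching lines needed for the Cauchy-sequence construction in the complete case. That lemma, or something playing its role, is the missing idea in your outline.
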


\subsection{The initial construction} \label{sec: TX construction}

We first construct a family of groups acting freely and transitively on complete real trees with large valence. The groups we construct to prove Theorem~\ref{thm: centraliser spectrum} will be subgroups of these.

Let $\alpha:\mathbb{R} \rightarrow \mathbb{R}$ be the order reversing automorphism of $\mathbb{R}$ defined by $\alpha(\lambda) = - \lambda$ for all $\lambda \in \mathbb{R}$.
Let $X$ be a set equipped with an action of $\mathbb{R} \rtimes_\alpha \la * \ra$, where $*$ is an element of order two. We will abuse notation and identify $\mathbb{R}$ with the normal subgroup $(\mathbb{R}, \id) \unlhd \mathbb{R} \rtimes_\alpha \la \ast \ra$. 

Given $\ell \in \mathbb{R}$ such that $\ell \geq 0$ and a map $f:[0,\ell] \rightarrow X$, let $f^* \coloneqq * \circ f$. The \textit{length} of $f$ is $\ell$ and is denoted by $\ell(f) \coloneqq \ell$.

\begin{definition}[Equivalence, length, identity] \label{defn: equivalent paths} 
Two maps $f:[0,\ell(f)] \rightarrow X$, $g:[0,\ell(g)] \rightarrow X$ are \textit{equivalent}, denoted $f \simeq g$, if $\ell(f) = \ell(g)$ and $|\{s:f(s) \neq g(s)\}| \leq \aleph_0$. The equivalence class of a function $f$ is denoted by $\mathfrak{f}$. The \textit{length} of $\mathfrak{f}$ is $\ell(\mathfrak{f}) \coloneqq \ell(f)$.

The unique equivalence class with length 0 is denoted by $\id$.
\end{definition}

\begin{definition}
    Let $\mathcal{Y}_X$ denote the set of equivalence classes of maps $[0,\ell] \rightarrow X$. Define a binary relation $\preceq$ on $\mathcal{Y}_X$ by: $\mathfrak{\f} \preceq \mathfrak{\g}$ if $\ell(\f) \leq \ell(\g)$ and, for all but countably many $t \in [0,\ell(\f)]$, we have $f(t) = g(t)$. 
\end{definition}

\begin{lemma} \label{lambda trees: median semilattice}
    $(\mathcal{Y}_X,\preceq)$ is a median semilattice with bottom element $\id$.
\end{lemma}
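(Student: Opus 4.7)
The plan is to verify the axioms in order: first that $\preceq$ descends to a partial order on equivalence classes with bottom $\id$, then that meets exist, and finally that the median-semilattice distributivity holds. For the partial order, I would first note that the defining condition for $\preceq$ is insensitive to the choice of representatives, since if $f \simeq f'$ and $g \simeq g'$ then the disagreement sets of $(f,g)$ and $(f',g')$ on $[0,\ell(f)]$ differ by a countable set. Reflexivity is clear; antisymmetry follows because $\f \preceq \g$ and $\g \preceq \f$ force $\ell(f)=\ell(g)$ and countable disagreement, hence $f \simeq g$; transitivity follows by bounding the disagreement set of $(f,h)$ by the (countable) union of those of $(f,g)$ and $(g|_{[0,\ell(f)]},h|_{[0,\ell(f)]})$. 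That $\id$ is the bottom element is immediate since its domain $\{0\}$ is countable.

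For the meet of $\f, \g$, let $\ell = \min(\ell(f),\ell(g))$ and set
\[
S \coloneqq \{s \in [0,\ell] : |\{t \in [0,s] : f(t) \neq g(t)\}| \leq \aleph_0\}.
\]
The key step is to show $s^\ast \coloneqq \sup S$ lies in $S$. Choosing an increasing sequence $s_n \in S$ with $s_n \nearrow s^\ast$, the disagreement set on $[0,s^\ast]$ is $\bigcup_n \{t \in [0,s_n] : f(t) \neq g(t)\}$, together with at most the single point $\{s^\ast\}$, hence countable. Define $\f \wedge \g$ to be the class of $f|_{[0,s^\ast]}$, which equals the class of $g|_{[0,s^\ast]}$. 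The universal property is routine: any lower bound $\h$ has $\ell(h) \leq \ell$, its disagreement sets with $f$ and $g$ are countable on $[0,\ell(h)]$, so their union is countable, forcing $\ell(h) \in S$ hence $\ell(h) \leq s^\ast$, and then $\h \preceq \f \wedge \g$.

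The main structural observation — which drives everything else — is that in $\mathcal{Y}_X$ two elements $\f, \g$ admit a common upper bound if and only if they are comparable. Indeed, if $\f, \g \preceq \h$, then on $[0,\min(\ell(f),\ell(g))]$ both $f$ and $g$ agree with $h$ modulo countable sets and hence with each other, so the shorter of $\f, \g$ is a face of the longer. Applied to the three pairwise meets $a\wedge b$, $b\wedge c$, $a\wedge c$, each pair of which shares a common upper bound among $\{a,b,c\}$, this forces these three elements to form a totally ordered chain. Thus the join $(a\wedge b)\vee(b\wedge c)\vee(a\wedge c)$ exists and equals their $\preceq$-maximum, call it $M$.

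It remains to check, for any $x \in \mathcal{Y}_X$, the distributivity identity
\[
(x \wedge a \wedge b) \vee (x \wedge b \wedge c) \vee (x \wedge a \wedge c) \;=\; x \wedge M.
\]
Monotonicity of $\wedge$ gives that each term on the left is $\preceq x \wedge M$, so the left side is $\preceq$ the right side. Conversely, $M$ is one of $a \wedge b$, $b \wedge c$, $a \wedge c$, so $x \wedge M$ coincides with one of the three elements being joined on the left, giving the reverse inequality. The only real obstacle in the argument is the supremum step in the meet construction, where one must exploit that a countable union of countable sets is countable to ensure $s^\ast \in S$; once this is done the rest of the proof reduces to chasing the elementary consequences of the fact that comparability is the only form of upper-boundedness in $\mathcal{Y}_X$.
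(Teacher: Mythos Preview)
Your proof is correct and follows essentially the same route as the paper's: construct the meet by restricting to the supremal length of agreement (using that a countable union of countable sets is countable), observe that the set of predecessors of any element is a chain so the three pairwise meets are totally ordered, and then the median-semilattice identity reduces to the trivial fact that $x\wedge M$ already appears as one of the three terms on the left. The paper's proof is slightly terser and constructs meets for arbitrary non-empty subsets rather than just pairs, but the argument is the same.
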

\begin{proof}
    It is immediate from the definition that $\preceq$ is reflexive, antisymmetric and transitive so $(\mathcal{Y}_X, \preceq)$ is a partially ordered set. It is also clear from the definition that $\id$ is a bottom element. Let $S \subseteq \mathcal{Y}_X$ be a non-empty subset and let $\ell \coloneqq \inf\{\ell(\mathfrak{s}) : \mathfrak{s} \in S\}$ and 
    \[m \coloneqq \sup\{\lambda \in  [0,\ell] : \mathfrak{s}|_{[0,\lambda]} \simeq \mathfrak{s}'|_{[0,\lambda]} \; \forall \; \mathfrak{s} \in S\}.\]
    Let $g \coloneqq s|_{[0,m]}$ for some $\mathfrak{s} \in S$. Then the equivalence class $\mathfrak{g}$ of $g$ is independent of the choice of $\mathfrak{s}$ and $\mathfrak{g} \preceq \mathfrak{s}'$ for all $\mathfrak{s}' \in S$. Moreover if $\mathfrak{f} \in \mathcal{Y}_X$ with $\mathfrak{f} \preceq \mathfrak{s}$ for all $\mathfrak{s} \in S$ then the definition of $m$ implies that $\mathfrak{f} \preceq \mathfrak{g}$. Therefore $\mathfrak{g} = \bigwedge S$ is the meet of $S$ and $(\mathcal{Y}_X, \preceq)$ is a meet semilattice.

    Observe that, for any $\mathfrak{f} \in \mathcal{Y}_X$, the set $\{\mathfrak{g} \in \mathcal{Y}_X : \mathfrak{g} \preceq \mathfrak{f}\}$ is totally ordered. It follows that, for any $\mathfrak{f}_1,\mathfrak{f}_2,\mathfrak{f}_3 \in \mathcal{Y}_X$, the set $\{\mathfrak{f}_i \wedge \mathfrak{f}_j :  i \neq j\}$ is totally ordered so, $\mathfrak{f}_k \wedge \mathfrak{f}_m = \bigvee \{\mathfrak{f}_i \wedge \mathfrak{f}_j :  i \neq j\}$ for some $k \neq m$. Let $\g \in \mathcal{Y}_X$ be an arbitrary element. Then, for each $i \neq j$, we have that $g \wedge f_i \wedge f_j \preceq g$ and $g \wedge f_i \wedge f_j \preceq f_i \wedge f_j \preceq f_k \wedge f_m$ so
    \[ \bigvee \{g \wedge f_i \wedge f_j : i \neq j\} \preceq g \wedge f_k \wedge f_m \preceq \bigvee \{g \wedge f_i \wedge f_j : i \neq j\}.\]
    Thus $\bigvee \{g \wedge f_i \wedge f_j\} = g \wedge f_k \wedge f_m$ and $\mathcal{Y}_X$ is median.
\end{proof}

\begin{definition} 
\begin{itemize}
    \item Define $-1:\mathcal{Y}_X \rightarrow \mathcal{Y}_X$ as follows. The $-1$-image of $\mathfrak{f}$ is the equivalence class of the map $f^{-1}:[0,\ell(f)] \rightarrow X$ defined by $f^{-1}(t) = -\ell(f) \cdot f^*(\ell(f)-t)$. 
    \item Define $\con: \mathcal{Y}_X \times \mathcal{Y}_X \rightarrow \mathcal{Y}_X$ as follows. Given $\mathfrak{f}, \mathfrak{g} \in \mathcal{Y}_X$, $\mathfrak{f} \con \mathfrak{g} \in \mathcal{Y}_X$ is the equivalence class of the map $f \con g: [0, \ell(f) + \ell(g)] \rightarrow X$ given by
    \[f \con g(t) = 
    \begin{cases}
        f(t) \quad &\text{if } t \in [0,\ell(f)]; \\
        -\ell(f) \cdot g(t - \ell(f)) &\text{otherwise.}
    \end{cases}
    \]
    \item Let $T_X \subseteq \mathcal{Y}_X$ be the set of admissible elements of $\mathcal{Y}_X$.
\end{itemize}
\end{definition}

\begin{lemma} \label{lem: big tree ore}
    $(\mathcal{Y}_X,\preceq, \id, -1,\con)$ is an ore.
\end{lemma}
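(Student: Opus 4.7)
The plan is to verify the six axioms (O1)--(O6) in turn. Axiom (O1) is precisely Lemma~\ref{lambda trees: median semilattice}. For (O2), associativity of $\con$ follows from computing $(\f \con \g) \con \h$ and $\f \con (\g \con \h)$ piecewise and observing that two successive shifts by $-\ell(f)$ and $-\ell(g)$ compose to a shift by $-(\ell(f)+\ell(g))$; the empty-map class $\id$ is a two-sided identity (the definitions of $\id \con \f$ and $\f \con \id$ differ from $\f$ on at most a single boundary point, which is negligible under $\simeq$); the identity $(\f \con \g)^{-1} = \g^{-1} \con \f^{-1}$ is verified by unfolding the definitions and using the relation $* \cdot \lambda = -\lambda \cdot *$ coming from $\alpha(\lambda) = -\lambda$; and cancellativity follows because from $\f \con \g \simeq \f \con \h$ the common $-\ell(f)$ shift on the tails drops out, so $\g \simeq \h$, with right cancellation being analogous. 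Axiom (O3) is immediate: if $\f \preceq \g$, then $\mathfrak{z}$ defined by $z(t) \coloneqq \ell(f) \cdot g(t + \ell(f))$ on $[0, \ell(g) - \ell(f)]$ satisfies $\g = \f \con \mathfrak{z}$, and conversely unpacking the definition of $\con$ gives $\f \preceq \f \con \mathfrak{z}$.

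The key structural observation is that if $\f, \g \in \mathcal{Y}_X$ admit a common upper bound $\h$, then they are comparable, because each is (up to equivalence) an initial segment of $\h$ and one initial segment is contained in the other. In particular, there are no non-trivial orthogonal pairs in $\mathcal{Y}_X$: having $\f \perp \g$ with $\f, \g \neq \id$ would require $\f \vee \g$ to exist (forcing comparability) while $\f \wedge \g = \id$ (ruling it out). This makes (O5) and (O6) vacuously true, since their hypotheses cannot be realised by non-trivial elements.

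For (O4), given $a, b, c \in G$, the three pairwise meets $a \wedge b, b \wedge c, a \wedge c$ are pairwise comparable --- any two of them share an element of $\{a, b, c\}$ as a common upper bound, and below any element of $\mathcal{Y}_X$ the order is total --- so they form a totally ordered triple whose join is simply their maximum. This maximum is a face of one of $a, b, c$, hence admissible, so the required join exists in $G$. The distributive identity with an auxiliary $x \in G$ reduces, after meeting everything with $x$ and using that $x \wedge (\cdot)$ preserves order on a totally ordered set, to the trivial fact that $x \wedge (\cdot)$ commutes with taking the maximum of comparable elements.

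The main obstacle is the bookkeeping required to verify $(\f \con \g)^{-1} = \g^{-1} \con \f^{-1}$ in (O2), since this is the one place where the non-trivial semidirect-product structure of $\mathbb{R} \rtimes_\alpha \langle \ast \rangle$ interacts with concatenation; every other step is either a direct computation or collapses to a short argument thanks to the absence of non-trivial orthogonal pairs.
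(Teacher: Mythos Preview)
Your proposal is correct and follows essentially the same approach as the paper: Lemma~\ref{lambda trees: median semilattice} for (O1), direct computation for (O2) and (O3), the observation that $\f^\perp = \{\id\}$ for all $\f \neq \id$ to make (O5) and (O6) vacuous, and the total-order-below-any-element argument for (O4). The only small gap is that you do not explicitly verify $(\f^{-1})^{-1} = \f$, which is part of the requirement that $-1$ be an involution in (O2); the paper checks this by a one-line computation using that $*$ has order two and $\mathbb{R}$ acts. A cosmetic difference is that the paper folds (O4) into Lemma~\ref{lambda trees: median semilattice} (since admissible elements are downward closed and the median there is always one of the pairwise meets, hence admissible), whereas you rederive this explicitly for $G$ --- but the content is identical.
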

\begin{proof}
(O1) and (O4) were proven in Lemma~\ref{lambda trees: median semilattice} and, for all $\f \in \mathcal{Y}_X - \{\id\}$, we have $\f^\perp = \{\id\}$ so (O5) and (O6) are trivially satisfied. It remains to check (O2) and (O3).

\begin{itemize}
    \item[(O2)] Let $\f,\g,\h \in \mathcal{Y}_X$. Then $\ell((\f \con \g) \con \h) = \ell(\f) + \ell(\g) + \ell(\h) = \ell(\f \con (\g \con \h))$. Moreover for all $t \in [0,\ell(\f) + \ell(\g) + \ell(\h)]$ we have
    \begin{align*}
        (f \con g) \con h(t) &= 
        \begin{cases}
            f(t) \quad &\text{if } t \in [0,\ell(\f)] \\
            -\ell(f) \cdot g(t - \ell(f)) &\text{if } t \in (\ell(f), \ell(f) + \ell(g)] \\
            -(\ell(f) + \ell(g)) \cdot h(t - (\ell(f) + \ell(g))) &\text{otherwise}
        \end{cases}
        \\
        &= f \con (g \con h)(t).
    \end{align*}
    Thus $\con$ is associative. Clearly $\id$ is a two-sided identity for $\con$, so $(\mathcal{Y}_X, \con, \id)$ is a monoid.
    
    Let $\f, \g, \h \in \mathcal{Y}_X$ and suppose that $\f \con \g = \f \con \h$. Then clearly $\ell(\g) = \ell(\h)$ and, for all but countably many $t \in (\ell(f), \ell(g)]$, we have $-\ell(\f) \cdot g(t-\ell(\f)) = -\ell(\f) \cdot h(t-\ell(\f))$, so $g(t) = h(t)$ for all but countably many $t \in [0,\ell(\g)]$, which means that $\g = \h$. Similarly, if $\f \con \g = \h \con \g$ then $\ell(\f) = \ell(\h)$ and for all but countably many $t \in [0,\ell(\f)]$ we have $f(t) = h(t)$, so $\f = \h$. Thus $\mathcal{Y}_X$ is cancellative.

    Let $\f \in \mathcal{Y}_X$. Then for all $t \in [0,\ell(f)]$ we have 
    \[ (f^{-1})^{-1}(t) = -\ell(f) \cdot (-\ell(f) \cdot f^*(\ell(f) - (\ell(f) - t)))^* = -\ell(f) \cdot (\ell(f) \cdot f(t)) = f(t).\]
    Therefore $(\f^{-1})^{-1} = \f$ and $-1: \mathcal{Y}_X \rightarrow \mathcal{Y}_X$ is an involution.
    Let $\f,\g \in \mathcal{Y}_X$. Then $\ell((\f \con \g)^{-1}) = \ell(\f \con \g) = \ell(\f) + \ell(\g) = \ell(\g \con \f)$ and for all $t \in [0,\ell(\f) + \ell(\g)]$ we have 
    \begin{align*} 
    (f \con g)^{-1}(t) &= -(\ell(f) + \ell(g)) \cdot (f \con g)^*(\ell(f) + \ell(g) - t) \\
    &=
    \begin{cases}
        -(\ell(f) + \ell(g)) \cdot (-\ell(f) \cdot g(\ell(g) - t))^* \quad &\text{if } t \in [0,\ell(g)) ; \\
        -(\ell(f) + \ell(g)) \cdot f^*(\ell(f) + \ell(g) - t) &\text{otherwise;}
    \end{cases}
     \\ &=
     \begin{cases}
         -\ell(g) \cdot g^*(\ell(g) - t) \hspace{3.4cm} &\text{if } t \in [0,\ell(g)) ; \\
         -\ell(g) \cdot (-\ell(f) \cdot f^*(\ell(f) - (t - \ell(g)))) &\text{otherwise}
     \end{cases}
     \\
     &= g^{-1} \con f^{-1} (t), \text{ unless }t = \ell(\g).
     \end{align*}
     Therefore $(\f \con \g)^{-1} = \g^{-1} \con \f^{-1}$. This completes the proof of (O2).
    \item[(O3)]  Let $\mathfrak{f},\mathfrak{g} \in \mathcal{Y}_X$. It is clear from the definition of $\con$ that if there exists $\mathfrak{h} \in \mathcal{Y}_X$ such that $\f = \g \con \h$ then $\g \preceq \f$. Conversely suppose that $\g \preceq \f$. Define $h:[0,\ell(f) - \ell(g)] \rightarrow X$ by $h(t) = \ell(g) \cdot f(t + \ell(g))$. Then $\f = \g \con \h$. \qedhere
\end{itemize}
\end{proof}

Let $(T_X, \twist)$ be the group extracted from $\mathcal{Y}_X$ and define $d:T_X \times T_X \rightarrow \mathbb{R}$ by $d(\mathfrak{f},\mathfrak{g}) = \ell(\mathfrak{f}^{-1} \twist \mathfrak{g})$.

\begin{lemma} \label{lem: complete R tree}
    $(T_X,d)$ is a complete $\mathbb{R}$-tree.
\end{lemma}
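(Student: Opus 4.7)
The plan is to show that $(T_X,d)$ is (i) a median metric space, (ii) geodesic of rank $1$, and (iii) complete; combined, (i) and (ii) give the $\mathbb{R}$-tree structure via Lemma~\ref{lem: tree characterisation}, and (iii) is where the actual work lies.

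For (i), the map $\ell:\mathcal{Y}_X\to\mathbb{R}_{\geq 0}$ immediately satisfies the axioms of Definition~\ref{def: length function} (positive definiteness, symmetry under $-1$ and additivity under $\con$ are built into the definitions). Together with Lemma~\ref{lem: big tree ore} and Proposition~\ref{prop: median metric}, this makes $(T_X,d)$ a median metric space. For rank $1$, observe that in $\mathcal{Y}_X$, if $\f_1,\f_2$ both lie below a common element then they are both prefixes of it, hence $\preceq$-comparable, so $\f_1\wedge\f_2\in\{\f_1,\f_2\}$. Thus $\mathcal{Y}_X$ has no orthogonal pair of non-identity elements, and Lemma~\ref{lem: rank} gives $\mathrm{rk}(T_X)=1$. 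For geodesicity, given $\f,\g\in T_X$ set $\mathfrak{m}=\f\wedge\g$ and define $\gamma:[0,d(\f,\g)]\to T_X$ by sending $t\in[0,\ell(\f)-\ell(\mathfrak{m})]$ to the length-$(\ell(\f)-t)$ prefix of $\f$, and $t\in[\ell(\f)-\ell(\mathfrak{m}),\,d(\f,\g)]$ to the length-$(2\ell(\mathfrak{m})-\ell(\f)+t)$ prefix of $\g$; the two formulas agree at the junction point, producing $\mathfrak{m}$. All prefixes of admissible elements remain admissible, and the identity $d(\h_1,\h_2)=\ell(\h_1)+\ell(\h_2)-2\ell(\h_1\wedge\h_2)$ shows $\gamma$ is isometric.

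For completeness, let $(\f_n)$ be a Cauchy sequence. Since $|\ell(\f_n)-\ell(\f_m)|\leq d(\f_n,\f_m)$, the lengths converge to some $L\geq 0$. Set $\mathfrak{g}_n:=\bigwedge_{m\geq n}\f_m$, which exists by Lemma~\ref{lambda trees: median semilattice} and lies in $T_X$ as a face of $\f_n$; the sequence is $\preceq$-increasing. The Cauchy condition forces $\ell(\mathfrak{g}_n)\to L$, since for $m,m'\geq n$ large, $d(\f_m,\f_{m'})<\varepsilon$ gives $\ell(\f_m\wedge\f_{m'})>(\ell(\f_m)+\ell(\f_{m'})-\varepsilon)/2\to L-\varepsilon/2$. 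Choose any representatives $g_n$ of $\mathfrak{g}_n$; the set $C:=\bigcup_{n<m}\{s\in[0,\ell(g_n)]:g_m(s)\neq g_n(s)\}$ is a countable union of countable sets, hence countable. Define $g:[0,L]\to X$ by letting $g(s)$ equal the stable value $g_n(s)$ for $s\in[0,L)\setminus C$ and any $n$ with $s<\ell(g_n)$, and arbitrary elsewhere; then $\mathfrak{g}:=[g]$ satisfies $\mathfrak{g}_n\preceq\mathfrak{g}$ for all $n$. It then follows that $\ell(\f_n\wedge\mathfrak{g})\geq\ell(\mathfrak{g}_n)\to L$, so $d(\f_n,\mathfrak{g})=\ell(\f_n)+L-2\ell(\f_n\wedge\mathfrak{g})\to 0$.

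The main obstacle is verifying that the limit $\mathfrak{g}$ is admissible, i.e.\ lies in $T_X$ and not merely in $\mathcal{Y}_X$. The strategy is as follows: if $\mathfrak{g}=\mathfrak{a}\con\mathfrak{b}\con\mathfrak{b}^{-1}\con\mathfrak{c}$ with $\mathfrak{b}\neq\id$, the inadmissibility is already witnessed by the prefix $\mathfrak{a}\con\mathfrak{b}\con\mathfrak{b}^{-1}$ of length $\ell(\mathfrak{a})+2\ell(\mathfrak{b})<L$. For any $n$ with $\ell(\mathfrak{g}_n)\geq\ell(\mathfrak{a})+2\ell(\mathfrak{b})$, uniqueness of prefixes of fixed length below a given element forces the corresponding prefix of $\mathfrak{g}_n$ to equal $\mathfrak{a}\con\mathfrak{b}\con\mathfrak{b}^{-1}$, making $\mathfrak{g}_n$ itself inadmissible and contradicting $\mathfrak{g}_n\in T_X$. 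This rules out the possibility that a cancellation pattern appears only in the infinite limit.
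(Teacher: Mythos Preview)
Your proof is correct and follows essentially the same approach as the paper's; in fact you are more thorough, since the paper's proof simply constructs the limit map and asserts $\f_n\to\f$ in $T_X$ without explicitly checking admissibility. One minor point: your claim that $\ell(\mathfrak{a})+2\ell(\mathfrak{b})<L$ tacitly assumes $\mathfrak{c}\neq\id$, but this can always be arranged by replacing $\mathfrak{b}$ with a strictly shorter nontrivial suffix $\mathfrak{b}''$ (writing $\mathfrak{b}=\mathfrak{b}'\con\mathfrak{b}''$ gives $\mathfrak{g}=(\mathfrak{a}\con\mathfrak{b}')\con\mathfrak{b}''\con(\mathfrak{b}'')^{-1}\con((\mathfrak{b}')^{-1}\con\mathfrak{c})$ with the last factor of positive length).
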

\begin{proof}
    By Lemma~\ref{prop: median metric}, $T_X$ is a median metric space and it is clear from the construction that $T_X$ is connected. For all $\f \in \mathcal{Y}_X - \{\id\}$, we have $\f^{\perp} = \{\id\}$, so  Lemma~\ref{lem: rank} implies that $T_X$ has rank 1. Thus $T_X$ is an $\mathbb{R}$-tree by \cite[Lemma~15.1.2]{Bowditch}; let us show that it is complete.
    Let $(\f_n)_{n \in \mathbb{N}} \subseteq T_X$ be a Cauchy sequence. Then the sequence $(\ell(\f_n))_{n \in \mathbb{N}} \subseteq \mathbb{R}$ is Cauchy and has a limit $\ell \in \mathbb{R}$. Define a map $f:[0,\ell] \rightarrow \mathbb{R}$ as follows. If $t < \ell$ then there exists $N_t \in \mathbb{N}$ such that, for all $n \geq N_t$, we have $\ell(\f_n \wedge \f_{N_t}) > t$. Let $f(t) = f_{N_t}(t)$ for all such $t$ and let $f(\ell)$ be arbitrary. Then $\f_n \rightarrow \f$ in $T_X$. 
\end{proof}

\begin{remark}
    If $2 \leq |X| \leq 2^{\aleph_0}$ then the valence (and the cardinality) of $T_X$ is $2^{2^{\aleph_0}}$.  
\end{remark}

The following characterisation of admissibility for elements of $\mathcal{Y}_X$ will be useful.

\begin{lemma} \label{lem: characterise admissiblity}
    Let $\ell \geq 0$ and $f:[0,\ell] \rightarrow X$ be a map.
    The equivalence class of $f$ is admissible if and only if either $\ell = 0$ or, for each $t \in (0,\ell)$ and every non-degenerate interval $[t-s, t+s] \subseteq [0,\ell(f)]$, there exist uncountably many $0 < \varepsilon < s$ such that $f^*(t-\varepsilon) \neq 2t \cdot f(t+\varepsilon)$.
\end{lemma}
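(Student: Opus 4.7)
The plan is to verify the biconditional by direct computation in the semidirect product $\mathbb{R} \rtimes_\alpha \la * \ra$, tracking how the group action and the involution interact with the formula $f^{-1}(t) = -\ell(f) \cdot f^*(\ell(f)-t)$. The case $\ell = 0$ is vacuous, since if $\f = \id$ then $\f$ is admissible (any decomposition $\mathfrak{x} \con \mathfrak{y} \con \mathfrak{y}^{-1} \con \mathfrak{z}$ would force $\ell(\mathfrak{y})=0$ by additivity of length). So assume $\ell(\f) > 0$.

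For the forward direction (inadmissible implies the palindromic equation holds), suppose $\f = \mathfrak{x} \con \mathfrak{y} \con \mathfrak{y}^{-1} \con \mathfrak{z}$ with $s \coloneqq \ell(\mathfrak{y}) > 0$, and set $r \coloneqq \ell(\mathfrak{x})$, $q \coloneqq \ell(\mathfrak{z})$, $t \coloneqq r + s$. Then $t \in (0,\ell)$ and $[t-s, t+s] \subseteq [0,\ell]$. For the nice representative $F \coloneqq x \con y \con y^{-1} \con z$ and any $\varepsilon \in (0,s)$, I would compute: $F(t-\varepsilon) = -r \cdot y(s-\varepsilon)$, and using $y^{-1}(u) = -s\cdot y^*(s-u)$ together with the relation $* \cdot \lambda = -\lambda \cdot *$ in the semidirect product, $F(t+\varepsilon) = -r \cdot (-s) \cdot y^{-1}(\varepsilon) = (-r-2s)\cdot y^*(s-\varepsilon)$. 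Applying $*$ to the first equation gives $F^*(t-\varepsilon) = r \cdot y^*(s-\varepsilon)$, and multiplying the second equation by $2t = 2(r+s)$ yields $2t \cdot F(t+\varepsilon) = r \cdot y^*(s-\varepsilon)$ as well. Since $f \simeq F$, the set of $u \in [0,\ell]$ where $f(u) \neq F(u)$ is countable, so the equality $f^*(t-\varepsilon) = 2t \cdot f(t+\varepsilon)$ holds for all $\varepsilon \in (0,s)$ outside a countable subset (those $\varepsilon$ for which $t \pm \varepsilon$ lies in the exceptional set).

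For the converse, given $t \in (0,\ell)$ and $s > 0$ with $[t-s,t+s] \subseteq [0,\ell]$, set $\tau \coloneqq t-s$ and define maps $x \coloneqq f|_{[0,\tau]}$, $y:[0,s] \to X$ by $y(v) \coloneqq \tau \cdot f(v + \tau)$, and $z:[0,\ell-t-s] \to X$ by $z(w) \coloneqq (t+s) \cdot f(w + t + s)$, understanding $\mathfrak{x}$ or $\mathfrak{z}$ to be $\id$ if the corresponding interval is degenerate. The equivalence class $\mathfrak{y}$ has length $s > 0$. By the definition of $\con$ I would verify immediately that $\mathfrak{x} \con \mathfrak{y}$ and $\mathfrak{x} \con \mathfrak{y} \con \mathfrak{y}^{-1} \con \mathfrak{z}$ agree with $\f$ on the intervals $[0, t]$ and $[t+s, \ell]$. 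For the middle interval $(t, t+s]$, parametrizing by $\varepsilon = u - t$ and applying the formulas for $\con$ and $-1$ in the same way as above, the value at $t + \varepsilon$ of the concatenation equals $-(r+2s) \cdot y^*(s-\varepsilon)$, which via the relation $*\cdot \lambda = -\lambda \cdot *$ simplifies to an expression matching $f(t+\varepsilon)$ \emph{precisely} when $f^*(t-\varepsilon) = 2t \cdot f(t+\varepsilon)$. By hypothesis this holds outside a countable set of $\varepsilon$, so $\f = \mathfrak{x} \con \mathfrak{y} \con \mathfrak{y}^{-1} \con \mathfrak{z}$ in $\mathcal{Y}_X$, witnessing inadmissibility.

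The only real obstacle is bookkeeping: one must carefully track the commutation $*\cdot \lambda = -\lambda \cdot *$ and the shifting constants introduced by each application of $\con$ so that the coefficient $2t$ in the palindromic equation emerges correctly. Once the formula $F^*(t-\varepsilon) = r \cdot y^*(s-\varepsilon) = 2t \cdot F(t+\varepsilon)$ is identified, both directions follow from the definition of equivalence (agreement off a countable set) together with the observation that a point $\varepsilon$ is "bad" only if either $t - \varepsilon$ or $t + \varepsilon$ is in the countable exceptional set.
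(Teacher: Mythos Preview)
Your proof is correct. The paper states this lemma without proof (it is simply introduced as ``The following characterisation of admissibility for elements of $\mathcal{Y}_X$ will be useful''), so there is no paper argument to compare against; your direct computational verification, tracking the semidirect-product relation $* \cdot \lambda = -\lambda \cdot *$ to identify the coefficient $2t$, is exactly the natural route. One tiny notational slip: in the converse paragraph you write $-(r+2s)\cdot y^*(s-\varepsilon)$ using the symbol $r$, which was only introduced in the forward direction; in the converse you defined $\tau = t-s$ instead, so either write $-(\tau+2s)$ or add the remark $r \coloneqq \tau$.
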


\subsection{Axes}

\begin{definition} \label{axis defn}
    Let $G$ be a group acting by isometries on a metric space $(Y,d_Y)$. An \textit{axis} in $Y$ is an isometric embedding $L: \mathbb{R} \rightarrow Y$ such that the stabiliser $\Stab_G(L(\mathbb{R}))$ acts coboundedly on $L(\mathbb{R})$. We will also call the image of such a map an \textit{axis}.
\end{definition}

Let $X, \mathbb{R}, \mathcal{Y}_X, T_X$ be as in Section~\ref{sec: TX construction}.

\begin{definition}
An element $\f \in \mathcal{Y}_X$ is \textit{constant} if there is a representative of $\f$ which is a constant map. Given a constant element $\f \in \mathcal{Y}_X$, we will always assume that the representative $f$ is a constant map. The \textit{image} of $\f$ is the image of $f$.
\end{definition}

\begin{lemma} \label{lem: admissible constants}
    Let $x \in X$ be such that $x$ and $x^*$ are in different $\mathbb{R}$-orbits. Then every constant element with image $x$ is admissible.
\end{lemma}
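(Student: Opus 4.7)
The plan is to invoke Lemma~\ref{lem: characterise admissiblity} directly. Let $\f \in \mathcal{Y}_X$ be a constant element with image $x$, and let $f:[0,\ell] \to X$ be the constant representative, so $f(t) = x$ for every $t \in [0,\ell]$. If $\ell = 0$ then $\f = \id$, which is admissible, so we may assume $\ell > 0$.

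I would then translate the non-admissibility criterion into a condition purely on $x$. For any $t \in (0,\ell)$, any interval $[t-s,t+s] \subseteq [0,\ell]$, and any $\varepsilon \in (0,s)$, we have $f^*(t-\varepsilon) = x^*$ and $2t \cdot f(t+\varepsilon) = 2t \cdot x$. Thus the equation $f^*(t-\varepsilon) = 2t \cdot f(t+\varepsilon)$ reduces to $x^* = 2t \cdot x$, which is independent of $\varepsilon$.

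By hypothesis, $x^* \notin \mathbb{R} \cdot x$, so $x^* \neq 2t \cdot x$ for every $t \in \mathbb{R}$. Consequently $f^*(t-\varepsilon) \neq 2t \cdot f(t+\varepsilon)$ holds not merely for uncountably many, but for \emph{all} $\varepsilon \in (0,s)$. The criterion of Lemma~\ref{lem: characterise admissiblity} is satisfied, so $\f$ is admissible.

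The argument is very short; there is no real obstacle, since the characterisation lemma has already reduced admissibility to a pointwise condition that the orbit-separation assumption on $x$ kills outright. The only thing to be careful about is choosing the constant representative of $\f$ (as the convention following Definition of constant elements allows), since the criterion can fail for poorly chosen representatives differing from $f$ on a countable set.
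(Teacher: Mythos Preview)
Your proof is correct and takes essentially the same approach as the paper: both invoke Lemma~\ref{lem: characterise admissiblity} and reduce the criterion, for the constant representative, to the statement $x^* \neq 2t \cdot x$, which follows from $x^* \notin \mathbb{R} \cdot x$. The paper phrases it as a proof by contradiction while you argue directly, but this is a purely stylistic difference.
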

\begin{proof}
    Let $f:[0,\ell] \rightarrow X$ be a constant map to $x \in X$ and suppose that $f$ is inadmissible. Then, by Lemma~\ref{lem: characterise admissiblity}, there exists $\lambda \in [0,\ell]$ and $s > 0$ such that $[\lambda - s,\lambda + s] \subseteq [0,\ell]$ and for uncountably many $0 < \varepsilon < s$ we have $f^*(\lambda-\varepsilon) = 2\lambda \cdot f(\lambda + \varepsilon)$. But then $x^* = 2\lambda \cdot x$. 
\end{proof}

\begin{definition} \label{def: standard axis}
    Let $x \in X$ be such that $x$ and $x^*$ are in different $\mathbb{R}$-orbits. Define a subspace 
    \[L_x \coloneqq \{\f \in T_X : f(t) = x \; \forall \; t \in [0,\ell(f)]\}.\]
    The $x$\textit{-axis} of $T_X$ is the subspace $\mathbf{L}_x \coloneqq L_x \cup L_{x^*}$. A subspace $L \subseteq T_X$ is a \textit{standard axis} if it is an $x$-axis for some $x \in X$ such that $\Stab_\mathbb{R}(x)$ is non-trivial and $x^* \notin \mathbb{R} \cdot x$.
\end{definition}

The lemma below follows immediately from the relevant definitions.

\begin{lemma} \label{lem: stab of standard axis}
     Let $x \in X$ be such that $x$ and $x^*$ are in different $\mathbb{R}$-orbits. Then
     \[ 
     	\Stab_{T_X}(\mathbf{L}_x) = \{ \f \in \mathbf{L}_x : \ell(\f) \in \Stab_{\mathbb{R}}(x)\} \cong \Stab_{\mathbb{R}}(x).
     \]
     Moreover, if $\varphi: \mathbf{L}_x \rightarrow \mathbb{R}$ is the map defined by $\varphi(\f) = \ell(\f)$ if $\f \in L_x$ and $\varphi(\f) = -\ell(\f)$ otherwise, then $\varphi$ is a $\Stab_{T_X}(\mathbf{L}_X)$-equivariant isometry.
\end{lemma}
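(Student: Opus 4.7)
The plan is to establish the set equality by two containments and then verify the properties of $\varphi$ by a direct case analysis of $\twist$ on $\mathbf{L}_x$.

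For the containment $\Stab_{T_X}(\mathbf{L}_x) \subseteq \{\f \in \mathbf{L}_x : \ell(\f) \in \Stab_{\mathbb{R}}(x)\}$, any $\g \in \Stab_{T_X}(\mathbf{L}_x)$ satisfies $\g = \g \twist \id \in \mathbf{L}_x$, so $\g$ is constant with value $x$ or $x^*$. A direct computation from the definitions shows that for $\f \in L_x$ of length $\ell_f$, the inverse $\f^{-1}$ is the constant map with value $-\ell_f \cdot x^*$: the hypothesis $x^* \notin \mathbb{R} \cdot x$ rules out this value being $x$, while it equals $x^*$ iff $\ell_f \in \Stab_{\mathbb{R}}(x)$, using the easy observation that $\Stab_{\mathbb{R}}(x) = \Stab_{\mathbb{R}}(x^*)$ (because $r \cdot x^* = (-r \cdot x)^*$). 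The case $\f \in L_{x^*}$ is symmetric. Since $\g^{-1} \in \Stab_{T_X}(\mathbf{L}_x) \subseteq \mathbf{L}_x$ by the same argument applied to $\g^{-1}$, this forces $\ell(\g) \in \Stab_{\mathbb{R}}(x)$.

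For the reverse containment I would fix $\g \in \mathbf{L}_x$ with $\ell_g \in \Stab_{\mathbb{R}}(x)$ and verify that $\g \twist \f \in \mathbf{L}_x$ for every $\f \in \mathbf{L}_x$. This splits into four cases according to which half-axis each of $\g, \f$ lies in, with sub-cases for the relative lengths. In each case $\g^{-1} \wedge \f$ is either $\id$ (when the initial values $x, x^*$ disagree) or the shorter of the two elements (when they agree), and the formula for $\twist$ reduces to a controlled concatenation of constant maps. The recurring algebraic input is that $-\ell_g \cdot y = y$ for all $y \in \{x, x^*\}$ whenever $\ell_g \in \Stab_{\mathbb{R}}(x)$, which collapses the reshuffled segments back to constants in $\{x, x^*\}$, so $\g \twist \f \in \mathbf{L}_x$. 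The same computations yield $\varphi(\g \twist \f) = \varphi(\g) + \varphi(\f)$ in every sub-case, giving $\Stab_{T_X}(\mathbf{L}_x)$-equivariance of $\varphi$ for free.

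For the isometry property, I would evaluate $\f \wedge \g$ for arbitrary $\f, \g \in \mathbf{L}_x$: if both lie in the same $L_y$ (with $y \in \{x, x^*\}$), then the shorter is a prefix of the longer, so $\ell(\f \wedge \g) = \min\{\ell(\f), \ell(\g)\}$; if they lie on opposite sides then $\f \wedge \g = \id$, as their initial values $x$ and $x^*$ differ (note $x \neq x^*$ follows from the orbit hypothesis). Substituting into $d(\f, \g) = \ell(\f) + \ell(\g) - 2\ell(\f \wedge \g)$ yields $|\varphi(\f) - \varphi(\g)|$ on the nose. Bijectivity of $\varphi$ onto $\mathbb{R}$ follows from Lemma~\ref{lem: admissible constants}: every non-negative length is realised by an admissible constant map to $x$, every negative value by one to $x^*$, and distinct elements of $\mathbf{L}_x$ have distinct $(\text{side}, \text{length})$ pairs. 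The main obstacle is book-keeping in the $\twist$ computation, since $\con$ and $^{-1}$ both involve the $\mathbb{R}$-action on $X$, so a ``constant'' in $\mathcal{Y}_X$ can acquire several distinct actual values depending on where it appears inside a concatenation; the content of the argument is precisely that these discrepancies vanish exactly when $\ell_g \in \Stab_{\mathbb{R}}(x)$.
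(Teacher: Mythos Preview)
Your proposal is correct. The paper does not give a proof of this lemma at all, stating only that it ``follows immediately from the relevant definitions''; your case analysis is exactly the natural unpacking of that claim, and all the computations you outline (the value of $\f^{-1}$, the identification $\Stab_{\mathbb{R}}(x) = \Stab_{\mathbb{R}}(x^*)$, the four-case evaluation of $\g \twist \f$, and the formula $d(\f,\g) = \ell(\f) + \ell(\g) - 2\ell(\f \wedge \g)$ on $\mathbf{L}_x$) are accurate.
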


\begin{corollary}
    Standard axes in $T_X$ are axes in the sense of Definition~\ref{axis defn}.
\end{corollary}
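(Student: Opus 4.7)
The goal is to verify, for a standard axis $\mathbf{L}_x \subseteq T_X$, the two clauses of Definition~\ref{axis defn}: that $\mathbf{L}_x$ is the image of an isometric embedding of $\mathbb{R}$, and that $\Stab_{T_X}(\mathbf{L}_x)$ acts coboundedly on it. Essentially every ingredient needed is already packaged in Lemma~\ref{lem: stab of standard axis}; the task is just to assemble them.

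First, I would observe that the map $\varphi : \mathbf{L}_x \to \mathbb{R}$ constructed in Lemma~\ref{lem: stab of standard axis} is surjective (for every $\lambda \geq 0$ the constant maps of length $\lambda$ with image $x$ or $x^*$ are admissible by Lemma~\ref{lem: admissible constants}, so they belong to $L_x$ and $L_{x^*}$ and cover both halves of $\mathbb{R}$ under $\varphi$). Since $\varphi$ is already an isometry onto its image by the lemma, its inverse $\varphi^{-1} : \mathbb{R} \hookrightarrow T_X$ is an isometric embedding with image $\mathbf{L}_x$, verifying the first clause.

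Next, I would analyse the stabiliser action. Lemma~\ref{lem: stab of standard axis} gives a $\Stab_{T_X}(\mathbf{L}_x)$-equivariant isometry onto $\mathbb{R}$ and an isomorphism $\Stab_{T_X}(\mathbf{L}_x) \cong \Stab_\mathbb{R}(x)$. The induced action of $\Stab_\mathbb{R}(x)$ on $\mathbb{R}$ is by isometries, and since $T_X$ acts on itself freely by left multiplication (being a group), this induced action is free. The only free isometric actions on $\mathbb{R}$ are by translations, so the action factors through an injective homomorphism into the translation subgroup of $\Isom(\mathbb{R})$.

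Finally, by the definition of a standard axis, $\Stab_\mathbb{R}(x)$ is non-trivial, hence contains some $\lambda \neq 0$. Its image in the translation group of $\mathbb{R}$ therefore contains the translation by $\lambda$, and in particular the orbit of $0$ is $|\lambda|/2$-dense in $\mathbb{R}$. Pulling back by $\varphi$, the orbit $\Stab_{T_X}(\mathbf{L}_x) \cdot \id$ is $|\lambda|/2$-dense in $\mathbf{L}_x$, so the action is cobounded. I do not expect any real obstacle here: the corollary is essentially bookkeeping on top of Lemma~\ref{lem: stab of standard axis}, with the only substantive observation being that freeness forces the $\mathbb{R}$-action to be by translations (which is where the hypothesis $x^* \notin \mathbb{R} \cdot x$ in the definition of a standard axis has already done the heavy lifting, by ensuring $\mathbf{L}_x$ genuinely has two "ends" indexed by $x$ and $x^*$ and by making the earlier lemma apply).
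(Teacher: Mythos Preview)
Your proposal is correct and follows the same approach as the paper, which states the corollary without proof as an immediate consequence of Lemma~\ref{lem: stab of standard axis}. Your argument is more detailed than necessary: the equivariance of $\varphi$ already shows that the induced action on $\mathbb{R}$ is by translations (by $\varphi(\g)$ for $\g \in \Stab_{T_X}(\mathbf{L}_x)$), so the detour through freeness of the action is not needed, though it is not wrong.
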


\begin{remark} \label{rem: weird axes}
Clearly `most' lines in $T_X$ will have trivial stabilisers. Moreover it is not difficult to construct non-standard axes with cyclic stabilisers and even such that the generator of the stabiliser acts on its axis with arbitrary translation length. 

A more surprising observation is that it is also possible for a non-standard axis of $T_X$ to have a dense stabiliser. Suppose that, for some uncountable proper subgroup $H \lneq \mathbb{R}$ and $x,y \in X$, the orbits $\mathbb{R} \cdot x, \mathbb{R} \cdot x^*, \mathbb{R} \cdot y, \mathbb{R} \cdot y^*$ are pairwise disjoint and $\Stab_\mathbb{R}(x) = \Stab_\mathbb{R}(y) = H$. Let $L \subseteq T_X$ be the set of elements of the form $f:[0,\ell] \rightarrow X$ such that $f(t) = x$ if $t \in H$ and $f(t) = y$ otherwise and let $\mathbf{L} \coloneqq L \cup L^*$. The fact that $H$ is uncountable implies that $\mathbf{L}$ is not standard, yet $\Stab_{T_X}(\mathbf{L}) = \{\f \in L : \ell(\f) \in H\} \cong H$. This behaviour will disappear when we pass to more sensible subgroups of $T_X$.
\end{remark}

\subsection{Templates} \label{sec: templates}

We can now introduce the subgroup of $T_X$ which will be used to prove Theorem~\ref{thm: centraliser spectrum}.

Let $S \subseteq T_X$ be a set. The closed subgroup $\overline{\la S \ra}$ generated by $S$ is the smallest closed subgroup of $T_X$ containing $S$, with respect to the topology induced by the metric $d$.

\begin{lemma} \label{lem: connected subgroups}
Let $S \subseteq T_X$ be a symmetric set which is closed under restriction (i.e. $\mathfrak{s}^{-1} \in S$ for all $\mathfrak{s} \in S$ and, if the equivalence class of $s: [0,\ell] \rightarrow X$ is in $S$, then the equivalence class of the restriction $s|_{[0,t]}$ is in $S$ for all $t \in [0,\ell]$). Then $\la S \ra$ and $\overline{\la S \ra}$ are connected.
\end{lemma}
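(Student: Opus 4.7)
The plan is to show that $\langle S \rangle$ is path-connected by exhibiting an explicit path from the identity to an arbitrary product of elements of $S$, and then to deduce that $\overline{\langle S \rangle}$ is connected since the closure of a connected set is connected. The key input is that restriction closure gives, for each element of $S$, a canonical geodesic from $\id$ to that element which stays inside $S$.

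First I would handle the trivial case $S = \emptyset$, where $\langle S \rangle = \{\id\}$ and there is nothing to prove. Otherwise, fix $\mathfrak{s} \in S$ with representative $s:[0,\ell(s)] \to X$ and define $\gamma_s:[0,\ell(s)] \to T_X$ by $\gamma_s(t) \coloneqq [s|_{[0,t]}]$. The image of $\gamma_s$ lies in $S$ by assumption, and $\gamma_s(0) = \id$, $\gamma_s(\ell(s)) = \mathfrak{s}$. For $t \leq t'$, we have $\gamma_s(t) \preceq \gamma_s(t')$, hence $\gamma_s(t) \wedge \gamma_s(t') = \gamma_s(t)$, so by the formula $d(\f,\g) = \ell(\f) + \ell(\g) - 2\ell(\f \wedge \g)$ noted in the proof of Proposition~\ref{prop: median metric}, we get $d(\gamma_s(t), \gamma_s(t')) = |t' - t|$. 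Thus $\gamma_s$ is continuous (in fact an isometric embedding), witnessing a path from $\id$ to $\mathfrak{s}$ inside $S$.

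Next, since $S$ is symmetric, every element of $\langle S \rangle$ is a finite product $\mathfrak{s}_1 \twist \cdots \twist \mathfrak{s}_n$ with $\mathfrak{s}_i \in S$. Left multiplication by any element of $T_X$ is an isometry (as recorded after Proposition~\ref{prop: median metric}), hence continuous, so for each $i$ the translated path $t \mapsto (\mathfrak{s}_1 \twist \cdots \twist \mathfrak{s}_{i-1}) \twist \gamma_{s_i}(t)$ is a continuous path inside $\langle S \rangle$ connecting $\mathfrak{s}_1 \twist \cdots \twist \mathfrak{s}_{i-1}$ to $\mathfrak{s}_1 \twist \cdots \twist \mathfrak{s}_i$. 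Concatenating these paths yields a continuous path in $\langle S \rangle$ from $\id$ to $\mathfrak{s}_1 \twist \cdots \twist \mathfrak{s}_n$, so $\langle S \rangle$ is path-connected.

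Finally, since connectedness is preserved under taking closures in any topological space, the path-connectedness (hence connectedness) of $\langle S \rangle$ implies that $\overline{\langle S \rangle}$ is connected. There is no real obstacle here; the only point to be slightly careful about is verifying that $\gamma_s$ is genuinely continuous in the metric $d$, which follows from the elementary length formula above.
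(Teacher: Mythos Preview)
Your proof is correct and follows essentially the same approach as the paper for $\langle S \rangle$: both construct the geodesic $t \mapsto [s|_{[0,t]}]$ inside $S$ and then propagate by left translation to reach arbitrary finite products. For $\overline{\langle S \rangle}$ you take a shortcut, invoking the general fact that the closure of a connected set is connected, whereas the paper explicitly builds a path to each limit point (assuming $\f_n \preceq \f_{n+1}$ and splicing geodesics), thereby establishing the stronger conclusion that $\overline{\langle S \rangle}$ is path-connected; since the lemma only asserts connectedness, your simpler argument suffices.
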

\begin{proof}
    Let $H \leq T_X$ be a subgroup and $\f,\g \in H$ be elements which are connected to $\id$ via paths $\gamma_\f,\gamma_\g: [0,1] \rightarrow H$. Then the composition $\f \star \gamma_\g$ is a path from $\f$ to $\f \star \g$ and the concatenation of $\gamma_\f$ with $\f \star \gamma_\g$ is a path from $\id$ to $\f \star \g$. It follows that $\la S \ra$ is connected. Now suppose that $(\f_n)_{n \in N} \subseteq H$ is a sequence which converges to $\f \in H$ such that each $\f_n$ is connected to $\id$ via a path in $H$. Then we can assume without loss of generality that $\f_n \preceq \f_{n+1}$ for all $n \in \mathbb{N}$ and fix geodesics $\gamma_n: [0,\ell(\f_n)] \rightarrow H$ connecting $\id$ to $\f_n$. Let $\gamma(\ell(\f)) = \f$ and, if $0 \leq t < \ell(f)$, let $\gamma(t) = \gamma_n(t)$ for some (equivalently any) $n \in \mathbb{N}$ such that $\ell(\f_n) > t$. Then $\gamma: [0, \ell(\f)] \rightarrow H$ is a path connecting $\id$ to $\f$. It follows that $\overline{\la S \ra}$ is connected.
\end{proof}

\begin{definition}    
    Given a subset $Y \subseteq X$, let $T_X(Y) \coloneqq \overline{\la S \ra}$, where $S = \cup_{y \in Y} L_y$.
\end{definition}

We will show that $T_X(Y)$ is the universal real tree with valence $2^{\aleph_0}$ whenever $2 \leq |\mathbb{R} \cdot Y| \leq 2^{\aleph_0}$. To do this, we will first characterise the elements of $T_X(Y)$ using \textit{templates}. We start by showing that $T_X(Y) = T_X(\mathbb{R} \cdot Y) = T_X(Y^*) = T_X(\mathbb{R} \cdot Y^*)$.

\begin{lemma} \label{lem: orbit closure}
    Let $Y \subseteq X$ be non-empty and suppose that $\Stab_\mathbb{R}(y) \neq \{0\}$ and $y^* \notin \mathbb{R} \cdot y$ for all $y \in Y$. If $H \leq T_X$ is a closed subgroup containing $L_y$ for each $y \in Y$, then $H$ contains $L_{\lambda \cdot y}$ and $L_{\lambda \cdot y^*}$ for each $\lambda \in \mathbb{R}$.
\end{lemma}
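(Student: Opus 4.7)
The plan is to exhibit, for every $\lambda \in \mathbb{R}$ and every length $L > 0$, an explicit element of $H$ whose representative is the constant map of length $L$ with image $\lambda \cdot y$, and likewise for $\lambda \cdot y^*$. The key observation is that within $(T_X,\twist)$ one can \emph{shift} the image of a constant by pre-multiplying it by the inverse of a shorter element of $L_y$.

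For each $\ell \geq 0$, let $\g_\ell \in L_y$ denote the class of the constant map of length $\ell$ and image $y$; by hypothesis $\g_\ell \in H$. Given $0 \leq \ell_1 \leq \ell_2$, I would unwind the definition of $\twist$ with $\f \coloneqq \g_{\ell_1}^{-1}$ and $\g \coloneqq \g_{\ell_2}$. Since $(\g_{\ell_1}^{-1})^{-1} = \g_{\ell_1} \preceq \g_{\ell_2}$, the meet $\f^{-1} \wedge \g$ equals $\g_{\ell_1}$, so the left factor in the formula for $\f \twist \g$ collapses to $\id$ and we are left with $\g_{\ell_1}^{-1} \twist \g_{\ell_2} = \g_{\ell_2} \divdot \g_{\ell_1}$. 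One application of the formula for $\divdot$ recorded in the proof of (O3) in Lemma~\ref{lem: big tree ore} identifies $\g_{\ell_2} \divdot \g_{\ell_1}$ with the constant map of length $\ell_2 - \ell_1$ and image $\ell_1 \cdot y$; admissibility is automatic since this element lies in the image of $\twist$, and membership in $H$ is immediate because $H$ is a subgroup containing $\g_{\ell_1}$ and $\g_{\ell_2}$.

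The two conclusions then follow directly. For $L_{\lambda \cdot y}$: the group $\Stab_\mathbb{R}(y)$ is nontrivial and hence unbounded above, so we may pick $\ell_1 \geq 0$ with $\ell_1 - \lambda \in \Stab_\mathbb{R}(y)$, forcing $\ell_1 \cdot y = \lambda \cdot y$; varying $\ell_2 = \ell_1 + L$ over $L > 0$ then produces the constant of image $\lambda \cdot y$ and length $L$, while $\id$ handles $L = 0$. For $L_{\lambda \cdot y^*}$: the $\twist$-inverse (which coincides with the involution $-1$) of the element above is $\g_{\ell_2}^{-1} \twist \g_{\ell_1}$, and an analogous calculation identifies it with the constant of length $\ell_2 - \ell_1$ and image $-\ell_2 \cdot y^*$; since $\Stab_\mathbb{R}(y^*) = \Stab_\mathbb{R}(y)$ is also unbounded, the same parameter choice produces every element of $L_{\lambda \cdot y^*}$. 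I expect the only delicate point is keeping track of how the semidirect product $\mathbb{R} \rtimes_\alpha \la * \ra$ interacts with the involution when verifying the image of $\g_{\ell_2}^{-1} \twist \g_{\ell_1}$; no appeal to the closure of $H$ is actually used, so the stronger statement with $\la \bigcup_{y \in Y} L_y \ra$ in place of $\overline{\la \bigcup_{y \in Y} L_y \ra}$ is obtained.
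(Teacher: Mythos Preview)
Your proposal is correct and follows essentially the same approach as the paper: both compute $\g_{\ell_1}^{-1} \twist \g_{\ell_2}$ explicitly as the constant of length $\ell_2-\ell_1$ with image $\ell_1 \cdot y$, and use the nontrivial stabiliser to realise arbitrary $\lambda$. Your organisation is in fact slightly more streamlined than the paper's (which splits into the cases $\lambda \geq 0$ and $\lambda < 0$ and first establishes $L_{y^*} \subseteq H$ separately), and your observation that closure of $H$ is never invoked is correct and matches the paper's actual argument.
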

\begin{proof}
     Fix $y \in Y$ and, for each $\ell > 0$, let $\f_\ell \in L_y$ denote the constant element with length $\ell$ and image $y$. Suppose $s \in \Stab_\mathbb{R}(y)$ and $s > 0$. Then $\f_s^{-1} \in H$ has length $s$ and $f_s^{-1}(t) = (-s) \cdot y^* = (s \cdot y)^* = y^*$ for any $t \in [0,s]$. Given any $\ell > 0$ let $s,t \geq 0$ be such that $s \in \Stab_\mathbb{R}(y)$ and $\ell = s-t$. Then $\f_s^{-1} \star \f_t = \f_\ell^* \in L_{y^*}$ so $L_{y*} \subseteq H$. 
    Now let $\lambda \in \mathbb{R}$ and suppose that $\lambda \geq 0$. For each $\ell \geq 0$, let $\g_\ell \coloneqq \f_\lambda^{-1} \star \f_{\lambda + \ell}$. 
    Since $\f_{\lambda + \ell} = \f_\lambda \con \lambda \cdot \f_\ell$, the element $\g_\ell \in H$ has length $\ell$ and $g_\ell(t) = \lambda \cdot y$ for all $t \in [0,\lambda]$. Thus $L_{\lambda \cdot y} \subseteq H$. Suppose that $\lambda < 0$. For any $\ell > 0$, if $g_\ell' \coloneqq (\f^*_{-\lambda})^{-1} \twist \f^*_{\ell - \lambda}$ then $g_\ell'$ has length $\ell$ and $g_\ell'(t) = (-\lambda) \cdot y^* = (\lambda \cdot y)^*$. Thus $L_{(\lambda \cdot y)^*} \subseteq H$ and by the argument above this implies that $L_{\lambda \cdot y} \subseteq H$.
\end{proof}

\begin{definition}[Templates] \label{templates}
Let $\ell \in \mathbb{R}$ with $\ell \geq 0$. A countable set $P \subseteq [0,\ell]$ is a \textit{template} of $[0,\ell]$ if it satisfies the following.
\begin{enumerate}
    \item[(T1)] If $p \in P$ and $p > 0$ then there exist $p_1 \in P$ -- called the \textit{predecessor} of $p$ -- such that $p_1 < p$ and $P \cap (p_1,p) = \emptyset$. If $p \in P$ and $p < \ell$ then there exists $p_2 \in P$ -- called the \textit{successor} of $p$ -- such that $p < p_2$ and $P \cap (p,p_2) = \emptyset$.
    \item[(T2)] The union $\cup \{[p,p'] : p,p' \in P $ and $ P \cap (p,p') = \emptyset\}$ has countable complement in $[0,\ell]$.
\end{enumerate}
Given a template $P \subseteq [0,\ell]$, the \textit{inverse} of $P$ is the template $P^{-1} \coloneqq \{\ell - p : p \in P\}$.
\end{definition}
Note that (T2) implies that every point in the complement is an accumulation point of $P$ and that the closure $\overline{P}$ is countable.

Let $\ell \geq 0$ and $P \subseteq [0,\ell]$ be a template. There are two related but slightly different ways of ``filling in" a template to produce an element of $\mathcal{Y}_X$ or $T_X$. In the first version, a sequence of elements of $X$, indexed by $P$, directly defines a map. In the second, the input is a suitable sequence of elements of $\mathcal{Y}_X$, also indexed by $P$, and the map they define should be viewed as an infinite concatenation (so the action of $\mathbb{R}$ must be taken into account). 

\begin{definition}[Realising sequences]
\begin{itemize} 
\item[(i)] Let $(x_p)_{p \in P}$ be a sequence in $X$. The \textit{realisation} of $(x_p)_{p \in P}$ is the equivalence class of any function $f:[0,\ell] \rightarrow X$ such that $f(t) = x_p$ if $t \in (p,p')$ for some $p,p' \in P$ such that $P \cap (p,p') = \emptyset$. We say that $(x_p)_{p \in P}$ is \textit{admissible} if $\mathfrak{f} \in T_X$.
\item[(ii)] A sequence $(\f_p)_{p \in P} \subseteq \mathcal{Y}_X$ is \textit{consistent} if the following holds. For each $p \in P - \{\ell\}$, if $p' \in P$ is the successor of $p$, then $\ell(\f_p) = p'-p$ and if $\ell \in P$ then $\f_\ell = \id$. The \textit{concatenation} of $(\f_p)_{p \in P}$ is the equivalence class of any function $f:[0,\ell] \rightarrow X$ such that $f(t) = -p \cdot f_p(t-p)$ if $t \in (p,p')$ for some $p,p' \in P$ such that $P \cap (p,p') = \emptyset$. We say that $(\f_p)_{p \in P}$ is \textit{admissible} if $\mathfrak{f} \in T_X$.
\end{itemize}
\end{definition}

\begin{definition}[Template ore]
Let $\mathcal{Z}_X$ be the set of realisations of sequences $(x_p)_{p \in P} \subseteq X$, where $P \subseteq [0,\ell]$ is a template and $\ell \geq 0$. The set $\mathcal{Z}_X \subseteq \mathcal{Y}_X$ is called the \textit{template ore} over X. 
\end{definition}

One checks easily that $\mathcal{Z}_X$ is closed under the operations $\con$ and $-1$, and under the relation $\preceq$. Moreover the element $\id$ is the realisation of any sequence $(x_0)$ so $\id \in \mathcal{Z}$. Therefore the following lemma follows immediately from Lemma~\ref{lem: big tree ore}.

\begin{lemma} \label{lem: template ore}
    $(\mathcal{Z}_X, \preceq, \id, \con, -1)$ is an ore.
\end{lemma}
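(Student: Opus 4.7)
The plan is to exhibit $\mathcal{Z}_X$ as a subset of $\mathcal{Y}_X$ closed under each of the ore operations and under $\preceq$; once this is done, axioms (O1)--(O6) for $\mathcal{Z}_X$ follow at once from the corresponding axioms for $\mathcal{Y}_X$, which were verified in Lemma~\ref{lem: big tree ore}. First, $\id \in \mathcal{Z}_X$ because it is realised by any sequence indexed by the template $\{0\} \subseteq [0,0]$. For closure under $-1$, I would observe that if $\f$ is the realisation of $(x_p)_{p \in P}$ with $P \subseteq [0,\ell(\f)]$, then $\f^{-1}$ is the realisation of a suitable sequence indexed by $P^{-1} = \{\ell(\f) - p : p \in P\}$, and the conditions (T1), (T2) transfer through the reflection $p \mapsto \ell(\f) - p$. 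For closure under $\con$, if $\g$ is realised on a template $Q \subseteq [0,\ell(\g)]$, then $\f \con \g$ is realised on $P \cup (\ell(\f) + Q) \subseteq [0,\ell(\f)+\ell(\g)]$; predecessors/successors at the junction $\ell(\f)$ are provided by the nearest elements of $P$ and $\ell(\f)+Q$, and (T2) is additive over the two halves.

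The slightly more delicate point, and the main obstacle, is closure under $\preceq$: given $\f \in \mathcal{Z}_X$ with template $P$ and any $0 \leq m \leq \ell(\f)$, I need a template $P'$ of $[0,m]$ realising $\f|_{[0,m]}$. I would split into two cases. If $m$ is an accumulation point of $P$ from below with $m \notin P$, take $P' = P \cap [0,m)$; then (T1) holds at every point of $P'$ directly from (T1) for $P$, and the complement of $\bigcup\{[p,p'] : p,p' \in P',\ P' \cap (p,p') = \emptyset\}$ in $[0,m]$ is covered by the complement for $P$ together with $\{m\}$, hence countable. Otherwise there is some largest $p \in P \cap [0,m]$, and I would set $P' = (P \cap [0,m]) \cup \{m\}$, so that $m$ gains $p$ as an immediate predecessor and (T1), (T2) are preserved.

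With closure in hand, the proof concludes by inheritance. Axioms (O1), (O2) restrict verbatim from $\mathcal{Y}_X$. For (O3), the unique $\z \in \mathcal{Y}_X$ with $\y = \x \con \z$ is the shifted tail of a template realisation of $\y$, which is a template realisation, hence in $\mathcal{Z}_X$. For (O4), the admissible elements of $\mathcal{Z}_X$ are precisely $\mathcal{Z}_X \cap T_X$; since $T_X$ is an $\mathbb{R}$-tree, the median of three points is always a prefix of one of them, so closure under $\preceq$ yields closure under medians, and the median semilattice identity is inherited. Finally, (O5) and (O6) are vacuous in $\mathcal{Z}_X$: just as in $\mathcal{Y}_X$, the only orthogonal pairs are those involving $\id$, since two template realisations admit a common join only when one is a prefix of the other, in which case their meet is the shorter one and can only equal $\id$ if that shorter element is $\id$ itself.
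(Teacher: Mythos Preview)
Your proposal is correct and follows exactly the approach of the paper's proof, which consists of a single sentence asserting that $\mathcal{Z}_X$ is closed under $\con$, $-1$, and $\preceq$ and then invoking Lemma~\ref{lem: big tree ore}; you have simply supplied considerably more detail than the paper does. One minor point worth tightening: your template $P \cup (\ell(\f) + Q)$ for $\f \con \g$ can fail (T1) at the junction when exactly one of $\ell(\f) \in P$, $0 \in Q$ holds and the other endpoint is an accumulation point, but this is easily repaired by first inserting a monotone sequence so that both sides accumulate at $\ell(\f)$---the paper glosses over this as well.
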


\begin{definition}[Complexity] \label{def: complexity}
Given $\f \in \mathcal{Z}_X$, the \textit{complexity} of $\f$ is the smallest countable ordinal $\alpha$ such that $\f$ is the realisation of a sequence in $X$ indexed over a template $P$ whose closure has CB-rank $\alpha$. For each countable ordinal $\alpha$, we denote by $\mathcal{Z}_X^{[\alpha]}$ the set of elements of $\mathcal{Z}_X$ with complexity $\leq \alpha$.
\end{definition}

\begin{remark}
The closure of a template is a countable compact Polish space, so its CB-rank is a successor ordinal (see Remark~\ref{rem: CB-rank succesor}). Therefore the complexity of an element of $\mathcal{Z}_X$ is always a successor ordinal. We will use this liberally from now on.
\end{remark}

\begin{lemma}[Inversion] \label{lem: template inverse}
Let $\f \in \mathcal{Z}_X$ be the realisation (resp. concatenation) of a sequence in $X$ (resp. in $\mathcal{Y}_X$) indexed over a template $P \subseteq [0,\ell(\f)]$. Then $\f^{-1}$ is the realisation (resp. concatenation) of a sequence in $X$ (resp. in $\mathcal{Y}_X$) indexed over $P^{-1}$.
\end{lemma}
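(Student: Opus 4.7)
The plan is to prove the lemma by explicit construction: given the indexing sequence for $\f$ over $P$, I will exhibit indexing sequences for $\f^{-1}$ over $P^{-1}$ and verify they satisfy the definitions. The argument splits naturally into the two parallel cases of realisation and concatenation.

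First I would check that $P^{-1} = \{\ell - p : p \in P\}$ is a template in the sense of Definition~\ref{templates}, where $\ell \coloneqq \ell(\f)$. The map $p \mapsto \ell - p$ is an order-reversing bijection $P \to P^{-1}$, so it carries pairs $p < p'$ of consecutive elements of $P$ to pairs $\ell - p' < \ell - p$ of consecutive elements of $P^{-1}$; this immediately gives (T1). The intervals $[p, p']$ with $P \cap (p,p') = \emptyset$ correspond under $t \mapsto \ell - t$ to intervals $[\ell - p', \ell - p]$ with $P^{-1} \cap (\ell - p', \ell - p) = \emptyset$, and $t \mapsto \ell - t$ preserves countable complements, so (T2) transfers.

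For the realisation case, suppose $\f$ is the realisation of $(x_p)_{p \in P}$. For each $q \in P^{-1}$ with $q < \ell$, let $q'$ denote the successor of $q$ in $P^{-1}$ and set $y_q \coloneqq -\ell \cdot x_{\ell - q'}^*$; if $\ell \in P^{-1}$ set $y_\ell$ arbitrarily. I would then verify that $\f^{-1}$ is the realisation of $(y_q)_{q \in P^{-1}}$: for $t \in (q, q')$, the formula $f^{-1}(t) = -\ell \cdot f^*(\ell - t)$ together with $\ell - t \in (\ell - q', \ell - q)$ (these being consecutive in $P$) gives $f(\ell - t) = x_{\ell - q'}$ and therefore $f^{-1}(t) = -\ell \cdot x_{\ell - q'}^* = y_q$.

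For the concatenation case, suppose $\f$ is the concatenation of a consistent sequence $(\f_p)_{p \in P}$. For each $q \in P^{-1}$ with $q < \ell$, with $q'$ the successor of $q$, set $\g_q \coloneqq \f_{\ell - q'}^{-1}$; if $\ell \in P^{-1}$ set $\g_\ell \coloneqq \id$. Consistency follows since $\ell(\g_q) = \ell(\f_{\ell-q'}^{-1}) = \ell(\f_{\ell - q'}) = (\ell - q) - (\ell - q') = q' - q$. To verify that the concatenation equals $\f^{-1}$, I would substitute $s = t - q$ and $p = \ell - q'$, $p' = \ell - q$, and compute, using $(\mu \cdot y)^* = -\mu \cdot y^*$ and $f_p^*(\ell(\f_p) - s) = \ell(\f_p) \cdot f_p^{-1}(s)$, that $-q \cdot g_q(t-q) = -q \cdot f_{\ell-q'}^{-1}(s)$ equals $-\ell \cdot f^*(\ell - t) = f^{-1}(t)$; the key identity is the arithmetic cancellation $-q - \ell(\f_p) = -\ell + p$, which is precisely the substitution $q = \ell - p'$, $\ell(\f_p) = p' - p$.

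The main obstacle is purely bookkeeping: keeping track of which variable indexes which block and correctly applying the semidirect product action of $\mathbb{R} \rtimes_\alpha \langle * \rangle$ so that the shift constants introduced by the definition of $\con$ match the shift constants introduced by inversion. Admissibility of $\f^{-1}$ is automatic because $(T_X, \twist)$ is a group by Theorem~\ref{thm: extracted group} and $-1$ is its inverse map, so no separate argument is needed.
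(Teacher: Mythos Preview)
Your proposal is correct and follows the same approach as the paper: you write down explicitly the sequences indexed over $P^{-1}$ (namely $y_q = -\ell \cdot x_{\ell-q'}^*$ for realisation and $\g_q = \f_{\ell-q'}^{-1}$ for concatenation) and verify that they realise/concatenate to $\f^{-1}$, which is exactly what the paper does, though the paper omits the computational checks you spell out. One small remark: your final paragraph about admissibility is unnecessary, since the lemma concerns $\f \in \mathcal{Z}_X$ rather than $T_X$ and makes no admissibility claim.
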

\begin{proof}
 Let $(x_p)_{p \in P} \subseteq X$ be a sequence with realisation $\f$.
 For each $p' = \ell - q \in P^{-1} - \{\ell\}$, let $p \in P$ be the predecessor of $q \in P$, and let $z_{p'} \coloneqq - \ell \cdot x_p^*$. If $\ell \in P^{-1}$ then let $z_\ell \in X$ be an arbitrary element.
 Then $\f^{-1}$ is the realisation of $(z_{p'})_{p' \in P^{-1}}$.
 
Let $(\f_p)_{p \in P} \subseteq \mathcal{Y}_X$ be a consistent sequence with realisation $\f$. 
For each $p' = \ell - q \in P^{-1} - \{\ell\}$, where $q \in P$ is the successor of $p \in P$, let $\g_{p'} \coloneq \f_p^{-1}$. If $\ell \in P^{-1}$ then let $\g_\ell \coloneqq \id$.
Then $\f^{-1}$ is the realisation of $(\g_{p'})_{p' \in P^{-1}}$.
\end{proof}

\begin{lemma}[Refining] \label{lem: refining}
    Let $P \subseteq [0,\ell]$ be a template, let $(x_p)_{p \in P} \subseteq X$ be a sequence and let $(\f_p)_{p \in P} \subseteq T_X$ be consistent. For any $t \in [0, \ell]$, there exists a template $Q \subseteq [0,\ell]$, a sequence $(z_q)_{q \in Q} \subseteq X$ and a consistent sequence $(\g_q)_{q \in Q} \subseteq T_X$ such that: 
    \begin{itemize}
        \item $t$ is an accumulation point of $Q$ (in particular, $t \notin Q$); 
        \item $\overline{Q} - Q = (\overline{P} - P) \cup \{t\}$;
        \item the realisation of $(z_q)_{q \in Q}$ is equivalent to the realisation of $(x_p)_{p \in P}$;
        \item the concatenation of $(\g_q)_{q \in Q}$ is equivalent to the concatenation of $(\f_p)_{p \in P}$.
    \end{itemize}
\end{lemma}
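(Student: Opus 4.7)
The strategy is to modify $P$ locally near $t$, inserting monotone sequences of $Q$-points that accumulate to $t$ from both sides, so as to make $t$ an accumulation point of $Q$ while keeping the realisations and concatenations unchanged outside a countable set. Three cases arise according to the position of $t$ relative to $P$. If $t \in \overline{P} \setminus P$ then $t$ is already an accumulation point of $P$ lying outside $P$, so I take $Q = P$, $z_q = x_q$ and $\g_q = \f_q$, and every conclusion is immediate.

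Otherwise, $t$ either lies in $P$ or in the interior of some gap $(p, p')$ of $P$ (these are the only remaining possibilities, since (T2) guarantees every point of $[0,\ell]$ lies in some closed gap $[p,p']$ or in $\overline{P} \setminus P$). In the gap case I set $s_n = t - (t-p)/2^n$ and $r_n = t + (p'-t)/2^n$ and take $Q = P \cup \{s_n, r_n : n \geq 1\}$. In the case $t \in P$ with $P$-predecessor $t_1$ and $P$-successor $t_2$, I carry out the same construction inside $(t_1, t)$ and $(t, t_2)$ and additionally \emph{remove} $t$, obtaining $Q = (P \setminus \{t\}) \cup \{s_n, r_n : n \geq 1\}$; when $t \in \{0, \ell\} \cap P$, only the non-trivial side contributes. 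Axiom (T1) then holds because each $s_n$ has predecessor $s_{n-1}$ (or $t_1$, resp.\ $p$, for $n = 1$) and successor $s_{n+1}$, and symmetrically for $r_n$; it is essential that we accumulate on \emph{both} sides of $t$, for otherwise $t_2$ (say) would be left without a predecessor in $Q$ after the removal of $t$, violating (T1). For (T2), the union of closed gap intervals of $Q$ differs from that of $P$ only by deletion of the single point $t$, yielding $\overline{Q} \setminus Q = (\overline{P} \setminus P) \cup \{t\}$ and witnessing $t$ as an accumulation point of $Q$.

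To define $(z_q)$ and $(\g_q)$, I set $z_q = x_q$ and $\g_q = \f_q$ for every $q \in P \cap Q$, and for each newly inserted $q$ I let $z_q$ be the value that the realisation of $(x_p)$ takes on the original gap in which $q$ was placed: explicitly $z_{s_n} = z_{r_n} = x_p$ in the gap case, and $z_{s_n} = x_{t_1}$, $z_{r_n} = x_t$ in the case $t \in P$. For $\g_q$, I would use (O3) twice to factor each relevant $\f_p$ as a triple concatenation whose lengths match the new sub-gap structure, and take $\g_q$ to be the middle factor. This middle factor is admissible whenever $\f_p$ is, since any cancellation $y \con y^{-1}$ appearing inside it would also appear inside $\f_p$. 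The realisations then agree pointwise outside the countable set $\{s_n, r_n\}_{n \geq 1}$, and for the consistent sequences the concatenation formula together with the identity $-q + (q - p) = -p$ for the $\mathbb{R}$-action shows that on every new gap the two concatenations take the same value, so they are equivalent.

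The main obstacle, as I see it, is (T1) after the removal of $t$, which is precisely what forces the two-sided accumulation construction. The $\mathbb{R}$-action bookkeeping for the explicit middle-factor formula $\g_{s_n} = [v \mapsto (s_n - t_1) \cdot f_{t_1}(v + s_n - t_1)]$ is then a direct calculation that is nevertheless essential for showing that the reinserted slice, shifted by $-s_n$, produces the same value $-t_1 \cdot f_{t_1}(u - t_1)$ on the new sub-gap as the original concatenation produced on $(t_1, t)$.
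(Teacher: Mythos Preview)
Your construction is essentially identical to the paper's: the paper also inserts monotone sequences on both sides of $t$ (handling the $t\in P$ and gap cases simultaneously via the condition $P\cap(p_1,p_2)\subseteq\{t\}$), removes $t$ if it was present, and defines the new $z_q$, $\g_q$ by restricting and shifting the original data exactly as you describe. One small slip: you set $\g_q=\f_q$ for every $q\in P\cap Q$, but the left endpoint $t_1$ (resp.\ $p$) of the affected interval lies in $P\cap Q$ and now has $Q$-successor $s_1$ rather than $t$ (resp.\ $p'$), so consistency forces $\g_{t_1}=\f_{t_1}|_{[0,\,s_1-t_1]}$ --- the paper makes this truncation explicit, and your own formula for $\g_{s_n}$ shows you already have the correct picture.
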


The process of replacing $P$ with $Q$ and either replacing $(x_p)_{p \in P}$ with $(z_q)_{q \in Q}$ or $(\f_p)_{p \in P}$ with $(\g_q)_{q \in Q}$ is called \textit{refining}. The fact that $\overline{P} - P \subseteq \overline{Q} - Q$ ensures that, by repeating this process a finite number of times, $P$ can be refined to admit any finite set of points in $[0,\ell]$ as accumulation points. 
\begin{proof} 
    Suppose that $t$ is not an accumulation point of $P$ and $t \notin \{0,\ell\}$. Then there exists $p_1, p_2 \in P$ such that $p_1 < t < p_2$ and $P \cap (p_1, p_2) \subseteq \{t\}$. Let $(t_n)_{n \in \mathbb{N}} \subseteq (p_1, t)$ be a strictly increasing sequence which converges towards $t$, let $(s_n)_{n \in \mathbb{N}} \subseteq (t, p_2)$ be a strictly decreasing sequence which converge towards $t$ and let $Q \coloneqq (P - \{t\}) \cup \{t_n, s_n: n \in \mathbb{N}\}$. Then $Q$ is a template with non-trivial accumulation point $t$ and $\partial Q = \partial P \cup \{t\}$.
    
    For each $p \in P - \{t\}$ let $z_p \coloneqq x_p$ and for each $n \in \mathbb{N}$ let $z_{t_n} \coloneqq x_{p_1}$ and $z_{s_n} \coloneqq x_{t}$ if $t \in P$ and $z_{s_n} \coloneqq x_{p_1}$ otherwise. Then the realisation of $(z_q)_{q \in Q}$ is equivalent to the realisation of $(x_p)_{p \in P}$. For each $p \in P - \{p_1,t\}$ let $g_p \coloneqq f_p$. Let $g_{p_1} \coloneqq f_{p_1}|_{[0,t_1-p_1]}$ and let $g_{s_1}: [0,p_2-s_1] \rightarrow X$ be defined by $g_{s_1}(s) = (s_1-t) \cdot f_t(s+s_1-t)$ if $t \in P$ and $g_{s_1}(s) = (s_1-p_1) \cdot f_{p_1}(t+s_1-p_1)$ otherwise. For each $n \in \mathbb{N}$ let $g_{t_n}:[0,t_{n+1}-t_n] \rightarrow X$ be defined by $g_{t_n}(s) = (t_n-p_1) \cdot f_{p_1}(s+t_n-p_1)$ and, if $n \geq 2$, let $g_{s_n}:[0,s_{n-1}-s_{n}] \rightarrow X$ be defined by $g_{s_n}(s) = (s_n-t) \cdot f_t(s+s_{n}-t)$ if $t \in P$ and $g_{s_n}(s) = (s_n - p_1) \cdot f_{p_1}(s+s_{n}-p_1)$ otherwise. Then $(\g_q)_{q \in Q}$ is consistent and its concatenation is equivalent to the concatenation of $(\f_p)_{p \in P}$.
    
    If $t = 0$ or $\ell$ then perform the above operation but only on one side of $t$.
\end{proof}

\begin{proposition} \label{prop: template generation}
    Let $Y \subseteq X$ be such that $\Stab_\mathbb{R}(y) \neq \{0\}$ and $y^* \notin \mathbb{R} \cdot y$ for all $y \in Y$. Then $T_X(Y)$ is the set of realisations of admissible sequences $(y_p)_{p \in P}$ in $\mathbb{R} \cdot (Y \cup Y^*)$, where $P$ is a template.
\end{proposition}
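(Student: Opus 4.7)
The plan is to show the set $T'$ described in the statement equals $T_X(Y)$ by double inclusion.

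For $T_X(Y) \subseteq T'$, the strategy is to verify that $T'$ is a closed subgroup of $T_X$ containing $L_y$ for each $y \in Y$; minimality of $T_X(Y)$ then yields the inclusion. Each $L_y$ lies in $T'$ via the two-point template $\{0, \ell\}$ realising the constant sequence with value $y$. Closure of $T'$ under inversion follows from Lemma \ref{lem: template inverse} combined with the fact that $\mathbb{R} \cdot (Y \cup Y^*)$ is preserved by $*$ (since $* \cdot (\lambda \cdot y) = (-\lambda) \cdot y^*$). For closure under $\star$, the decomposition $\f \star \g = (\f \dotdiv (\f^{-1} \wedge \g)^{-1}) \con (\g \divdot (\f^{-1} \wedge \g))$ exhibits $\f \star \g$ as a concatenation of a face of $\f$ and a face of $\g$: restrictions of template-realisations are again template-realisations, and one uses Lemma \ref{lem: refining} to refine both templates at the junction (making the junction point an accumulation from both sides) so that their shifted union satisfies (T1) and (T2). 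Topological closedness is the most delicate point: given a Cauchy sequence $(\f_n) \subseteq T'$ with limit $\f$, the meets $\g_n \coloneqq \f_n \wedge \f$ form an increasing sequence of faces of $\f$ with $\ell(\g_n) \to \ell(\f)$, each lying in $T'$ as a face of $\f_n$; we choose compatible templates $P_n$ for $\g_n$ (using Lemma \ref{lem: refining} to turn each $\ell(\g_n)$ into an accumulation point so that $P_n \subseteq P_{n+1}$) and exhibit $\bigcup_n P_n$ as a template for $\f$ realising it with values in $\mathbb{R} \cdot (Y \cup Y^*)$.

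For $T' \subseteq T_X(Y)$, I would proceed by transfinite induction on the complexity of $\f \in T'$. By Lemma \ref{lem: orbit closure}, $T_X(Y)$ contains $L_z$ for every $z \in \mathbb{R} \cdot (Y \cup Y^*)$, and since $T_X(Y)$ is closed under $\star$, every finite-template realisation (complexity $1$) lies in $T_X(Y)$: it is a finite $\star$-product of constants. For the inductive step, suppose $\f$ has complexity $\alpha = \beta + 1 > 1$, so $\overline{P}^{(\beta)} = \{p_1^* < \cdots < p_k^*\}$ is finite and non-empty by Remark \ref{rem: CB-rank succesor}. For $t \in (0, p_1^*)$ with $t \notin \overline{P}$, the prefix $\f|_{[0, t]}$ has complexity strictly less than $\alpha$ (its template's closure misses $\overline{P}^{(\beta)}$ entirely), so $\f|_{[0, t]} \in T_X(Y)$ by induction; letting $t \to p_1^*$ and using closedness of $T_X(Y)$ yields $\f|_{[0, p_1^*]} \in T_X(Y)$. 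One then writes $\f = \f|_{[0, p_1^*]} \star \h_1$ with $\h_1$ the shifted tail of $\f$ from $p_1^*$, and handles $\h_1$ by a secondary induction that tracks the remaining number of top-level accumulation points together with the one- or two-sided nature of the accumulation at each $p_i^*$.

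The hardest part will be the combination of (i) verifying, in the topological-closedness step, that the increasing union $\bigcup_n P_n$ has a countable closure and satisfies (T1)--(T2) globally, and (ii) closing the secondary induction on the structure of accumulation for the tail, since a two-sided accumulation at $p_i^*$ does not automatically reduce the complexity of the tail beyond $p_i^*$. Both difficulties are addressed using Lemma \ref{lem: refining} together with the fact that $\overline{P}$ is a countable compact Polish space, so that successive removal of top-level accumulation points terminates and can be organised via a well-founded lexicographic ordering on the decomposition data (complexity, cardinality of $\overline{P}^{(\beta)}$ in the current segment, and direction of accumulation).
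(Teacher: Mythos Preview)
Your proposal is correct and follows essentially the same double-inclusion strategy as the paper: showing $T'$ is a closed subgroup containing each $L_y$ (via Lemma~\ref{lem: template inverse}, refinement at the junction for $\star$, and a union-of-templates argument for limits), and then showing $T' \subseteq T_X(Y)$ by transfinite induction on complexity using Lemma~\ref{lem: orbit closure} for the base case. The only notable organisational difference is that the paper packages the inductive step as the standalone Lemma~\ref{lem: sequence closure}, decomposing $\f$ \emph{once} at interior points $k_i^+ \in (k_i,k_{i+1}) \cap P$ between consecutive elements of $\overline{P}^{(\beta)}$, so that each piece has a single one-sided top-level accumulation and is directly a limit of lower-rank prefixes---this avoids your secondary lexicographic induction on the tail entirely.
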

\begin{proof}
    We start by showing that the set of all such realisations is a closed subgroup. Lemma~\ref{lem: template inverse} implies that it is closed under taking inverses. Let $\ell, \ell' > 0$, let $P \subseteq [0,\ell]$, $P' \subseteq [0, \ell']$ be templates and let $(y_p)_{p \in P}, (y_{p'}')_{p' \in P'} \subseteq \mathbb{R} \cdot (Y \cup Y^*)$ be admissible sequences with realisations $\f,\f'$ respectively. Let $\tau \coloneqq \ell(\f \dotdiv (\f^{-1} \wedge \f')^{-1})$. 
    By refining $P,P'$ and $(y_p)_{p \in P},(y_{p'}')_{p' \in P'}$ if necessary, we can assume that $\tau$ is a non-trivial accumulation point of $P$ and $\ell - \tau$ is a non-trivial accumulation point of $P'$.
     Let $P'' \coloneqq (P \cap [0,\tau]) \cup ([\ell - \tau,\ell(\f \star \f')] \cap \{p' - \ell + 2\tau : p' \in P'\})$. Then $P''$ is a template of $[0, \ell(\f \star \f')]$. For each $p \in P \cap P''$, let $x_p \coloneqq y_p$ and for each $p' - \ell + 2\tau\in P'' - P$ let $x_{p'-\ell+2\tau} \coloneqq (\ell - 2\tau) \cdot y_{p'}'$. Then $(x_{p''})_{p'' \in P''}$ is admissible and the realisation of this sequence is $\f \star \f'$. 

     \begin{observation} \label{obs1}
         If $\f,\g \in T_X$ are such that $\g \preceq \f$ and $\f$ is the realisation of a sequence $(x_p)_{p \in P} \subseteq \mathbb{R} \cdot (Y \cup Y^*)$, where $P \subseteq [0,\ell(\f)]$ is a template, then $\g$ is the realisation of a sequence in $\mathbb{R} \cdot (Y \cup Y^*)$ indexed over a template. Indeed, up to refining $P$ and $(x_p)_{p \in P}$, we can assume that $\ell(\g) \in \overline{P}$. Let $P' \coloneqq \{p \in P : p \leq \ell(\g)\}$. Then $\g$ is the realisation of $(x_p)_{p \in P'}$. 
     \end{observation}
     
     Now let $(\f_n)_{n \in \mathbb{N}} \subseteq T_X$ be a sequence converging to a point $\f \in T_X$ such that, for each $n$, if $\ell_n \coloneqq \ell(\f_n)$ then there is a template $P_n \subseteq [0,\ell_n]$ and an admissible sequence $y^{(n)} = (y_p^{(n)})_{p \in P_n} \subseteq \mathbb{R} \cdot (Y \cup Y^*)$ such that $\f_n$ is the realisation of $y^{(n)}$. If $\ell \coloneqq \ell(\f)$ then $(\ell_n)_{n \in \mathbb{N}}$ converges to $\ell$. Take a subsequence of $(\f_n)_{n \in \mathbb{N}}$ so that $(\ell_n)_{n \in \mathbb{N}}$ is either strictly increasing or strictly decreasing. If $(\ell_n)_{n \in \mathbb{N}}$ is strictly decreasing then $\f \preceq \f_n$ for almost all $n$, so $\f$ is the realisation of some sequence in $\mathbb{R} \cdot (Y \cup Y^*)$ by Observation~\ref{obs1}.
     So suppose that $(\ell_n)_{n \in \mathbb{N}}$ is strictly increasing. By Observation~\ref{obs1} again, we can replace each $\f_n$ with a $\preceq$-smaller element so that $\f_n \preceq \f_{n+1}$ for each $n$. For each $n \in \mathbb{N}$, refine $P_n$ and $y^{(n)}$ finitely many times so that $\ell_m$ is a limit point of $P_n$ for all $m \leq n$. Define
     \[P \coloneqq \bigcup_{n \in \mathbb{N}} P_n \cap (\ell_{n-1}, \ell_n).\]
     Then $P$ is a template of $[0,\ell]$. 
     For each $n \in \mathbb{N}$ and each $p \in P \cap (\ell_{n-1},\ell_n)$ let $z_p \coloneqq y_p^{(n)}$. Then $(z_p)_{p \in P}$ is an admissible sequence whose realisation is $\f$.
     
    By Lemma~\ref{lem: orbit closure}, the fact that all realisations of admissible sequences in $\mathbb{R} \cdot (Y \cup Y^*)$ are in $T_X(Y)$ follows from Lemma~\ref{lem: sequence closure} below.
\end{proof}

\begin{lemma} \label{lem: sequence closure}
    Let $H \leq T_X$ be a closed subgroup. Let $\ell > 0$ and let $P \subseteq [0,\ell]$ be a template. Suppose that $(\h_p)_{p \in P} \subseteq H$ is a consistent and admissible sequence and let $\f$ be the concatenation of $(\h_p)_{p \in P}$. Then $\f \in H$.
\end{lemma}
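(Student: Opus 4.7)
The plan is to proceed by transfinite induction on the complexity $\alpha$ of the template $P$. In the base case $\alpha = 1$, the template $P = \{0 = p_0 < p_1 < \cdots < p_n = \ell\}$ is finite and $\f$ is the $\con$-product $\h_{p_0} \con \h_{p_1} \con \cdots \con \h_{p_{n-1}}$. Since $\f$ is admissible, no consecutive pair $\h_{p_i}, \h_{p_{i+1}}$ can fail to concatenate geodesically (otherwise a factorisation of the form $a \con y \con y^{-1} \con b$ with $y \neq \id$ would appear), so $\f$ coincides with the $\star$-product $\h_{p_0} \star \cdots \star \h_{p_{n-1}}$, which lies in $H$.

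For the inductive step, write $\alpha = \gamma + 1$ and set $K \coloneqq \overline{P}^{(\gamma)}$; this is a finite non-empty subset of $\overline{P}$ because $\overline{P}^{(\alpha)} = \emptyset$ forces $\overline{P}^{(\gamma)}$ to be discrete, hence finite as a closed subset of the compact space $\overline{P}$. First I would reduce to the case $|K| \leq 1$: if $K = \{r_1 < \cdots < r_m\}$ with $m \geq 2$, use (T2) to pick $t_i \in P \cap (r_i, r_{i+1})$ for each $i$ and cut $[0, \ell]$ at the $t_i$'s. The decomposition of $\f$ into sub-concatenations over the pieces $P \cap [t_{i-1}, t_i]$ is a $\con$-product which upgrades to a $\star$-product by admissibility, and each sub-template contains at most one $K$-point. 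When $|K| = 0$, the complexity of $P$ is already at most $\gamma$ and the inductive hypothesis applies directly.

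The substantive case is $|K| = 1$, say $K = \{r\}$. For $\gamma \geq 1$ and $r$ an interior point of $[0, \ell]$, the conditions of (T1) force $r \notin P$, so $r$ is an accumulation point of $P$. If this accumulation is one-sided, say only from the left, there is a gap $(r, r + \delta) \cap P = \emptyset$; letting $p_{-} = \max(P \cap [0, r))$ and letting $p_{+}$ denote its successor in $P$ (so $p_{+} > r$), the sub-templates $P \cap [0, p_{-}]$ and $P \cap [p_{+}, \ell]$ both have complexity at most $\gamma$, so their concatenations lie in $H$ by the inductive hypothesis, and $\f$ is their $\star$-product with $\h_{p_{-}} \in H$ inserted in the middle. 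When the accumulation is two-sided, take $p_n \uparrow r$ and $q_n \downarrow r$ in $P$; the sub-templates $P \cap [0, p_n]$ have complexity at most $\gamma$, so by IH each $\f_{p_n}$ lies in $H$, and the limit $\f_r$ also lies in $H$ since $H$ is closed. A symmetric argument on the right yields a corresponding element of $H$, and assembling these factors would recover $\f$.

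I expect the main obstacle to lie in this last two-sided case: expressing the tail $\f^r = \f_r^{-1} \star \f$ as the concatenation of a consistent sequence in $H$ over a template on $[0, \ell - r]$ (so that the inductive hypothesis can be invoked) requires reparameterising the right sub-template $P \cap (r, \ell]$, and this reparameterisation introduces an $\mathbb{R}$-shift by $-r$ on each sequence value. For an arbitrary closed subgroup these shifted values need not lie in $H$; the argument therefore relies on the $\mathbb{R}$-invariance that Lemma~\ref{lem: orbit closure} provides for the subgroups $T_X(Y)$ of the intended application, and tracking how this invariance propagates through the induction is the step whose bookkeeping demands the most care.
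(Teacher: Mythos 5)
Your approach is essentially the paper's: transfinite induction on the Cantor--Bendixson rank of $\overline{P}$, decomposition at the finitely many top-level CB points, and the use of closedness of $H$ to pass to limits of pieces whose sub-templates have strictly lower CB rank. The base case, the reduction to $|K|\le 1$, and the Cauchy-sequence step for the two-sided case all match the structure of the paper's argument.

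The flagged concern at the end, however, is not a genuine obstacle: it rests on a misreading of the concatenation operation. Recall that the concatenation of a consistent sequence $(\h_p)_{p\in P}$ is \emph{defined} with the $\mathbb{R}$-shift built in, namely $f(t) = -p\cdot h_p(t-p)$ for $t\in(p,p')$. Consequently, if one restricts $\f$ to a subinterval $[k^-, k]$ with $k^-\in P$ and reparameterises via $f_k^-(t) \coloneqq k^-\cdot f(t+k^-)$, and if one sets $Q\coloneqq\{p-k^-:p\in P\cap[k^-,k)\}$ and re-indexes the \emph{same} elements $\h_p$ over $Q$, then the two shifts cancel: the concatenation of $(\h_{q+k^-})_{q\in Q}$ is precisely $\f_k^-$. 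The elements fed into the inductive hypothesis are therefore the original $\h_p\in H$, not $\mathbb{R}$-translates of them, and no appeal to Lemma~\ref{lem: orbit closure} or any $\mathbb{R}$-invariance of $H$ is required. The lemma really does hold for an arbitrary closed subgroup $H\le T_X$, as stated. (A smaller point: your one-sided sub-case is in fact vacuous for interior $r$ — if $P$ accumulated at $r$ only from the left, the interval $(r,p_+)$ would be an uncountable set disjoint from $\bigcup[p,p']$, contradicting (T2) — and your $p_-=\max(P\cap[0,r))$ would not exist there anyway. The paper sidesteps this by always working with the left-hand piece $P\cap[k^-,k)$ and obtaining the right-hand piece by passing to inverses, which is cleaner.)
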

\begin{proof}
    We prove the lemma by transfinite induction on the Cantor--Bendixson rank $\alpha$ of $\overline{P}$.
    
    If $\alpha=1$, then $\overline{P}$ is a compact set of isolated points and is therefore finite. Therefore $\f$ is a finite concatenation of elements in $H$ and is itself in $H$.

    Suppose that $\alpha > 1$ and recall that $\alpha = \beta + 1$ for some countable ordinal $\beta \geq 1$.
    Let $K \coloneqq \overline{P}^{(\beta)}$. Then $K$ is a compact set of isolated points so $K$ is finite.
    Let $k_0, \dots, k_m$ be the elements of $K \cup \{0, \ell\}$, ordered such that $0 = k_0 < k_1 < \dots < k_m = \ell$. For each $0 \leq i < m$ choose a point $k_i^+ = k_{i+1}^- \in (k_i, k_{i+1}) \cap P$ (this exists by (T2) because $K \cap P = \emptyset$). Let $f_{k_i}^+: [0,k_i^+-k_i] \rightarrow X$ be defined by $f_{k_i}^+(t) = k_i \cdot f(t+k_i)$ and let $f_{k_{i+1}}^-: [0,k_{i+1}-k_{i+1}^-] \rightarrow X$ be defined by $f_{k_{i+1}}^-(t) = k_{i+1}^- \cdot f(t+k_{i+1}^-)$.
    If $\f_{k_i}^-,\f_{k_{i+1}}^+ \in H$ for each $0 \leq i < m$ then $\f = \f_{k_0}^+ \star \f_{k_1}^- \star \f_{k_1}^+ \star \dots \star \f_{k_{m-1}}^+ \star \f_{k_m}^- \in H$. It therefore remains to show that $\f_{k_i}^-,\f_{k_{i+1}}^+$ are indeed elements of $H$ for each $i$.
    
    Fix $k \in \{k_i : 0 < i \leq m\}$ and let $Q \coloneqq \{p-k^- : p \in P \cap [k^-,k)\}$. Then $Q$ is a template of $[0,k - k^-]$ and $0 \in Q$. If $k = \ell$ and $\ell \notin K$ then the CB-rank of $\overline{Q}$ is at most $\beta$ and $\f_k^-$ is the concatenation of $(\h_p)_{p-k^- \in Q}$ so $\f_k^- \in H$ by the induction hypothesis. So assume that $k \in K$. Then $\overline{Q}^{(\beta)} = \{k - k^-\}$ and there exists a strictly increasing sequence $(q_n)_{n \in \mathbb{N}} \subseteq Q$ which converges towards $k-k^-$ (because $\beta \geq 1$). For each $n$, let $Q_n \coloneqq Q \cap [0,q_n]$, let $\beta_n$ be the CB-rank of $\overline{Q}_n$ and let $g_n \coloneqq f_k^-|_{[0,q_n]}$. Then $Q_n$ is a template of $[0,q_n]$, $\beta_n \leq \beta$ and $\g_n$ is the concatenation of $(\h_p)_{p - k^- \in Q_n}$. By the induction hypothesis, $\g_n \in H$ for each $n$. Moreover $(\g_n)_{n \in \mathbb{N}}$ converges towards $\f_k^-$ so $\f_k^- \in H$. By a similar argument $(\f_k^+)^{-1} \in H$ for all $k \in \{k_i : 0 \leq i < m\}$ and therefore $\f_k^+ \in H$ for all such $k$.
\end{proof}

\begin{proposition} \label{prop: universal real tree}
    Let $Y \subseteq X$ be such that each $y \in Y$ has a non-trivial $\mathbb{R}$-stabiliser and $y^* \notin \mathbb{R} \cdot y$ for all $y \in Y$.
    Suppose that $|Y| \leq 2^{\aleph_0}$ and there exist $y_1, y_2 \in Y$ such that $y_1 \neq y_2$ and $y_1 \notin \mathbb{R} \cdot y_2^*$. Then $(T_X(Y),d)$ is the universal $\mathbb{R}$-tree with valence $2^{\aleph_0}$.
\end{proposition}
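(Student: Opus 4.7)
The plan is to show that $T_X(Y)$ is a complete $\mathbb{R}$-tree in which every point has valence exactly $2^{\aleph_0}$, and then invoke uniqueness of such a tree \cite{Mayer-Nikiel-Oversteegen,Nikiel}. Completeness is automatic: $T_X(Y)$ is a closed subgroup of the complete $\mathbb{R}$-tree $T_X$ (Lemma~\ref{lem: complete R tree}). For connectedness, Lemma~\ref{lem: orbit closure} gives $T_X(Y) = \overline{\langle S' \rangle}$, where $S' \coloneqq \bigcup_{y \in Y,\, \lambda \in \mathbb{R}} (L_{\lambda \cdot y} \cup L_{\lambda \cdot y^*})$ is symmetric (the inverse of a constant function is again constant, with image in $\mathbb{R} \cdot Y^* \cup \mathbb{R} \cdot Y$) and closed under restriction, so Lemma~\ref{lem: connected subgroups} applies. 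A closed connected subspace of an $\mathbb{R}$-tree is a subtree, hence $T_X(Y)$ is a complete $\mathbb{R}$-tree.

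Since $T_X(Y)$ acts on itself by isometries via left multiplication, the valence function is constant and it suffices to compute the valence at $\id$. Two non-trivial elements $\mathfrak{f}, \mathfrak{g} \in T_X(Y)$ lie in the same direction at $\id$ if and only if $\mathfrak{f} \wedge \mathfrak{g} \neq \id$. For the upper bound, Proposition~\ref{prop: template generation} expresses each element of $T_X(Y)$ as the realisation of an admissible sequence in $\mathbb{R} \cdot (Y \cup Y^*)$ indexed over a template (a countable set). Since $|\mathbb{R} \cdot (Y \cup Y^*)| \leq 2^{\aleph_0}$, there are at most $2^{\aleph_0}$ templates and $(2^{\aleph_0})^{\aleph_0} = 2^{\aleph_0}$ sequences over each, so $|T_X(Y)| \leq 2^{\aleph_0}$ and the valence at $\id$ is at most $2^{\aleph_0}$.

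For the lower bound I construct $2^{\aleph_0}$ elements of $T_X(Y)$ with pairwise trivial meet. Set $P \coloneqq \{1/n : n \geq 1\} \subseteq [0,1]$, which is readily seen to be a template, and let $\mathcal{S} \subseteq \{1,2\}^{\mathbb{N}}$ be a set of representatives for the ``agree cofinitely'' equivalence relation, so $|\mathcal{S}| = 2^{\aleph_0}$. For each $s = (s_n)_{n \geq 1} \in \mathcal{S}$, let $\mathfrak{f}_s$ be the realisation of the sequence on $P$ taking value $y_{s_n}$ on $(1/(n+1), 1/n)$. The main obstacle is the admissibility check via Lemma~\ref{lem: characterise admissiblity}: away from the $1/n$ the function is locally constant at some $y_i$, and the admissibility condition there follows from $y_i^* \notin \mathbb{R} \cdot y_i$; at the accumulation point $0$ there is nothing to check, and at $t = 1/n$ for $n \geq 2$ the condition reduces to the single inequality $y_{s_n}^* \neq (2/n) \cdot y_{s_{n-1}}$. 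If $s_n = s_{n-1}$ this holds because $y^* \notin \mathbb{R} \cdot y$; if $s_n \neq s_{n-1}$ it holds because $y_1 \notin \mathbb{R} \cdot y_2^*$ (equivalently $y_2 \notin \mathbb{R} \cdot y_1^*$ after applying $*$). For distinct $s, t \in \mathcal{S}$, the sequences differ on infinitely many indices, so $\mathfrak{f}_s$ and $\mathfrak{f}_t$ disagree on uncountably many points in every neighbourhood of $0$, forcing $\mathfrak{f}_s \wedge \mathfrak{f}_t = \id$. This produces $2^{\aleph_0}$ directions at $\id$, and uniqueness of the complete universal real tree with valence $2^{\aleph_0}$ then concludes the proof.
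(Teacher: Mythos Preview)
Your proof is correct and follows essentially the same approach as the paper's: both establish completeness and connectedness via Lemmas~\ref{lem: complete R tree} and \ref{lem: connected subgroups}, reduce to the valence at $\id$ by transitivity, bound the valence above using Proposition~\ref{prop: template generation}, and produce $2^{\aleph_0}$ directions by realising $\{y_1,y_2\}$-valued sequences over a template accumulating at $0$ and quotienting by the cofinite-agreement relation. Your treatment is in fact slightly more careful in two places --- you explicitly symmetrise the generating set via Lemma~\ref{lem: orbit closure} before invoking Lemma~\ref{lem: connected subgroups}, and you spell out the admissibility check at the break points $t=1/n$ (where the paper just cites Lemmas~\ref{lem: characterise admissiblity} and \ref{lem: admissible constants}) --- but these are refinements of the same argument rather than a different route.
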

\begin{proof}
    By Lemmas~\ref{lem: complete R tree} and \ref{lem: connected subgroups}, $T_X(Y)$ is a closed connected subspace of a complete $\mathbb{R}$-tree, so it is itself a complete $\mathbb{R}$-tree. Since the action of $T_X(Y)$ on itself is transitive, it suffices to check that the valence of $T_X(Y)$ at $\id$ is $2^{\aleph_0}$. Let $\kappa$ denote the valence of $T_X(Y)$ at $\id$. Two elements $\f,\g \in T_X(Y)$ lie in the same connected component of $T_X(Y) - \{\id\}$ if and only if there exists $\varepsilon > 0$ such that $f(t) = g(t)$ for all but countably many $t \in [0,\varepsilon]$. Let $\ell \in \mathbb{R}$, $\ell > 0$ and let $P = \{p_n \in (0,\ell] : n \in \mathbb{N}\}$ be a template of $[0,\ell]$ such that $p_n \rightarrow 0$. For each subset $\Omega \subseteq \mathbb{N}$ let $x^\Omega = (x^\Omega_{p_n})_{p_n \in P} \subseteq X$ where $x^\Omega_{p_n} = y_1$ if $n \in \Omega$ and $x^\Omega_{p_n} = y_2$ otherwise. It follows from Lemmas~\ref{lem: characterise admissiblity} and \ref{lem: admissible constants}, and the assumptions on $y_1,y_2$, that the sequence $x^\Omega$ and its realisation $\mathfrak{f}_\Omega$ are admissible. Moreover, if $\Omega, \Omega' \subseteq \mathbb{N}$ and the symmetric difference $\Omega \Delta \Omega'$ is infinite then $\f_\Omega$ and $\f_{\Omega'}$ lie in different connected components of $T_X(Y) - \{\id\}$. Therefore $\kappa \geq \{0,1\}^\mathbb{N} / \sim$, where $\Omega \sim \Omega'$ if and only if $|\Omega \Delta \Omega'| < \infty$. Each equivalence class of subsets has cardinality $\aleph_0$ so this implies that $\kappa \geq 2^{\aleph_0}$. Conversely, Proposition~\ref{prop: template generation} implies that cardinality of $T_X(Y)$ is bounded above by that of $\mathcal{C} \times (\mathbb{R} \cdot(Y \cup Y^*))$, where $\mathcal{C}$ is the set of countable subsets of $\mathbb{R}$. Thus $\kappa \leq |T_X(Y)| \leq 2^{\aleph_0}$.
\end{proof}

We can now prove that the only lines in $T_X(Y)$ which can have dense stabilisers are translates of standard axes.

\begin{lemma} \label{lem: controling axis stabilisers}
    Let $Y \subseteq X$ be such that each $y \in Y$ has a non-trivial $\mathbb{R}$-stabiliser and $y^* \notin \mathbb{R} \cdot y$ for all $y \in Y$.
    Let $L \subseteq T_X(Y)$ be a line such that $\Stab_G(L)$ acts on $L$ with dense orbits. Then $L = \f \star L'$ for some $\f \in T_X(Y)$ and some standard axis $L' \subseteq T_X(Y)$.
\end{lemma}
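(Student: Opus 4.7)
The plan is to translate $L$ so that $\id \in L$ and then use the translations in $\Stab_G(L)$, which form a dense set by hypothesis, to impose a rigid self-similar structure on the ray $L^+$. Since $G = T_X(Y)$ acts freely and transitively on itself by left multiplication, replacing $L$ by $\f^{-1} \twist L$ for any $\f \in L$ reduces to the case $\id \in L$, where it suffices to show $L$ is itself a standard axis. Decompose $L = L^+ \cup L^-$ into the two closed rays meeting at $\id$; each is a totally ordered $\preceq$-chain, since the geodesic in $T_X$ from $\id$ to any element $\f$ is precisely the chain of $\preceq$-prefixes of $\f$. Parametrising $L^+$ by distance from $\id$ and choosing compatible representatives $f_\ell: [0, \ell] \to X$ produces a ``ray function'' $f_\infty: [0, \infty) \to X$ (defined up to countable equivalence on each initial interval) such that $f_\infty|_{[0, \ell]}$ represents the unique $\f_\ell \in L^+$ at distance $\ell$ from $\id$.

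Let $H \leq \Stab_G(L)$ be the (index $\leq 2$) subgroup of orientation-preserving translations; by the density hypothesis, its image under the translation-length homomorphism is a dense subgroup of $\mathbb{R}$, whose positive part I denote $H^+$. For each $\tau \in H^+$, the unique $\g_\tau \in H$ translating $L$ by $+\tau$ lies in $L^+$ at distance $\tau$ from $\id$, while $\g_\tau^{-1}$ lies on the opposite ray $L^-$. Since $\g_\tau^{-1}$ and $\g_{\tau'}$ lie in different connected components of $T_X(Y) - \{\id\}$, their meet is $\id$, so $\g_\tau \twist \g_{\tau'} = \g_\tau \con \g_{\tau'}$; equating with $\g_{\tau + \tau'}$ and comparing representatives yields the self-similarity
\[
    f_\infty(s + \tau) = (-\tau) \cdot f_\infty(s) \quad \text{for a.e. } s \geq 0, \text{ each } \tau \in H^+.
\]
The main step is to deduce from this, the density of $H^+$ in $\mathbb{R}_{>0}$, and Proposition \ref{prop: template generation}, that $f_\infty$ is a.e.\ equal to a constant $x_0 \in \mathbb{R} \cdot (Y \cup Y^*)$ with $H^+ \subseteq \Stab_\mathbb{R}(x_0)$. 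Since the transformation $x \mapsto -\tau \cdot x$ preserves $\mathbb{R}$-orbits, the orbit-valued map $s \mapsto \mathbb{R} \cdot f_\infty(s)$ is $\tau$-invariant modulo countable exceptions for each $\tau \in H^+$; by Proposition \ref{prop: template generation} each $\g_\tau$ is realised on a countable template in $[0, \tau]$, and axiom (T2) rules out any template being dense in its interval, so the orbit-map must be a.e.\ constant. A further density argument in the fibre, applied to the writing $f_\infty(s) = \lambda(s) \cdot x_0$ with $\lambda(s) \in \mathbb{R} / \Stab_\mathbb{R}(x_0)$, then forces $\lambda \equiv 0$ a.e.\ and $H^+ \subseteq \Stab_\mathbb{R}(x_0)$, giving $L^+ = L_{x_0}$.

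Given $L^+ = L_{x_0}$, the inverse $\g_\tau^{-1}$ is computed directly from the formula for inversion to be the constant map of length $\tau$ with image $-\tau \cdot x_0^* = x_0^*$, the last equality using $\tau \in \Stab_\mathbb{R}(x_0) = \Stab_\mathbb{R}(x_0^*)$. As $\tau$ ranges over $H^+$, these constants occupy a dense set of distances along $L^-$, so applying the analogous $\preceq$-chain argument to $L^-$ yields $L^- = L_{x_0^*}$. Since $L^+ \ne L^-$, we have $x_0^* \notin \mathbb{R} \cdot x_0$, and $H^+ \subseteq \Stab_\mathbb{R}(x_0)$ is non-trivial, so $\mathbf{L}_{x_0}$ is a standard axis and $L = L_{x_0} \cup L_{x_0^*} = \mathbf{L}_{x_0}$. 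The main obstacle is the rigidity step establishing a.e.\ constancy of $f_\infty$: the interplay between the combinatorial constraint on templates (countability and axiom (T2)) and the density of $H^+$ in $\mathbb{R}$ is essential, and a direct measure-theoretic approach cannot replace the template-theoretic one.
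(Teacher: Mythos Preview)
Your overall strategy is sound and can be completed, but it takes a different route from the paper and the rigidity step you flag is where your write-up has a real gap. The sentence ``axiom (T2) rules out any template being dense in its interval, so the orbit-map must be a.e.\ constant'' is not yet an argument. What is missing is the overlap step: if the orbit map $\phi(s) = \mathbb{R}\cdot f_\infty(s)$ took distinct values on two nondegenerate template intervals $I_1, I_2$, density of $H^+$ lets you pick $\tau$ with $(I_1+\tau)\cap I_2$ nondegenerate, and then $\phi(s+\tau)=\phi(s)$ fails on an uncountable set. The fibre step needs similar care and must in fact come in two stages: first use a single template piece of length $\varepsilon$ to get $H^+\cap(0,\varepsilon)\subseteq\Stab_\mathbb{R}(x_0)$, hence all of $H$ (a dense subgroup of $\mathbb{R}$ is generated by its intersection with any neighbourhood of $0$); only then does the cocycle $\lambda(s+\tau)=\lambda(s)-\tau$ collapse to genuine $\tau$-invariance so that the overlap argument can be rerun.

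The paper's argument is a direct bootstrap that never separates these two problems. After translating so that $\id\in L$ and so that the template for some $\f\in L^+$ contains $0$, it has an initial constant piece $[0,p]$ with value $x$, picks a \emph{single} stabiliser element $\g$ of small length, and compares $\h=\g\con\f'$ (where $\f'=f|_{[0,p]}$) with the other elements of the totally ordered ray $L^+$: since the shorter is a prefix of the longer, the overlap forces $-\ell(\g)\cdot x=x$, so $\h$ is again constant with image $x$ on a strictly longer interval. Iterating with $\g^n\con\f'$ pushes the constant segment out to infinity. Your functional-equation approach is more systematic but pays for it by having to solve the equation in two passes; the paper's comparison trick on the ray handles orbit- and value-constancy in one stroke and needs only the existence of one short translation rather than the full density of $H^+$.
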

\begin{proof}
    Up to translating by an element of $T_X(Y)$, we can assume that $L$ intersects the identity. Let $\f \in L$ be a non-trivial element and let $P \subseteq [0,\ell(\f)]$ be a template such that $\f$ is the realisation of a sequence in $X$ indexed by $P$. Up to translating by an element of $T_X(Y)$, we can assume that $P$ can be chosen so that $0 \in P$. Let $p \in P$ be the successor of $0$ and let $x \in Y$ be such that $f(t) = x$ for all but countably many $t \in [0,p]$. The assumption on the stabiliser subgroup of $L$ implies that there exists $\id \precneq g \precneq f$ such that $g \in \Stab_{T_X(Y)}(L)$. Therefore $\g \con \f' \in L$, where $f' \coloneqq f|_{[0,p]}$. Now $\g \con \f' = \h$ where $h([0,\ell(g)]) = x$ and $h([\ell(\g), \ell(\g) + p]) = -\ell(\g) \cdot x$. But since $\h \in L$ and $\g \preceq \f$, we have $\f \preceq \h$ therefore for all but countably many $t \in [\ell(g), \ell(f)]$ we have $-\ell(\g) \cdot x = f(t) = x$. Thus $\h$ is constant with image $x$. It follows by induction that $\g^n \con \f \in L$ is constant with image $x$ for all $n \in \mathbb{N}$. In particular, each $\g^n$ is constant with image $x$ and $\ell(\g) \in \Stab_{\mathbb{R}}(x)$ so it follows that each $\g^{-n}$ is constant with image $x^*$. Thus $L = L_x \cup L_{x^*}$. 
\end{proof}

\subsection{Proof of Theorems~\ref{thm: centraliser spectrum} and \ref{thm: small valence action}} \label{sec: centraliser spectrum}

\begin{proof}[Proof of Theorem~\ref{thm: centraliser spectrum}]
Fix an arbitrary map $\iota: \Sub_{NC}(\mathbb{R}) \rightarrow \mathcal{K}$. If $\iota$ is the zero map then let $X_0 \coloneqq \mathbb{R}/ \mathbb{Z} \sqcup (\mathbb{R} / \mathbb{Z})'$, equipped with the natural action of $\mathbb{R}$ and the involution $(x + \mathbb{Z})^* = (-x + \mathbb{Z})'$, ${(x + \mathbb{Z})'}^* = -x + \mathbb{Z}$ for all $x \in \mathbb{R}$. This defines an action of $\mathbb{R} \rtimes_\alpha \la * \ra$ on $X$. Let $G \coloneqq T_{X_0}(X_0)$ and recall that, by Proposition~\ref{prop: universal real tree}, $(G,d)$ is the complete universal real tree with valence $2^{\aleph_0}$. By Lemma~\ref{lem: controling axis stabilisers}, all the stabilisers of lines in $T_{X_0}(X_0)$ are either trivial or cyclic so the action of $T_{X_0}(X_0)$ on itself by left multiplication satisfies the conclusion of the theorem. If $\iota$ is the characteristic map of $\mathbb{R}$ then let $X_1 \coloneqq X_0 \sqcup \{x, x'\}$, where $X_0$ is equipped with the same action of $\mathbb{R} \rtimes_\alpha \la * \ra$ as before, $\mathbb{R}$ acts on $\{x,x'\}$ trivially and $x^* \coloneqq x', (x')^* \coloneqq x$. Let $G \coloneqq T_{X_1}(X_1)$ and note that $(G,d)$ is again the complete universal real tree with valence $2^{\aleph_0}$. Then by Lemma~\ref{lem: controling axis stabilisers} the only orbit of lines with non-trivial and non-cyclic stabilisers is the orbit of the standard axis $\mathbf{L}_x$. It follows that $|A_H| = |\{G \cdot \mathbf{L}_x\}| = 1$ if $H = \mathbb{R}$ and $|A_H|= 0$ otherwise.
Thus we can (and do) assume that $\iota$ is not the zero map or the characteristic map of $\mathbb{R}$.

For each subgroup $H \in \Sub_{NC}(\mathbb{R})$ let $B_{H} \coloneqq \mathbb{R}/H \sqcup (\mathbb{R}/H)'$ and define an action of $\mathbb{R} \rtimes \la * \ra$ on $B_H$ as follows. If $r \in \mathbb{R}$ and $x + H \in \mathbb{R}/H$ then $r \cdot (x + H) \coloneqq r + x + H$, $r \cdot (x + H)' = (r + x + H)'$, and $(x + H)^* \coloneqq (-x + H)'$, ${(x + H)'}^* \coloneqq -x + H$. Let $X_{H}$ be the disjoint union of $\iota(H)$ copies of $B_H$ and let $X \coloneqq \sqcup \{X_H : H \in \Sub_{NC}(\mathbb{R})\}$. Since $\sum_{H \in \Sub_{NC}(\mathbb{R})} \iota(H) \leq 2^{\aleph_0}$, we have $|X| \leq 2^{\aleph_0}$.

Let $G \coloneqq T_X(X)$. By Proposition~\ref{prop: universal real tree} $(G,d)$ is the complete universal real tree with valence $2^{\aleph_0}$. Let $H \in \Sub_{NC}(\mathbb{R})$. If $L \subseteq G$ is a line whose orbit is an element of $A_H$ then Lemma~\ref{lem: controling axis stabilisers} implies that $L = \f \star L'$ for some $\f \in G$ and some standard axis $L'$. Lemma~\ref{lem: orbit closure} implies that, if $y,y' \in Y$ and $y' \in \mathbb{R} \cdot \{y,y^*\}$,  then $\mathbf{L}_y, \mathbf{L}_{y'}$ are in the same $G$-orbit and it is clear from the definition of the operation on $G$ that, if $y' \notin \mathbb{R} \cdot \{y,y^*\}$, then $\mathbf{L}_y, \mathbf{L}_{y'}$ are in different $G$-orbits. By Lemma~\ref{lem: stab of standard axis}, if $y \in Y$ then $G \cdot \mathbf{L}_y \in A_H$ if and only $\Stab_\mathbb{R}(y) = H$. Therefore $|A_H|$ is equal to the number of copies of $B_H$ in $X_H$, which is $\iota(H)$ by construction.
\end{proof}

To prove Theorem~\ref{thm: small valence action} we will need two more lemmas:

\begin{lemma} \label{lem: intersection of dense axes}
    Let $T$ be a real tree and let $G$ be a group acting freely on $T$. Let $L \subseteq T$ be a line such that $\Stab_G(L)$ acts on $L$ with dense orbits. Then, for all $g \in G$, the intersection $gL \cap L$ is either empty, a point or a line.
\end{lemma}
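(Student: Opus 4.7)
The plan is to observe that $gL\cap L$, being convex in the $\mathbb{R}$-tree $T$ and in each line, is automatically a sub-interval of $L$: empty, a point, a non-degenerate bounded segment, a ray, or all of $L$. To prove the lemma, I need to rule out the cases where $gL\cap L$ is a non-degenerate bounded segment strictly inside $L$ or a ray strictly inside $L$. I will assume such a configuration and deduce that some non-trivial element of $G$ has a fixed point, contradicting freeness.

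First I set up the rigid structure forced by freeness. Every non-trivial element of $G$ is hyperbolic, with a unique translation axis in $T$. Any non-trivial $h\in\Stab_G(L)$ must therefore have axis $L$ and act on $L$ as a translation (reflections would give fixed points), while $ghg^{-1}$ has axis $gL$. Uniqueness of axes immediately yields $\Stab_G(L)\cap\Stab_G(gL)=\{1\}$ whenever $gL\neq L$. Furthermore, the restriction $\Stab_G(L)\hookrightarrow\Isom(L)$ factors through the translation group of $L$, and the dense-orbit hypothesis is equivalent to saying the resulting image in $\mathbb{R}$ is dense.

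Now, using density, pick $h\in\Stab_G(L)$ with positive translation length $t$ strictly less than half the diameter of $gL\cap L$ (any $t>0$ will do if $gL\cap L$ is a ray). Set $h_1\coloneqq h$ and $h_2\coloneqq ghg^{-1}$; each has translation length $t$, along the respective axes $L$ and $gL$. Choose an interior point $p$ of $gL\cap L$ whose $t$-neighborhood inside $gL\cap L$ is still inside $gL\cap L$. On this neighborhood $L$ and $gL$ coincide as subsets of $T$, so the two tangent directions of $L$ at $p$ agree with the two tangent directions of $gL$ at $p$. Hence $h_1 p$ and $h_2 p$ both lie on this common local segment at distance $t$ from $p$, and each must be one of the two points $p_{+}, p_{-}$ at that distance from $p$ along the shared segment. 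The only two possibilities are therefore $h_1 p=h_2 p$ or $h_1 p=h_2^{-1}p$.

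If $h_1 p=h_2 p$, then $h_2^{-1}h_1$ fixes $p$, so freeness yields $h_1=h_2$, i.e.\ $g$ commutes with $h$. Then $h\cdot gL=g\cdot hL=gL$, so $h\in\Stab_G(L)\cap\Stab_G(gL)=\{1\}$, contradicting $t>0$. If instead $h_1 p=h_2^{-1}p$, then $h_2 h_1$ fixes $p$, so $h_2=h_1^{-1}$, i.e.\ $ghg^{-1}=h^{-1}$; but this identifies a hyperbolic isometry with axis $gL$ with one whose axis is $L$, forcing $gL=L$, another contradiction. The main (though easily confirmed) geometric point is the local coincidence of $L$ and $gL$ at interior points of their intersection, which restricts $h_1 p$ and $h_2 p$ to a two-element set; once that is in place, freeness disposes of each case immediately.
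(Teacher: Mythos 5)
Your proof is correct, and it takes a genuinely different route from the paper's. Both you and the paper start by producing, via density, a nontrivial $h\in\Stab_G(L)$ of small translation length, and both exploit freeness. But the paper introduces the \emph{explicit stabiliser}, a $G$-orbit invariant subgroup of $\mathbb{R}$ recording which translation lengths occur along a line, to show directly that any $h\in\Stab_G(L)$ moving a point of $L\cap gL$ to another point of $L\cap gL$ must already stabilise $gL$ as well; from this it deduces that $L\cap gL$ has infinite length and then separately rules out a ray. You bypass this invariant entirely: you conjugate $h$ to $h_2=ghg^{-1}$, use the local coincidence of $L$ and $gL$ at an interior point $p$ of their intersection to see that $hp$ and $h_2 p$ must both lie in the same two-element set, and conclude $h=h_2$ or $h=h_2^{-1}$ by freeness. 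Either identity forces $gL=L$ via uniqueness of hyperbolic axes, contradicting the assumption that the intersection is a proper segment or ray. Your approach disposes of the bounded-segment and ray cases in a single step and makes the role of axis uniqueness explicit (the paper leaves it somewhat implicit when it asserts that a ray is impossible by freeness), at the cost of the two-case dichotomy and the local-coincidence observation, both of which the explicit-stabiliser characterisation of $\Stab_G(gL)$ quietly sidesteps.
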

\begin{proof}
    Let $M \subseteq T$ be a line, let $\varphi:M \rightarrow \mathbb{R}$ be an isometry and let $y \coloneqq \varphi^{-1}(0)$. We will say that the \textit{explicit stabiliser} of $M$ is  $K \coloneqq \varphi(\Stab_G(M) \cdot y) \leq \mathbb{R}$. Note that $K$ does not depend on the choice of $\varphi$. It follows from the freeness of the action that, for any $g \in G$ and $x \in M$, we have $g \in \Stab_G(M)$ if and only if $gx \in M$ and $d(x,gx) \in K$. Moreover, if $g \in G$ then $\varphi \circ g^{-1}:gM \rightarrow \mathbb{R}$ is an isometry, $(\varphi \circ g^{-1})^{-1}(0) = gy$  and $\varphi \circ g^{-1}(\Stab_G(gM) \cdot gy) = \varphi(\Stab_G(M) \cdot y) = K$, so $K$ is an invariant of $G \cdot M$.

    Let $H \leq \mathbb{R}$ be the explicit stabiliser of $L$.
    Let $g \in G$ be such that $|gL \cap L| > 1$.
    The intersection $L \cap gL$ is closed and connected so this implies that there is a non-degenerate segment $[x,y] \subseteq L \cap gL$. By assumption, there exists $h \in \Stab_G(L)$ such that $hx \in [x,y]$.
    Then $hx \in gL$ and $d(x,gx) \in H$ so $h \in \Stab_G(L) \cap \Stab_G(gL)$. If $z \in [x,y]$ is such that $d(z, y) < d(x,hx)$ then $hz \in L \cap gL$ and $hz \notin [x,y]$. This argument applies to any non-degenerate segment $[x,y] \subseteq L \cap gL$, so $L \cap gL$ has infinite length. Since the action of $G$ is free, $L \cap gL$ cannot be a ray, so we must have that $L \cap gL = L$.
\end{proof}

Given a group $G$ acting freely and transitively on an $\mathbb{R}$-tree $T$, we establish the following convention. Identify $G$ with $T$ via an orbit map $g \mapsto g \cdot x_0$ for some $x_0 \in T$ and equip $G$ with the partial order defined by $g \preceq f$ if $[\id, g] \subseteq [\id, f]$. Note that $(G, \preceq)$ is a meet semilattice and $g \wedge f \neq \id$ if and only if $f$ and $g$ are in the same direction at $\id$. For all $g \in G$ define $\ell(g) \coloneqq d(\id, g)$, where $d$ is the metric on $T$.

\begin{lemma} \label{lem: small stabilisers large valence}
    Let $\kappa \leq 2^{\aleph_0}$ be a cardinal. Let $T$ be a real tree which is not a single point and such that there is a group $G$ acting freely and transitively on $T$. If, for any line $L \subseteq T$, we have $|\Stab_G(L) \setminus L| \geq \kappa$, then the valence of $T$ is $\geq \kappa$.
\end{lemma}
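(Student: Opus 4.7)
\emph{Proof plan.} Since the action $G \acting T$ is transitive, the valence at every point of $T$ is the same, so it suffices to exhibit $\geq \kappa$ distinct directions at a fixed basepoint $x_0 \in T$. As $T$ is not a point and the action is transitive, every point has valence at least $2$, and a standard extension argument (iteratively picking a non-backtracking direction at the current endpoint, applying Zorn's lemma to the totally ordered set of extensions on each side of $x_0$) yields a line $L \subseteq T$ through $x_0$.

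Set $H \coloneqq \Stab_G(L)$. By hypothesis $|H \setminus L| \geq \kappa$, so choose orbit representatives $\{y_\alpha\}_{\alpha \in A} \subseteq L \setminus \{x_0\}$ with $|A| = \kappa$ (discarding the $H$-orbit of $x_0$ if needed), and for each $\alpha$ let $g_\alpha \in G$ be the unique element with $g_\alpha \cdot x_0 = y_\alpha$. The fact that the $y_\alpha$ represent distinct $H$-orbits forces $g_\alpha g_\beta^{-1} \notin H$ for $\alpha \neq \beta$ and $g_\alpha \notin H$ for each $\alpha$, so the translates $g_\alpha^{-1} L$ form a family of pairwise distinct lines through $x_0$, each different from $L$.

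The core of the argument is to show that these $|A|$ lines together with $L$ contribute $\geq \kappa$ distinct directions at $x_0$. Assume first that $H$ acts on $L$ with dense orbits. Then Lemma~\ref{lem: intersection of dense axes} applies: for $\alpha \neq \beta$, the intersection $g_\alpha g_\beta^{-1} L \cap L$ is at most a single point (it cannot be a line because $g_\alpha g_\beta^{-1} \notin H$), and since it contains $y_\alpha = g_\alpha g_\beta^{-1}(y_\beta)$ it equals $\{y_\alpha\}$; translating by $g_\alpha^{-1}$ yields $g_\alpha^{-1} L \cap g_\beta^{-1} L = \{x_0\}$, and the analogous argument gives $g_\alpha^{-1} L \cap L = \{x_0\}$. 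Hence the pair of directions at $x_0$ supplied by each of the $|A|+1$ lines is disjoint from the pair supplied by any other, producing $2 + 2|A| \geq \kappa$ distinct directions.

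The step I expect to require the most care is the case in which $H$ does not act on $L$ with dense orbits: Lemma~\ref{lem: intersection of dense axes} is then unavailable, and the translates $g_\alpha^{-1} L$ may \emph{a priori} agree with $L$ or with each other on non-degenerate segments at $x_0$, so the disjointness of the direction pairs above can fail. The plan is to exploit the \emph{universal} form of the hypothesis --- large orbit spaces on \emph{every} line, including each $g_\alpha^{-1} L$ --- together with transitivity, which provides elements of $G$ with $g \cdot x_0$ arbitrarily close to $x_0$ along $L$ and hence of arbitrarily small translation length. By extracting an accumulation point of their axes one produces a line whose stabiliser does act densely, and then one applies the dense-case argument above to that line. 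This upgrade from the failure of density on a specific line to the existence of some line with dense stabiliser is the most delicate part of the argument.
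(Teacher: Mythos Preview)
Your dense-case argument is correct and is essentially the paper's argument in that situation: once you have a line $L$ through the basepoint with $\Stab_G(L)$ acting with dense orbits, Lemma~\ref{lem: intersection of dense axes} forces the translates $g_\alpha^{-1}L$ to meet pairwise only at $x_0$, and counting directions finishes.

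The gap is in the non-dense case. Your plan is to manufacture a line with dense stabiliser by taking elements $g_n$ with $g_n x_0 \to x_0$ (hence translation length $\to 0$) and then ``extracting an accumulation point of their axes''. This step does not go through. The axes $A_n$ of the $g_n$ need not share any common segment, nor need any subsequence of them converge to a line in any useful sense; having small translation length says nothing about the direction of the axis. More fundamentally, the strategy cannot succeed: there exist free transitive actions on $\mathbb{R}$-trees in which \emph{every} line stabiliser is cyclic (hence non-dense), and for such trees there is simply no line to which your dense-case argument can be applied. So the reduction ``find a dense-stabiliser line and re-run Case 1'' is not available in general.

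The paper sidesteps this by organising the dichotomy differently. It first proves, with no density hypothesis at all, that if \emph{every} line stabiliser is countable then the valence is already $\geq 2^{\aleph_0} \geq \kappa$. The mechanism is a direct counting argument: fixing a non-trivial $f$ and, for each $g \prec f$, setting $h_g \coloneqq g^{-1}f \wedge f$, one shows that $h_g = \id$ for all but countably many $g$ (otherwise a pigeonhole on the lengths $\ell(h_g)$ produces uncountably many elements stabilising a single line). This bounds the size of each equivalence class in $\{g : g \preceq f\}$ under ``$g^{-1}f$ and ${g'}^{-1}f$ share a direction at $\id$'' by $\aleph_0$, forcing $2^{\aleph_0}$ classes and hence $2^{\aleph_0}$ directions. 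Only after this does the paper turn to the remaining case, where some line has \emph{uncountable} (hence automatically dense) stabiliser, and there your argument applies verbatim. The missing ingredient in your proposal is precisely this countable-stabiliser counting claim; the ``accumulation of axes'' heuristic should be replaced by it.
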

\begin{proof} 
    We first assume that the $G$-stabiliser of every line in $T$ is at most countable.

    \setcounter{claim}{0}
    \begin{claim} \label{claim: countbale classes}
        Let $f \in G$ be non-trivial and, for each $g \prec f$, let $h_g \coloneqq g^{-1} f \wedge f$. Then, for all but countably many $g \prec f$, we have $h_g = \id$.
    \end{claim}
    \begin{proof}
        \renewcommand{\qed}{$\blacksquare$}
        Let $A \subseteq G$ be the set of $g \prec f$ such that $h_g \neq \id$ and suppose that $A$ is uncountable. For each $n \in \mathbb{N}$, let $A_n \coloneqq \{g \in A : \ell(h_g) \geq 1/n\}$. Then $A = \cup_{n \in \mathbb{N}} A_n$ so there is some $n$ for which $A_n$ is uncountable. Since $[\id,f]$ is a finite union of segments of length $\leq 1/2n$, there exists a segment $I \subseteq [\id,f]$ with length $\leq 1/2n$ such that $P \coloneqq A_n \cap I$ is uncountable. We can moreover assume, up to taking a subsegment with the same property, that $I = [p_1,p_2]$ for some $p_1,p_2 \in [\id,f]$ such that $p_1,p_2 \in A_n$.
        Let $k \coloneqq p_1^{-1} p_2$. Then $k \preceq h_p$ for all $p \in P$ so $pk \preceq f$. In  particular, $p_1k^2 \preceq f$ and $p_1k \preceq f$ which implies that $k \wedge k^{-1} = \id$. Let $L \subseteq G$ be the convex hull of $\la k \ra$ and note that $L$ is a line. For every $p \in P$ we have $p_1 \preceq pk \preceq p_1k^2$ and it follows that $p_1^{-1}p$ stabilises $L$, but there are uncountably many such elements.
    \end{proof}

    Let $G_f \coloneqq \{g \in G : g \preceq f\}$ and define an equivalence relation $\sim$ on $G_f$ by: $g \sim g'$ if $g^{-1}f \wedge {g'}^{-1} f \neq \id$. If $g \sim g'$ and $g \prec g' \prec f$ then $g^{-1}g' \preceq g^{-1} f$ and $(g^{-1}g')^{-1} g^{-1}f \wedge g^{-1}f = {g'}^{-1}f \wedge g^{-1}f \neq \id$. By Claim~\ref{claim: countbale classes}, there are at most countably many such elements $g^{-1}g' \preceq g^{-1} f$. It follows that equivalence classes of $G_f$ are at most countable, which implies that $|G_f / \sim| = 2^{\aleph_0}$, and thus the valence of $T$ is $\geq 2^{\aleph_0}$. 

    Now assume $T$ has valence $< 2^{\aleph_0}$. By the above argument, there is a line $L \subseteq T$ whose stabiliser is uncountable. Up to translating $L$ by some element of $G$, we can assume that $\id \in L$. The orbit $\Stab_G(L) \cdot x$ is dense in $L$ for any $x \in L$ so, by Lemma~\ref{lem: intersection of dense axes}, for any $f,g \in G$, the intersection $fL \cap gL$ is either empty, a single point or a line. Therefore, if $f,g \in L$ and $f^{-1} \wedge g^{-1} \neq \id$, then $f^{-1} L = g^{-1} L$ so $fg^{-1} \in \Stab_G(L)$. By assumption $|\Stab_G(L) \setminus L| \geq \kappa$ so this implies that the valence of $T$ is bounded below by $\kappa$.   
\end{proof}

\begin{proof}[Proof of Theorem~\ref{thm: small valence action}]
    Let $\kappa \geq 4$ be a cardinal which is either infinite or even. Let $I_\kappa$ be a set with cardinality $\kappa / 2$ and let $X_\kappa \coloneqq \{x_i, x_i^* : i \in I_\kappa\}$, equipped with the trivial action of $\mathbb{R}$ and the involution $\ast: x_i \mapsto x_i^*, x_i^* \mapsto x_i$. Let $H_\kappa \leq T_{X_\kappa}(X_\kappa)$ be the subgroup of complexity 1 elements of $T_{X_\kappa}(X_\kappa)$. Then $H_\kappa$ is an incomplete real tree and the set of directions at $\id$ is in bijection with $X$.
    
    Next, fix a cardinal $3 \leq \kappa < 2^{\aleph_0}$ and suppose there exists a real tree $T$ with valence $\kappa$ and a group $G$ which admits a free transitive action on $T$.

    Suppose $T$ is complete. We will show that $\kappa \geq 2^{\aleph_0}$, leading to a contradiction. We will need to construct a set $\{a_{1,n}, a_{2,n} \in G : n \in \mathbb{R}\}$ such that $\ell(a_{i,n}) \leq 1/n^2$ for all $n$ and, for all $n,m \in \mathbb{N}$, we have $a_{1,n}^{-1} \wedge a_{1,m} = a_{1,n}^{-1} \wedge a_{2,m} = a_{2,n} \wedge a_{2,m}^{-1} = \id$. 
    To this end, we first show that there exist lines $L_1, L_2 \subseteq T$ such that $L_1 \cap L_2 = \{\id\}$ and the actions $\Stab_G(L_i) \acting L_i$ have dense orbits. 
    
    By Lemma~\ref{lem: small stabilisers large valence}, there exists a line $L_1 \subseteq T$ with uncountable, and therefore dense, stabiliser. Up to translating by an element of $G$, we can assume that $\id \in L_1$. 
    If the stabiliser of $L_1$ is not transitive, let $g \in L_1 - \Stab_G(L_1)$ and $L_2 \coloneqq g^{-1}L_1$. Then $L_2 \neq L_1$ and $\id \in L_2$ so by Lemma~\ref{lem: intersection of dense axes} $L_1 \cap L_2 = \{\id\}$. 
    If $\Stab_G(L_1)$ is transitive then $\Stab_G(L_1) = L_1$. Let $H \subseteq G$ be the set of elements $g \in G$ such that the segment $[\id, g]$ does not intersect any translate of $L_1$ in a non-degenerate segment. If $g,h \in H$, then $[\id,gh] = [\id, g'] \cup g' \cdot [h',h]$ for some $g' \preceq g$ and $h' \preceq h$. For all $k \in G$ we have $[\id,g'] \cap kL_1 \subseteq [\id,g] \cap kL_1$ and $g' \cdot [h',h] \cap kL_1 \subseteq g' \cdot ([\id,h] \cap {g'}^{-1} k L_1)$ so $|[\id, gh] \cap kL_1| \leq 1$ and $gh \in H$. Also $[\id,g^{-1}] \cap kL_1 = g^{-1} \cdot ([\id,g] \cap gkL_1)$ which has cardinality at most 1 so $g^{-1} \in H$. Thus $H$ is a subgroup of $G$. It is clear from the definition that $H$ is a subtree of $G$. It follows from the fact that $\Stab_G(L_1) = L_1$ that $H$ is non-trivial -- more precisely, the valence of $H$ is $\kappa - 2$. Thus by Lemma~\ref{lem: small stabilisers large valence} there is a line $L_2 \subseteq H$ containing $\id$ such that $\Stab_H(L_2) = \Stab_G(L_2)$ acts on $L_2$ with dense stabilisers. 

    Now fix rays $L_i^+ \subseteq L_i$ based at $\id$ for each $i$.
    For each $n \in \mathbb{N}$, let $a_{1,n} \in L_1^+ \cap \Stab_G(L_1), a_{2,n} \in L_2^+ \cap \Stab_G(L_2)$ be such that $0 < \ell(a_{1,n}) <  \ell(a_{2,n}) \leq 1/n^2$. Then for all $n,m \in \mathbb{N}$, $a_{1,n}^{-1}, a_{1,m}^{-1} \in L_1 - L_1^+$ and $a_{2,n}^{-1}, a_{2,m}^{-1} \in L_2 - L_2^+$ so 
    \[
        a_{1,n}^{-1} \wedge a_{1,m} =  a_{2,n}^{-1} \wedge a_{2,m}^{-1} = a_{1,n}^{-1} \wedge a_{2,m} = a_{2,n}^{-1} \wedge a_{1,m} = \id.
    \]
    Given a map $\theta: \mathbb{N} \rightarrow \{1,2\}$, define $g^\theta_n \coloneqq a_{\theta(1),1} \dots a_{\theta(n),n}$. The way we chose the $a_{1,i}$'s and $a_{2,i}$'s implies that $\ell(g^\theta_n) = \sum_{i=1}^n \ell(a_{\theta(i),i})$, $g^\theta_n \prec g^\theta_{n+1}$ and $(\ell(g^{\theta}_n))_{n \in \mathbb{N}}$ is convergent, so $(g^\theta_n)_{n \in \mathbb{N}}$ is Cauchy and has a limit $g^\theta \in G$. It also follows from the choice of $a_{1,i}$'s and $a_{2,i}$'s that, if $\theta \neq \theta'$, then $(g^\theta)^{-1} \wedge (g^{\theta'})^{-1} = \id$ which means that $T$ has at least $2^{\aleph_0}$ directions at $\id$.

    Finally, suppose that $\kappa$ is finite. By Lemma~\ref{lem: small stabilisers large valence}, there is a line $L \subseteq T$ such that $|\Stab_G(L) \setminus L| \leq \kappa$. The additive group $\mathbb{R}$ has no finite index subgroups, so this implies that $\Stab_G(L)$ acts transitively on $L$. We assume without loss of generality that $\id \in L$ and note that $L = \Stab_G(L)$ in this case.  Let $H \subseteq G$ be the set of elements $g \in G$ such that the segment $[\id, g]$ does not intersect any translate of $L_1$ in a non-degenerate segment. Recall that $H$ is a subgroup of $G$ and a subtree with valence $\kappa -2$. It follows by induction on $\kappa$ that $\kappa$ is even and there are lines $L_1, \dots, L_{\kappa/2} \subseteq T$ such that $L_i \cap L_j = \{\id\}$ for all $i \neq j$ and $\Stab_G(L_i) = L_i$ for all $i$. 
    
    To prove the uniqueness statement, recall the definition of the group $H_\kappa$ from the first paragraph of this proof. For each $i \in \{1, \dots, \kappa/2\}$ fix an isometry $\psi: \mathbf{L}_{x_i} \rightarrow L_i$ such that $\psi_i(\id) = \id$. These extend to an isometric and homomorphic embedding $\psi: H_\kappa \rightarrow T$. 
    
    It remains to show that $\psi$ is surjective. Let $p \in T$ be in the closure of $\psi(H_\kappa)$ and let $(h_n)_{n \in \mathbb{N}} \subseteq \psi(H_\kappa)$ be a sequence with limit $p$. Since $\psi(H_\kappa)$ is connected, we can assume without loss of generality that $h_n \preceq h_{n+1}$ for all $n \in \mathbb{N}$. Each $p^{-1} h_n$ lies in the same direction at $\id$ and $\ell(p^{-1} h_n) \rightarrow 0$. Therefore there exists $i \in \{1, \dots, \kappa / 2\}$ such that , for sufficiently large $n$, we have $p^{-1} h_n \in L_i \subseteq \psi(H_\kappa)$. Thus $p \in \psi(H_\kappa)$ and $\psi(H_\kappa)$ is closed. Now suppose there exists $p \in T - \psi(H_\kappa)$ and let $p' \in \psi(H_\kappa)$ be the closest point projection of $p$ onto $\psi(H_\kappa)$. Then $[p',p] \cap \psi(H_\kappa) = \{p'\}$, so $p$ lies in a different direction at $p'$ to any point of $\psi(H_\kappa)$. But since the valence of $H_\kappa$ is $\kappa$, this implies that the valence of $p'$ in $T$ is $\geq \kappa + 1$, which is a contradiction.
\end{proof}

\begin{remark} \label{rem: isomorphic actions}
    Let $\kappa \geq 4$ be a cardinal which is not both finite and odd, and let $H_\kappa$ be the group defined at the start of the above proof. Then $H_\kappa$ is isomorphic to the free product of $\kappa$ copies of $\mathbb{R}$ and the action of $H_\kappa$ on itself is precisely the action constructed in \cite{Chiswell-Muller} for free products of copies of $\mathbb{R}$.
    
    For each $i \in I_\kappa$, let $R_i = \mathbb{R}$. Let $\ast_{I_\kappa} R_i$ be the free product of $\{R_i : i \in I_\kappa\}$. Recall that, for each $i \in I_\kappa$, since $\Stab_{H_\kappa}(\mathbf{L}_{x_i}) = \mathbf{L}_{x_i}$, the map $\mathbf{L}_{x_i} \rightarrow R_i$ which maps $\f \in \mathbf{L}_{x_i}$ to $\ell(\f)$ if the image of $\f$ is $x_i$, and maps $\f$ to $-\ell(\f)$ otherwise, is an isomorphism. The group $H_\kappa$ is generated by $\cup_{i \in I_\kappa} \mathbf{L}_{x_i}$, so these isomorphisms extend to an isomorphism $\psi: H_\kappa \rightarrow \ast_{I_\kappa} R_i$. Moreover, length function $\ell \circ \psi^{-1} : \ast_{I_\kappa} R_i \rightarrow \mathbb{R}$ is precisely the Lyndon length function defined in \cite{Chiswell-Muller} to construct a free transitive action on an $\mathbb{R}$-tree. It follows that the resulting free transitive actions are isomorphic.

    If $\kappa = 2^{\aleph_0}$ then $H_\kappa \acting H_\kappa$ is also isomorphic to the free transitive action on Uryson's tree constructed in \cite{Berestovskii1989,Berestovskii2019}. 
\end{remark}

\subsection{Reduction retraction} \label{sec: reduction}

Suppose that $x^* \notin \mathbb{R} \cdot x$ and $\Stab_\mathbb{R}(x)$ is non-trivial for all $x \in X$.

\begin{proposition} \label{prop: reduction functor}
    There exists a retraction ${\Lightning} : \mathcal{Z}_X \rightarrow T_X(X)$ such that $(\f \con \g)^{\Lightning} = (\f^{\Lightning} \con \g^{\Lightning})^{\Lightning} = \f^{\Lightning} \star \g^{\Lightning}$ and $(\f^{-1})^{\Lightning} = {\f^{\Lightning}}^{-1}$ for all $\f,\g \in \mathcal{Z}_X$.
\end{proposition}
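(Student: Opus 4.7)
The plan is to define the retraction $\Lightning$ by transfinite induction on the complexity of $\f \in \mathcal{Z}_X$, simultaneously verifying the three required identities and the auxiliary Lipschitz estimate $\ell(\f^\Lightning) \leq \ell(\f)$ (which will be essential for taking limits at successor stages).

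For the base case (complexity~$1$), write $\f = c_1 \con c_2 \con \dots \con c_n$ as a finite concatenation of constants with images $x_i \in X$. The assumption $x^* \notin \mathbb{R} \cdot x$ together with Lemma~\ref{lem: admissible constants} ensures each $c_i$ is admissible, so $c_i \in L_{x_i} \subseteq T_X(X)$. Set
\[
\f^\Lightning \coloneqq c_1 \star c_2 \star \dots \star c_n \in T_X(X).
\]
Well-definedness (independence of the decomposition) follows from Lemma~\ref{lem: refining} combined with the observation that splitting a single constant of length $\ell_1 + \ell_2$ into two sub-constants leaves the $\star$-product unchanged. The retraction property, the inversion identity $(\f^{-1})^\Lightning = (\f^\Lightning)^{-1}$ and the concatenation identity are immediate from the associativity of $\star$ and the fact that $\star$ agrees with $\con$ whenever no cancellation occurs; the Lipschitz estimate holds because $\star$-multiplication only shortens.

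For the inductive step, fix a countable ordinal $\alpha = \beta + 1 > 1$ and suppose $\Lightning$ has been defined on $\mathcal{Z}_X^{[<\alpha]}$ with all four properties. Given $\f \in \mathcal{Z}_X^{[\alpha]} \setminus \mathcal{Z}_X^{[<\alpha]}$, realize $\f$ over a template $P$ with $\overline{P}$ of CB-rank $\alpha$, so $K \coloneqq \overline{P}^{(\beta)}$ is a finite subset of $\overline{P} \setminus P$. Enumerate $K \cup \{0, \ell(\f)\}$ as $k_0 < k_1 < \dots < k_m$ and, using axiom~(T2), choose probe points $k_i^+ = k_{i+1}^- \in (k_i, k_{i+1}) \cap P$. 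Decompose
\[
\f = \f^+_{k_0} \con \f^-_{k_1} \con \f^+_{k_1} \con \dots \con \f^-_{k_m},
\]
as in the proof of Lemma~\ref{lem: sequence closure}, where $\f^-_{k_i}$ and $\f^+_{k_i}$ are the restrictions of $\f$ to $[k_i^-, k_i]$ and $[k_i, k_i^+]$ respectively. Each of these pieces may still have complexity $\alpha$ (the accumulation persists at the relevant endpoint), so we introduce a further limit. For $\f^-_{k_i}$, fix an increasing sequence $q_n \in P \cap (k_i^-, k_i)$ converging to $k_i$ and set $\g_n \coloneqq \f^-_{k_i}|_{[0, q_n - k_i^-]}$. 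Each $\g_n \preceq \g_{n+1} \preceq \f^-_{k_i}$ has complexity $\leq \beta < \alpha$ (its template's $\beta$-th derivative is empty, as $k_i$ is excluded), so $\g_n^\Lightning$ is defined by induction. The inductive concatenation identity and the Lipschitz estimate give
\[
d_{T_X(X)}(\g_n^\Lightning, \g_m^\Lightning) = \ell\bigl((\g_n^{-1} \con \g_m)^\Lightning\bigr) \leq \ell(\g_m) - \ell(\g_n) \xrightarrow[n,m \to \infty]{} 0,
\]
so $(\g_n^\Lightning)$ is Cauchy and converges in the complete $\mathbb{R}$-tree $T_X(X)$ to a limit we declare to be $(\f^-_{k_i})^\Lightning$. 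The pieces $\f^+_{k_i}$ are handled symmetrically (e.g.\ by $(\f^+_{k_i})^\Lightning \coloneqq \bigl(((\f^+_{k_i})^{-1})^\Lightning\bigr)^{-1}$). Finally, set
\[
\f^\Lightning \coloneqq (\f^+_{k_0})^\Lightning \star (\f^-_{k_1})^\Lightning \star (\f^+_{k_1})^\Lightning \star \dots \star (\f^-_{k_m})^\Lightning.
\]

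The required properties propagate: admissibility passes to factors, so if $\f$ is admissible then each $\g_n$ is admissible, hence $\g_n^\Lightning = \g_n \to \f^-_{k_i}$ and the $\star$-product reconstructs $\f$ (no cancellation occurs at the junctions); the inversion and concatenation identities at level $\alpha$ follow by applying the inductive hypotheses piecewise and using associativity of $\star$; the Lipschitz estimate is preserved because $\ell$ is additive under $\con$ and non-increasing under $\star$. The main obstacle is the verification of well-definedness at the inductive step — namely, that $\f^\Lightning$ is independent of the choice of template $P$, of probe points $k_i^+$, and of the sequence $(q_n)$. Any two templates for $\f$ admit a common refinement by Lemma~\ref{lem: refining}, and adding a single point to the template inserts the $\star$-identity into the product; similarly any two increasing sequences converging to $k_i$ admit a common increasing refinement, and the Lipschitz estimate forces the two resulting limits to agree.
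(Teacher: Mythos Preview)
Your proof follows the same transfinite-induction skeleton as the paper's: induct on complexity, decompose at the finite $\beta$-th derived set, handle one-sided accumulation pieces as Cauchy limits of lower-complexity truncations, and reassemble via $\star$. The one genuine difference is at the base case: the paper introduces an ad-hoc ``reduction at $t$'' operation and proves a substantial lemma (Lemma~\ref{lem: reduction is independent of order}) that the order of such reductions is immaterial, whereas you define $\f^\Lightning = c_1 \star \dots \star c_n$ directly and let the already-proven associativity of $\star$ absorb that work --- this is cleaner. The paper then reuses its reduction-at-$t$ machinery to verify well-definedness at the inductive step (its Claim~\ref{two-sided Cauchy}) and the concatenation identity, while your sketch of those verifications compresses the same bootstrapping: before independence of probe points can follow, you still need the mixed identity $(\h \con \g)^\Lightning = \h^\Lightning \star \g^\Lightning$ for $\h$ of complexity $\alpha$ with a single endpoint accumulation and $\g$ of complexity $\leq \beta$, but this is routine once isolated and stated. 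Your explicit Lipschitz estimate $\ell(\f^\Lightning)\leq\ell(\f)$ is a nice device for the Cauchy bound; the paper gets the equivalent bound $d(\g_{n+1}^\Lightning,\g_n^\Lightning)\leq 2(q_{n+1}-q_n)$ more directly.
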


Let us start with some terminology.

\begin{definition}[Reducible at $t$]
    Let $\ell > 0$ and $Q \subseteq [0,\ell]$ be a finite template. Let $(\f_q)_{q \in Q} \subseteq T_X$ be a consistent sequence with concatenation $\f$. We say that $\f$ is \textit{reducible at} $t$ if there exists $t_1,t_2 \in Q$ with $t_1 < t < t_2$ such that $Q \cap (t_1,t_2) = \{t\}$ and there exists $0 < s \leq \min\{t-t_1, t_2-t\}$ such that for all but countably many $0 \leq \varepsilon \leq s$ we have $f_{t_1}^{-1}(\varepsilon) = f_t(\varepsilon)$.
\end{definition}
\begin{remark}
    If $\f$ is the concatenation of another finite sequence $(\f'_{q'})_{q' \in Q'} \subseteq T_X$ then $t \in Q'$ and $\f$ is also reducible at $t$ with respect to $(\f'_{q'})_{q' \in Q'}$. Thus reducibility at $t$ is well-defined.
\end{remark}

We now define reduction of a finite sequence of elements of $T_X$. 

\begin{definition}[Reduction of a finite sequence at $t$] \label{defn: reduction at t}
    Let $\ell > 0$ and $Q \subseteq [0,\ell]$ be a finite template. Let $(\f_q)_{q \in Q} \subseteq T_X$ be a consistent but inadmissible sequence with concatenation $\f$ and suppose that $\f$ is reducible at $t \in Q$. Let $t_1$ be the predecessor of $t$ and $t_2$ be its successor. Let $\sigma > 0$ be the maximal $0 < s \leq \min\{t - t_1,t_2 - t\}$ such that $f_{t_1}^*(t - t_1 - \varepsilon) = 2t \cdot f_t(\varepsilon)$ for all but countably many $0 \leq \varepsilon \leq s$. Let $Q^t \coloneqq Q \cup \{t - \sigma, t + \sigma\}$ and define a sequence $(\f^t_q)_{q \in Q^t} \subseteq T_X$ as follows:
    \begin{itemize}
        \item if $q < t_1$ or $q \geq t_2$, then $f_q^t \coloneqq f_q$;
        \item if $t_1 \neq t-\sigma$, then $f^t_{t_1} \coloneqq f_{t_1}|_{[0,t-t_1 - \sigma]}$;
        \item $f^t_{t-\sigma}: [0,\sigma] \rightarrow X$ is defined by $f^t_{t-\sigma}(u) = (t-\sigma) \cdot f_{t_1}(u + t - t_1 - \sigma)$;
        \item $f^t_t \coloneqq f_t|_{[0,\sigma]}$;
        \item if $t + \sigma \neq t_2$, then $f^t_{t+\sigma}: [0,t_2-t-\sigma] \rightarrow X$ is defined by $f^t_{t+\sigma}(u) = \sigma \cdot f_t(u + \sigma)$.
    \end{itemize}
    Observe that the concatenation of $(\f^t_q)_{q \in Q^t}$ is $\f$.
    Now define 
    \[ Q_t \coloneqq \{q \in Q^t : q \leq t-\sigma\} \cup \{q - 2\sigma : q \in Q^t, q \geq t+\sigma\}\]
    and note that $Q_t$ is a finite template of $[0,\ell - 2\sigma]$. Define $r_{Q,t}: Q^t - \{t\}\rightarrow Q_t$ by $r_{Q,t}(q) = q$ if $q \leq t - \sigma$ and $r_{Q,t}(q) = q-2\sigma$ otherwise. 
    
    Given $q \in Q_t$ let $\f^{(t)}_q \coloneqq \f^t_q$ if $q < t-\sigma$ and $\f^{(t)}_q = \f^t_{q+2\sigma}$ otherwise. The \textit{reduction} $\mathrm{red}(\f,Q,t)$ of $(\f,Q)$ at $t$ is the concatenation of $(\f^{(t)}_q)_{q \in Q_t}$.
\end{definition}

\begin{remark} \label{rem: decreasing reducibility}
By the maximality of $\sigma$, if $t-\sigma \neq t_1$ and $t+\sigma \neq t_2$ then $\f^{(t)}$ is not reducible at $t-\sigma$. This implies that the number of points at which $\mathrm{red}(\f,Q,t)$ is reducible is strictly smaller than the number of points at which $\f$ is reducible.
\end{remark}

\begin{lemma} \label{lem: reduction is independent of order}
    Suppose that $\ell,Q, (\f_q)_{q \in Q}$ and $\f$ are as in Definition~\ref{defn: reduction at t} and that $\f$ is reducible at the points $s,t \in Q$. Let $U$ be the template and $(\g_u)_{u \in U}$ be the sequence obtained by first reducing $\f$ at $s$ then at $r_{Q,s}(t)$ and let $V$ be the template and $(\h_v)_{v \in V}$ be the sequence obtained by reducing $\f$ first at $t$ then at $r_{Q,t}(s)$. Then $U=V$ and $(\g_u)_{u \in U} = (\h_v)_{v \in V}$.
\end{lemma}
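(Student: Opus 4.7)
My plan is to proceed by case analysis on the relative positions of $s$ and $t$ in $Q$, exploiting that the reduction operation of Definition~\ref{defn: reduction at t} is local: performing the reduction at $t$ modifies only the three consecutive functions $\f_{t_1}, \f_t, \f_{t_2}$ (up to re-indexing) and inserts the two new template points $t \pm \sigma$.

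If $s$ and $t$ are not adjacent in $Q$, say $s < t$ with some $q \in Q$ strictly between them, then $s_2 \leq q \leq t_1$, so the triples $\{s_1, s, s_2\}$ and $\{t_1, t, t_2\}$ intersect in at most the point $q$ (if $s_2 = q = t_1$), and the function at that point is unchanged by both reductions. Consequently the reductions at $s$ and at $r_{Q,s}(t)$, respectively at $t$ and at $r_{Q,t}(s)$, act on disjoint portions of the sequence and commute trivially; both orderings produce the same template and the same sequence essentially by inspection.

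Suppose instead that $s$ and $t$ are adjacent; without loss of generality $t = s_2$ and $s = t_1$. Let $\tau = t - s$ and let $\sigma_s, \sigma_t$ denote the maximal reduction lengths at $s, t$ respectively, and consider two sub-cases. If $\sigma_s + \sigma_t \leq \tau$, the cancellation intervals inside $\f_s$ — the head of length $\sigma_s$ matched with the tail of $\f_{s_1}$, and the tail of length $\sigma_t$ matched with the head of $\f_t$ — are disjoint. Both orderings then refine $Q$ to the same enlarged template $Q \cup \{s \pm \sigma_s, t \pm \sigma_t\}$, compute the same intermediate functions using the formulas of Definition~\ref{defn: reduction at t}, and after deleting $\{s, t\}$ and performing the corresponding shifts yield identical output.

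The main obstacle is the sub-case $\sigma_s + \sigma_t > \tau$, where the two cancellation windows inside $\f_s$ genuinely overlap on an interval of positive length. Here the reducibility conditions at $s$ and at $t$ jointly constrain $\f_{s_1}$ and $\f_t$ to agree on the overlap region (modulo a countable exceptional set and up to the appropriate $\mathbb{R}$-translation). My approach is to show that after reducing at $s$, the maximal reduction length at the image $r_{Q,s}(t) = t - 2\sigma_s$ equals $\tau - \sigma_s$ rather than $\sigma_t$, since the new bound $\min\{\tau - \sigma_s, t_2 - t\}$ is dominated by $\tau - \sigma_s$; symmetrically for the reverse ordering with $\tau - \sigma_t$. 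In both orderings the total cancellation equals $2\tau$, and a careful tracking of the residual pieces of $\f_{s_1}$ and $\f_t$ through the reduction formulas — using the overlap-induced compatibility between these two functions — identifies the two outputs. This last identification is the only step beyond direct bookkeeping, and is the principal technical content of the proof.
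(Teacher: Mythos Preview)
Your approach is essentially the same as the paper's. The paper also splits into a non-overlapping case (your first two cases together, handled by the observation that the collapsed intervals are disjoint) and an overlapping case, and in the latter it verifies exactly what you outline: that both orderings yield total shift $2(t-s)$ and that the residual function at the single surviving interior point agrees, using the two reducibility conditions together. Your identification of the new reduction length as $\tau - \sigma_s$ (and symmetrically $\tau - \sigma_t$) is correct and matches the paper's quantities $\tau'$ and $\sigma'$; note that you should also check the lower bound, i.e.\ that reducibility at the shifted point genuinely persists on the whole interval $[0,\tau-\sigma_s]$ --- this follows because the predecessor function after reducing at $s$ is just a tail of the original $\f_s$ and $\tau - \sigma_s < \sigma_t$, but it deserves a sentence. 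One small imprecision: in the non-adjacent case with $s_2 = q = t_1$, the function $\f_q$ \emph{is} modified (restricted) by the reduction at $t$; what makes the argument work is rather that the reduction at $s$ does not read $\f_q$ at all, so the two operations act on disjoint data.
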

\begin{proof}
    Relabel $s$ and $t$ if necessary so that $s < t$ and let $s_1,s_2 \in Q^s$, $t_1, t_2 \in Q^t$ be such that $s_1 < s_2$, $t_1 < t_2$, $Q^s \cap (s_1, s_2) = \{s\}$ and $Q^t \cap (t_1, t_2) = \{t\}$. Let $\sigma \coloneqq s-s_1 = s_2-s$ and $\tau \coloneqq t-t_1 = t_2-t$. First suppose that $s_2 \leq t_1$. It is clear from the definition that the reduction of $x$ at $s$ is reducible at $r_{Q,s}(t)$ and similarly the reduction of $x$ at $t$ is reducible at $r_{Q,t}(s)$.
    Since the collapsed intervals are disjoint, it is clear that $U=V$ and $(\g_u)_{u \in U} = (\h_v)_{v \in V}$ in this case.
    
    So suppose that $t_1 < s_2$. In this case $s_1 \leq s \leq t_1 < s_2 \leq t \leq t_2$. 
    Let $\sigma' \coloneqq t_1 - s$ and $\tau' \coloneqq t - s_2$. Then 
    \begin{align*} 
    U &= \{p \in Q : p \leq s_1\} \cup \{p - 2(\sigma + \tau') : p \in Q, p \geq t_2\} \\
    &= \{p \in Q : p \leq s_1\} \cup \{p - 2(t-s) : p \in Q, p \geq t_2\}
    \end{align*}
    and 
    \begin{align*} 
    V &= \{p \in Q : p \leq s - \sigma'\} \cup \{p - 2(\tau + \sigma') : p \in Q, p \geq t_2\} \\
    &= \{p \in Q : p \leq s - \sigma'\} \cup \{p - 2(t-s) : p \in Q, p \geq t_2\}.
    \end{align*}

    Since $t_2 - 2(\sigma + \tau') = s - \sigma'$, we indeed have $U=V$.

    Let $s_1' \in Q$ be such that $s_1'<s$ and $Q \cap (s_1',s) = \emptyset$ and let $t_2' \in Q$ be such that $t < t_2'$ and $Q \cap (t,t_2') = \emptyset$. For each $p < s_1'$, $g_p = f_p = h_p$. Also if $s_1 \neq s_1'$ then $g_{s_1'} = (f_{s_1'})|_{[0,s_1 - s_1']} = h_{s_1'}$.
    We have $g_{s_1}: [0,\tau-\tau'] \rightarrow X$ is given by $g_{s_1}(r) = (t + \tau') \cdot f(r+t + \tau')$ and $h_{s_1}: [0, \sigma - \sigma'] \rightarrow X$ is given by $h_{s_1}(r) = s_1 \cdot f(r+s_1)$.
    Note that $\tau - \tau' = \sigma - \sigma'$ and $t$ is the successor of $s$ in $Q$. By assumption, $(\f^s_{s_1})^{-1} = \f_s^s$ and $(\f^t_{t_1})^{-1} = \f_t^t$. Therefore, for all $r \in [0, \sigma - \sigma']$, 
    \[
        s_1 \cdot f(r + s_1) 
        = f^s_{s_1}(r) 
        = (f^s_s)^{-1}(r) 
        = -\sigma \cdot (s \cdot f(s + (\sigma - r)))^*
        = - s_2 \cdot f^*(s_2 - r)
    \]
    and 
    \begin{align*}
        (t + \tau') \cdot f(r + t + \tau') 
        &= \tau' \cdot f_t^t(r + \tau') \\
        &= \tau' \cdot (f_{t_1}^t)^{-1}(r + \tau') \\
        &= (\tau' - \tau) \cdot (t_1 \cdot f(t - (r+\tau')))^* \\
        &= -s_2 \cdot f^*(s_2 - r)
    \end{align*}
    Lastly, for all $p \in Q$ with $p \geq t_2'$, we have $g_{p-2(t-s)} = f_p = h   _{p-2(t-s)}$ and, if $t_2 \neq t_2'$, then $g_{t_2}, h_{t_2}:[0,t_2'-t_2] \rightarrow X$ are defined by $g_{t_2}(r) = h_{t_2}(r) = f_t(r+\tau)$.
\end{proof}

Combining Lemma~\ref{lem: reduction is independent of order} with Remark~\ref{rem: decreasing reducibility}, the following is well-defined.

\begin{definition}[Reduction of a finite sequence] \label{defn: reduction finite sequence}
    Let $\ell > 0$, let $Q \subseteq [0,\ell]$ be a finite template and let $(\f_q)_{q \in Q}$ be a consistent sequence with concatenation $\f$. If $\f$ is reducible at $t \in Q$ then reduce it at $t$. If $\red(\f,Q,t)$ is reducible at $t' \in Q_t$ then reduce it at $t'$. Iterating this procedure, we eventually obtain an admissible element $\f_Q^{\Lightning}$, which we call \textit{the reduction of $\f$ with respect to $Q$}.
\end{definition}

\begin{lemma} \label{reduction closed under Cauchy}
    Suppose that $m > 0$ and $Q = \{q_n : n \in \mathbb{N}\} \subseteq [0,m]$ is a template such that $q_n < q_{n+1}$ for each $n \in \mathbb{N}$. In particular, $m$ is the unique accumulation point of $Q$. For each $n$, let $Q_n \coloneqq \{q_i : 1 \leq i \leq n\}$. Let $(\f_q)_{q \in Q} \subseteq T_X$ be a sequence and for each $n$, let $\g_n$ be the concatenation of $(\f_q)_{q \in Q_n}$. Then $((\g_n)_Q^{\Lightning})_{n \in \mathbb{N}} \subseteq T_X(X)$ is Cauchy.
\end{lemma}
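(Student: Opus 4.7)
The plan is to establish the sharp equality $d(\g_n^{\Lightning}, \g_{n+1}^{\Lightning}) = q_{n+2} - q_{n+1}$ for every $n$, after which telescoping and the convergence $q_n \to m$ yield the Cauchy property.

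Consistency of $(\f_q)_{q \in Q}$ forces $\ell(\f_{q_i}) = q_{i+1} - q_i$, and a direct comparison of the formulas defining the concatenations shows $\g_{n+1} = \g_n \con \h_n$ in $\mathcal{Y}_X$, where $\h_n := \f_{q_{n+1}}$ is admissible of length $q_{n+2} - q_{n+1}$. By Lemma~\ref{lem: reduction is independent of order} the order of reduction steps is immaterial, so $\g_{n+1}^{\Lightning}$ can be computed by first fully reducing $\g_n$ to $\g_n^{\Lightning}$ and then reducing the two-piece concatenation $\g_n^{\Lightning} \con \h_n$. The heart of the argument is the identity $(\f \con \g)^{\Lightning} = \f \star \g$, valid for arbitrary admissible $\f, \g \in T_X$, where $\star$ is the group operation on $T_X$ from Theorem~\ref{thm: extracted group}.

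To prove this identity, set $\mathfrak{k} := \f^{-1} \wedge \g \in \mathcal{Y}_X$. By (O3) and (O2) we have $\f = (\f \dotdiv \mathfrak{k}^{-1}) \con \mathfrak{k}^{-1}$, $\g = \mathfrak{k} \con (\g \divdot \mathfrak{k})$, and $\ell(\mathfrak{k}) \leq \min\{\ell(\f), \ell(\g)\}$. Unfolding the reduction criterion $f_{t_1}^*(t - t_1 - \varepsilon) = 2t \cdot f_t(\varepsilon)$ at the unique boundary point $t = \ell(\f)$, using the defining formula $f^{-1}(\varepsilon) = -\ell(\f) \cdot f^*(\ell(\f) - \varepsilon)$ and the identity $(r \cdot x)^* = -r \cdot x^*$ coming from the semidirect product structure of $\mathbb{R} \rtimes_\alpha \la * \ra$, the criterion reduces to $f^{-1}(\varepsilon) = g(\varepsilon)$. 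Hence the maximal $\sigma$ of Definition~\ref{defn: reduction at t} equals $\ell(\mathfrak{k})$, and a single reduction step collapses the backtrack, leaving $(\f \dotdiv \mathfrak{k}^{-1}) \con (\g \divdot \mathfrak{k}) = \f \star \g$. This output is admissible: each face of an admissible element is admissible, and maximality of $\mathfrak{k}$ forces $(\f^{-1} \divdot \mathfrak{k}) \wedge (\g \divdot \mathfrak{k}) = \id$, ruling out any further boundary reduction.

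Applying the identity with $\f = \g_n^{\Lightning}$ and $\g = \h_n$ gives $\g_{n+1}^{\Lightning} = \g_n^{\Lightning} \star \h_n$, and left-invariance of $d$ together with the group structure of $T_X$ yield
\[
d(\g_n^{\Lightning}, \g_{n+1}^{\Lightning}) = \ell\bigl((\g_n^{\Lightning})^{-1} \star \g_{n+1}^{\Lightning}\bigr) = \ell(\h_n) = q_{n+2} - q_{n+1}.
\]
Telescoping gives $d(\g_n^{\Lightning}, \g_{n+k}^{\Lightning}) \leq q_{n+k+1} - q_{n+1}$, which tends to $0$ as $n \to \infty$ since $(q_n)$ converges to $m$ and is therefore Cauchy in $\mathbb{R}$. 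The main obstacle is the unpacking in the third paragraph: verifying that the literal reduction criterion translates into the clean relation $f^{-1}(\varepsilon) = g(\varepsilon)$ between the involution of $\f$ and $\g$, and that the maximal $\sigma = \ell(\mathfrak{k})$ is indeed attained in a single reduction step thanks to $\ell(\mathfrak{k}) \leq \min\{\ell(\f), \ell(\g)\}$. That each $\g_n^{\Lightning}$ lies in $T_X(X)$ (rather than merely $T_X$) is automatic from Proposition~\ref{prop: template generation}, since $\g_n^{\Lightning}$ is the realisation of an admissible finite template sequence valued in $X$.
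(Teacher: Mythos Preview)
Your proof is correct and follows the same strategy as the paper: bound $d(\g_n^{\Lightning}, \g_{n+1}^{\Lightning})$ by the length of the added piece and telescope. The paper's proof is a single line asserting $d((\g_{n+1})_Q^{\Lightning},(\g_n)_Q^{\Lightning}) \leq 2(q_{n+1} - q_n)$ without justification, whereas you establish the sharp equality by proving the identity $(\f \con \g)^{\Lightning} = \f \star \g$ for admissible $\f,\g$; this identity is essentially Property~(F) in the paper's later proof of Proposition~\ref{prop: reduction functor}, so you are front-loading that computation rather than taking a different route. One cosmetic point: your indexing convention (taking $\g_n$ to have length $q_{n+1}$) differs from what the paper's bound suggests, but this is immaterial to the Cauchy conclusion.
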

\begin{proof}
    Observe that $d((g_{n+1})_Q^{\Lightning},(g_n)_Q^{\Lightning}) \leq 2(q_{n+1} - q_n) \rightarrow 0$ as $n \rightarrow \infty$.
\end{proof}

\begin{definition}[Reduction of a Cauchy sequence]
    Let $m,Q, (\f_q)_{q \in Q}, (\g_n)_{n \in \mathbb{N}}$ be as in Lemma~\ref{reduction closed under Cauchy} and let $\f$ be the concatenation of $ (f_q)_{q \in Q}$. The \textit{reduction} of $\f$ with respect to $Q$, denoted $\f_Q^{\Lightning}$, is the limit of the Cauchy sequence $((\g_n)_Q^{\Lightning})_{n \in \mathbb{N}}$. 
\end{definition}

\begin{proof}[Proof of Proposition~\ref{prop: reduction functor}]
\setcounter{claim}{0}
 Let us define ${\Lightning}: \mathcal{Z}_X \rightarrow T_X(X)$. Let $\f \in \mathcal{Z}_X$ be an element with complexity $\alpha$. We proceed by transfinite induction on $\alpha$.
 
 First suppose that $\alpha = 1$ and fix a sequence $x = (x_p)_{p \in P} \subseteq X$ such that $\f$ is its realisation. Since $P$ is finite, we can (and do) assume that it is minimal among the set of templates for $\f$. For each $p \in P - \{\ell\}$, let $p' \in P$ be the successor of $p$ and define $f_p: [0,p'-p] \rightarrow X$ by $f_p(t) = p \cdot x_p$ for all $t$. Let $\f_\ell \coloneqq \id$. Then $\f$ is the concatenation of $(\f_p)_{p \in P}$. The reduction of $\f$ is $\f^{\Lightning} \coloneqq \f_P^{\Lightning}$ in the sense of Definition~\ref{defn: reduction finite sequence}.

So suppose that $\alpha = \beta + 1$ for some countable ordinal $\beta$ and suppose that, for any $\g \in \mathcal{Z}_X^{[\beta]}$ (recall Definition~\ref{def: complexity}), the reduction $\g^{\Lightning}$ of $\g$ has been defined in such a way that it depends only on $\g$. Suppose moreover that the following holds. 

\begin{itemize}
\item[(C)]Let $m > 0$, let $Q \subseteq [0,m]$ be a template such that $Q^{(\beta)} = m$ and fix a sequence $y = (y_q)_{q \in Q} \subseteq X$. Let $\{q_n : n \in \mathbb{N}\} \subseteq Q$ be such that $q_n < q_{n+1}$ for each $n \in \mathbb{N}$, so that $(q_n)_{n \in \mathbb{N}}$ is a strictly increasing sequence with limit $m$. For each $n \in \mathbb{N}$ let $Q_n \coloneqq \{q \in Q : q \leq q_n\}$ and let $\g_n$ be the realisation of $(y_q)_{q \in Q_n}$. Then $(\g_n^{\Lightning})_{n \in \mathbb{N}}$ is Cauchy.
\item[(F)] If $\f, \g \in \mathcal{Z}_X^{[\beta]}$ then $(\f \con \g)^{\Lightning} = \f^{\Lightning} \star \g^{\Lightning}$. 
\item[(I)]If $\f \in \mathcal{Z}_X^{[\beta]}$ then $(\f^{-1})^{\Lightning} = {\f^{\Lightning}}^{-1}$. 
\end{itemize}
In the case where $\beta = 1$, Property (C) holds by Lemma~\ref{reduction closed under Cauchy} and Property (F) holds by Lemma~\ref{lem: reduction is independent of order}. To check Property (I) in this case, recall that $\f^{-1}$ is the realisation of the sequence $(-\ell \cdot y_{p}^*)_{p \in P^{-1}}$, where $P^{-1} \coloneqq \{\ell - p : p \in P\}$ and $y_{p} = -\ell \cdot x_{p'}^*$ where $p' \in P$ is the predecessor of $\ell - p \in P$. Property (I) therefore follows from Definition~\ref{defn: reduction finite sequence}.

Now let $\f \in \mathcal{Z}_X$ be an element with complexity $\alpha$. Let $\ell \coloneqq \ell(\f)$, let $P \subseteq [0,\ell]$ be a template and $(x_p)_{p \in P} \subseteq X$ be a sequence with realisation $\f$. Suppose moreover that $\overline{P}^{(\beta)} \subseteq \overline{Q}^{(\beta)}$ for any template $Q$ which indexes a sequence in $X$ with realisation $\f$. Note that this property renders $\overline{P}^{(\beta)}$ unique (although $P$ itself may not be). Since the complexity of $\f$ is $\alpha$, $\overline{P}^{(\beta)}$ is finite.

If $\overline{P}^{(\beta)} = \{\ell\}$, then let $\{q_n : n \in \mathbb{N}\} \subseteq P$ be such that $q_n < q_{n+1}$ for each $n \in \mathbb{N}$, so that $(q_n)_{n \in \mathbb{N}}$ is a strictly increasing sequence with limit $\ell$. For each $n \in \mathbb{N}$ let $P_n \coloneqq \{p \in P : p \leq p_n\}$ and let $\f_n \in \mathcal{Z}^{[\beta]}$ be the realisation of $(x_p)_{p \in P_n}$.
By Item (C), the sequence $(\f_n^\reduced)_{n \in \mathbb{N}}$ is Cauchy. Let $\f^\reduced \in T_X(X)$ be the limit of $(\f_n^\reduced)_{n \in \mathbb{N}}$.

If $\overline{P}^{(\beta)} = \{0\}$ then let $\f^\reduced \coloneqq ({\f^{-1}}^\reduced)^{-1}$.

Now suppose that $\overline{P}^{(\beta)} = \{0,\ell\}$. Choose a point $t \in P$ and let $P_t^- \coloneqq \{p \in P : p \leq t\}$ and $P_t^+ \coloneqq \{p-t : p \in P$ and $p \geq t\}$. Then $P_t^-$ and $P_t^+$ are templates of $[0,t]$ and $[0,\ell-t]$ respectively. For each $p \in P_t^-$ let $x_p^- \coloneqq x_p$ and for each $p \in P_t^+$ let $x_p^+ \coloneqq t \cdot x_{p+t}$. Let $\f_t^-, \f_t^+$ be the realisations of $(x_p^-)_{p \in P_t^-}$ and $(x_p^+)_{p \in P_t^+}$ respectively. Then let $\f_t^{\Lightning}$ be the reduction of ${\f_t^-}^\reduced \con {\f_t^+}^{\reduced}$ with respect to the template $\{0,t,\ell\}$.

\begin{claim} \label{two-sided Cauchy}
If $t,t' \in P$ then $\f_t^{\Lightning} = \f_{t'}^{\Lightning}$.
\end{claim}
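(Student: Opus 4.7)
The plan is to find a common ``middle piece'' $\mathfrak{m} \in \mathcal{Z}_X^{[\beta]}$ that factors both of the decompositions of $\f$ used to define $\f_t^{\Lightning}$ and $\f_{t'}^{\Lightning}$, then conclude by Property~(F) of the inductive hypothesis and associativity of $\star$. Without loss of generality I will assume $t < t'$; note that for $\f_s^{\Lightning}$ to be defined by the case being treated we must have $s \in P \cap (0, \ell)$, since otherwise one of $\f_s^-, \f_s^+$ would have complexity $\alpha$ rather than lying in $\mathcal{Z}_X^{[\beta]}$.

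I will define $\mathfrak{m} \in \mathcal{Z}_X$ to be the realisation of the sequence $(t \cdot x_p)_{p \in P \cap [t, t']}$, re-indexed by the template $(P \cap [t, t']) - t \subseteq [0, t' - t]$. Since $\overline{P}^{(\beta)} = \{0,\ell\}$ and $[t,t'] \subseteq (0,\ell)$, the $\beta$-th Cantor--Bendixson derivative of this shifted template is empty, so $\mathfrak{m}$ has complexity strictly less than $\beta$. A direct unpacking of the concatenation formula (tracking the $-\ell(\cdot)$ shift applied to the second factor in $\con$) then yields the two factorisations
\begin{equation*}
\f_{t'}^- \; = \; \f_t^- \con \mathfrak{m} \qquad \text{and} \qquad \f_t^+ \; = \; \mathfrak{m} \con \f_{t'}^+,
\end{equation*}
with all five elements in $\mathcal{Z}_X^{[\beta]}$, since cutting $\f$ at any point in $(0, \ell)$ separates the two level-$\beta$ limit points $0$ and $\ell$, so each factor has its $\beta$-th derivative contained in a singleton. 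Property~(F) of the inductive hypothesis then gives
\begin{equation*}
(\f_{t'}^-)^{\Lightning} = (\f_t^-)^{\Lightning} \star \mathfrak{m}^{\Lightning} \qquad \text{and} \qquad (\f_t^+)^{\Lightning} = \mathfrak{m}^{\Lightning} \star (\f_{t'}^+)^{\Lightning}.
\end{equation*}

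To finish, I will observe that the reduction of a $\con$-concatenation of two admissible elements, taken with respect to the three-point template at the seam, equals their $\star$-product in $T_X(X)$: Definition~\ref{defn: reduction at t} cancels the maximal common inverse segment at the junction, which matches the cancellation in $f \star g = (f \dotdiv (f^{-1} \wedge g)^{-1}) \con (g \divdot (f^{-1} \wedge g))$. Hence $\f_t^{\Lightning} = (\f_t^-)^{\Lightning} \star (\f_t^+)^{\Lightning}$ and likewise for $t'$, and associativity of $\star$ in $T_X(X)$ yields
\begin{equation*}
\f_t^{\Lightning} \; = \; (\f_t^-)^{\Lightning} \star \mathfrak{m}^{\Lightning} \star (\f_{t'}^+)^{\Lightning} \; = \; \f_{t'}^{\Lightning}.
\end{equation*}
The main care-point is the Cantor--Bendixson bookkeeping guaranteeing that every element in sight lies in $\mathcal{Z}_X^{[\beta]}$, so that Property~(F) genuinely applies; tracking the $\mathbb{R}$-shift in $\con$ to confirm that the middle piece arising on each side is the \emph{same} element $\mathfrak{m}$ is a further small point but not a genuine obstacle.
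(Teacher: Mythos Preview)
There is a genuine gap in your Cantor--Bendixson bookkeeping, precisely at the point you flag as the main care-point. You assert that all five elements $\f_t^\pm, \f_{t'}^\pm, \mathfrak{m}$ lie in $\mathcal{Z}_X^{[\beta]}$ on the grounds that each has $\beta$-th derivative contained in a singleton. But membership in $\mathcal{Z}_X^{[\beta]}$ means complexity $\leq \beta$, i.e.\ the $\beta$-th CB-derivative of the template's closure is \emph{empty}; having it equal to a nonempty singleton gives CB-rank $\beta+1 = \alpha$. Since $0$ and $\ell$ each survive $\beta$ derivatives of $\overline{P}$ and this is a local condition, each one-sided piece $\f_t^\pm, \f_{t'}^\pm$ has complexity exactly $\alpha$, and Property~(F) of the inductive hypothesis does not apply to the factorisations $\f_{t'}^- = \f_t^- \con \mathfrak{m}$ and $\f_t^+ = \mathfrak{m} \con \f_{t'}^+$. (For $\mathfrak{m}$ itself your argument is fine: its template lies inside $[t,t'] \subset (0,\ell)$, so its $\beta$-th derivative genuinely is empty.)

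Your overall strategy is correct and matches the paper's; what is missing is the justification of the two identities $(\f_t^+)^{\Lightning} = \mathfrak{m}^{\Lightning} \star (\f_{t'}^+)^{\Lightning}$ and $(\f_{t'}^-)^{\Lightning} = (\f_t^-)^{\Lightning} \star \mathfrak{m}^{\Lightning}$. For these you must unwind the actual definition of $\Lightning$ on the complexity-$\alpha$ pieces just introduced: $(\f_t^+)^{\Lightning}$ is the limit of $(\g_n)^{\Lightning}$ with $\g_n$ initial restrictions of $\f_t^+$, and for large $n$ one has $\g_n = \mathfrak{m} \con \h_n$ with $\h_n$ an initial restriction of $\f_{t'}^+$; now both factors \emph{do} lie in $\mathcal{Z}_X^{[\beta]}$, so (F) applies to give $(\g_n)^{\Lightning} = \mathfrak{m}^{\Lightning} \star \h_n^{\Lightning}$, and one passes to the limit. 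The paper argues these identities ``from the definition'' and then finishes with Lemma~\ref{lem: reduction is independent of order}, which plays the same role as your appeal to associativity of $\star$.
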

\begin{proof}
\renewcommand{\qedsymbol}{$\blacksquare$}
Relabel $t$ and $t'$ if necessary so that $t < t'$. Let $P_{[t,t']} \coloneqq \{p - t : p \in P, t \leq p \leq t'\}$ and note that $P_{[t,t']}$ is a template of $[0,t'-t]$ whose closure has CB-rank $\leq \beta$. Let $\f_{[t,t']}^{\Lightning}$ denote the reduction of the realisation of $(x_{p+t})_{p \in P_{[t,t']}}$ and let $Q \coloneqq \{0,t',t,\ell\}$. It follows from the definition that $\f_t^+ = (\f_{[t,t']} \con \f_{t'}^+)_Q^{\Lightning}$ and $\f_{t'}^- = (\f_{[t,t']}^{-1}) \con \f_t)_Q^{\Lightning}$. Therefore $\f_t^{\Lightning}$ is obtained from $(\f_t^-)^{-1} \con \f_{[t,t']} \con \f_{t'}^+$ by first reducing at $t'$ then at $r_{P,t'}(t)$ and $\f_{t'}^{\Lightning}$ is obtained from $(\f_t^-)^{-1} \con \f_{[t,t']} \con \f_{t'}^+$ by first reducing at $t$ then at $r_{P,t}(t')$. Thus $\f_t^{\Lightning} = \f_{t'}^{\Lightning}$ by Lemma~\ref{lem: reduction is independent of order}.
\end{proof} 

The reduction of $\f$ is $\f^{\Lightning} \coloneqq \f_t^{\Lightning}$ for any $t \in P^{[\beta]}$ such that if $s \in P$ with $s < t$ and $P \cap (s,t) = \emptyset$ then $x_{s} \neq x_t$. In particular any template for $\f$ contains $t$ so the definition of $\f^{\Lightning}$ does not depend on $P$.

In the general case, let $b_0, \dots, b_m \in [0,\ell]$ be such that $b_0 < \dots < b_m$ and $\overline{P}^{(\beta)} \cup \{0,\ell\} = \{b_1, \dots, b_m\}$. For each $i \in \{0, \dots, m-1\}$, let $P_i \coloneqq \{p-b_i : p \in P, b_i \leq p \leq b_{i+1}\}$ and $f_{b_i}:[0,b_{i+1} - b_i] \rightarrow X$ be defined by $f_{b_i}(t) = f(t+b_i)$. For each such $i$, the reduction $\f_{b_i}^{\Lightning}$ has been defined. Moreover $\f = \f_0 \con \dots \con \f_m$, so let $\f^{\Lightning}$ be the reduction of the consistent sequence $(\f_{b_i}^\reduced)_{i \in \{1,\dots,m-1\}}$ with respect to the finite template $\{b_0, \dots, b_m\} \subseteq [0,\ell]$. The reduction of $\f$ is well-defined by Lemma~\ref{lem: reduction is independent of order} and the uniqueness of $\{b_0, \dots, b_m\}$ while Property~(C) holds by Lemma~\ref{reduction closed under Cauchy}. Property (F) is checked in the claim below. Property (I) follows immediately from the construction.

\begin{claim}
    Let $\f,\g \in \mathcal{Z}_X^{[\alpha]}$. Then $(\f \con \g)^{\Lightning} = (\f^{\Lightning} \con \g {\Lightning})^{\Lightning}= \f^{\Lightning} \star \g^{\Lightning}$.
\end{claim}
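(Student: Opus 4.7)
\emph{Strategy.} The claim splits into (A) $(\f\con\g)^{\Lightning}=(\f^{\Lightning}\con\g^{\Lightning})^{\Lightning}$ and (B) $(\f^{\Lightning}\con\g^{\Lightning})^{\Lightning}=\f^{\Lightning}\star\g^{\Lightning}$. I will prove the stronger identity $(\f\con\g)^{\Lightning}=\f^{\Lightning}\star\g^{\Lightning}$, from which (A) and (B) both follow: (B) is the special case $\f=\f^{\Lightning}$, $\g=\g^{\Lightning}$, and (A) then equates both sides with $\f^{\Lightning}\star\g^{\Lightning}$.

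\emph{The admissible case.} For any two admissible $\mathfrak{u},\mathfrak{v}\in T_X(X)$, view $\mathfrak{u}\con\mathfrak{v}$ as a consistent sequence on the three-point template $\{0,\ell(\mathfrak{u}),\ell(\mathfrak{u})+\ell(\mathfrak{v})\}$. A direct unpacking of Definition~\ref{defn: reduction at t} at the junction $t=\ell(\mathfrak{u})$ shows that the maximal cancellation length $\sigma$ equals $\ell(\mathfrak{u}^{-1}\wedge\mathfrak{v})$, so the single-point reduction produces exactly $(\mathfrak{u}\dotdiv(\mathfrak{u}^{-1}\wedge\mathfrak{v})^{-1})\con(\mathfrak{v}\divdot(\mathfrak{u}^{-1}\wedge\mathfrak{v}))=\mathfrak{u}\star\mathfrak{v}$. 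Admissibility of $\mathfrak{u}$ and $\mathfrak{v}$ precludes any further reduction, so $(\mathfrak{u}\con\mathfrak{v})^{\Lightning}=\mathfrak{u}\star\mathfrak{v}$ and in particular (B) holds.

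\emph{The main identity.} I prove $(\f\con\g)^{\Lightning}=\f^{\Lightning}\star\g^{\Lightning}$ by transfinite induction on $\alpha$, the base case $\alpha=1$ being the previous paragraph together with Lemma~\ref{lem: reduction is independent of order}: for finite templates, the concatenated finite-template reduction of $\f\con\g$ can be performed by first exhausting the single-point reductions inside the $\f$-part (yielding $\f^{\Lightning}$), then inside the $\g$-part (yielding $\g^{\Lightning}$), and finally at the junction $\ell(\f)$ (yielding $\f^{\Lightning}\star\g^{\Lightning}$); independence of the order is Lemma~\ref{lem: reduction is independent of order}. For the inductive step $\alpha=\beta+1$, decompose $\f$ and $\g$ using the recursive structure of the definition of ${\Lightning}$ at level $\alpha$: depending on whether $\overline{P}^{(\beta)}$ is $\{0\}$, $\{\ell(\f)\}$, $\{0,\ell(\f)\}$ or general, $\f^{\Lightning}$ is built (directly or via $\f^{-1}$) from a Cauchy limit of reductions $\f_k^{\Lightning}$ with $\f_k\in\mathcal{Z}_X^{[\beta]}$, and likewise for $\g^{\Lightning}$. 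Since ${\Lightning}$ is $1$-Lipschitz with respect to $d$ (encoded in Property (C)) and $\star$ is an isometry of $T_X(X)$ in each variable, the identity transfers from the approximating pairs $(\f_k,\g_k)$ -- where it holds by the inductive hypothesis -- to $(\f,\g)$ by passing to the limit in $d$.

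\emph{Main obstacle.} The principal subtlety is the boundary junction $\ell(\f)$: when $\ell(\f)\in\overline{P}^{(\beta)}$ or $0\in\overline{Q}^{(\beta)}$, the innermost pieces $\f_{b_{m-1}}$ and $\g_{c_0}$ accumulate against the junction from the inside, and one cannot directly approximate $\f$ or $\g$ by lower-complexity objects that avoid these accumulations while remaining $d$-close. The workaround is to split $\f=\f_1\con\f_2$ where $\f_1$ has complexity $\leq\beta$ (by cutting just before the $\beta$-accumulation at $\ell(\f)$) and $\f_2$ is small in $d$, and symmetrically $\g=\g_1\con\g_2$, and then to track how reductions propagate across the four-piece concatenation $\f_1\con\f_2\con\g_1\con\g_2$; the inductive hypothesis applies piecewise to $\f_1$ and $\g_2$, while the contribution of the small middle pieces is bounded by $d(\id,\f_2)+d(\id,\g_1)$ and vanishes in the limit, giving the desired identity at level $\alpha$.
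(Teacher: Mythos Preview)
Your reduction to the single identity $(\f\con\g)^{\Lightning}=\f^{\Lightning}\star\g^{\Lightning}$ and your treatment of the admissible case and the base case $\alpha=1$ are fine. The gap is in the inductive step.

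Your limiting argument requires that $(\f_k\con\g_k)^{\Lightning}\to(\f\con\g)^{\Lightning}$ as the approximants $\f_k,\g_k\in\mathcal{Z}_X^{[\beta]}$ tend to $\f,\g$. Property~(C) does not give this: it only asserts that prefix truncations of a \emph{single} element have Cauchy reductions, and is used to \emph{define} $\f^{\Lightning}$, not to compare reductions of different concatenations. The element $\f_k\con\g_k$ is not a prefix of $\f\con\g$, so (C) says nothing about it. Establishing the continuity you need amounts to showing $d\big((\f_k\con\g)^{\Lightning},(\f\con\g)^{\Lightning}\big)\leq C\,\ell(\f\divdot\f_k)$, which unwinds to an instance of (F) at level $\alpha$ applied to $\f_k\con(\f\divdot\f_k)\con\g$ --- precisely what you are proving. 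The same circularity recurs in your ``main obstacle'' workaround: to bound the effect of the small middle pieces $\f_2,\g_1$ on $(\f_1\con\f_2\con\g_1\con\g_2)^{\Lightning}$, you would need to peel them off via (F), but $\f_2\con\g_1\con\g_2$ still has complexity $\alpha$, so neither the inductive hypothesis nor any established Lipschitz bound applies.

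The paper's argument avoids limits entirely at this step. It splits into two cases according to whether the junction $\ell(\f)$ lies in the $\beta$-derived set of the concatenated template. If it does (i.e.\ $\ell(\f)\in\overline{P}^{(\beta)}$ or $0\in\overline{Q}^{(\beta)}$), then the general-case recursive definition of ${\Lightning}$ on $\f\con\g$ \emph{already} cuts at $\ell(\f)$ and reduces the two halves separately, so $(\f\con\g)^{\Lightning}=(\f^{\Lightning}\con\g^{\Lightning})^{\Lightning}$ holds by definition. If it does not, one writes $\f=\f'\con\h_1$ and $\g=\h_2\con\g'$ where $\h_1,\h_2$ are the pieces between the junction and the nearest $\beta$-derived points; these have complexity $\leq\beta$, so the inductive hypothesis applies to $\h_1\con\h_2$, and the remaining manipulations use only order-independence of finite reductions (Lemma~\ref{lem: reduction is independent of order}).
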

\begin{proof}
    \renewcommand{\qedsymbol}{$\blacksquare$}
    If the complexities of both $\f$ and $\g$ are strictly less than $\alpha$, then the claim follows by the induction hypothesis. So suppose that at least one of $\f, \g$ has complexity $\alpha$.

    Let $\ell \coloneqq \ell(\f), m \coloneqq \ell(g)$ and let $P \subseteq [0,\ell], Q \in [0,m]$ be templates which can index sequences whose realisations are $\f$ and $\g$ respectively. Suppose moreover that $\overline{P}^{(\beta)} \subseteq \overline{P'}^{(\beta)}$ for any template $P'$ which indexes a sequence in $X$ with realisation $\f$, and similarly $\overline{Q}^{(\beta)} \subseteq \overline{Q'}^{(\beta)}$ for any template $Q'$ which indexes a sequence in $X$ with realisation $\g$.
    
    If $\ell \in \overline{P}^{(\beta)}$ or $0 \in \overline{Q}^{(\beta)}$ then, by definition, $(\f \con \g)^{\Lightning} = (\f^{\Lightning} \con g^{\Lightning})^{\Lightning} = \f^{\Lightning} \star \g^{\Lightning}$. So suppose that $\ell \notin \overline{P}^{(\beta)}$ and $0 \notin \overline{Q}^{(\beta)}$. Let $p \coloneqq \max \overline{P}^{(\beta)} \cup \{0\}$ and $q \coloneqq \min \overline{Q}^{(\beta)} \cup \{\ell(\g)\}$ and let $f' \coloneqq f|_{[0,p]}$ and $g' \coloneqq q \cdot g|_{[q,\ell(g)]}$. Let $\h_1,\h_2 \in T_X(X)$ be such that $\f = \f' \con \h_1$ and $\g = \h_2 \con \g'$, so $\f \con \g = \f' \con \h_1 \con \h_2 \con \g'$. Then, using the induction hypothesis and Lemma~\ref{reduction closed under Cauchy}, we have $(\h_1 \con \h_2)^{\Lightning} = (\h_1^{\Lightning} \con \h_2^{\Lightning})$. Therefore
    \begin{align*}
    (\f^{\Lightning} \con \g^{\Lightning})^{\Lightning} &= (({\f'}^{\Lightning} \con \h_1^{\Lightning})^{\Lightning} \con (\h_2^{\Lightning} \con {\g'}^{\Lightning})^{\Lightning})^{\Lightning} \\
    &= ({\f'}^{\Lightning} \con (\h_1^{\Lightning} \con \h_2^{\Lightning})^{\Lightning} \con {\g'}^{\Lightning})^{\Lightning} \\
    &= ({\f'}^{\Lightning} \con (\h_1 \con \h_2)^{\Lightning} \con {\g'}^{\Lightning})^{\Lightning} \\
    &= (\f \con \g)^{\Lightning}.
    \end{align*}
    The first and last inequalities follow from the definition of reduction and the second follows from Lemma~\ref{lem: reduction is independent of order}. 
\end{proof}

This completes the proof of the proposition. 
\end{proof}

\section{Actions on $\Lambda$-trees} \label{sec: Lambda tree groups}

Some of the ideas from the previous section can also be used to construct actions on $\Lambda$-trees for arbitrary totally ordered abelian groups $\Lambda$. Since we do not assume that $\Lambda$ is uncountable or that every bounded set in $\Lambda$ has a supremum, the exact analogue of the construction does not work. On the other hand, the notion of completeness is no longer very relevant in this context so there is no need for the more involved notion of templates introduced in Section~{\ref{sec: templates} to ensure completeness. The group we obtain will be an analogue of the subgroup of $T_X(X)$ consisting of elements with complexity 1. 

As before, the construction allows for flexibility with respect to the stabilisers of `lines'. To illustrate this, we will prove the following proposition.

\begin{proposition} \label{prop: Lambda tree result}
    Let $\Lambda$ be a totally ordered abelian group, let $H \leq \Lambda$ be any subgroup and let $\kappa = 2|\Lambda / H|$. Then there exists a group $G$ acting freely and transitively on a $\Lambda$-tree $T$ with valence $\kappa$ such that there exists a subspace $L \subseteq T$ and isometry $\varphi: L \rightarrow \Lambda$ (where $\Lambda$ is equipped with the $\Lambda$-metric $d(\lambda_1, \lambda_2) = |\lambda_1 - \lambda_2|$) such that $\varphi(\Stab_G(L) \cdot x) = H$, where $x = \varphi^{-1}(0)$. In particular $\Stab_G(L) \cong H$. 
\end{proposition}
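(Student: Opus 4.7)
My plan is to mimic the construction from Section~\ref{sec: TX construction}, replacing $\mathbb{R}$ by $\Lambda$ and restricting to the $\Lambda$-analogue of complexity-$1$ elements, since the template and completeness machinery of Section~\ref{sec: templates} relies on features of $\mathbb{R}$ not enjoyed by arbitrary $\Lambda$. Take $X := \Lambda/H \sqcup (\Lambda/H)'$ with the action of $\Lambda \rtimes_\alpha \langle * \rangle$ modelled on the proof of Theorem~\ref{thm: centraliser spectrum}: $\Lambda$ acts by translation on each factor, and the involution is $(\mu + H)^* := (-\mu + H)'$, ${(\mu + H)'}^* := -\mu + H$. Then $|X| = 2|\Lambda/H| = \kappa$; the element $x_0 := 0 + H \in X$ has $\Lambda$-stabiliser exactly $H$; and $x_0^* \notin \Lambda \cdot x_0$.

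Define $Y_X$ to be the set of equivalence classes of maps $f : [0, \ell] \to X$, where $\ell \in \Lambda$ with $\ell \geq 0$ and $f$ is constant on each subinterval of some finite partition of $[0,\ell]$, with $f \simeq g$ if $\ell(f) = \ell(g)$ and $\{s : f(s) \neq g(s)\}$ is finite. Define $\preceq$, $\id$, $\con$ and $^{-1}$ by direct analogy with Section~\ref{sec: TX construction}; closure under these operations is immediate. I claim $(Y_X, \preceq, \id, \con, ^{-1})$ is an ore: (O1) and (O4) follow as in Lemma~\ref{lambda trees: median semilattice}, (O2) and (O3) as in Lemma~\ref{lem: big tree ore}, and (O5)--(O6) hold vacuously because $\f^\perp = \{\id\}$ for every non-identity $\f \in Y_X$ (no element dominates two initial segments starting with different first pieces).

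Applying Theorem~\ref{thm: extracted group} and Proposition~\ref{prop: median metric} with the $\Lambda$-length function sending each equivalence class to its common domain length produces a group $G$ of admissible elements, equipped with the left-invariant $\Lambda$-metric $d(\f,\g) = \ell(\f^{-1} \twist \g)$, making $(G,d)$ into a median $\Lambda$-metric space on which $G$ acts freely and transitively by isometries via left multiplication. To conclude that $(G,d)$ is a $\Lambda$-tree I appeal to Lemma~\ref{lem: tree characterisation}: $G$ has rank $1$ by Lemma~\ref{lem: rank}, since $\f^\perp = \{\id\}$ for every non-identity $\f$; and $G$ is geodesic because every $\f \in G$ is a finite piecewise-constant map which may be truncated to any length $\lambda \in \Lambda$ with $0 \leq \lambda \leq \ell(\f)$, so a geodesic from $\f$ to $\g$ is obtained by truncating back along $\f$ to the meet $\f \wedge \g$ and then extending forward along $\g$.

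Two non-identity elements of $G$ lie in the same direction at $\id$ iff their leading pieces take the same value in $X$, and by the direct analogue of Lemma~\ref{lem: admissible constants} every $x \in X$ does arise as such a leading value, so the valence of the $\Lambda$-tree $T := G$ is $|X| = \kappa$. Set $L := L_{x_0} \cup L_{x_0^*}$, where $L_{x_0}$ consists of the classes of constant maps with image $x_0$; the map $\varphi : L \to \Lambda$ sending $\f \in L_{x_0}$ to $\ell(\f)$ and $\f \in L_{x_0^*}$ to $-\ell(\f)$ is an isometry, and the $\Lambda$-analogue of Lemma~\ref{lem: stab of standard axis} yields $\Stab_G(L) = \{\f \in L : \ell(\f) \in \Stab_\Lambda(x_0)\}$, so with $x := \varphi^{-1}(0) = \id$ we obtain $\varphi(\Stab_G(L)\cdot x) = \Stab_\Lambda(x_0) = H$. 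The main obstacle will be adapting the admissibility characterisation of Lemma~\ref{lem: characterise admissiblity} to the $\Lambda$-setting, which lacks the density arguments used there; however restricting to finite partitions reduces admissibility to a combinatorial non-cancellation condition on adjacent pieces that is straightforward to verify.
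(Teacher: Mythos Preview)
Your overall strategy matches the paper's exactly: take $X = (\Lambda/H) \sqcup (\Lambda/H)'$ with the indicated $\Lambda \rtimes_\alpha \langle * \rangle$-action, build an ore from finite-partition piecewise-constant maps, extract the group, and read off the axis $\mathbf{L}_{x_0}$ with stabiliser $H$. The valence and axis computations are correct.

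There is, however, a genuine gap in your choice of equivalence relation. You declare $f \simeq g$ when $\{s : f(s) \neq g(s)\}$ is finite, but for a general totally ordered abelian group $\Lambda$ the interval $[0,\ell]$ can itself be a finite set. Take $\Lambda = \mathbb{Z}$: then $[0,\ell] = \{0,1,\dots,\ell\}$, so \emph{any} two maps $[0,\ell] \to X$ are equivalent, your $Y_X$ has a single class in each length, and the extracted group degenerates (every element of length $\geq 2$ becomes inadmissible). The same collapse occurs for any discretely ordered $\Lambda$, and more subtly for groups like $\mathbb{Z} \times \mathbb{Z}$ with the lexicographic order, where some nontrivial intervals are finite. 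The Section~\ref{sec: TX construction} equivalence relation worked only because intervals in $\mathbb{R}$ are uncountable, one cardinality above the ``negligible'' sets; there is no uniform analogue for arbitrary $\Lambda$.

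The paper sidesteps this by dropping equivalence classes altogether and working with genuine maps $f:(0,\ell] \to X$ on \emph{half-open} intervals, defining $f^{-1}$ piecewise via the template so that the endpoint ambiguity never arises. With that modification your argument goes through unchanged: the ore axioms, the rank-$1$ verification, the geodesicity of $S_X$, and the identification of directions at $\id$ with $X$ all proceed exactly as you outline.
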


Let $\alpha:\Lambda \rightarrow \Lambda$ be the order reversing automorphism of $\Lambda$ defined by $\alpha(\lambda) = - \lambda$ for all $\lambda \in \Lambda$.
Let $X$ be a set equipped with an action of $\Lambda \rtimes_\alpha \la * \ra$, where $*$ is an element of order two. We will abuse notation and identify $\Lambda$ with the normal subgroup $(\Lambda, \id) \unlhd \Lambda \rtimes_\alpha \la \ast \ra$. 

\begin{definition}
    Let $\mathcal{Y}_X$ denote the set of maps $f:(0,\ell] \rightarrow X$, where $\ell \in \Lambda$ with $\ell \geq 0$, such that, for some $k \in \mathbb{N} \cup \{0\}$ and some finite sequence $0 = p_0 < \dots < p_k = \ell$, the map $f$ if constant on each interval $(p_i, p_{i+1}]$.
    When $\ell = 0$, $f$ is necessarily the empty map, which we denote by $\id: \emptyset \rightarrow X$.
    The \textit{length} of $f:(0,\ell] \rightarrow X$ is $\ell(f) \coloneqq \ell$.
\end{definition}

\begin{remark}
    We can define templates for intervals in $\Lambda$ in analogy with the notion for intervals in $\mathbb{R}$ (see Definition~\ref{templates}). Then the set $P \coloneqq \{p_0, \dots, p_k\}$ in the above definition is just a finite template for the interval $[0,\ell]$ and $f$ is the realisation of a sequence $(x_p)_{p \in P} \subseteq X$.
\end{remark}

\begin{definition} 
Fix $f,g \in \mathcal{Y}_X$.
    \begin{itemize}
        \item Define $f^{-1}:(0, \ell(f)] \rightarrow X$ as follows. Let $\{p_0, \dots, p_k\} \subseteq [0,\ell]$ be a template for $f$ and for each $i \in \{0, \dots,k-1\}$ and $t \in (p_i,p_{i+1}]$ let $f^{-1}(t) = -\ell(f) \cdot f^*(p_{i+1})$.
        \item Define $f \con g:(0, \ell(f) + \ell(g)] \rightarrow X$ by 
        \[ 
            f \con g(t) = 
            \begin{cases}
                f(t) \quad &\text{if } t \in (0, \ell(f)]; \\
                -\ell(f) \cdot g(t - \ell(f)) &\text{otherwise.}
            \end{cases}
        \]
        \item We say that $f \preceq g$ if $f$ is a restriction of $g$. 
    \end{itemize}
\end{definition}

\begin{lemma}
    $(\mathcal{Y}_X, \preceq, \id, \con)$ is an ore.
\end{lemma}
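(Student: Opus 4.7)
The plan is to verify axioms (O1)--(O6) for $(\mathcal{Y}_X, \preceq, \id, \con, -1)$ following the strategy of Lemma~\ref{lem: big tree ore}. The setup is actually simpler here because every element of $\mathcal{Y}_X$ is an honest map (not an equivalence class) and every map is piecewise constant with only finitely many pieces, so all constructions automatically stay inside $\mathcal{Y}_X$.

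For (O1), note that $\{h \in \mathcal{Y}_X : h \preceq f\}$ is precisely the set of restrictions $f|_{(0,\lambda]}$ with $\lambda \in [0, \ell(f)] \cap \Lambda$, and this set is totally ordered by $\preceq$. Given $f, g \in \mathcal{Y}_X$ with templates $\{0 = p_0 < \dots < p_k = \ell(f)\}$ and $\{0 = q_0 < \dots < q_m = \ell(g)\}$, the set of $\lambda$ at which $f|_{(0,\lambda]} = g|_{(0,\lambda]}$ is an initial segment of $[0, \min(\ell(f), \ell(g))]$ whose maximum is either $\min(\ell(f), \ell(g))$ (if all compared values agree throughout) or the first $\min(p_i, q_i)$ at which the piecewise values first disagree; in either case the maximum lies in $\Lambda$ and is attained, so $f \wedge g \in \mathcal{Y}_X$. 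The empty map $\id$ is the bottom element. Axiom (O4) follows as in Lemma~\ref{lambda trees: median semilattice}: for any $a, b, c \in G$ the pairwise meets form a chain, their join is their maximum, and by (O3) faces of admissibles are admissible so this join lies in $G$; the meet-distributivity identity reduces to the trivial statement that $\max$ in a chain commutes with intersection with another chain.

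For (O2), associativity of $\con$ and the identity property of $\id$ are immediate from the defining formulae. Cancellativity: if $f \con g = f \con h$, the two sides agree as maps on $(0, \ell(f)+\ell(g)]$, forcing $\ell(g) = \ell(h)$ and, by untranslating on $(\ell(f), \ell(f)+\ell(g)]$, $g = h$; right cancellation is analogous. The involution properties $(f^{-1})^{-1} = f$ and $(f \con g)^{-1} = g^{-1} \con f^{-1}$ are verified piecewise, using the semidirect-product relation $(\lambda \cdot x)^* = (-\lambda) \cdot x^*$ in $\Lambda \rtimes_\alpha \la * \ra$. Axiom (O3) is built into the definition of $\preceq$: given $f \preceq g$, the map $h:(0, \ell(g) - \ell(f)] \to X$ defined by $h(t) = \ell(f) \cdot g(t + \ell(f))$ is piecewise constant with finitely many pieces, lies in $\mathcal{Y}_X$, and satisfies $g = f \con h$; the converse is immediate from the definition of $\con$.

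Finally, (O5) and (O6) are vacuous because $f^\perp = \{\id\}$ for every nontrivial $f \in \mathcal{Y}_X$. Indeed, if $f, g$ are both nontrivial and admit a common upper bound $f \vee g$, then (taking $\ell(f) \leq \ell(g)$ without loss of generality) both $f$ and $g$ are restrictions of $f \vee g$ to initial segments, so $f = g|_{(0, \ell(f)]}$, which forces $f \preceq g$ and $f \wedge g = f \neq \id$. The only substantive content in the whole proof is the observation in (O1) that the supremum defining the meet is always attained, which is exactly what the finiteness of the pieces guarantees; there is no serious obstacle.
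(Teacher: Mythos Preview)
Your proof is correct and follows essentially the same approach as the paper: the paper's proof simply asserts that $(\mathcal{Y}_X, \preceq, \id)$ is a median semilattice and that the rest is analogous to Lemma~\ref{lem: big tree ore}, and you have spelled out precisely those details, including the key observation that $f^\perp = \{\id\}$ makes (O5) and (O6) vacuous. Your remark that finiteness of the pieces guarantees the meet is attained (rather than merely approximated) is the one genuinely new point relative to the $\mathbb{R}$-setting, and you have identified it correctly.
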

\begin{proof}
    It is straightforward to see that $(Y, \preceq, \id)$ is a median semilattice. The rest of the proof is completely analogous to that of Lemma~\ref{lem: big tree ore}. 
\end{proof}

Let $(S_X, \star)$ be the group extracted from $\mathcal{Y}_X$.

\begin{lemma}
    The map $\ell: \mathcal{Y}_X \rightarrow \Lambda$ is a $\Lambda$-length function (Definition~\ref{def: length function}). If $d$ is the resulting $\Lambda$-metric on $S_X$ then $(S_X,d)$ is a $\Lambda$-tree.
    
    If $x$ and $x^*$ are in different $\Lambda$-orbits for all $x \in X$ then the valence of $S_X$ is the cardinality of $X$. 
\end{lemma}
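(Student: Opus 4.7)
The plan is to handle each of the three assertions in turn: that $\ell$ is a $\Lambda$-length function, that $(S_X, d)$ is a $\Lambda$-tree, and that the valence equals $|X|$ under the orbit hypothesis.

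That $\ell$ satisfies the length-function axioms is immediate from the definitions: $\ell(\id) = 0$ while $\ell(f) > 0$ for any $f \neq \id$ (since its domain $(0, \ell(f)]$ is non-empty), inversion preserves the domain so $\ell(f^{-1}) = \ell(f)$, and $\ell(f \con g) = \ell(f) + \ell(g)$ by construction of concatenation. Proposition~\ref{prop: median metric} then gives that $(S_X, d)$ is a median $\Lambda$-metric space on which $S_X$ acts by isometries via left multiplication, so by Lemma~\ref{lem: tree characterisation} it remains to show $(S_X, d)$ is geodesic and has rank at most $1$. For geodesicity, for each $g \in S_X$ I would define $\gamma_g : [0, \ell(g)] \to S_X$ by $\gamma_g(s) \coloneqq g|_{(0, s]}$; these restrictions lie in $S_X$ because faces of admissible elements are admissible (a consequence of (O3)), and the identity $d(a,b) = \ell(a) + \ell(b) - 2\ell(a \wedge b)$ shows at once that $\gamma_g$ is an isometric embedding from $\id$ to $g$. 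Left-invariance of $d$ extends this to every pair. For rank, note that if $g_1, g_2 \in S_X \setminus \{\id\}$ were orthogonal, then $g_1 \vee g_2$ would be a common upper bound; but $\preceq$ is just restriction, so one of $g_1, g_2$ must in fact be a restriction of the other, forcing $g_1 \wedge g_2 \neq \id$ and contradicting orthogonality. Lemma~\ref{lem: rank} then gives rank $\leq 1$, completing the hypotheses of Lemma~\ref{lem: tree characterisation} (the degenerate case $S_X = \{\id\}$ being a trivial $\Lambda$-tree).

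The crux of the valence statement will be showing that, under the orbit hypothesis, the constant function $c_x^\lambda : (0, \lambda] \to \{x\}$ is admissible for every $x \in X$ and every $\lambda > 0$. If $c_x^\lambda = a \con b \con b^{-1} \con c$ with $b \neq \id$, then evaluating on the four successive intervals of lengths $\ell(a), \ell(b), \ell(b), \ell(c)$ forces each of $a, b, b^{-1}, c$ to be constant; computing the image of $b^{-1}$ from the definition of $-1$ and from the semidirect-product identity $(\mu \cdot y)^* = -\mu \cdot y^*$, and matching on the third interval, yields an equation of the form $x^* = \nu \cdot x$ with $\nu = 2(\ell(a) + \ell(b)) \in \Lambda$, contradicting the hypothesis. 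This is the main technical step; the rest of the argument is routine bookkeeping.

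Finally, since left multiplication is transitive and by isometries, the valence is constant on $S_X$ and equal to its value at $\id$. In the $\Lambda$-tree $S_X$, two non-trivial elements $f, g$ lie in the same geodesic-connected component of $S_X \setminus \{\id\}$ precisely when the median $m(\id, f, g) = f \wedge g$ (computed using Theorem~\ref{thm: median semilattice}) is non-trivial, which happens iff $f$ and $g$ agree on some initial interval $(0, \varepsilon]$, i.e.\ have the same initial value in $X$. Because every non-trivial element of $S_X$ is piecewise constant and so has a well-defined initial value, and because by the previous step every $x \in X$ is realised as the initial value of the admissible constant $c_x^\lambda$, the directions at $\id$ are in bijection with $X$ and the valence is $|X|$.
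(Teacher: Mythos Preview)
Your proof is correct and follows essentially the same route as the paper: verify the length-function axioms directly, invoke Proposition~\ref{prop: median metric} for the median structure, use restriction to build geodesics, observe that $f^\perp = \{\id\}$ for rank~$1$ via Lemma~\ref{lem: rank}, and then identify directions at $\id$ with initial values after checking that constants are admissible. Your admissibility computation for constants is spelled out in more detail than the paper's (which simply refers back to Lemma~\ref{lem: admissible constants}), but the argument is the same.
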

\begin{proof}
    It is clear from the definition that $\ell$ is a $\Lambda$-length function. By Proposition~\ref{prop: median metric} $(S_X,d)$ is a median $\Lambda$-metric space. For any $f \in S_X - \{\id\}$ we have $f^\perp = \{\id\}$ so $S_X$ has rank 1 by Lemma~\ref{lem: rank}. For any $f \in S_X$, the map $t \in [0,\ell(f)] \mapsto f|_{(0,t]} \in S_X$ is a geodesic from $\id$ to $f$. Therefore $S_X$ is a $\Lambda$-tree by Lemma~\ref{lem: tree characterisation}.

    Fix an element $\lambda \in \Lambda$ with $\lambda > 0$. 
    If $x$ and $x^*$ are in different $\Lambda$-orbits then the constant map $f_x: (0,\lambda] \rightarrow X$ with image $x$ is admissible (by a similar argument to the proof of Lemma~\ref{lem: admissible constants}). Assume that $x$ and $x^*$ are in different $\Lambda$-orbits for all $x \in X$. For all $x \neq y$ we have $f_x \wedge f_y = \id$ and for any $g \in S_X$ there is some $x$ such that $f_x \wedge g > \id$, so the set of directions at $\id$ is in bijection with $X$.
\end{proof}

\begin{proof}[Proof of Proposition~\ref{prop: Lambda tree result}]
    Let $X \coloneqq (\Lambda/H) \sqcup (\Lambda/H)'$ equipped with the obvious $\Lambda$-action and define $*: X \rightarrow X$ by $(\lambda + H)^* = (-\lambda + H)'$ and ${(\lambda + H)'}^* = -\lambda + H$ for all $x + H \in \Lambda / H$. This defines an action of $\Lambda \rtimes_\alpha \la * \ra$ on $X$. Let $(S_X, \star)$ be the resulting group, equipped with its canonical $\Lambda$-metric $d$. Then $S_X$ is a $\Lambda$-tree with valence $|X| = 2|\Lambda / H|$. Moreover, for each $x \in X$, there is subspace $\mathbf{L}_x \coloneqq \{f \in S_X : f$ is constant with image $x$ or $x^*\}$ and the map $\varphi: \mathbf{L}_x \rightarrow \Lambda$, defined by $\varphi(f) = \ell(f)$ if the image of $f$ is $x$ and $\varphi(f) = -\ell(f)$ otherwise, is an isometry. Then, $\varphi(\Stab_{S_X}(\mathbf{L}_x) \cdot \id) = H$ so this completes the proof.
\end{proof}

\begin{remark}[2-torsion]
    If $\Lambda$ is not 2-divisible then $S_X$ can contain some order 2 elements which act by inversions (i.e. fixed point free order two isometries). Suppose that $\lambda \in \Lambda$ is such that $2\lambda' \neq \lambda$ for all $\lambda' \in \Lambda$ and let $X \coloneqq \Lambda / \la \lambda \ra$, equipped with the natural action of $\Lambda$ and the involution $(t + \la \lambda \ra)^* = -t + \la \lambda \ra$. Let $S_X$ be the resulting group and $\Lambda$-tree. Let $f:(0,\lambda] \rightarrow X$ be the constant map with image $x \coloneq 0 + \la \lambda \ra$. If $f = a \con b \con b^{-1} \con c$ then $2 \ell(b) \in \la \lambda \ra$ but since $0 \leq \ell(b) < \lambda$ and $\lambda$ is not 2-divisible this can only happen if $b = \id$. So $f \in S_X$ and $f^{-1}$ is constant with image $- \lambda \cdot x = x$, so $f^{-1} = f$.

    The only fixed point free isometries of $\Lambda$-trees are inversions and hyperbolic isometries (which have infinite order) so the only finite order elements in any $S_X$ have order 2. Moreover $S_X$ can only contain inversions if $\Lambda$ is not 2-divisible \cite[Lemma~3.1.2, Theorem~5.1.4]{Chiswell-intro_to_lambda_trees}.
\end{remark}

\section{Actions on products of $\mathbb{R}$-trees}

In this section, we present two distinct constructions of free transitive actions on $\ell^1$ products of $\mathbb{R}$-trees. In Subsection~\ref{sec: higher rank}, we extend the ideas of Subsection~\ref{sec: TX construction} to produce groups which act on products of $\mathbb{R}$-trees with arbitrary flat stabilisers. To illustrate the flexibility of this construction, we will prove a variant of Theorem~\ref{thm: centraliser spectrum} (Theorem~\ref{thm: arbitrary flats}). After establishing some facts about reducible actions on products of $\mathbb{R}$-trees in Subsection~\ref{sec: reducible actions}, we will use the construction from Subsection~\ref{sec: higher rank} to prove the existence of a free transitive and irreducible action on a product of two $\mathbb{R}$-trees in Subsection~\ref{sec: irreducible action}. In Subsection~\ref{sec: embedding BMW groups}, we present an entirely different construction which allows one to isometrically embed any BMW group with a positive BMW presentation into a free dense action on a product of two $\mathbb{R}$-trees.

\subsection{Actions with prescribed flat stabilisers} \label{sec: higher rank}

Let $N \in \{\mathbb{N}\} \cup \{\{1, \dots, n\}: n \in \mathbb{N}\}$, let $\mathbf{R} \coloneqq \ell^1(N)$ be equipped with its natural additive group structure, and, for each $n \in N$, let $\chi_n \in \mathbf{R}$ be the characteristic map of $n$.

Recall that $\mathcal{K}$ is the set of cardinals $\kappa$ such that $\kappa \leq 2^{\aleph_0}$ and let $\Sub_{D}(\mathbf{R})$ be the set of dense subgroups $H \leq \mathbf{R}$. Let $\overline{\Sub}_{D}(\mathbf{R})$ be the quotient of $\Sub_{D}(\mathbf{R})$ under linear isometries and denote the equivalence class of each $H \in \Sub_D(\mathbb{R})$ by $[H]$.

\begin{theorem} \label{thm: arbitrary flats}
    Let $\iota: \overline{\Sub}_{D}(\mathbf{R}) \rightarrow \mathcal{K}$ and $\eta: N \rightarrow \{0,1\}$ be arbitrary maps such that $\eta$ is non-zero and $\iota$ is supported on $\leq 2^{\aleph_0}$ elements of $\overline{\Sub}_{D}(\mathbf{R})$. 
    For each $n \in N$ such that $\eta(n) = 1$, let $T_n$ be the universal real tree with valence $2^{\aleph_0}$ and, for all $n \in N$ such that $\eta(n) = 0$, let $T_n \coloneqq \mathbb{R}$. Let $\mathcal{T} \coloneqq (T_n)_{n \in N}$.
    Then there exists a group $G$ which acts freely and transitively on the $\ell^1$ product $\ell^1(\mathcal{T}, z)$, for some $z \in \prod_{n \in N} T_n$, such that the following holds. For each $[H] \in \overline{\Sub}_{D}(\mathbf{R})$, let $A_H$ be the set of $G$-orbits of maximal flats $F \subseteq S$ such that $\Stab_G(F) \acting F$ is isomorphic to $H \acting \mathbf{R}$.
    Then $|A_H| = \iota([H])$.
\end{theorem}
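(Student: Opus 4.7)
The plan is to apply the higher-rank ore construction developed in Section~\ref{sec: higher rank}, which generalises Section~\ref{sec: TX construction} by producing, from a set $X$ equipped with an action of $\Isom(\mathbf{R}) = \mathbf{R} \rtimes O$ (where $O$ is the group of linear isometries of $\mathbf{R}$, generated by coordinate reflections and permutations) together with a designated family of involutions, a group acting freely and transitively on an $\ell^1$ product of real trees. Just as in the rank-one case, one passes to a distinguished closed subgroup $G = T_X(X)$ generated by orbits of ``constant'' elements so that each factor is connected and, under the mild hypothesis that $|X| \leq 2^{\aleph_0}$ together with sufficient diversity of $X$ in each coordinate direction, has valence $2^{\aleph_0}$ by a direct adaptation of Proposition~\ref{prop: universal real tree}. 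Factors on which we want $T_n = \mathbb{R}$ rather than the universal real tree (those with $\eta(n)=0$) are handled by arranging $X$ so that no ``branching'' occurs in the $n$-th coordinate; for those with $\eta(n)=1$ a seed set analogous to $\mathbb{R}/\mathbb{Z} \sqcup (\mathbb{R}/\mathbb{Z})'$ from the proof of Theorem~\ref{thm: centraliser spectrum} will be included to guarantee full valence.

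The key input, which is the higher-rank analogue of Lemma~\ref{lem: stab of standard axis}, is that each $x \in X$ whose $\Isom(\mathbf{R})$-orbit is sufficiently generic gives rise to a standard maximal flat $\mathbf{L}_x \subseteq G$ together with a canonical isometry $\varphi_x : \mathbf{L}_x \to \mathbf{R}$ that intertwines the left action of $\Stab_G(\mathbf{L}_x)$ with the translation action of $\Stab_\mathbf{R}(x)$ on $\mathbf{R}$. Moreover, two standard flats $\mathbf{L}_x, \mathbf{L}_{x'}$ lie in the same $G$-orbit exactly when $x' \in \Isom(\mathbf{R}) \cdot x$, and any other $G$-equivariant isometry $\mathbf{L}_x \to \mathbf{R}$ differs from $\varphi_x$ by a linear isometry of $\mathbf{R}$. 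Consequently the $G$-orbit $G \cdot \mathbf{L}_x$ carries a well-defined invariant $[\Stab_\mathbf{R}(x)] \in \overline{\Sub}_D(\mathbf{R})$, provided $\Stab_\mathbf{R}(x)$ is dense.

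To realise an arbitrary $\iota$ I would mimic the construction from the proof of Theorem~\ref{thm: centraliser spectrum}. Start with a seed set $X_0$ equipped with an $\Isom(\mathbf{R})$-action in which all stabilisers are trivial or cyclic and all required branching behaviour is achieved. Then, for each class $[H] \in \overline{\Sub}_D(\mathbf{R})$ in the support of $\iota$, fix a representative $H \leq \mathbf{R}$ and adjoin $\iota([H])$ disjoint copies of $(\mathbf{R}/H) \sqcup (\mathbf{R}/H)'$ with their natural $\Isom(\mathbf{R})$-action and the obvious primed/unprimed involution. The hypothesis $\sum_{[H]} \iota([H]) \leq 2^{\aleph_0}$ ensures $|X| \leq 2^{\aleph_0}$, so the valence estimate is preserved, and each adjoined block contributes exactly one $G$-orbit of standard flats whose stabiliser class is $[H]$. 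This gives the lower bound $|A_H| \geq \iota([H])$ for every $H$ in the support of $\iota$, and no orbits for $[H]$ outside the support.

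The main obstacle is the matching upper bound: showing that every maximal flat $F \subseteq \ell^1(\mathcal{T},z)$ whose stabiliser action on $F$ is isomorphic to some element of $\Sub_D(\mathbf{R})$ is in fact a $G$-translate of a standard flat $\mathbf{L}_x$ constructed above. This is the higher-rank analogue of Lemma~\ref{lem: controling axis stabilisers}, and density is the crucial hypothesis. Adapting the template-refinement argument of Section~\ref{sec: templates} coordinate by coordinate, one should show that a non-standard flat in any single factor has countable stabiliser projection, contradicting the density of the combined stabiliser in $\mathbf{R}$; thus each coordinate of $F$ must be ``purely constant'', pinning $F$ down as $\mathbf{L}_x$ for a unique $\Isom(\mathbf{R})$-orbit of some $x \in X$. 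Once this classification is established, the counting closes, yielding $|A_H| = \iota([H])$ and completing the proof.
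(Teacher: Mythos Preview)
Your high-level strategy (construct standard flats with prescribed stabilisers, prove a classification lemma for flats with dense stabilisers, then count orbits) matches the paper's, but several structural details of Section~\ref{sec: higher rank} are misdescribed and the key combinatorial trick is missing.

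First, the construction does not start from a single set $X$ with an $\Isom(\mathbf{R})$-action. It takes a \emph{family} $(X_n)_{n\in N}$, each carrying an action of the abelian group $\mathbf{R}$, and sets $Y_n = X_n \sqcup X_n^*$; the involution is external and commutes with $\mathbf{R}$ (so the relevant acting group is $\mathbf{R}\times\langle*\rangle$, not a semidirect product with the full orthogonal group --- see the remark following Definition~\ref{def: higher rank ore}). The ore $\mathcal{F}$ is built directly from the template ores $\mathcal{Z}_{Y_n}$, so no passage to a closed subgroup ``$T_X(X)$'' is needed: the extracted group $G$ is already the full $\ell^1$ product of the trees $T_n$ (Proposition~\ref{prop: product of trees}), and Lemma~\ref{lem: yet another real tree}(3) gives the valence dichotomy via $|X_n|=1$ versus $|X_n|\geq 2$ rather than via a template argument as in Proposition~\ref{prop: universal real tree}.

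Second, and more importantly, the stabiliser of a standard flat $F((x_n)_n)$ is $\bigcap_{n\in N}\Stab_{\mathbf{R}}(x_n)$ (Lemma~\ref{lem: standard flat stabilisers}), an intersection over \emph{all} coordinates. Adjoining copies of $\mathbf{R}/H$ symmetrically, as you propose, does not by itself control this intersection. The paper's device is to fix a single index $m$ with $\eta(m)=1$, place all the blocks $B_H=\mathbf{R}/H$ into $X_m$ only, and for every other $n$ include the one-point set $B_{\mathbf{R}}=\mathbf{R}/\mathbf{R}$ (whose $\mathbf{R}$-stabiliser is all of $\mathbf{R}$) in $X_n$. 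A standard flat with $x_m\in B_H$ and $x_n\in B_{\mathbf{R}}$ for $n\neq m$ then has stabiliser exactly $H$, and two such flats lie in the same $G$-orbit iff their $m$-th coordinates lie in the same $\mathbf{R}$-orbit. This localisation to a single coordinate is what makes the orbit count match $\iota$; the classification of dense-stabiliser flats (Lemma~\ref{lem: only standard flats have fun stabilisers}) then closes the argument essentially as you outline.
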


For each $n \in N$, let $X_n$ be a set equipped with an action of $\mathbf{R}$ and let $Y_n \coloneqq X_n \sqcup X_n^*$, where $X_n^*$ is a copy of $X_n$ and $*:X_n \rightarrow X_n^*$ is a bijection. For each $x^* \in X_n^*$, let $(x_n^*)^* \coloneqq x_n$, so $\ast:Y_n \rightarrow Y_n$ is an involution. For each $x_n^* \in X_n^*$ and $r \in \mathbf{R}$, let $r \cdot x_n^* \coloneqq (r \cdot x_n)^*$.

\begin{remark}
    This setup is not quite analogous to that of Section~\ref{sec: TX construction sum}: we have an action of $\mathbf{R} \times \la \ast \ra$ on $Y_n$ rather than an action of $\mathbf{R} \rtimes_\alpha \la \ast \ra$, where $\alpha: \mathbf{R} \rightarrow \mathbf{R}$ is the order reversing automorphism given by $\alpha(r) = -r$ for all $r \in \mathbf{R}$. 
\end{remark}

Recall that, for each $n$, $\mathcal{Z}_{Y_n}$ denotes the set of equivalence classes of realisations of sequences $(x_p)_{p \in P} \subseteq Y_n$, where $P \subseteq [0,\ell]$ is a template and $\ell > 0$ (see Section~\ref{sec: templates}).

\begin{definition} \label{def: higher rank ore}
    Let $\mathcal{F} \coloneqq \{\f = (\f_n)_{n \in N} \in \prod_{n \in N} \mathcal{Z}_{Y_n} : \sum_{n \in N} \ell(\f_n) < \infty\}$. For each $n \in N$, let $\ell_n(\f) \coloneqq \ell(\f_n)$. Let $\mathfrak{o}(\f) \coloneqq (\ell_n(\f))_{n \in N} \in \mathbf{R}$; we call $\mathfrak{o}(\f)$ the \textit{outline} of $\f$.
    Let $\id \in \mathcal{F}$ be the unique element with outline 0.

    Set $\f \preceq \g$ if and only if $\f_n \preceq \g_n$ for all $n \in N$.
\end{definition}

\begin{definition}[Signed length, signed outline]
    Given $n \in N$ and $\f \in \mathcal{Z}_{Y_n}$, let $P \subseteq [0,\ell(\f)]$ be a template and $(x_p)_{p \in P}$ be a sequence with realisation $\f$. For each $p \in P - \{\ell\}$, let $p' \in P$ be the successor of $p$. Define:
    \begin{align*}
        &\ell^+(\f) \coloneqq \sum \{p' - p : p \in P - \{\ell\} \text{ and } x_p \in X_n\} \\
        &\ell^-(\f) \coloneqq \sum \{p' - p : p \in P - \{\ell\} \text{ and } x_p \in X_n^*\}
    \end{align*}
    The \textit{signed length} of $\f$ is $\sigma(\f) \coloneqq \ell^+(\f) - \ell^-(\f) \in \mathbb{R}$.

    If $\f = (\f_n)_{n \in N} \in \mathcal{F}$ then the \textit{signed outline} of $\f$ is $\tau(\f) \coloneqq (\sigma(\f_n))_{n \in N} \in \mathbf{R}$.
\end{definition}

\begin{definition} \label{def: higher rank ore operations}
\begin{itemize}
    \item Given $\f \in \mathcal{F}$, let $\f^{-1} \coloneqq (\f_n^{-1})_{n \in N}$, where $f_n^{-1}:[0,\ell_n(\f)] \rightarrow X_n$ is defined by $f_n^{-1}(t) = -\tau(\f) \cdot f_n^*(\ell_n(\f) - t)$ for each $n$.
    \item Define an operation $\con: \mathcal{F} \times \mathcal{F} \rightarrow \mathcal{F}$ as follows. Let $\f, \g \in \mathcal{F}$ and, for each $n$, let $a_n : [0,\ell_n(\f) + \ell_n(\g)] \rightarrow X_n$ be the map given by 
    \[a_n (t) = 
    \begin{cases}
        f_n(t) \quad &\text{if } t \in [0,\ell_n(\f)] ;\\
        \tau(\f) \cdot g_n(t - \ell_n(\f)) \quad &\text{otherwise.}
    \end{cases}
    \]
    and let $\mathfrak{a}_n \in \mathcal{Y}_{X_n}$ be the equivalence class of $a_n$. Define $\mathfrak{f} \con \mathfrak{g} \coloneqq (\mathfrak{a_n})_{n \in N}$.
\end{itemize}
\end{definition}

\begin{remark} \label{rem: concatenating tau}
If $\f = (\f_n)_{n \in N}, \g = (\g_n)_{n \in N} \in \mathcal{F}$, then $\tau(\f \con \g) = \tau(\f) + \tau(\tau(\f) \cdot \g) = \tau(\f) + \tau(\g)$, since $X_n$ and $X_n^*$ are $\mathbf{R}$-invariant for all $n \in N$.
\end{remark}
\begin{remark} \label{rem: inverse tau}
    If $\f = (\f_n)_{n \in N} \in \mathcal{F}$ then $\tau(\f^{-1}) = \tau(\f^*) = - \tau(\f)$.
\end{remark}

\begin{lemma} \label{lem: higher rank monoid}
    $(\mathcal{F}, \con)$ is a cancellative monoid with identity $\id$ and with involution $-1$. 
\end{lemma}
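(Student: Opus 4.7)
The plan is to check the four required properties (associativity, identity, cancellation, and the involution axiom) componentwise, in close analogy with the proof of Lemma~\ref{lem: big tree ore}, the novelty being the bookkeeping of the shifts $\tau(\f)$ that appear in Definition~\ref{def: higher rank ore operations}. First one should verify that $\mathcal{F}$ is closed under $\con$ and $-1$: since $\ell_n(\f \con \g) = \ell_n(\f) + \ell_n(\g)$ and $\ell_n(\f^{-1}) = \ell_n(\f)$, summing over $n$ keeps us inside $\ell^1(N)$, and the individual components remain in $\mathcal{Z}_{Y_n}$ because each $Y_n$ is $\mathbf{R}$-invariant and closed under $\ast$.

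For associativity, using Remark~\ref{rem: concatenating tau}, one verifies that both $(\f \con \g) \con \h$ and $\f \con (\g \con \h)$ have $n$-th component equal to the equivalence class of the map sending $t$ to $f_n(t)$ on $[0, \ell_n(\f)]$, to $\tau(\f) \cdot g_n(t - \ell_n(\f))$ on the next interval, and to $(\tau(\f) + \tau(\g)) \cdot h_n(t - \ell_n(\f) - \ell_n(\g))$ on the last; the two bracketings agree because $\mathbf{R}$ acts by translations and $\tau(\f \con \g) = \tau(\f) + \tau(\g)$. The identity property is immediate from $\tau(\id) = 0$ and the fact that $\id$ has empty components. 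For cancellation, $\f \con \g = \f \con \h$ forces $\ell_n(\g) = \ell_n(\h)$ for each $n$; restricting to $(\ell_n(\f), \ell_n(\f) + \ell_n(\g)]$ and undoing the shift by $\tau(\f)$, which is a bijection of $Y_n$, gives $\g_n = \h_n$ in $\mathcal{Z}_{Y_n}$. Right cancellation is symmetric.

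The involution axiom is the one requiring a little care. The identity $(\f^{-1})^{-1} = \f$ is a direct computation using Remark~\ref{rem: inverse tau} and the observation that $\ast$ commutes with the $\mathbf{R}$-action on $Y_n$ (by construction, $r \cdot x^* = (r \cdot x)^*$). The main obstacle, such as it is, is the identity $(\f \con \g)^{-1} = \g^{-1} \con \f^{-1}$: combining Remarks~\ref{rem: concatenating tau} and~\ref{rem: inverse tau} shows that both sides have signed outline $-\tau(\f) - \tau(\g)$, and a case split on whether $t$ lies in $(0, \ell_n(\g)]$ or in $(\ell_n(\g), \ell_n(\f) + \ell_n(\g)]$ shows that the two maps coincide on each component. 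The subtle point is that the left-hand side applies a single shift by $-\tau(\f \con \g)$ to the pointwise-reversed concatenation, while the right-hand side accumulates the shifts $-\tau(\g)$ (from the new concatenation) and $-\tau(\f)$ (already built into $\f^{-1}$); these agree because $\mathbf{R}$ is abelian and $\ast$ is $\mathbf{R}$-equivariant.
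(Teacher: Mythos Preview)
Your proposal is correct and follows essentially the same approach as the paper: both proceed componentwise, invoke Remarks~\ref{rem: concatenating tau} and~\ref{rem: inverse tau} at the same points, and handle the involution identity $(\f \con \g)^{-1} = \g^{-1} \con \f^{-1}$ by the same case split on the two subintervals. Your additional remark on closure of $\mathcal{F}$ under $\con$ and $-1$ is implicit in the paper but worth making explicit.
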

\begin{proof}
    Given $\f,\g,\h \in \mathcal{F}$, we have $\mathfrak{o((f \con g) \con h)} = \mathfrak{o}(\f) + \mathfrak{o}(\g) + \mathfrak{o}(\h) = \mathfrak{o(f \con (g \con h))}$. Using Remark~\ref{rem: concatenating tau}, we have that, for each $n \in N$ and $t \in [0, \ell_n(\f) + \ell_n(\g) + \ell_n(\h)]$:
    \begin{align*}
        ((f \con g) \con h)_n(t) &=
        \begin{cases}
            f_n(t) \quad &\text{if } t \in [0,\ell_n(\f)]; \\
            \tau(\f) \cdot g_n(t - \ell_n(\f)) &\text{if } t \in (\ell_n(\f), \ell_n(\g)];\\
            (\tau(\f) + \tau(\g)) \cdot h(t - \ell_n(f) - \ell_n(g)) &\text{otherwise}
        \end{cases} \\
        &= (f \con (g \con h))_n(t).
    \end{align*}
    Therefore $((\f \con \g) \con \h)_n = (\f \con (\g \con \h))_n$ for each $n \in N$, which implies that $\mathfrak{(f \con g) \con h = f \con (g \con h)}$. Thus $\con$ is associative. It is clear from the definitions that $\id$ is a two-sided identity for $\con$, so $(\mathcal{F}, \con)$ is a monoid. 
    
    Let $\f \in \mathcal{F}$. Then $(\f^{-1})^{-1} = (\aaa_n)_{n \in N}$ has outline $\mathfrak{o}(\f)$. Using Remark~\ref{rem: inverse tau}, we have that, for each $n \in N$ and $t \in [0,\ell_n(\f)]$, 
    \[
        a_n(t) = -\tau(\f^{-1}) \cdot (- \tau(\f) \cdot f_n^*(\ell_n(\f) - (\ell_n(\f) - t)))^* = \tau(\f) \cdot (- \tau(\f) \cdot f_n(t)) = f_n(t).
    \]
    Therefore $(\f^{-1})^{-1}= \f$ and $-1:\mathcal{F} \rightarrow \mathcal{F}$ is an involution.    
    Fix $\f, \g \in \mathcal{F}$. Then $(\f \con \g)^{-1} = (\aaa_n)_{n \in N}$, where for each $i$, $a_n:[0,\ell_n(\f) + \ell_n(\g)] \rightarrow X_n$ is defined by 
    \begin{align*}
    a_n(t) &= - \tau(\f \con \g) \cdot (f \con g)_n^*(\ell_n(\f) + \ell_n(\g) - t) \\
    &= 
    \begin{cases}
        - (\tau(\f) + \tau(\g)) \cdot (\tau(\f) \cdot g_n(\ell_n(\g) - t))^* \quad &\text{if } t \in [0, \ell_n(\g)) ; \\
        - (\tau(\f) + \tau(\g)) \cdot f_n^*(\ell_n(\f) - (t - \ell_n(\g))) \quad &\text{otherwise}
    \end{cases} \\
    &= \begin{cases}
        g_n^{-1}(t) \quad &\text{if } t \in [0, \ell_n(\g)) ; \\
        \tau(\g^{-1}) \cdot f_n^{-1}(t - \ell_n(\g)) \quad &\text{otherwise}
    \end{cases} \\
    &= (g^{-1} \con f^{-1})_n, \quad \text{unless } t = \ell_n(\g).
    \end{align*}
    Therefore $(\mathfrak{f} \con \mathfrak{g})^{-1} = \mathfrak{g}^{-1} \con \mathfrak{f}^{-1}$.

    Finally, let $\f,\g,\h \in \mathcal{F}$ and suppose that $\f \con \g = \f \con \h$. Then $\mathfrak{o(g) = o(h)}$ and, for each $n$ and for all but countably many $t \in [0, \ell_n(\g)] = [0,\ell_n(\h)]$, we have that $\tau(f) \cdot g_n(t) = \tau(f) \cdot h_n(t)$ so $g_n(t) = h_n(t)$. Therefore $\g = \h$. It is clear that if $\f \con \g = \h \con \g$ then $\f = \h$, so $(\mathcal{F}, \preceq)$ is cancellative.
\end{proof}

\begin{lemma} \label{rem: admissibility invariance}
    If $\f \in \mathcal{F}$ is admissible and $r \in \mathbf{R}$, then the equivalence class $r \cdot \f$ of $(r \cdot f_n)_{n \in N}$ is admissible.
\end{lemma}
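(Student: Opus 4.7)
The key observation is that the $\mathbf{R}$-action on $\mathcal{F}$ (given pointwise by $(r \cdot \f)_n(t) = r \cdot f_n(t)$) commutes with both $\con$ and $-1$. Admissibility is a purely algebraic property (existence of a factorisation $\f = \aaa \con \bbb \con \bbb^{-1} \con \ccc$ with $\bbb \neq \id$), so this commutation will let us transport inadmissibility backwards.

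The first step is to record two elementary invariants. Since for every $n$ the $\mathbf{R}$-action preserves each of $X_n$ and $X_n^*$, the signed length of each coordinate is unchanged: $\sigma(r \cdot \f_n) = \sigma(\f_n)$, and hence $\tau(r \cdot \f) = \tau(\f)$ and $\mathfrak{o}(r \cdot \f) = \mathfrak{o}(\f)$. Moreover, since the action of $\mathbf{R}$ on $Y_n$ commutes with $\ast$ by the definition $(r \cdot x_n)^\ast = r \cdot x_n^\ast$, we have $(r \cdot f_n)^\ast = r \cdot f_n^\ast$.

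Next, using Definition~\ref{def: higher rank ore operations} and $\tau(r \cdot \aaa) = \tau(\aaa)$, a direct coordinatewise computation gives
\[
    r \cdot (\aaa \con \bbb) = (r \cdot \aaa) \con (r \cdot \bbb) \quad \text{and} \quad r \cdot \bbb^{-1} = (r \cdot \bbb)^{-1}
\]
for all $\aaa, \bbb \in \mathcal{F}$; the verification of the second identity uses the abelianness of $\mathbf{R}$ and the compatibility of the action with $\ast$. It is also immediate that $r \cdot \id = \id$ and that $r \cdot \bbb = \id$ implies $\bbb = \id$, since the action is by bijections.

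Now suppose for contradiction that $r \cdot \f$ is inadmissible, so there exist $\aaa, \bbb, \ccc \in \mathcal{F}$ with $\bbb \neq \id$ and $r \cdot \f = \aaa \con \bbb \con \bbb^{-1} \con \ccc$. Applying $(-r) \cdot$ to both sides and using the two identities above yields
\[
    \f = ((-r) \cdot \aaa) \con ((-r) \cdot \bbb) \con ((-r) \cdot \bbb)^{-1} \con ((-r) \cdot \ccc),
\]
with $(-r) \cdot \bbb \neq \id$, contradicting the admissibility of $\f$. Hence $r \cdot \f$ is admissible.

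The only non-trivial point is verifying that $r$ commutes with $\con$ and $-1$; this is purely a bookkeeping exercise, with the key input being that $\tau$ is $\mathbf{R}$-invariant (so the global shift $\tau(\aaa)$ appearing in the definition of $\con$ is unaffected by the action) and that the action commutes with $\ast$ (so that $-1$ is respected).
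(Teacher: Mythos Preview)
Your proof is correct and takes essentially the same approach as the paper: both show that the $\mathbf{R}$-action commutes with $\con$ and $-1$ (using $\tau$-invariance of the action and compatibility with $\ast$), then apply $(-r)\cdot$ to a hypothetical inadmissible decomposition of $r\cdot\f$ to obtain one for $\f$. The paper carries out the verification that $(-r\cdot\bbb)^{-1} = -r\cdot\bbb^{-1}$ by an explicit coordinatewise computation, whereas you state it as a consequence of the abelianness of $\mathbf{R}$ and the identity $(r\cdot x)^\ast = r\cdot x^\ast$; these amount to the same thing.
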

\begin{proof}
    Suppose $\aaa, \bbb, \ccc \in \mathcal{F}$ are such that $r \cdot \f = \aaa \con \bbb \con \bbb^{-1} \con \ccc$. Since $X_n$ and $X_n^*$ are $\mathbf{R}$-invariant for each $n \in N$, we have $\tau(\g) = \tau(-r \cdot \g)$ for all $\g \in \mathcal{F}$. It follows that $\f =  -r \cdot \aaa \con - r \cdot \bbb \con -r \cdot \bbb^{-1} \con - r \cdot \ccc$. But $(-r \cdot \bbb)^{-1} = (\aaa_n)_{n \in N}$ where, for each $n \in N$ and $t \in [0, \ell_n(\bbb)]$,
    \begin{align*}
        a_n(t) 
        &= - \tau(-r \cdot \bbb) \cdot (-r \cdot b_n(\ell_n(\bbb) - t))^* \\
        &= (-\tau(\bbb) - r) \cdot b_n^*(\ell_n(\bbb) - t) \\
        &= - r \cdot b_n^{-1}(t).
    \end{align*}
    Thus $(-r \cdot \bbb)^{-1} = - r \cdot \bbb$ and, since $\f$ is admissible, this implies that $\bbb = \id$.
\end{proof}

\begin{lemma} \label{lem: product ore v1}
    ($\mathcal{F},\preceq, \id, \con, -1)$ is an ore.
\end{lemma}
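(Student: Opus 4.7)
The plan is to verify axioms (O1)--(O6), with (O2) already established in Lemma~\ref{lem: higher rank monoid}. Axioms (O1) and (O3) reduce cleanly to the componentwise structure of the template ores $\mathcal{Z}_{Y_n}$ (Lemma~\ref{lem: template ore}). For (O1), I would define $\f \wedge \g \coloneqq (\f_n \wedge \g_n)_{n \in N}$; its outline is bounded above by $\mathfrak{o}(\f)$, so the result lies in $\mathcal{F}$, and $\id$ is clearly the bottom element. For (O3), the forward implication is immediate from the definition of $\con$; conversely, given $\f \preceq \g$, I would set $h_n(t) \coloneqq -\tau(\f) \cdot g_n(\ell_n(\f) + t)$ and verify that $\f \con \h = \g$ using the identity $\tau(\f) \cdot (-\tau(\f)) \cdot x = x$.

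For (O4), first note that $G$ is closed under passage to faces: any inadmissible decomposition of some $\g \preceq \f$ extends via (O3) to an inadmissible decomposition of $\f$. Given $\aaa, \bbb, \ccc \in G$, the three pairwise meets all lie below $\aaa$, and in each component $n \in N$ the order restricted below $\aaa_n$ is total (as each $\mathcal{Z}_{Y_n}$ has a tree-like order). Hence the join $(\aaa \wedge \bbb) \vee (\bbb \wedge \ccc) \vee (\aaa \wedge \ccc)$ exists as the componentwise maximum, and a standard argument on totally ordered components confirms that this maximum is indeed the join in $\mathcal{F}$. The distributivity condition in Definition~\ref{def: median semilattice} then reduces componentwise to the median semilattice property of $\mathcal{Z}_{Y_n}$ (Lemma~\ref{lambda trees: median semilattice}).

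The main work lies in verifying (O5) and (O6). The key structural observation is that $\f \perp \g$ in $\mathcal{F}$ forces, for every $n \in N$, at least one of $\f_n$, $\g_n$ to equal $\id$: the meet condition $\f_n \wedge \g_n = \id$ together with the existence of a common upper bound in the tree-like semilattice $\mathcal{Z}_{Y_n}$ forces $\f_n$ and $\g_n$ to be comparable. Expanding $\f \vee \g = \f \con \g'$ componentwise then yields $\g' = -\tau(\f) \cdot \g$, where the $\mathbf{R}$-action is applied componentwise, and (O5) follows at once from the $\mathbf{R}$-invariance of admissibility established in Lemma~\ref{rem: admissibility invariance}. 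For (O6), the symmetric identity $\f' = -\tau(\g) \cdot \f$ gives admissibility of $\f'$, and the observation that $\f_n = \id$ implies $\f_n^{-1} = \id$ yields both $\f^{-1} \perp \g'$ and $\g^{-1} \perp \f'$. Expanding $\f^{-1} \con \g$ and $\g' \con (\f')^{-1}$ componentwise, using $\tau(\f^{-1}) = -\tau(\f)$, shows that each $n$-component equals the unique non-trivial element of $\{\f_n^{-1}, \g'_n\}$ (or $\id$ if both are trivial); this establishes the central equality $\f^{-1} \con \g = \g' \con (\f')^{-1} = \f^{-1} \vee \g'$, and the identification with $(\g^{-1} \vee \f')^{-1}$ follows from (O2). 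The main obstacle will be carefully tracking the $\tau$-shifts in the definitions of $\con$ and $-1$, which couple the components non-trivially and distinguish $\mathcal{F}$ from a naive product of the ores $\mathcal{Z}_{Y_n}$.
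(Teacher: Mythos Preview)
Your approach is essentially identical to the paper's: reduce (O1), (O3), (O4) componentwise to the template ores $\mathcal{Z}_{Y_n}$, and handle (O5), (O6) via the observation that orthogonality forces disjoint supports, so that the parallel transports are given by the $\mathbf{R}$-action and Lemma~\ref{rem: admissibility invariance} applies. One small slip in (O4): it is not true that $\bbb \wedge \ccc \preceq \aaa$ in general; the correct reason the three pairwise meets are componentwise comparable is that each pair of them shares a common upper bound among $\{\aaa_n, \bbb_n, \ccc_n\}$, and the order below any fixed element of $\mathcal{Z}_{Y_n}$ is total (cf.\ the proof of Lemma~\ref{lambda trees: median semilattice}).
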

\begin{proof}
(O2) holds by Lemma~\ref{lem: higher rank monoid}. Let us go through the remaining axioms.
\begin{itemize} 
    \item[(O1)] This follows from the fact that $(\mathcal{Y}_{X_i}, \preceq)$ is a median semilattice for each $i$ (by Lemma~\ref{lambda trees: median semilattice}).
    \item[(O3)] Let $\mathfrak{f},\mathfrak{g} \in \mathcal{F}$. It is clear from the definition of $\con$ that, if there exists $\mathfrak{h} \in \mathcal{F}$ such that $\f = \g \con \h$, then $\g \preceq \f$. Conversely, suppose that $\g \preceq \f$. For each $n \in N$ define $h_n:[0,\ell_n(\f) - \ell_n(\g)] \rightarrow X$ by $h(t) \coloneqq -\tau(\g) \cdot f(t + \ell_n(\g))$. Then $\f = \g \con \h$.
    \item[(O4)] By Lemma~\ref{lambda trees: median semilattice}, each $(\mathcal{Y}_{X_n}, \preceq)$ is a median semilattice. Let $m_n: \mathcal{Y}_{X_n}^3 \rightarrow \mathcal{Y}_{X_n}$ be the median map from Theorem~\ref{thm: median semilattice}. For each $\f,\g,\h \in \mathcal{F}$ let $m(\f,\g,\h) \coloneqq (m_n(\f_n, \g_n, \h_n))_{n \in N}$. One can check that $m(\f,\g,\h) = (\f \wedge \g) \vee (\g \wedge \h) \vee (\f \wedge \h)$. Let $\mathfrak{x = (x_n)_{n \in N}} \in \mathcal{F}$. Then $m(\mathfrak{x \wedge f, x \wedge g, x \wedge h}) = (m_n(\mathfrak{x}_n \wedge \f_n, \mathfrak{x}_n \wedge \g_n, \mathfrak{x}_n \wedge \h_n))_{n \in N} = (\mathfrak{x}_n \wedge m_n(\f_n, \g_n, \h_n))_{n \in N} = \mathfrak{x} \wedge m(\f,\g,\h)$.
    \item[(O5)] Let $\f,\g,\h \in \mathcal{F}$ and suppose that $\f,\g$ are admissible, $\f \perp \h$ and $\f \vee \h = \f \con \g$. Let $I_\f \coloneqq \{n \in N : \ell_n(\f) > 0\}$ and $I_\h \coloneqq \{n \in N : \ell_n(\h) > 0\}$. Then $I_\f \cap I_\h = \emptyset$ and $\f \vee \h = \mathfrak{a}$ where $\aaa_n = \f_n$ if $n \in I_\f$, $\aaa_n = \h_n$ if $n \in I_\h$ and $a_n: \{0\} \rightarrow X_n$ is any map of length 0 otherwise. Therefore, up to choosing a different representative for $\g$, we have that $\h = \tau(\f) \cdot \g$. By Remark~\ref{rem: admissibility invariance}, $\h$ is admissible.
    \item[(O6)] Let $\f, \g \in \mathcal{F}$ be admissible and orthogonal. Let $I_\f \coloneqq \{n \in N : \ell_n(\f) > 0\}$ and $I_\g \coloneq \{n \in N : \ell_n(\g) > 0\}$. Then $I_\f \cap I_\g = \emptyset$ and $\f \vee \g = \h$ where $\h_n = \f_n$ if $n \in I_f$, $\h_n = \g_n$ if $n \in I_\g$ and $h_n: \{0\} \rightarrow X_n$ is any map of length 0 otherwise. It follows that $\f \vee \g = \f \con -\tau(\f) \cdot \g = \g \con -\tau(\g) \cdot \f$. By Remark~\ref{rem: admissibility invariance}, $-\tau(\f) \cdot \g$ and $-\tau(\g) \cdot \f$ are admissible. Also, $\f^{-1} \vee -\tau(f) \cdot \g = \f^{-1} \con \g = -\tau(f) \cdot \g \con -\tau(\g) \cdot \f^{-1}$ the fact that $I_\f \cap I_\g = \emptyset$ implies that $\f^{-1} \perp -\tau(\f) \cdot \g$ and $\g^{-1} \perp -\tau(\g) \cdot \f$. 
    \qedhere
\end{itemize}
\end{proof}

\begin{definition} \label{def: product group}
Let $(G,\twist)$ be the group extracted from $(\mathcal{F},\con)$. 
\end{definition}

\begin{remark} \label{rem: constants are admissible}
    Any element $\f \in \mathcal{F}$ such that each $\f_n$ is constant is admissible and is therefore an element of $G$. Indeed if $\f$ is inadmissible then the image of some $\f_n$ must contain both some $x \in X_n$ and an element in the orbit of $x^*$, which is contained in $X_n^*$, so $\f_n$ is not constant.
\end{remark}
Let $\ell: \mathcal{F} \rightarrow \mathbb{R}$ be the map defined by $\ell(\mathfrak{f}) = \sum_{n \in N} \ell_n(\mathfrak{f})$. Then $\ell$ is a length function. Let $d$ be the resulting metric on $G$ given by $d(\f,\g) = \ell(\f^{-1} \star \g)$ for all $\f, \g \in G$ (see Proposition~\ref{prop: median metric}).

\medskip

We will need a few lemmas to prove the following:

\begin{proposition} \label{prop: product of trees}
The metric space $(G,d)$ is the $\ell^1$ product of $|N|$ complete real trees, with respect to some basepoint.
\end{proposition}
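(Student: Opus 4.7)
The plan is to (i) show that the metric $d$ decomposes as an $\ell^1$ sum of coordinate-wise metrics, (ii) identify each coordinate as a complete $\mathbb{R}$-tree, and (iii) verify that $G$ exhausts the corresponding $\ell^1$ product.

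First, I would establish the $\ell^1$ decomposition of $d$. Since the partial order $\preceq$ on $\mathcal{F}$ is defined coordinate-wise, meets are computed coordinate-wise, so $(\f\wedge\g)_n = \f_n\wedge\g_n$. Writing $\f^{-1}\twist\g = (\f^{-1}\dotdiv(\f\wedge\g)^{-1})\con(\g\divdot(\f\wedge\g))$ and summing lengths yields
\[
d(\f,\g) \;=\; \sum_{n\in N}\bigl(\ell_n(\f_n)+\ell_n(\g_n)-2\ell_n(\f_n\wedge\g_n)\bigr) \;=:\; \sum_{n\in N} d_n(\f_n,\g_n).
\]

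Second, I would define $G_n \coloneqq \{\f\in G : \ell_m(\f)=0 \text{ for all } m\neq n\}$ and show $(G_n, d|_{G_n})$ is a complete $\mathbb{R}$-tree. For $\f\in G_n$ one has $\tau(\f)=\sigma(\f_n)\chi_n$, so the formulas for $\con$ and ${}^{-1}$ restricted to $G_n$ reduce to the single-coordinate ore formulas from Section~\ref{sec: TX construction} applied to $Y_n$ equipped with the $\mathbb{R}$-action $r\cdot y \coloneqq (r\chi_n)\cdot y$. Thus $G_n$ is isomorphic to the group extracted from this single-coordinate ore, and the proof of Lemma~\ref{lem: complete R tree} (which adapts without change from the $\mathbb{R}\rtimes_\alpha\langle *\rangle$ setting to the $\mathbb{R}\times\langle *\rangle$ setting) shows $G_n$ is a complete $\mathbb{R}$-tree.

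Third, I would consider the map $\Psi:G\to\prod_{n\in N}G_n$, $\Psi(\f)\coloneqq(\f^{[n]})_{n\in N}$, where $\f^{[n]}\in\mathcal{F}$ has $n$-th coordinate $\f_n$ and is trivial elsewhere. Admissibility of $\f^{[n]}$ follows because any decomposition $\f^{[n]}=\aaa\con\bbb\con\bbb^{-1}\con\ccc$ with $\bbb\neq\id$ forces $\aaa,\bbb,\ccc$ to be supported on coordinate $n$ (by balancing coordinate lengths), and such a decomposition lifts to witness inadmissibility of $\f$. Injectivity of $\Psi$ is immediate, and the decomposition of $d$ together with $\ell(\f)=\sum_n\ell(\f^{[n]})$ shows $\Psi$ is an isometric embedding into $\ell^1((G_n)_{n\in N},(\id)_{n\in N})$. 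For surjectivity, given a tuple $(\mathfrak{h}^{(n)})_n$ in this $\ell^1$ product, I would define $\f\in\mathcal{F}$ by $\f_n\coloneqq\mathfrak{h}^{(n)}_n$ and show $\f\in G$: any putative decomposition $\f=\aaa\con\bbb\con\bbb^{-1}\con\ccc$ must have $\bbb$ supported in some single coordinate $n_0$, and the cancellation $\tau(\bbb)+\tau(\bbb^{-1})=0$ makes the coordinate-$n_0$ restriction of the decomposition independent of the other coordinates of $\tau(\aaa)$, producing a nontrivial cancellation in $\mathfrak{h}^{(n_0)}$ and contradicting $\mathfrak{h}^{(n_0)}\in G_{n_0}$.

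The main obstacle will be this final surjectivity step: one must carefully track how the $\mathbf{R}$-twisting in the definitions of $\con$ and ${}^{-1}$ projects to a single coordinate, because a priori the full signed outline $\tau(\aaa)\in\mathbf{R}$ (not just its $\chi_{n_0}$-component) acts on elements of $Y_{n_0}$. The key observation is that the twist is constant along a $\bbb\con\bbb^{-1}$ segment, so any extra twist contributed by the other coordinates of $\tau(\aaa)$ can be absorbed into the choice of $\aaa$ and $\ccc$; once this is exploited, admissibility decouples across coordinates and the identification $G\cong\ell^1((G_n)_{n\in N},(\id)_{n\in N})$ follows.
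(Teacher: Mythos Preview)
Your overall strategy matches the paper's: decompose $d$ coordinate-wise, show each coordinate sits in a complete $\mathbb{R}$-tree, and identify $G$ with the $\ell^1$ product. The paper packages the admissibility-decoupling into a single lemma (Lemma~\ref{lem: admissible factors in F}): $\f\in\mathcal{F}$ is admissible if and only if each $\f_n$ is admissible in the single-coordinate ore.

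There is, however, a genuine gap in your surjectivity argument. You assert that in any decomposition $\f=\aaa\con\bbb\con\bbb^{-1}\con\ccc$ the element $\bbb$ must be supported in a single coordinate $n_0$. This is false: nothing forces $\bbb$ to be concentrated in one coordinate, and in general it will not be. The correct argument (which your final paragraph almost reaches) runs coordinate by coordinate: for \emph{each} $n$, write out the $n$-th component of $\aaa\con\bbb\con\bbb^{-1}\con\ccc$ explicitly. Because $\tau(\bbb)+\tau(\bbb^{-1})=0$, the twist contributed by the other coordinates of $\tau(\aaa)$ is the same on both sides of the $\bbb\con\bbb^{-1}$ block and can be absorbed---this is exactly the computation in the converse direction of Lemma~\ref{lem: admissible factors in F}, where one replaces $\tau(\aaa)$ by its projection $\widehat\tau$ onto the $n$-th axis and checks that $\f_n = \aaa_n\con\widehat\bbb_n\con(\widehat\bbb_n)^{-1}\con\widehat\ccc_n$ holds in the single-coordinate ore. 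Admissibility of $\f_n$ then forces $\bbb_n=\id$, and since this holds for every $n$ one concludes $\bbb=\id$. So your endpoint ``admissibility decouples across coordinates'' is correct, but the route through ``$\bbb$ is single-coordinate'' does not work; you must project the entire decomposition onto each coordinate separately.
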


\begin{definition}
    For each $n \in N$, let $\mathcal{Z}_n \coloneqq \{\f \in \mathcal{F} : \f_i = \id \; \forall \; i \neq n\}$ and let $\psi_n:\mathcal{Z}_n \rightarrow \mathcal{Z}_{Y_n}$ be the bijection given by $\psi_n(\f) = \f_n$ for all $\f \in \mathcal{Z}_n$.

    Let $T_n \coloneqq \{\f_n \in \mathcal{Z}_{Y_n} : \psi_n^{-1}(\f_n) \in G\} \subseteq \mathcal{Z}_{Y_n}$ for each $n \in N$.
\end{definition}

\begin{lemma} \label{lem: yet another real tree}
\begin{enumerate}
    \item There is an operation $\con$ and an involution $-1$ on $\mathcal{Z}_{Y_n}$ such that $(\mathcal{Z}_{Y_n}, \preceq,$ $\con, -1, \id)$ is an ore and $T_n$ is its set of admissible elements. 
    \item Let $(T_n, \star)$ be the group extracted from $\mathcal{Z}_{Y_n}$. Then $\ell$ is a length function on $T_n$ and, if $d_n$ is the metric on $T_n$ given by $d_n(\f_n,\g_n) = \ell_n(\f_n^{-1} \star \g_n)$ for all $\f_n, \g_n \in T_n$ then $(T_n,d_n)$ is a complete real tree.
    \item If $|X_n| = 1$ then $T_n$ is a line and if $2 \leq |X_n| \leq 2^{\aleph_0}$ then $T_n$ is the universal real tree with valence $2^{\aleph_0}$.
\end{enumerate}
\end{lemma}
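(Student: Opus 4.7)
The plan is to get part~(1) essentially for free from Lemma~\ref{lem: product ore v1}. I will first observe that $\mathcal{Z}_n = \{\f \in \mathcal{F} : \f_i = \id \text{ for } i \neq n\}$ is a sub-ore of $\mathcal{F}$: it is evidently closed under $\preceq$, $\con$ and $-1$, contains $\id$, and meets (and orthogonal joins) computed in $\mathcal{F}$ between $\mathcal{Z}_n$-elements remain in $\mathcal{Z}_n$, because all of the operations of Definition~\ref{def: higher rank ore operations} act coordinatewise. Moreover, any witness $\f = \aaa \con \bbb \con \bbb^{-1} \con \ccc$ with $\f \in \mathcal{Z}_n$ automatically has $\aaa, \bbb, \ccc \in \mathcal{Z}_n$, so admissibility in the sub-ore coincides with admissibility in $\mathcal{F}$. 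Transporting the ore structure along the bijection $\psi_n$ yields the concrete formulae $f \con g(t) = \sigma(f)\chi_n \cdot g(t-\ell(f))$ for $t > \ell(f)$ and $f^{-1}(t) = -\sigma(f)\chi_n \cdot f^*(\ell(f)-t)$ on $\mathcal{Z}_{Y_n}$, and identifies $T_n$ as its admissible set.

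For part~(2), Theorem~\ref{thm: extracted group} and Proposition~\ref{prop: median metric} immediately produce the group $(T_n, \star)$ and the median metric $d_n$ once one verifies that $\ell$ is a length function (the only non-obvious check, $\ell(f^{-1}) = \ell(f)$, is transparent from the definition of $-1$). The critical observation is that every downset in $\mathcal{Z}_{Y_n}$ is a chain of restrictions, so any orthogonal pair must contain $\id$; hence Lemma~\ref{lem: rank} gives $\mathrm{rk}(T_n) = 1$, and Lemma~\ref{lem: tree characterisation} concludes that $T_n$ is a real tree (geodesics are $t \mapsto f|_{[0,t]}$). Completeness will mimic Lemma~\ref{lem: complete R tree}: pass a Cauchy sequence to an increasing (in $\preceq$) subsequence via left-translation, let $\ell = \lim \ell(f_k)$, and patch the $f_k$ on a co-countable subset of $[0,\ell]$, taking the union of their templates together with $\{\ell\}$ as a new accumulation point to produce a valid template for the limit; admissibility of the limit transfers from the $f_k$ because any inadmissibility witness would already be visible at a finite stage.

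For part~(3), when $|X_n| = 1$ the action of $\mathbf{R}$ on $Y_n = \{x, x^*\}$ is necessarily trivial, so for any $\lambda > 0$ the constant $b$ with image $x$ on $[0,\lambda]$ has $b^{-1}$ constant with image $x^*$ on $[0,\lambda]$. Any $f$ that switches between $x$ and $x^*$ across an interior point therefore admits a factorisation $\aaa \con b \con b^{-1} \con \ccc$ and is inadmissible, forcing $T_n$ to consist of constant maps only, on which $f \mapsto \sigma(f)\chi_n$ is an isometry onto $\mathbb{R}$. When $2 \leq |X_n| \leq 2^{\aleph_0}$, I plan to adapt Propositions~\ref{prop: template generation} and~\ref{prop: universal real tree}: the upper bound $|T_n| \leq 2^{\aleph_0}$ drops out of the template description, while the lower bound of $2^{\aleph_0}$ directions at $\id$ comes from fixing distinct $x_1, x_2 \in X_n$ and a template $\{p_k\} \downarrow 0$, then associating to each $\Omega \subseteq \mathbb{N}$ the realisation of the alternating pattern determined by $\Omega$; the assumption $X_n \cap X_n^* = \emptyset$ rules out local reductions and guarantees admissibility, and subsets with infinite symmetric difference produce distinct directions. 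By transitivity of left translation, the valence is $2^{\aleph_0}$ at every point, so by uniqueness of the universal real tree with valence $2^{\aleph_0}$ we conclude. The main technical obstacle is the completeness step of part~(2), since one must controllably combine countably many templates of potentially unbounded Cantor--Bendixson rank into a single admissible element of $\mathcal{Z}_{Y_n}$.
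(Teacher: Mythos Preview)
Your proposal is correct and follows essentially the same approach as the paper: transport the ore structure of $\mathcal{F}$ to $\mathcal{Z}_{Y_n}$ via $\psi_n$, invoke Proposition~\ref{prop: median metric} and Lemmas~\ref{lem: rank}, \ref{lem: tree characterisation} for the real-tree structure, and argue completeness by splicing templates of an increasing Cauchy sequence (the paper does this via the refining Lemma~\ref{lem: refining}, which is exactly the mechanism behind your ``union of templates with $\{\ell\}$ as new accumulation point''). One small slip: passing to a $\preceq$-increasing subsequence is done by \emph{restriction} (downward closure of $T_n$), not left-translation; otherwise the argument matches the paper's proof.
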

\begin{proof}
\begin{enumerate} 
    \item Observe that $\mathcal{Z}_{n}$ is invariant under $\con$ and $-1$ and that $\id \in \mathcal{Z}_{n}$. Also if $\g \in \mathcal{F}$ and $\g \preceq \f$ then $\g \in \mathcal{Z}_n$. Thus it follows from Lemma~\ref{lem: product ore v1} that $(\mathcal{Z}_n, \preceq, \con, -1, \id)$ is an ore. The bijection $\psi_n$ then endows $\mathcal{Z}_{Y_n}$ with the structure of an ore. By definition, the set of admissible elements of $\mathcal{Z}_{Y_n}$ is $T_n$. 
    
    \item It is immediate from the definition that $\ell$ is a length function on $\mathcal{Z}_{Y_n}$. Thus, by Proposition~\ref{prop: median metric}, $(T_n, d_n)$ is a median space. For all $\f_n \in T_n$ we have $\f_n^\perp = \{\id\}$ so, by Lemma~\ref{lem: rank}, $T_n$ has rank 1. Given $\f_n \in T_n$ and $t \in [0,\ell(\f_n)]$, let $\gamma(t) \coloneqq \g_n$, where $g_n = f_n|_{[0,t]}$. Then $\g_n \in T_n$ and $\gamma:[0,\ell(\f_n)] \rightarrow T_n$ is a geodesic from $\id$ to $\f_n$. Therefore $T_n$ is an $\mathbb{R}$-tree by Lemma~\ref{lem: tree characterisation}.

    The argument to show that $T_n$ is complete is very similar to the argument showing that $T_X(Y)$ is closed in the proof of Proposition~\ref{prop: template generation}: Suppose that $(\aaa_i)_{i \in \mathbb{N}} \subseteq T_n$ is a Cauchy sequence and, for each $i \in \mathbb{N}$, let $P_i \subseteq [0, \ell(\aaa_i)]$ be a template and $(a_p)_{p \in P_i} \subseteq Y_n$ be a sequence with realisation $\aaa_i$. Since $T_n$ is downward closed, we can assume that $(\aaa_i)_{i \in \mathbb{N}}$ is strictly increasing. Refine each $P_i$ and $(a_p)_{p \in P_i}$ using Lemma~\ref{lem: refining} so that $\ell(\aaa_{i-1}), \ell(\aaa_i) \in \overline{P_i}$ for each $i \in \mathbb{N}$, where $\aaa_0 \coloneq \id_n$. Then let $P \coloneqq \cup_{i \in \mathbb{N}} (P_i \cap [\ell(\aaa_{i-1}), \ell(\aaa_i)])$, observe that $P$ is a template for $[0, \lim_{i \rightarrow \infty} \ell(\aaa_i)]$, and let $\aaa \in \mathcal{Z}_n$ be the equivalence class of the sequence $(a_p)_{p \in P}$. It follows from the admissibility of the $\aaa_i$'s that $\aaa \in T_n$. By construction, $(\aaa_i)_{i \in \mathbb{N}}$ converges to $\aaa$.

    \item Suppose that $|X_n|= 1$ and let $L \coloneqq \{\f_n \in \mathcal{Z}_{Y_n} : \f_n$ is constant$\}$. By Remark~\ref{rem: constants are admissible}, $L \subseteq T_n$ and the fact that $|X_n| = 1$ implies that $L$ is isometric to $\mathbb{R}$. Let $\ell > 0$, let $P \subseteq [0,\ell]$ be a template and let $\f_n \in \mathcal{Z}_{Y_n}$ be the realisation of a sequence $(y_p)_{p \in P} \subseteq Y_n$. Suppose the CB-rank of $P$ is as small as possible. If $\f_n$ is not constant then there exists $p_1, p_2 \in P - \{\ell\}$ such that $p_2$ is the successor of $p_1$ and either $y_{p_1} \in X_n$ and $y_{p_2} \in X_n^*$ or $y_{p_1} \in X_n^*$ and $y_{p_2} \in X_n$. In either case, this implies that $\f$ is inadmissible. Therefore $T_n = L$. 

    If $2 \leq |X_n| \leq 2^{\aleph_0}$ then it follows by the same argument as the proof of Proposition~\ref{prop: universal real tree} that $T_n$ is the universal real tree with valence $2^{\aleph_0}$.
    \qedhere
\end{enumerate}
\end{proof}

\begin{lemma} \label{lem: admissible factors in F}
    Consider $\f = (\f_n)_{n \in N} \in \mathcal{F}$. Then $\f$ is admissible if and only if $\f_n \in T_{n}$ for each $n \in N$.
\end{lemma}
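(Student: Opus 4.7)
By Lemma~\ref{lem: yet another real tree}, $T_n = \{\f_n \in \mathcal{Z}_{Y_n} : \psi_n^{-1}(\f_n) \in G\}$, so the statement is equivalent to: $\f$ is admissible in the ore $\mathcal{F}$ if and only if $\psi_n^{-1}(\f_n)$ is admissible in $\mathcal{F}$ for every $n \in N$. I will prove both directions by contrapositive, in each case translating a witness decomposition $\aaa \con \bbb \con \bbb^{-1} \con \ccc$ with $\bbb \neq \id$ from one side to the other.

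For the ``only if'' direction, suppose $\psi_n^{-1}(\f_n) = \aaa \con \bbb \con \bbb^{-1} \con \ccc$ in $\mathcal{F}$ with $\bbb \neq \id$. Because $\ell_j(\psi_n^{-1}(\f_n)) = 0$ for every $j \neq n$, the nonnegative lengths of $\aaa, \bbb, \ccc$ in direction $j$ must all vanish, so $\aaa_j = \bbb_j = \ccc_j = \id$ for each $j \neq n$, and in particular $\tau(\aaa) + \tau(\ccc) = \tau(\psi_n^{-1}(\f_n)) = \sigma(\f_n)\chi_n$. Define $\dddd \in \mathcal{F}$ by $\dddd_n \coloneqq \id$ and, for $j \neq n$, by letting $\dddd_j$ be the equivalence class of $t \mapsto -\sigma(\f_n)\chi_n \cdot f_j(t)$ on $[0, \ell_j(\f)]$; this lies in $\mathcal{F}$ since $\ell(\dddd) \leq \ell(\f) < \infty$. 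A component-by-component check using Definition~\ref{def: higher rank ore operations} then gives $\f = \aaa \con \bbb \con \bbb^{-1} \con (\ccc \con \dddd)$, exhibiting $\f$ as inadmissible.

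For the ``if'' direction, suppose $\f = \aaa \con \bbb \con \bbb^{-1} \con \ccc$ in $\mathcal{F}$ with $\bbb \neq \id$ and choose $n \in N$ with $\bbb_n \neq \id$. Expanding the $n$-th component via Definition~\ref{def: higher rank ore operations}, together with the inversion formula $\bbb_n^{-1}(s) = -\tau(\bbb) \cdot \bbb_n^*(\ell_n(\bbb) - s)$ and the cancellation $\tau(\bbb) + \tau(\bbb^{-1}) = 0$, displays $\f_n$ as four consecutive blocks: $\aaa_n$, then $\tau(\aaa) \cdot \bbb_n$, then $\tau(\aaa) \cdot \bbb_n^*$ traversed in reverse, then $\tau(\aaa) \cdot \ccc_n$. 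In the ore structure on $\mathcal{Z}_{Y_n}$ inherited via $\psi_n$, however, concatenation of two elements $\g_n, \h_n$ twists the second by $\sigma(\g_n)\chi_n$ rather than by the ambient $\tau \in \mathbf{R}$. To reinterpret the four-block expression as a decomposition in this ore, I correct the middle and right blocks by the $\mathbf{R}$-shift $r \coloneqq \tau(\aaa) - \sigma(\aaa_n)\chi_n$: set $\bbb'_n \coloneqq r \cdot \bbb_n$ and $\ccc'_n \coloneqq r \cdot \ccc_n$, and note $\sigma(r \cdot \bbb_n) = \sigma(\bbb_n)$ because the $\mathbf{R}$-action preserves $X_n$ and $X_n^*$ separately. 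A direct block-by-block calculation then yields $\f_n = \aaa_n \con \bbb'_n \con (\bbb'_n)^{-1} \con \ccc'_n$ in $\mathcal{Z}_{Y_n}$, with $\bbb'_n \neq \id$ since $\ell_n(\bbb'_n) = \ell_n(\bbb) > 0$. Hence $\psi_n^{-1}(\f_n)$ is inadmissible in $\mathcal{F}$, so $\f_n \notin T_n$.

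The main obstacle is the bookkeeping in the second step: the ambient $\mathcal{F}$-concatenation twists by the full $\tau \in \mathbf{R}$, whereas the ore on $\mathcal{Z}_{Y_n}$ only registers the scalar $\sigma(\cdot)\chi_n$ in the $n$-th direction. The correction $r = \tau(\aaa) - \sigma(\aaa_n)\chi_n$ absorbs this discrepancy, and every $\sigma$-invariance needed for the check goes through because the $\mathbf{R}$-action preserves the $X_n / X_n^*$ partition.
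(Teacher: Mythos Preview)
Your proof is correct and follows essentially the same approach as the paper: both directions are handled by translating an explicit witness decomposition $\aaa \con \bbb \con \bbb^{-1} \con \ccc$ between $\mathcal{F}$ and the single-coordinate ore $\mathcal{Z}_{Y_n}$, using an $\mathbf{R}$-shift to reconcile the full twist $\tau(\aaa)$ in $\mathcal{F}$ with the one-coordinate twist $\sigma(\aaa_n)\chi_n$ in $\mathcal{Z}_{Y_n}$. Your correction $r = \tau(\aaa) - \sigma(\aaa_n)\chi_n$ is exactly the right object; the paper's $\widehat{\tau}$ plays the same role, and your block-by-block verification is the same computation carried out there.
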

\begin{proof}
    Suppose $\f$ is admissible and there exists $n \in N$ such that $\f_n = \aaa_n \con \bbb_n \con \bbb_n^{-1} \con \ccc_n$ for some $\aaa_n, \bbb_n, \ccc_n \in \mathcal{Z}_{Y_n}$. For each $i \in N - \{n\}$ define $\aaa_i \coloneqq \bbb_i \coloneqq \id$ and $\ccc_i \coloneqq (-\sigma(\aaa_n) \chi_n) \cdot \f_i$. Let $\aaa \coloneqq (\aaa_i)_{i \in N}, \bbb \coloneqq (\bbb_i)_{i \in N}$ and $\ccc \coloneqq (\ccc_i)_{i \in N}$. Then $\f = \aaa \con \bbb \con \bbb^{-1} \con \ccc$, so $\bbb_n = \id$.

    Conversely, suppose that $\f_n \in T_n$ for all $n \in N$. Let $\aaa, \bbb, \ccc \in \mathcal{F}$ be such that $\f = \aaa \con \bbb \con \bbb^{-1} \con \ccc$. For each $n \in N$, $\f_n$ is the equivalence class of the map $f_n: [0,\ell_n(\f)] \rightarrow Y_n$ given by:
    \begin{align*}
        f_n(t) 
        &= 
        \begin{cases}
            a_n(t) \quad &\text{if } t \in [0,\ell(\aaa_n)]; \\
            \tau(\aaa) \cdot b_n(t - \ell(\aaa_n)) &\text{if } t \in (\ell(\aaa_n), \ell(\aaa_n) + \ell(\bbb_n)]; \\
            (\tau(\aaa) + \tau(\bbb)) \cdot b_n^{-1}(t - \ell(\aaa_n) - \ell(\bbb_n)) &\text{if } t \in (\ell(\aaa_n) + \ell(\bbb_n), \ell(\aaa_n) + 2 \ell(\bbb_n)]; \\
            \tau(\aaa) \cdot c_n(t - \ell(\aaa_n) - 2 \ell(\bbb_n)) &\text{if } t \in (\ell(\aaa_n) + 2 \ell(\bbb_n), \ell(\f_n)]
        \end{cases}
        \\
        &=
        \begin{cases}
            a_n(t) \quad &\text{if } t \in [0,\ell(\aaa_n)]; \\
            \tau(\aaa) \cdot b_n(t - \ell(\aaa_n)) &\text{if } t \in (\ell(\aaa_n), \ell(\aaa_n) + \ell(\bbb_n)]; \\
            \tau(\aaa) \cdot b_n^*(2\ell(\bbb_n) - t + \ell(\aaa_n)) &\text{if } t \in (\ell(\aaa_n) + \ell(\bbb_n), \ell(\aaa_n) + 2 \ell(\bbb_n)]; \\
            \tau(\aaa) \cdot c_n(t - \ell(\aaa_n) - 2 \ell(\bbb_n)) &\text{if } t \in (\ell(\aaa_n) + 2 \ell(\bbb_n), \ell(\f_n)]
        \end{cases}
    \end{align*}
    for all $t \in [0, \ell(\f_n)]$. Let $\widehat{\tau} \in \mathbf{R}$ be such that $\widehat{\tau}(n) = \tau(\aaa)(n)$ and $\widehat{\tau}(i) = 0$ if $i \neq n$. Let $\widehat{\bbb}_n \coloneqq \widehat{\tau} \cdot \bbb_n \in \mathcal{Z}_{Y_n}$ and $\widehat{\ccc}_n \coloneqq \widehat{\tau} \cdot \ccc_n \in \mathcal{Z}_{Y_n}$. Then ${\widehat{\bbb}_n}^{-1}$ is the equivalence class of the map $[0,\ell(\bbb_n)] \rightarrow Y_n$ given by $t \mapsto (\widehat{\tau} -\tau(\psi_n^{-1}(\widehat{\bbb}_n))) \cdot b_n^*(\ell(\bbb_n) - t)$. It follows that
    $\aaa_n \con \widehat{\bbb}_n \con (\widehat{\bbb}_n)^{-1} \con \widehat{\ccc}_n = \f_n$. Therefore $\widehat{\bbb}_n = \id$ which, by Remark~\ref{rem: admissibility invariance}, implies that $\bbb_n = \id$. As this holds for all $n \in N$, we have that $\bbb = \id$ and $\f$ is admissible.  
\end{proof}

\begin{proof}[Proof of Proposition~\ref{prop: product of trees}]
Let $\mathcal{T} \coloneqq (T_n)_{n \in N}$.
Lemma~\ref{lem: admissible factors in F} implies that, as a set, $G$ is the $\ell^1$ product $\ell^1(\mathcal{T}, \id)$.
For all $\f, \g \in G$ we have  $d(\f,\g) =\ell(\f^{-1} \star \g) = \sum_{n \in N} d_n(\f_n, \g_n)$. Therefore $d$ is precisely the $\ell^1$ metric on $G = \ell^1(\mathcal{T}, \id)$ and, by Lemma~\ref{lem: yet another real tree}, each $T_n$ is a complete real tree.
\end{proof}

\begin{definition}
    A \textit{standard flat} in $G$ is a maximal flat $F \subseteq G$ of the form 
    \[F = \{\f \in G : \f_n \text{ is constant with image } x_n \text{ or } x_n^* \; \forall \; n \in N\}, \]
    for some $(x_n)_{n \in N} \in \prod_{n \in N} X_n$. We denote $F((x_n)_{n \in N}) \coloneqq F$.
\end{definition}

The stabilisers of maximal flats are straightforward to describe:

\begin{lemma} \label{lem: standard flat stabilisers}
    Let $(x_n)_{n \in N} \in \prod_{n \in N} X_n$ and let $F \coloneqq F((x_n)_{n \in N})$. Then 
    \[\Stab_G(F) = \{ \f \in F : \tau(\f) \in \cap_{n \in N} \Stab_{\mathbf{R}}(x_n)\} \cong \cap_{n \in N} \Stab_{\mathbf{R}}(x_n). \]
    The restriction of $\tau$ to $F$ is an isometry $F \rightarrow \mathbf{R}$ and the restriction $\tau|_{\Stab_G(F)}: \Stab_G(F)) \rightarrow \cap_{n \in N} \Stab_{\mathbf{R}}(x_n)$ is an isomorphism.
\end{lemma}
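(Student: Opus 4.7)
The plan is to proceed in three stages: first establish that $\tau$ is a group homomorphism $G\to\mathbf{R}$, second identify $\tau|_F$ as an isometric bijection onto $\mathbf{R}$, and third characterise $\Stab_G(F)$ as the $\tau$-preimage in $F$ of $\cap_n\Stab_{\mathbf{R}}(x_n)$; the final isomorphism then packages itself.

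By Remarks~\ref{rem: concatenating tau} and~\ref{rem: inverse tau} one has $\tau(\f\con\g)=\tau(\f)+\tau(\g)$ and $\tau(\f^{-1})=-\tau(\f)$; applying these to $\f\twist\g=(\f\dotdiv h^{-1})\con(\g\divdot h)$ with $h=\f^{-1}\wedge\g$ and using the identities $\f=(\f\dotdiv h^{-1})\con h^{-1}$ and $\g=h\con(\g\divdot h)$ yields $\tau(\f\twist\g)=\bigl(\tau(\f)+\tau(h)\bigr)+\bigl(\tau(\g)-\tau(h)\bigr)=\tau(\f)+\tau(\g)$, so $\tau:G\to\mathbf{R}$ is a homomorphism. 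For the second stage, each $\f\in F$ has $\f_n$ constant with image in $\{x_n,x_n^*\}$, so $\sigma(\f_n)=\pm\ell_n(\f)$ and $\tau(\f)\in\ell^1(N)=\mathbf{R}$; conversely any $r\in\mathbf{R}$ is realised by the $\f\in F$ whose $n$-th coordinate is the constant map of length $|r(n)|$ with image $x_n$ when $r(n)\geq 0$ and $x_n^*$ otherwise, admissible by Remark~\ref{rem: constants are admissible}. Since the $G$-metric restricts on $F$ to the sum of the metrics on each line $\mathbf{L}_{x_n}\cong\mathbb{R}$, $\tau|_F$ is an isometry.

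For the third stage, the inclusion $\Stab_G(F)\subseteq F$ is immediate from $\id\in F$ and $\f\twist\id=\f$. If $\f\in\Stab_G(F)$, then $\f^{-1}=\f^{-1}\twist\id\in F$; since $(\f^{-1})_n(t)=-\tau(\f)\cdot\f_n^*(\ell_n(\f)-t)$ is constant with image $-\tau(\f)\cdot y_n^*$, where $y_n\in\{x_n,x_n^*\}$ is the image of $\f_n$, this forces $\tau(\f)\cdot x_n=x_n$ for every $n$, i.e.\ $\tau(\f)\in\bigcap_n\Stab_{\mathbf{R}}(x_n)$. Conversely, given $\f\in F$ with $\tau(\f)\in\bigcap_n\Stab_{\mathbf{R}}(x_n)$ and arbitrary $\g\in F$, a coordinate-wise unpacking of $\f\twist\g=\widetilde{\f}\con\widetilde{\g}$ shows that $\widetilde{\f}_n$ and $\widetilde{\g}_n$ are constants whose images, after the $\mathbf{R}$-shift by $\tau(\widetilde{\f})$ built into $\con$, collapse via the identity $\tau(\widetilde{\f})-\tau(h)=\tau(\f)\in\Stab_{\mathbf{R}}(x_n)$ to elements of $\{x_n,x_n^*\}$, so $(\f\twist\g)_n\in\mathbf{L}_{x_n}$ and $\f\twist\g\in F$.

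Combining these stages, $\tau|_{\Stab_G(F)}$ is a group homomorphism into $\bigcap_n\Stab_{\mathbf{R}}(x_n)$, injective because $\tau|_F$ is, and surjective because the converse direction of the third stage produces a stabilising element for each prescribed $r\in\bigcap_n\Stab_{\mathbf{R}}(x_n)$. The main obstacle is the converse in the third stage: it requires a careful case analysis according to whether $\f_n^{-1}\wedge\g_n$ is trivial or not in each coordinate, and the saving grace is that the shift $\tau(\widetilde{\f})$ absorbs $\tau(h)$ so that only the standing hypothesis $\tau(\f)\in\Stab_{\mathbf{R}}(x_n)$ is needed to return each coordinate to $\mathbf{L}_{x_n}$.
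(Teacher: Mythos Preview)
The paper does not supply a proof of this lemma at all; it is introduced with the sentence ``The stabilisers of maximal flats are straightforward to describe'' and left to the reader. Your proposal therefore cannot be compared to a paper proof, but it does fill in the details correctly in broad outline and with the right mechanism (the net shift on the $\g$-part of $\f\star\g$ collapsing to $\tau(\f)$).

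There is one genuine gap. In the forward direction of your third stage you deduce $\tau(\f)\in\Stab_{\mathbf R}(x_n)$ for every $n$ solely from $\f^{-1}\in F$, by reading off the image of $(\f^{-1})_n$. This works only when $\ell_n(\f)>0$: if $\ell_n(\f)=0$ then $\f_n=\id$, the map $(\f^{-1})_n$ is empty, and no constraint on the $n$-th coordinate of $\tau(\f)$ is obtained. Since elements of $F$ are only required to satisfy $\sum_n\ell_n(\f)<\infty$, the set $\{n:\ell_n(\f)=0\}$ is typically nonempty and the argument as written does not cover those indices. The fix is immediate: for such $n$, test $\f$ against $\g\in F$ with $\g_n$ the nontrivial constant of image $x_n$ and $\g_m=\id$ for $m\neq n$; then $\f^{-1}\wedge\g=\id$, so $\f\star\g=\f\con\g$ and $(\f\con\g)_n$ has constant image $\tau(\f)\cdot x_n$, which must equal $x_n$.

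A smaller point in the converse: you show each coordinate of $\f\star\g$ takes values in $\{x_n,x_n^*\}$, but membership in $F$ requires it to be \emph{constant}. This does hold, because if $\widetilde{\f}_n$ has image $x_n$ and $\widetilde{\g}_n$ (after the shift) has image $x_n^*$, then $(\f^{-1})_n$ and $\g_n$ share the image $x_n^*$, forcing one of $\widetilde{\f}_n,\widetilde{\g}_n$ to be trivial; it is worth saying so explicitly.
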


\begin{lemma} \label{lem: only standard flats have fun stabilisers}
    Let $F \subseteq G$ be a maximal flat and let $H \leq \mathbf{R}$ be such that $\Stab_G(F) \acting F$ is isomorphic to $H \acting \mathbf{R}$. If $H$ is dense in $\mathbf{R}$ then $F$ is a translate of a standard flat.
\end{lemma}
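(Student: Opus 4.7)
The plan adapts the proof of Lemma~\ref{lem: controling axis stabilisers} to the product setting. First, translate $F$ by an element of $G$ so that $\id \in F$; since $F$ is a maximal $\ell^1$-flat in $\prod_{n \in N} T_n$ through $\id$, we may write $F = \prod_{n \in N} L_n$ with each $L_n \subseteq T_n$ a line through $\id_n$, and it suffices to show each $L_n$ is a standard axis $\mathbf{L}_{x_n}$ for some $x_n \in X_n$. Fix an isometry $\varphi: F \to \mathbf{R}$ with $\varphi(\id) = 0$; then $\varphi(\Stab_G(F))$ is a dense subgroup of $\mathbf{R}$, so its projection $\pi_n(\varphi(\Stab_G(F)))$ to the $n$-th coordinate is a dense subgroup of $\mathbb{R}$. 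For each $n$, choose the orientation of $L_n$ so that positive values in the $n$-th coordinate of $\mathbf{R}$ correspond to a fixed ray of $L_n$ from $\id_n$ containing some element $\f_n$ of positive length (the ``positive ray'').

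Fix $n \in N$, take $\f_n \in L_n$ on the positive ray, and let $p > 0$ be the length of its initial constant segment, with image $y_n \in Y_n$. Set $\f \coloneqq \psi_n^{-1}(\f_n) \in F \cap G$. For each $s$ in the dense subset $\pi_n(\varphi(\Stab_G(F))) \cap (0, p)$, pick $\g \in \Stab_G(F)$ with $\varphi(\g)(n) = s$; then $\g_n = \f_n|_{[0, s]}$ is a constant of image $y_n$ and length $s$. Because $y_n \in X_n$ and the initial image of $(\g^{-1})_n$ lies in $X_n^*$, and these are disjoint $\mathbf{R}$-orbits, one verifies $\g^{-1} \wedge \f = \id$ in $\mathcal{F}$, so $\g \star \f = \g \con \f$, whose $n$-th coordinate is $\g_n \con (\tau(\g) \cdot \f_n)$. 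This element lies on $L_n$ (as $\g \star \f \in F$) and extends $\g_n$ on the positive ray, so it equals the parameter-$(s + \ell(\f_n))$ point $\gamma_+^{(n)}(s + \ell(\f_n))$ of the continuation of $\f_n$ along $L_n$. Restricting both expressions to $[0, \ell(\f_n)]$ yields the pointwise identity $f_n(u + s) = \tau(\g) \cdot f_n(u)$ for almost all $u \in [0, \ell(\f_n) - s]$. For $u \in [0, p - s]$ we have $f_n(u) = f_n(u + s) = y_n$, forcing $\tau(\g) \in \Stab_\mathbf{R}(y_n)$; the identity applied to $u \in [0, p]$ then extends the constant segment of $\f_n$ to $[0, p + s]$. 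A density-and-doubling iteration over $s$ shows $\f_n$ is constant of image $y_n$ throughout $[0, \ell(\f_n)]$, so every element of $L_n$ on the positive ray is a constant with image $y_n$.

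For the negative ray, observe that $\g^{-1} \in \Stab_G(F) \subseteq F$ has $n$-th coordinate equal, by the same computation in $\mathcal{F}$, to the constant of length $s$ with image $-\tau(\g) \cdot y_n^* = y_n^*$ (using $-\tau(\g) \in \Stab_\mathbf{R}(y_n)$); since $\varphi(\g^{-1})(n) = -s < 0$, this element lies on the negative ray of $L_n$. An analogous iteration, applied to a chosen element of $L_n$ on the negative ray, shows that every element of $L_n$ on the negative ray is a constant with image $y_n^*$. Hence $L_n = \mathbf{L}_{x_n}$ where $x_n \coloneqq y_n$ if $y_n \in X_n$ and $x_n \coloneqq y_n^*$ otherwise. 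Applied to each $n \in N$, we conclude $F = \prod_{n \in N} \mathbf{L}_{x_n} = F((x_n)_{n \in N})$ is a standard flat. The main technical step is the product-ore computation verifying $\g^{-1} \wedge \f = \id$ and identifying $(\g \con \f)_n = \g_n \con (\tau(\g) \cdot \f_n)$, which requires careful tracking of the $\mathbf{R}$-shift by $\tau(\g)$; the remaining steps adapt the one-dimensional argument, using density of $\pi_n(\varphi(\Stab_G(F)))$ at each iteration.
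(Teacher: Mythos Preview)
Your approach is essentially the paper's: use density of the projection $p_n(\varphi(\Stab_G(F)))$ to find a small stabiliser element $\g$ with $\g_n$ constant, compare $(\g \star \f)_n$ with $\f_n$ along the line $L_n$, and iterate to force $\f_n$ constant. Your choice $\f = \psi_n^{-1}(\f_n)$, supported only on coordinate $n$, is a pleasant simplification over the paper's $\f \in \Stab_G(F)$: since $\f_m = \id$ for $m \neq n$, the claim $\g^{-1} \wedge \f = \id$ reduces to the single coordinate-$n$ check you give. (In the paper's setup $\g^{-1} \wedge \f$ need not vanish globally, but the formula for $(\g \star \f)_n$ only requires $(\g^{-1})_n \wedge \f_n = \id$, which holds for the same reason.) Your density-and-doubling iteration is a valid alternative to the paper's induction on powers of $\g$.

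There is one small but genuine gap. You assert that a chosen $\f_n$ on the positive ray of $L_n$ has an initial constant segment of length $p > 0$, but this is not automatic: an element of $\mathcal{Z}_{Y_n}$ is the realisation of a sequence over a template $P$, and if $0$ is an accumulation point of $P$ there is no nontrivial initial constant interval. The paper handles exactly this with a second translation of $F$ (``up to translating $F$ by another element of $G$, we can assume \dots''), and you should do the same. Concretely, pick any element on the positive ray of $L_n$, choose $s$ in the interior of one of its constant subintervals (such $s$ exist since the closure of a template is countable), and translate $F$ by the element of $F$ whose $n$th coordinate is the point at parameter $s$; after this your argument runs as written.
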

\begin{proof}
    We can replace $F$ by a $G$-translate so that $\id \in F$. Note that $\Stab_{G}(F) \subseteq F$ in this case. Let $\f \in \Stab_{G}(F)$ be non-trivial. Up to translating $F$ by another element of $G$, we can assume that, for all $n \in N$, there exists a non-trivial element $\f_n' \preceq \f_n$ which is constant, say with image $y_n \in Y_n$. 
    
    Let $\varphi: F \rightarrow \mathbf{R}$ be an isometry such that $\varphi(\id) = 0$ and, for each $n \in N$, we have $\varphi^{-1}(\mathbb{R} \cdot \chi_n) = \cap_{i \neq n} p_i^{-1}(\id)$.

    Let $E \coloneqq \varphi(\Stab_{G}(F))$ and note that $E \in [H]$, so in particular $E \in \Sub_{D}(\mathbf{R})$.
    For each $n \in N$ let $p_n: \mathbf{R} \rightarrow \mathbb{R}$ be the projection defined by $p_n((t_i)_{i \in N}) = t_n$. The image $p_n(E)$ is dense in $\mathbb{R}$ for each $n \in N$. 
    Fix $n \in N$ and let $\g \in \Stab_{G}(F)$ be such that $\id \precneq \g_n \precneq \f_n'$. Let $\h \coloneqq \g \twist \f$, so for all but countably many $t \in [0,\ell(\g_n)]$, $h_n(t) = g_n(t) = y_n$ and for all but countably many $t \in (\ell(\g_n), \ell(\g_n) + \ell(\f_n)]$, $h_n(t) = \tau(g) \cdot y_n$. But $\h \in F$, $\g_n \preceq \h$ and $\ell(\h_n) > \ell_n(\f_n)$ so $\f_n \preceq \h_n$, which implies that, for all but countably many $t \in [\ell(\g_n), \ell(\f_n)]$, we have $h_n(t) = f_n(t) = y_n$. Therefore $\tau(\g) \cdot y_n = y_n$ and $\h_n$ is constant with image $y_n$. It follows by induction on $k \in \mathbb{N}$ that the $n$th coordinate of $\g^k \star \f$ is constant with image $y_n$. It follows that the $n$th coordinate of $\g^k$ is constant with image $y_n$ for all $k$ and therefore the $n$th coordinate of $\g^{-k}$ is constant with image $y_n^*$. Thus, for all $\h \in F$, the element $\h_n$ is constant with image $y_n$ or $y_n^*$. The same argument holds for all $n \in N$ so we have shown that $F = F((x_n)_{n \in N})$, where, for each $n \in N$, $x_n \in X_n$ is such that $y_n = x_n$ or $y_n = x_n^*$.
\end{proof}

\begin{proof}[Proof of Theorem~\ref{thm: arbitrary flats}]
Let $\iota: \overline{\Sub}_{D}(\mathbf{R}) \rightarrow \mathcal{K}, \eta: N \rightarrow \{0,1\}$ be arbitrary maps such that $\eta$ is non-zero. Fix $m \in N$ such that $\eta(m) = 1$.
For each $[H] \in \overline{\Sub}_{D}(\mathbf{R})$, fix a representative $H \leq \mathbf{R}$ and let $B_H \coloneqq \mathbf{R} / H$, equipped with the natural action of $\mathbf{R}$ by addition. Let $D_H$ be the disjoint union of $\iota([H])$ copies of $B_H$.
If $\iota$ is the zero map then let $X_m \coloneqq B_{\{0\}}$, and if $\iota = \chi_\mathbf{R}$ then let $X_m \coloneqq B_\mathbf{R} \sqcup B_{\{0\}}$. Otherwise, let $X_m \coloneqq \sqcup_{H \in \Sub_{D}(\mathbf{R})} D_H$. 
If $ n \in N - \{m\}$ and $\eta(n) = 1$, then let $X_n \coloneqq B_\mathbf{R} \sqcup B_{\{0\}}$. If $n \in N$ and $\eta(n) = 0$, then let $X_n \coloneqq B_\mathbf{R}$. 

Let $\mathcal{F}$ be the resulting ore and let $G$ be its extracted group, equipped with the metric $d$ defined above. Then $(G,d)$ is the $\ell^1$ product $\ell^1(\mathcal{T}, \id)$. By Lemma~\ref{lem: yet another real tree}, for each $n \in N$, the metric space $(T_n,d_n)$ is the complete universal real tree with valence $2^{\aleph_0}$ if $\eta(n) = 1$ and $T_n$ is isometric to $\mathbb{R}$ if $\eta(n) = 0$.

Fix a subgroup $H \in \Sub_{D}(\mathbf{R})$. Given $(x_n)_{n \in \mathbb{N}} \in \prod X_n$, it follows from Lemma~\ref{lem: standard flat stabilisers} that, if $F = F((x_n)_{n \in \mathbb{N}}$ is the corresponding standard flat, then $G \cdot F \in A_H$ if and only if $\Stab_\mathbf{R}(x_m) \in [H]$ and $x_n \in B_\mathbf{R} \subseteq X_n$ for all $n \neq m$. Two standard flats $F((x_n)_{n \in \mathbb{N}}), F((y_n)_{n \in \mathbb{N}})$ whose stabilisers act with dense orbits
are in the same $G$-orbit if and only if $x_m, y_m$ are in the same $\mathbf{R}$-orbit. Therefore there are precisely $\iota([H])$ orbits of standard flats in $A_H$. It follows from Lemma~\ref{lem: only standard flats have fun stabilisers} that $|A_H| = \iota([H])$.
\end{proof}

\subsection{Reducible actions} \label{sec: reducible actions}

A natural question to ask when studying a group acting on a product space is whether it is ``reducible", either in the sense that it splits non-trivially as a direct product or that a subgroup large enough to encompass some of the geometry of the group splits non-trivially as a direct product. Given a group $G$ acting properly cocompactly on the product of two locally finite simplicial trees, one says that $G$ is \textit{reducible} if a finite index subgroup of $G$ splits non-trivially as a direct product. Inspired by this, we consider the following notions.
Fix a group $G$ and a space $X$ such that $X$ is isometric to the $\ell^1$-product of two unbounded real trees $T_1, T_2$.

An action of a group $G$ on a finite rank median space $X$ is called \textit{essential} (or sometimes \textit{minimal}, see \cite{Fioravanti-convex-cores}) if there is no proper $G$-invariant convex subspace of $X$.

\begin{definition}
    A free cobounded / essential action of $G$ on $X$ is \textit{coboundedly / essentially reducible} if there exists a subgroup $H \leq G$ which splits non-trivially as a direct product and such that the induced action of $H$ on $X$ is cobounded / essential.
\end{definition}

\begin{lemma} \label{lem: well-behaved factors}
    Let $T_1, T_2$ be $\mathbb{R}$-trees which are not isometric to $\mathbb{R}$ and let $X \coloneqq T_1 \times T_2$ be their $\ell^1$ product.
    Let $H = H_1 \times H_2 \leq \Isom(T_1) \times \Isom(T_2)$ be a subgroup such that $H_1,H_2 \neq \{\id\}$ and the action of $H$ on $X$ is free and essential. For each $i \in \{1,2\}$, let $p_i: H \rightarrow \Isom(T_i)$ be the canonical projection. Then, up to relabelling $H_1, H_2$, the following holds. Given any point $(z_1, z_2) \in X$, we have $H_1 = \Stab_H(T_1 \times \{z_2\}), \; H_2 = \Stab_H(\{z_1\} \times T_2)$ and the induced actions $H_1 \acting T_1 \times \{z_2\}, \; H_2 \acting \{z_1\} \times T_2$ are free and essential.

    It follows that, if the action of $H$ on $X$ is in addition cobounded / cocompact / transitive, then the $H_1 \acting T_1 \times \{z_2\}$ and the $H_2 \acting \{z_1\} \times T_2$ are cobounded / cocompact / transitive for each $i \in \{1,2\}$.
\end{lemma}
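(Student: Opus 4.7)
The plan is to prove the lemma in three steps: first show that each $H_i$ acts freely on $T_i$ (using the direct-product structure $H = H_1 \times H_2$ together with freeness of $H \acting X$), then read off the stabiliser identities and the freeness of the induced actions on the fibres, and finally deduce essentiality of each factor action from essentiality of $H \acting X$ using that products of convex sets are convex in the $\ell^1$-product.

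For the first step, given any non-trivial $h_1 \in H_1$, the element $(h_1, \id_{T_2})$ lies in $H$ by the direct-product hypothesis; if $h_1$ fixed some $x_1 \in T_1$, then $(h_1, \id_{T_2})$ would fix $(x_1, z_2)$ for every $z_2 \in T_2$, contradicting freeness of $H \acting X$. Hence $H_1 \acting T_1$ is free, and symmetrically $H_2 \acting T_2$ is free. The stabiliser identity
\[
    \Stab_H(T_1 \times \{z_2\}) = \{(h_1, h_2) \in H : h_2 z_2 = z_2\} = H_1 \times \Stab_{H_2}(z_2) = H_1 \times \{\id\}
\]
is then immediate, and symmetrically for $\{z_1\} \times T_2$. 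The induced action $H_1 \acting T_1 \times \{z_2\}$ is isometrically conjugate, via the coordinate projection onto $T_1$, to $H_1 \acting T_1$, so freeness (and, in the next step, essentiality) transfers automatically. For the third step, recall that the median on $X$ is computed coordinatewise, so $C_1 \times T_2$ is convex in $X$ whenever $C_1 \subseteq T_1$ is convex; if $H_1 \acting T_1$ admitted a proper $H_1$-invariant convex subset $C_1 \subsetneq T_1$, then $C_1 \times T_2 \subsetneq X$ would be a proper $H$-invariant convex subset, contradicting essentiality of $H \acting X$. The same reasoning applies to $H_2 \acting T_2$.

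The argument is conceptually routine, and the main thing to watch out for is correctly pairing each non-trivial element of one factor with the identity of the other — this is where the hypotheses $H_1, H_2 \neq \{\id\}$ and the factor-preserving assumption $H \leq \Isom(T_1) \times \Isom(T_2)$ are both essential. The ``up to relabelling'' clause I read simply as aligning the labels of $H_i$ with the factors $T_i$, which is already the case under the hypothesis. The hypothesis $T_i \not\cong \mathbb{R}$ does not appear to enter this lemma's proof and presumably serves later results. Finally, the transfer of transitivity/coboundedness/cocompactness stated at the end of the lemma is immediate from the observation that the $H$-orbit of $(z_1, z_2)$ equals $H_1 z_1 \times H_2 z_2$, so each such property on the $\ell^1$-product passes to each factor action.
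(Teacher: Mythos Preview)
Your proof has a genuine gap stemming from a misreading of the hypothesis. The statement $H = H_1 \times H_2 \leq \Isom(T_1) \times \Isom(T_2)$ means that $H$ is a subgroup of $\Isom(T_1) \times \Isom(T_2)$ which, as an abstract group, splits as a direct product $H_1 \times H_2$. It does \emph{not} assert that $H_1 \leq \Isom(T_1) \times \{\id\}$ and $H_2 \leq \{\id\} \times \Isom(T_2)$. A priori, an element $h_1 \in H_1$ has the form $(p_1(h_1), p_2(h_1))$ in $\Isom(T_1) \times \Isom(T_2)$, and there is no reason $p_2(h_1)$ should be trivial. Your sentence ``the element $(h_1, \id_{T_2})$ lies in $H$ by the direct-product hypothesis'' is therefore unjustified, and the rest of your argument collapses with it. Indeed, the whole content of the lemma is precisely to show that the abstract splitting must align with the factor decomposition of $\Isom(T_1) \times \Isom(T_2)$; this is what the ``up to relabelling'' clause refers to, and it is genuinely nontrivial. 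Look at how the lemma is applied in the proof of Corollary~\ref{cor: irreducible product}: one starts only with a subgroup that abstractly splits as a direct product.

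This also explains why the hypothesis $T_i \not\cong \mathbb{R}$ is needed, contrary to your remark. The paper's argument proceeds by finding a hyperbolic element in some $p_i(H_j)$, noting that the other factor $H_{3-j}$ must stabilise its axis $\ell$ (since it commutes with that element), and then ruling out the possibility that $p_i(H_{3-j})$ contains hyperbolic elements: if it did, $p_i(H)$ would stabilise $\ell$, forcing $T_i = \ell \cong \mathbb{R}$ by essentiality. One then argues that $p_i(H_{3-j})$ fixes a point on $\ell$, and propagates this to a global fixed point using the commutation relations. None of this is visible under your reading of the hypothesis.
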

\begin{proof}
Since the action of $H$ on $T_1 \times T_2$ is essential, so are the actions $p_1, p_2$. Let us show that there is a hyperbolic element in $p_i(H_1 \cup H_2)$ for $i = 1,2$. Suppose to the contrary that every element in $p_i(H_1 \cup H_2)$ fixes a point in $T_i$. Let $h_1 \in H_1, h_2 \in H_2$ and $x_1 \in T_i$ be a point fixed by $p_i(h_1)$. Then $p_i(h_1)$ fixes $p_i(h_2) x_1$ and therefore fixes the segment $[x_1, p_i(h_2) x_1]$ pointwise. Since $p_i(h_2)$ is elliptic, the midpoint $x_2$ of the (possibly degenerate) segment $[x_1, p_i(h_2) x_1]$ is fixed by $p_i(h_2)$ as well as $p_i(h_1)$. Thus $p_i(h_1h_2)$ fixes a point. But by \cite[Theorem~C(1)]{Fioravanti-convex-cores} there is a hyperbolic element in $p_i(H)$ so this is a contradiction.

Now, up to relabelling $H_1, H_2$, we can assume that there exists an element $h \in H_1$ such that $p_1(h)$ is hyperbolic. Let $\ell \subseteq T_1$ be the axis of $p_1(h)$. Then, since $H_2$ commutes with $h$, $p_1(H_2)$ stabilises $\ell$. Moreover, if $p_1(H_2)$ contains a hyperbolic element $p_1(h_2)$, then its axis is also $\ell$ and, since $H_1$ commutes with $H_2$, this implies that $p_1(H_1)$ also stabilises $\ell$. Since the action of $p_1(H)$ is essential, this implies that $T_1 = \ell$. So we can assume that $p_1(H_2)$ contains no hyperbolic elements. Since $p_1(H_2)$ stabilises $\ell$, it follows that there is a point $x_0 \in \ell$ which is fixed by every element of $p_1(H_2)$. Let $h_1 \in H_1, h_2,h_2' \in H_2$. Then $p_1(h_2') p_1(h_1h_2) x_0 = p_1(h_1) x_0 = p_1(h_1h_2) x_0$. Therefore $p_1(H_2)$ fixes the $p_1(H)$-orbit of $x_0$ pointwise and therefore its convex hull, which is the entirety of $T$. Note that this also implies that the action of $p_1(H_1)$ on $T_1$ is essential.

Since the action of $H$ on $T$ is free, the action of $p_2(H_2)$ on $T_2$ must be non-trivial. If $p_2(H_1)$ contains a hyperbolic element then the above argument implies that $T_2$ is a line. Therefore $p_2(H_1)$ does not contain a hyperbolic element, so $p_2(H_2)$ does and the above argument implies that the action of $p_2(H_2)$ on $T_2$ is essential while the action of $p_2(H_1)$ is trivial. The proposition follows.
\end{proof}

\subsection{A free transitive and irreducible action}
\label{sec: irreducible action}

We can now prove the following:

\begin{corollary} \label{cor: irreducible product}
    Let $T_1 = T_2$ be the complete universal real tree with valence $2^{\aleph_0}$. There exists a group $G \leq \Isom(T_1) \times \Isom(T_2)$ which is essentially reducible but coboundedly irreducible.
\end{corollary}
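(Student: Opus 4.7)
The plan is to apply Theorem~\ref{thm: arbitrary flats} with $N = \{1,2\}$, $\eta \equiv 1$, and $\iota$ equal to the zero map, producing a group $G$ acting freely and transitively on $T_1 \times T_2$, each a complete universal real tree of valence $2^{\aleph_0}$. With these parameters the sets $X_n$ appearing in the proof of that theorem are $B_{\{0\}}$ or $B_\mathbf{R} \sqcup B_{\{0\}}$, so the $\mathbf{R}$-action on the relevant part of each $Y_n = X_n \sqcup X_n^*$ is free. Consequently the signed length $\sigma : T_n \to \mathbb{R}$, $\sigma(\f_n) \coloneqq \ell^+(\f_n) - \ell^-(\f_n)$, is a surjective $1$-Lipschitz group homomorphism.

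The key objects are the normal subgroups $N_i \coloneqq \ker(p_j|_G)$, where $p_j : G \to \Isom(T_j)$ is the factor projection and $\{i,j\} = \{1,2\}$. A direct calculation with the formulas for $\star$ and $\con$ of Section~\ref{sec: higher rank}, tracking the $\tau$-twist appearing in $\con$, shows that $\f \in G$ acts trivially on $T_j$ if and only if $\f_j = \id$ and $\sigma(\f_i)\chi_i$ acts trivially on $Y_j$; with our choice of the $X_n$ this amounts to $\sigma(\f_i) = 0$. Hence $N_i$ is canonically identified with $\ker\sigma \leq T_i$. Each $N_i$ is non-trivial (for instance concatenate a length-one constant map with image in $X_i$ with a length-one constant map with image in $X_i^*$, choosing the images generically to avoid a fold); the subgroups $N_1$ and $N_2$ commute in $G$ (the $\tau$-twist in $\con$ vanishes once $\sigma = 0$); and $N_1 \cap N_2 = \{\id\}$. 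Thus $N_1 \times N_2 \leq G$ is a genuine direct product subgroup.

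For essential reducibility I would show that $\mathrm{Hull}(N_1 \times N_2) = T_1 \times T_2$: given $(p_1, p_2) \in T_1 \times T_2$, extend each $p_i$ to $n_i \in N_i$ by appending a constant element of signed length $-\sigma(p_i)$ with a generically chosen image so the resulting concatenation is admissible; then $(p_1, p_2)$ lies on the $\ell^1$-geodesic from $(n_1, \id)$ to $(\id, n_2)$, both of which belong to $N_1 \times N_2$, so the $N_1 \times N_2$-action has no proper invariant convex subspace.

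For cobounded irreducibility I would assume towards contradiction that $H = H_1 \times H_2 \leq G$ with $H_i \neq \{\id\}$ acts coboundedly on $T_1 \times T_2$. Every halfspace of $T_1 \times T_2$ has the form $\mathfrak{h}_1 \times T_2$ or $T_1 \times \mathfrak{h}_2$ and is unbounded, since halfspaces of the universal real tree of valence $2^{\aleph_0}$ are unbounded; hence no proper closed convex subspace of $T_1 \times T_2$ is $R$-dense for any $R$, and in particular the $H$-action is essential. By Lemma~\ref{lem: well-behaved factors} one obtains, up to relabelling, that $H_1$ acts trivially on $T_2$ and freely and coboundedly on $T_1$, so $H_1 \leq N_1$. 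But for any $z_1 \in T_1$ the orbit $H_1 \cdot z_1$ sits inside $N_1 \cdot z_1 = \{\f \in T_1 : \sigma(\f) = \sigma(z_1)\}$, and the bound $d(\f, N_1 \cdot z_1) \geq |\sigma(\f) - \sigma(z_1)|$ shows this set is not $R$-dense in $T_1$, contradicting coboundedness of $H_1$ on $T_1$. The most delicate step will be verifying the ``cobounded implies essential'' claim rigorously -- confirming that a proper closed convex subspace of $T_1 \times T_2$ must be contained in some halfspace of the product and that non-closed invariant convex subspaces cause no trouble -- which is the point one must be most careful about before invoking Lemma~\ref{lem: well-behaved factors}.
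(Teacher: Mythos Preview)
Your approach is essentially the paper's --- the same Section~\ref{sec: higher rank} ore, the same identification $\ker(p_j)\cong\ker\sigma\leq T_i$, the same witness $\ker(p_1)\times\ker(p_2)$ for essential reducibility, and the same use of Lemma~\ref{lem: well-behaved factors} together with the $1$-Lipschitz unbounded homomorphism $\sigma$ to exclude cobounded direct-product subgroups. The only difference is cosmetic: rather than routing through the proof of Theorem~\ref{thm: arbitrary flats} (which saddles one factor with an extraneous $B_{\mathbf{R}}$ summand), the paper takes the cleaner bespoke choice $X_1=X_2=\mathbb{R}$ with the cross-action $(r_1,r_2)\cdot x_i=r_j+x_i$ for $\{i,j\}=\{1,2\}$; and, like you, it tacitly relies on ``cobounded $\Rightarrow$ essential'' when invoking Lemma~\ref{lem: well-behaved factors} (it assumes $H$ essential and derives $H$ not cobounded), so the step you flag as delicate is present in the paper as well.
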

\begin{proof}
Let $X_1 = X_2 = \mathbb{R}$. Define an action of $\mathbb{R}^2$ on $X_1$ and $X_2$ as follows. If $(r_1,r_2) \in \mathbb{R}^2$, $x_1 \in X_1$ and $x_2 \in X_2$, then $(r_1, r_2) \cdot x_1 = r_2 + x_1$ and $(r_1,r_2) \cdot x_2 = r_1 + x_2$. Let $(\mathcal{F}, \con, -1, \preceq, \id)$ be the ore constructed in Section~\ref{sec: higher rank} using these actions and let $(G, \twist)$ be the group extracted from $\mathcal{F}$. Let $d$ be the metric on $G$, so that $(G,d)$ is the $\ell^1$ product $T_1 \times T_2$. 

For each $i \in \{1,2\}$, let $p_i:\Isom(T_1) \times \Isom(T_2) \rightarrow \Isom(T_i)$ be the canonical projection.
Suppose $H \leq G$ splits non-trivially as a direct product and the induced action $H \acting T_1 \times T_2$ is essential. By Lemma~\ref{lem: well-behaved factors}, $H = H_1 \times H_2$, where $H_1 = \ker(p_2) \cap H$ and $H_2 = \ker(p_1) \cap H$. Fix $i \in \{1,2\}$ and let $\f \in T_i$ be a constant element with image $x_i \in X_i$ and length $\ell > 0$. If $\h \in H_i$, then $\sigma(\h) = 0$, so $\ell^-(\f^{-1} \star \h) \geq \ell^+(\f) = \ell$. Therefore $d_i(\f, \h) = \ell(\f^{-1} \star \h) \geq \ell$. Thus the action of $H_i$ on $T_i$ is not cobounded, which implies that the action of $H$ on $T_1 \times T_2$ is not cobounded.

To see that $G$ is essentially reducible in spite of being coboundedly irreducible, let $H_1 \coloneqq p_1(\ker(p_2))$ and $H_2 \coloneqq p_2 (\ker(p_1))$. Fix $i \in \{1,2\}$. Let $\f \in T_1$ and let $(y_p)_{p \in P} \subseteq Y_i$ be the sequence whose realisation is $\f$. For each $p \in P$, fix $y_p' \in Y_i - \{y_p\}$ such that, if $y_p \in X_i$ then $y_p' \in X_i$, and if $y_p \in X_i^*$ then $y_p' \in X_i^*$. Let $\f'$ be the realisation of $(y_p')_{p \in P}$. Then $\f'$ is admissible, $\sigma({\f'}^{-1}) = - \sigma(\f') = -\sigma(\f)$ and $\f^{-1} \wedge {\f'}^{-1} = \id$. Therefore $\f \preceq \f \con {\f'}^{-1} = \f \star {\f'}^{-1}$ and $\sigma(\f \star {\f'}^{-1}) = 0$. It follows that $\f \star {\f'}^{-1} \in H_i$. Therefore the action $H_i \acting T_i$ is essential, which implies that $H_1$ and $H_2$ are non-trivial and the action of $H_1 \times H_2 \cong \la H_1 \times \{\id\}, \{\id\} \times H_2 \ra \leq G$ on $T_1 \times T_2$ is essential.
\end{proof}

\subsection{Embedding BMW groups into products of $\mathbb{R}$-trees}
\label{sec: embedding BMW groups}

The groups constructed in Section~\ref{sec: higher rank} contain no isometrically embedded irreducible BMW groups:

\begin{proposition} \label{prop: no BMW subgroups}
    Let $(G,\star)$ be the group from Definition~\ref{def: product group}. Let $H$ be a BMW group with BMW presentation $\la A \cup X | R \ra$ and suppose there exists a map $\psi: H \hookrightarrow G$ which is both a homomorphism and an isometric embedding, where $H$ is equipped with the word metric corresponding to $A \cup X$. Then $H$ is reducible.
\end{proposition}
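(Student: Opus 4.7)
The plan is to exploit the signed outline homomorphism $\tau \colon G \to \mathbf{R}$, defined by $\tau(\f) = (\sigma(\f_n))_{n \in N}$. The first task is to verify that $\tau$ descends to a group homomorphism from $(G, \star)$ to $\mathbf{R}$: additivity under $\con$ is Remark~\ref{rem: concatenating tau}, and setting $m \coloneqq \f^{-1} \wedge \g$, one computes $\tau(\f \star \g) = \tau(\f \dotdiv m^{-1}) + \tau(\g \divdot m) = (\tau(\f) + \tau(m)) + (\tau(\g) - \tau(m)) = \tau(\f) + \tau(\g)$. Consequently $\tau \circ \psi \colon H \to \mathbf{R}$ is a group homomorphism, and since $\ell(\psi(g)) = 1$ for every generator $g \in A \cup X$ by the isometric hypothesis, each value $\tau(\psi(g))$ has $\ell^1$-norm at most~$1$.

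Next, I would study the projections of generators to each tree factor $T_n$. For each $g \in A \cup X$, denote $\psi(g)_n \in T_n$; we have $\sum_n \ell_n(\psi(g)_n) = 1$. The BMW relations $axa'x' = 1$ yield, via $\tau$, the identities $\tau(\psi(a)) + \tau(\psi(x)) + \tau(\psi(a')) + \tau(\psi(x')) = 0$. Moreover, since $G$ is a median space whose median is factorwise, the two length-$2$ geodesics from $\id$ to $\psi(a) \star \psi(x)$ (one through $\psi(a)$ and one through $\psi({x'}^{-1})$) must project, in each factor $T_n$, to the unique geodesic from $\id$ to $(\psi(a) \star \psi(x))_n$. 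This produces a family of length and orientation equations among the projections $\{\psi(g)_n\}_{g \in A \cup X}$ inside each $T_n$.

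The crucial step is to establish the following dichotomy for each $n \in N$: either $\psi(a)_n = \id$ for every $a \in A$, or $\psi(x)_n = \id$ for every $x \in X$. The idea is that the subgroup $\la \{\psi(g)_n : g \in A \cup X\} \ra \leq T_n$ acts freely on the real tree $T_n$ (since $T_n$ acts freely on itself), so by Rips' theorem each of its finitely generated subgroups is a free product of surface and free abelian groups. If both the $\psi(a)_n$ and $\psi(x)_n$ projections were non-trivial, then the BMW relations (expressed in $T_n$, modulo the $\mathbf{R}$-twist coming from $\tau$) would force a rank-$2$ free abelian subgroup, contradicting the above; a refined signed-length argument would then eliminate the borderline cases, forcing one of the two families to vanish identically in $T_n$.

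Given this dichotomy, we obtain a partition $N = N_A \sqcup N_X$ of the factors where generators act non-trivially. For $a \in A$ and $x \in X$, the elements $\psi(a)$ and $\psi(x)$ then have disjoint support in $G$, and the residual $\mathbf{R}$-twist in the $\star$-operation is absorbed by passing to a finite index subgroup of $H$ contained in $\ker(\tau \circ \psi)$, on which $\la \psi(A) \ra$ and $\la \psi(X) \ra$ commute. This exhibits a finite index subgroup of $H$ as a direct product, establishing reducibility. The main obstacle will be the dichotomy step: the non-factorwise nature of $\star$ must be carefully tracked in order to convert the BMW word relations into rigid constraints inside each rank-$1$ factor.
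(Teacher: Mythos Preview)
Your outline has the right broad shape, but two of the three steps have genuine problems, and the last one is fatal as stated.

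For the dichotomy step, your proposed route through Rips' theorem will not work: the coordinate projection $G \to T_n$, $\f \mapsto \f_n$, is \emph{not} a group homomorphism, because the operation $\con$ on the $n$-th factor twists by $\tau(\f)$, which depends on all coordinates. Hence the BMW relations of $H$ do not transport to relations among the elements $\psi(g)_n$ inside $T_n$, and there is nothing forcing a $\mathbb{Z}^2$ there (and even if there were, $\mathbb{Z}^2$ embeds in $\mathbb{R}$ and hence acts freely on a real tree, so Rips would not object). The dichotomy is in fact much cheaper and is how the paper begins: the isometric hypothesis gives $\psi(a)^{\pm 1} \wedge \psi(x) = \id$ for all $a \in A,\, x \in X$, and the BMW relation $ax' = xa'$ yields $\psi(a) \con \psi(x') = \psi(x) \con \psi(a')$, so $\psi(a) \perp \psi(x)$. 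Orthogonality in $G$ is exactly disjointness of coordinate supports, and the dichotomy follows at once.

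The real gap is your last step. The kernel $\ker(\tau \circ \psi)$ has \emph{infinite} index in $H$ whenever $\tau \circ \psi$ is non-trivial, since its image is a non-trivial subgroup of the torsion-free group $\mathbf{R}$; and there is no reason for $\tau \circ \psi$ to vanish. So one cannot ``absorb the twist by passing to a finite index subgroup contained in $\ker(\tau \circ \psi)$''. The paper avoids this by killing not $\tau$ itself but its \emph{effect} on the finitely many values in $Y_1, Y_2$ that actually occur in the generators: the BMW structure gives, for a uniform $n$ depending only on $|A|,|X|$, relations of the form $g^n x = x f$ with $g,f \in \langle A\rangle$, $x \in X$, which force $\tau(\psi(g^n))$ to fix every such value. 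One then passes to the finite-index subgroup of $\langle A\rangle$ (respectively $\langle X\rangle$) lying in the kernel of this finite permutation action, and an inductive argument shows these two subgroups commute. The finite-index subgroup is found in the kernel of a permutation representation on a finite set, not in $\ker\tau$.
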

\begin{proof}
    Let $d_H$ denote the word metric on $H$ with respect to $A \cup X$.
    
    We first show that we can assume without loss of generality that $N = \{1,2\}$, $G$ is isometric to the $\ell^1$ product $T_1 \times T_2$ and $\psi(\la A \ra) \subseteq T_1 \times \{\id\}$ and $\psi(\la X \ra) \subseteq \{\id\} \times T_2$.
    The fact that $\psi$ is an isometric embedding implies that, for all $a \in A \cup A^{-1}$ and $x \in X \cup X^{-1}$ we have $\ell(\psi(a)) = \ell(\psi(x)) = 1$ and $\ell(\psi(ax)) = 2$. Then 
    \[
        2 = \ell(\psi(a^{-1}x)) = \ell(\psi(a)) + \ell(\psi(x)) - 2\ell(\psi(a) \wedge \psi(x)) = 2 - 2\ell(\psi(a) \wedge \psi(x)),
    \]
    so $\psi(a) \wedge \psi(x) = \id$. Therefore $\psi(a), \psi(x)$ concatenate geodesically for all $a \in A \cup A^{-1}, x \in X \cup X^{-1}$. Moreover, if $a \in A \cup A^{-1}, x \in X \cup X^{-1}$ and $a' \in A \cup A^{-1}, x' \in X \cup X^{-1}$ are the unique elements such that $ax' = xa'$, then $\psi(a) \con \psi(x') = \psi(a) \star \psi(x') = \psi(x) \star \psi(a') = \psi(x) \con \psi(a')$. Therefore $\psi(a) \perp \psi(x)$. Both $\la A \ra$ and $\la X \ra$ are $\mathbb{Z}$-trees when equipped with $d_H$, so $\psi(\la A \ra)$ and $\psi(\la X \ra)$ are $\mathbb{Z}$-trees. Therefore there exists $n,m \in N$ with $n \neq m$ such that $\psi(\la A \ra) \subseteq \mathcal{Z}_n$ and $\psi(\la X \ra) \subseteq \mathcal{Z}_m$. The set $\mathcal{E}$ of elements $\f \in \mathcal{F}$ such that $\f_i = \id$ for all $i \in N$ such that $i \notin \{n,m\}$ is invariant under $\con$ and $-1$, contains $\id$ and, if $\f,\g \in \mathcal{F}$ such that $\f \in \mathcal{E}$ and $\g \preceq \f$, then $\g \in \mathcal{E}$. Thus $(E, \con, -1, \id, \preceq)$ is an ore and its extracted group contains $\psi(H)$.

    Thus, we assume from now on that $N = \{1,2\}$, $G$ is isometric to the $\ell^1$ product $T_1 \times T_2$ and $\psi(\la A \ra) \subseteq T_1 \times \{\id\}$ and $\psi(\la X \ra) \subseteq \{\id\} \times T_2$.

    For all $g \in \la A \ra$, $f \in \la X \ra$, let $\g_1 \in T_1, \f_2 \in T_2$ be such that $\psi(g) = (\g_1, \id), \psi(f) = (\id, \f_2)$. Define
    \begin{align*} 
        &V_A \coloneqq \cup \{y \in Y_1: a_1(t) = y \text{ for some } a \in A \cup A^{-1} \text{ and uncountably many } t \in [0,\ell(\aaa_1)]\} \\
        &V_X \coloneqq \cup \{y \in Y_2: x_2(t) = y \text{ for some } x \in X \cup X^{-1} \text{ and uncountably many } t \in [0,\ell(\mathfrak{x}_2)]\}.
    \end{align*}
       
    Let $n \coloneqq \max\{(2|A|)!, (2|X|)!\}$. 
    If $g \in \la A \ra$ and $x \in X \cup X^{-1}$, then there exists $f \in \la A \ra$ such that $g^n x = x f$. Therefore
    \[
        (\g_1^n, \tau(\psi(g^n)) \cdot \mathfrak{x}_2) = (\tau(\psi(x)) \cdot \f_1, \mathfrak{x}_2)
    \]
    Thus $\tau(\psi(g^n)) \cdot \mathfrak{x}_2 = \mathfrak{x}_2$ for all $g \in \la A \ra$ and $x \in X \cup X^{-1}$.
    By a symmetric argument, $\tau(\psi(g^n)) \cdot \aaa_1 = \aaa_1$ for all $g \in \la X \ra$ and $a \in A \cup A^{-1}$. 

    Consider the actions $\alpha_A: \la A \ra \rightarrow \Sym(Y_2)$ and $\alpha_X: \la X \ra \rightarrow \Sym(Y_1)$ defined by $\alpha_A(g)(y_2) = \tau(\psi(g)) \cdot y_2$ and $\alpha_X(f)(y_1) = \tau(\psi(f)) \cdot y_1$ for all $g \in \la A \ra, f \in \la X \ra, y_1 \in Y_1$ and $y_2 \in Y_2$. Let $W_A \subseteq Y_1$ be the $\la X \ra$-orbit of $V_A$ and let $W_X$ be the $\la A \ra$-orbit of $V_X$. Let $\beta_A$ be the restriction of $\alpha_A$ to $W_X$ and let $\beta_X$ be the restriction of $\alpha_A$ to $W_X$.
    Then, by the above argument, every $\la A \ra$-orbit in $W_X$ has cardinality dividing $n$ and every $\la X \ra$-orbit in $W_A$ has cardinality dividing $n$. Since $A$ and $X$ are finite, there are finitely many subgroups of $\la A \ra$ with index at most $n$ and finitely many subgroups of $\la X \ra$ with index at most $n$. Let $H_1 \leq \la A \ra$ be the intersection of all subgroups of $\la A \ra$ with index at most $n$ and let $H_2 \leq \la X \ra$ be the intersection of all subgroups of $\la X \ra$ with index at most $n$. Then $H_1 \leq \ker(\beta_A)$ and $H_2 \leq \ker(\beta_X)$ and $H_1$ has finite index in $\la A \ra$ and $H_2$ has finite index in $\la X \ra$. 
    
    Let $H' \coloneqq \la H_1, H_2 \ra \leq H$. Then $H'$ is a finite index subgroup of $H$. Let $h \in H_1$ and $f \in \la X \ra$. Let us show by induction on the word length of $f$ that $\beta_A(h) \circ f_2 \simeq f_2$. If $f \in X \cup X^{-1}$ then this follows from the fact that $h \in \ker(\beta_2)$, since $f_2(t) \in W_X$ for all but countably many $t \in [0,\ell(\f_2)]$. Let $m > 1$ and suppose the word length of $f$ is $m$. Then $f = f' x$ where $f' \in \la X \ra$ has word length $m-1$ and $x \in X \cup X^{-1}$. By the induction hypothesis, $\psi(hf') = (\h_1, \tau(\psi(h)) \cdot \f_2') = (\h_1, \f_2') = \psi(f') \star (-\tau(\psi(f')) \cdot \h_1, \id)$. Moreover, for all $w \in W_X$, we have 
    \[
        \tau((-\tau(\psi(f')) \cdot \h_1, \id)) \cdot w
        = \tau(\psi({f'}^{-1}hf')) \cdot w 
        = w.
    \]
    Therefore:
    \begin{align*}
        (\h_1, \tau(\h_1) \cdot \f_2) 
        &= \psi(hf) \\
        &= \psi(hf'x) \\
        &= \psi(f') \star (-\tau(\psi(f')) \cdot \h_1, \tau((-\tau(\psi(f')) \cdot \h_1, \id)) \cdot \mathfrak{x}_2) \\ 
        &= \psi(f') \star (-\tau(\psi(f')) \cdot \h_1, \mathfrak{x}_2) \\
        &= (\h_1, \f_2),
    \end{align*}
    so $\beta_A(h) \circ f_2 \simeq f_2$. A symmetric argument shows that $\beta_X(h) \circ g_1 \simeq g_1$ for all $h \in H_2$ and $g \in \la A \ra$. It follows that $H_1$ and $H_2$ commute, so $H \cong H_1 \times H_2$.
\end{proof}

I do not know whether it is possible to isometrically embed an irreducible BMW group into a group acting freely and transitively on a product of two (complete) real trees (see Question~\ref{Q: BMW embedding}). If one requests only that $G$ acts on the product with dense orbits, the following theorem shows that this can be done for BMW groups equipped with a positive BMW presentation. Note that the group defined by Wise in \cite[Example~4.1]{Wise-CSC} has a positive BMW presentation and is irreducible by Corollary~6.8 in loc. sit.

\begin{theorem} \label{thm: BMW metric groups}
    Let $H$ be a BMW group with a positive BMW presentation $\la A \cup X \; | \; R \ra$ and let $\Cay(H, A \cup X)$ be the corresponding Cayley graph.
    There exists a group $G$ such that the following hold:
    \begin{itemize} 
        \item[i.] there is an injective homomorphism $H \hookrightarrow G$;
        \item[ii.] $G$ acts freely with dense orbits on the $\ell^1$ product of two $\mathbb{R}$-trees $T_1 \times T_2$;
        \item[iii.] there is an isometric embedding $\psi: \Cay(H,A \cup X) \hookrightarrow T_1 \times T_2$, which is equivariant relative to $H \hookrightarrow G$.
    \end{itemize}
    If $H$ is irreducible, then for any subgroup $L \leq G$ which splits non-trivially as a direct product, the induced action of $L$ on $T_1 \times T_2$ does not have dense orbits.
\end{theorem}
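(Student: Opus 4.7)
The plan is to construct $G$ as a directed union $G = \bigcup_{n \geq 0} H_n$ of BMW groups with positive BMW presentations, starting from $H_0 = H$, where each transition $H_n \hookrightarrow H_{n+1}$ ``halves'' every generator. More precisely, for each $n$ I would construct a positive BMW presentation $\la A_{n+1} \cup X_{n+1} \mid R_{n+1}\ra$ of $H_{n+1}$ together with a generator-to-word map $s \mapsto s^{(0)} s^{(1)}$ for each $s \in A_n \cup X_n$, which induces an injective homomorphism $H_n \hookrightarrow H_{n+1}$. Combinatorially this corresponds to refining the VH-complex of $H_n$ into one whose fundamental chamber has half the side length; auxiliary ``diagonal'' generators and relations would be introduced so that the refined complex satisfies the BMW link conditions at the newly introduced mid-edge and centre-of-square vertices, while keeping the presentation positive.

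Given such a tower, I would equip each $H_n$ with the word metric $d_n$ rescaled so that every generator has length $2^{-n}$. The halving property renders every inclusion $(H_n, d_n) \hookrightarrow (H_{n+1}, d_{n+1})$ isometric, and their common extension defines a metric on $G \coloneqq \bigcup_n H_n$. For each $n$, $(\Cay(H_n, A_n \cup X_n), d_n)$ is an $\ell^1$ product of two regular simplicial trees of edge length $2^{-n}$, and the metric completion of the direct limit of these Cayley graphs is an $\ell^1$ product $T_1 \times T_2$ of two complete $\mathbb{R}$-trees. Since each $H_n$ acts freely and factor-preservingly on its Cayley graph, the induced action of $G$ on $T_1 \times T_2$ is free and satisfies $G \leq \Isom(T_1) \times \Isom(T_2)$. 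The orbit $G \cdot \id = \bigcup_n H_n \cdot \id$ is dense in $T_1 \times T_2$ by construction, and the inclusion $\Cay(H, A \cup X) \hookrightarrow T_1 \times T_2$ induced by $H_0 = H \hookrightarrow G$ is the required $H$-equivariant isometric embedding $\psi$.

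For the final assertion, I would argue by contradiction: assume $H$ is irreducible and that some $L = L_1 \times L_2 \leq G$ splits non-trivially and acts with dense orbits on $T_1 \times T_2$. Density implies essentiality, so Lemma~\ref{lem: well-behaved factors} yields, after relabelling factors, $L_1 \leq \Isom(T_1) \times \{\id\}$ and $L_2 \leq \{\id\} \times \Isom(T_2)$, each acting on its respective factor with dense orbits. Writing $V_n \coloneqq H_n \cap (\Isom(T_1) \times \{\id\})$ and $W_n \coloneqq H_n \cap (\{\id\} \times \Isom(T_2))$, every element of $L_1$ lies in some $V_n$ and every element of $L_2$ lies in some $W_n$, with $[L_1, L_2] = 1$. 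Using density of the $L_i$-orbits and cofinality of the tower, I would find inside a single $H_n$ non-trivial commuting elements $u \in V_n \cap L_1$ and $v \in W_n \cap L_2$ approximating the Cayley-graph positions of prescribed generators $a \in A$, $x \in X$ of $H_0$ to arbitrary precision. Tracking these approximations through the BMW relations of $H_n$ and the injective embedding $H_0 \hookrightarrow H_n$, in a spirit similar to the commutation analysis at the end of the proof of Proposition~\ref{prop: no BMW subgroups}, one concludes that a finite-index subgroup of $\la A \ra$ commutes with a finite-index subgroup of $\la X \ra$ inside $H$, producing a finite-index direct-product subgroup of $H$ and contradicting irreducibility.

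The hard part will be producing the tower of BMW groups: naive barycentric subdivision of the VH-complex of $H_n$ does not yield a BMW presentation, since the link conditions fail at the newly introduced vertices. One must introduce auxiliary diagonal generators and relations so as to restore the BMW four-cycle structure at each new vertex, while keeping the refined presentation positive and simultaneously verifying that the induced map $H_n \to H_{n+1}$ is injective (not just a homomorphism collapsing structure) and isometric after rescaling. A secondary difficulty is the irreducibility argument: approximate commutation inside a single $H_n$ must be pulled back to genuine commutation of finite-index subgroups in $H_0$, which requires carefully tracking how elements of $\bigcup_n V_n$ and $\bigcup_n W_n$ sit inside $H_0$ along the tower and how the generator-to-word maps interact with the BMW relations at each level.
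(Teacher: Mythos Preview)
Your overall strategy---build $G$ as a direct limit of BMW groups with positive presentations, with isometric inclusions between levels and rescaled word metrics whose completion is a product of $\mathbb{R}$-trees---matches the paper's. But the two substantive steps you flag as ``hard'' are genuinely incomplete, and your proposed fix for the first one appears to be the wrong idea.

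For the tower, you propose halving each generator and repairing the BMW link conditions at the new vertices via ``auxiliary diagonal generators''. A diagonal generator, by definition, projects nontrivially to both tree factors, so the resulting Cayley graph is no longer a Cartesian product of two trees and the group is no longer a BMW group in the required sense; this breaks exactly the structure you need for items (ii) and (iii). The paper avoids subdivision entirely. Instead, for each $n$ it takes as generating set $Y_A(n) \cup Y_X(n)$, where $Y_A(n)$ consists of the $1/n$-lattice points on the edges of the complete directed graph on $A$ (and similarly for $X$), and defines the BMW relations by \emph{refining the cycle structure} of the original permutations $\sigma_a \in \Sym(X)$, $\sigma_x \in \Sym(A)$: a cycle $(x_0,\ldots,x_{m-1})$ of $\sigma_a$ becomes the length-$mn$ cycle on $Y_X(n)$ obtained by walking the directed edges $e(x_0,x_1),e(x_1,x_2),\ldots$ in $1/n$-steps. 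This yields a positive BMW presentation directly, with no diagonals, and the transition maps $y \mapsto y^k$ (for $m=kn$, over the divisibility net on $\mathbb{N}$) are then easily verified to be isometric monomorphisms.

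For irreducibility, your plan to extract from $[L_1,L_2]=1$ a pair of commuting elements in some $H_n$ ``near'' generators of $H_0$, and then to pull this back to finite-index commuting subgroups of $\la A\ra$ and $\la X\ra$ inside $H_0$, has no mechanism supplied for the pull-back step; elements of $L_i$ live arbitrarily deep in the tower and need have no finite-index shadow in $H_0$. The paper instead invokes the Burger--Mozes criterion that $H$ is reducible if and only if its projections to $\Aut(\Gamma_A)$ and $\Aut(\Gamma_X)$ have discrete image. Given dense $L = L_1 \times L_2$, one shows that any $h \in H$ fixing the radius-$1$ ball in $\Gamma_X$ pointwise must act trivially on all of $S_X$: the base case holds because $\varphi_n(\psi_{1,n}(h))$ is trivial, and the inductive step propagates outward using density of $L_2$ in $T_X$ together with the fact that elements of $L_2$ commute with $\psi_1(h)$. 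This gives discreteness of $q_X(H)$, hence reducibility of $H$.
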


\begin{proof}
Let $\mathcal{R}$ be closure of $R$ under cyclic permutations.
Let $\Gamma$ be the Cayley complex of $H$ with respect to the presentation $\la A \sqcup X \; | \; R \ra$ and recall that $\Gamma = \Gamma_A \times \Gamma_X$ is the product of the Cayley graph $\Gamma_A$ of $\la A \ra$ with the Cayley graph $\Gamma_X$ of $\la X \ra$. Let $\pi_A: \Gamma \rightarrow \Gamma_A$ and $\pi_X: \Gamma \rightarrow \Gamma_X$ be the projection maps and let $\id_A, \id_X$ be the identity in $\la A \ra, \la X \ra$ respectively. The action of $H$ on $\Gamma$ does not permute the factors so there are projection actions $H \acting \Gamma_A$ and $H \acting \Gamma_X$ defined (on the vertex sets) by 
\[ h \cdot g_A = \pi_A(h(g_A,\id_X)), \quad h \cdot g_X = \pi_X(h(\id_A,g_X)) \quad \forall \; h \in H, g_A \in \la A \ra, g_X \in \la X \ra.\]

\begin{definition}
    The induced action of $\la A \ra$ on $\Gamma_X$ permutes the set $X$ and can be read off the presentation of $H$: for all $a \in A$ and $x \in X$, there exist a unique $a' \in A$ and a unique $x' \in X$ such that $ax{a'}^{-1}{x'}^{-1} \in \mathcal{R}$, and we then have $a \cdot x = x'$. Similarly the action of $\la X \ra$ on $\Gamma_A$ permutes the set $A$ and can be read off the presentation: given $a \in A, x \in X$ there exist unique $a' \in A, x' \in X$ such that $a'x'a^{-1}x^{-1} \in \mathcal{R}$ and $x \cdot a = a'$.

    For each $a \in A$ and $x \in X$ let $\sigma_a \in \Sym(X)$ and $\sigma_x \in \Sym(A)$ denote the resulting permutations. 
\end{definition}

\begin{definition} 
\begin{itemize}
    \item Let $Y_A$ be a graph with vertex set $A$ such that, for each $a_1, a_2 \in A$ with $a_1 \neq a_2$, there is a directed edge $e(a_1, a_2)$ from $a_1$ to $a_2$ of length $1$. Similarly, let $Y_X$ be a graph with vertex set $X$ such that, for each $x_1, x_2 \in X$ with $x_1 \neq x_2$, there is a directed edge $e(x_1, x_2)$ from $x_1$ to $x_2$ of length $1$. 
    Let $d$ denote the resulting metrics on $Y_A$ and $Y_X$.
    \item For each $n \in \mathbb{N}$, let $Y_A(n) \subseteq Y_A, Y_X(n) \subseteq Y_X$ be the subsets consisting of points whose distance to a vertex is a multiple of $1/n$. 
    For all distinct $a_1, a_2 \in A$, $x_1, x_2 \in X$ let $e_n(a_1, a_2) \coloneqq Y_A(n) \cap e(a_1, a_2)$ and $e_n(x_1, x_2) \coloneqq Y_X(n) \cap e(x_1, x_2)$.
\end{itemize}
\end{definition}

Let $n \in \mathbb{N}$. We will construct a BMW group $G_n$ with generating set $Y_A(n) \sqcup Y_X(n)$.
We first define actions of the free groups $\la Y_A(n) \ra$ and $\la Y_X(n) \ra$ on $Y_X(n)$ and $Y_A(n)$ respectively. 

Recall that a \textit{directed cycle} in a directed graph is a cycle with edges $(e_0, \dots, e_{k-1})$ such that, for each $i \in \mathbb{Z}/k\mathbb{Z}$, the edge $e_i$ is directed from $e_{i-1} \cap e_i$ to $e_i \cap e_{i+1}$. Observe that, for any sequence of vertices $C = (v_0, \dots, v_{k-1})$ in $Y_A$ (resp. in $Y_X$) with $k \geq 2$, the cycle $(e(v_0,v_1), e(v_1,v_2), \dots, e(v_{k-1},v_{k}))$ is the unique directed cycle in $Y_A$ (resp. $Y_X$) with vertices $C$. In both $Y_A(n)$ and $Y_X(n)$, define a sequence $C(n)$ as follows. For each $i \in \{1, \dots, k-1\}$ and $j \in \{0, \dots, n-1\}$, let $y_{i,j} \in e(v_i, v_{i+1})$ be the point such that $d(v_i, y_{i,j}) = j/n$. Let $C(n) \coloneqq (y_{i,j} : i \in \{0, \dots, k-1\}, j \in \{0, \dots, n-1\})$, ordered lexicographically. Less formally, $C(n)$ is the sequence of points in $Y_A(n)$ (resp. $Y_X(n)$) that one encounters if one starts at $v_0$ and follows the directed cycle in $Y_A$ with vertices $C$.

Given $y_A \in Y_A(n)$, there is a unique $a \in A$ such that $y_A \in e(b,a) - \{b\}$ for some $b \in A$. Define $\varphi_n(y_A) \in \Sym(Y_X(n))$ as follows. Let $C_1 \dots C_k$ be the cyclic decomposition of $\sigma_a$; so, for each $i$, $C_i = (x_0, \dots, x_m)$ for some $m \geq 0$ and $x_0, \dots, x_{m-1} \in X$ and $\sigma_a(x_j) = x_{j+1}$ for each $j \in \mathbb{Z} / (m+1) \mathbb{Z}$. Let $\varphi_n(y_A)$ be the symmetry of $Y_A(n)$ with cyclic decomposition $C_1(n) \dots C_k(n)$. This defines a homomorphism $\varphi_n: \la Y_A(n) \ra \rightarrow \Sym(Y_X(n))$. Note that, if $Y_X(n)$ is equipped with the metric induced by $Y_X$, the action $\varphi_n$ is \textit{not} by isometries.

Similarly, if $y_X \in Y_X(n)$, let $x \in X$ be the unique vertex of $Y_X$ such that $y_X \in e(z,x) - \{z\}$ for some $z \in X$ and let $C_1 \dots C_k$ be the cyclic decomposition of $\sigma_x$. Define $\varphi_n(y_X) \in \Sym(Y_A(n))$ to be the symmetry of $Y_A(n)$ with cyclic decomposition $C_1(n) \dots C_k(n)$. This determines a homomorphism $\varphi_n: \la Y_X(n) \ra \rightarrow \Sym(Y_A(n))$. 

\begin{definition}
For each $y_A \in Y_A(n), y_X \in Y_X(n)$, let 
\[
    r_n(y_A, y_X) \coloneqq y_A \; \varphi_n(y_A)^{-1}(y_X) \; (\varphi_n(y_X)^{-1}(y_A))^{-1} \; y_X^{-1} \in \la Y_A(n) \cup Y_X(n) \ra
\]
and let $R(n) \coloneqq \{r_n(y_A,y_X) : y_A \in Y_A(n), \; y_X \in Y_X(n)\}$. Let $G_n$ be the group with presentation $\la Y_A(n) \cup Y_X(n) \; | \; R(n) \ra$ and let $\Gamma_n$ be the Cayley complex of $G_n$ with respect to this presentation. The image of a relation $r(y_A,y_X)$ in $\Gamma_n$ is illustrated in Figure~\ref{fig: BMW relation}.
\end{definition}

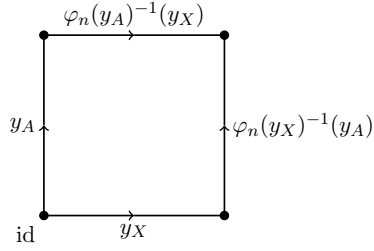
\begin{figure}
    \centering
    \scalebox{0.8}[0.8]{
    \begin{tikzpicture}
        \draw[thick,->] (0,0) -- (0,1.5);
        \draw[thick] (0,1.5) -- (0,3);
        \draw[thick,->] (0,0) -- (1.5,0);
        \draw[thick] (1.5,0) -- (3,0);
        \draw[thick,->] (3,0) -- (3,1.5);
        \draw[thick] (3,1.5) -- (3,3);
        \draw[thick,->] (0,3) -- (1.5,3);
        \draw[thick] (1.5,3) -- (3,3);
        \node[anchor=north] at (1.5,0) {$y_X$};
        \node[anchor=west] at (3,1.5) {$\varphi_n(y_X)^{-1}(y_A)$};
        \node[anchor=south] at (1.5,3) {$\varphi_n(y_A)^{-1}(y_X)$};
        \node[anchor=east] at (0,1.5) {$y_A$};
        \filldraw[black] (0,0) circle (2pt);
        \filldraw[black] (0,3) circle (2pt);
        \filldraw[black] (3,0) circle (2pt);
        \filldraw[black] (3,3) circle (2pt);
        \node at (-0.3,-0.3) {$\id$};
    \end{tikzpicture}
    }
    \caption{The relation $r_n(y_A,y_X)$ in $\Gamma_n$, where $y_A \in Y_A(n)$ and $y_X \in Y_X(n)$.}
    \label{fig: BMW relation}
\end{figure}

\setcounter{claim}{0}
\begin{claim}
    For each $n \in \mathbb{N}$, $\la Y_A(n) \cup Y_X(n) \; | \; R(n) \ra$ is a BMW presentation.
\end{claim}
\begin{proof}
\renewcommand{\qedsymbol}{$\blacksquare$}
    Let $z_A \in Y_A(n) \cup Y_A(n)^{-1}, z_X \in Y_X(n) \cup Y_X(n)^{-1}$ and let $y_A \in Y_A(n)$ and $y_X \in Y_X(n)$ be such that either $z_A = y_A$ or $z_A = y_A^{-1}$, and $z_X = y_X$ or $z_X = y_X^{-1}$. Then $r_n(y_A,y_X)$ is the unique element of $R(n)$ of the form $z_A z_X z_A' z_X'$ or $z_A'z_X'z_Az_X$ or $z_A^{-1}z_X'z_A'z_X^{-1}$ or $z_A'z_X^{-1}z_Az_X'$ for some $z_A' \in Y_A \cup Y_A^{-1}$ and $z_X' \in Y_X \cup Y_X^{-1}$.
\end{proof}

Let $d_n$ be the path metric on the 1-skeleton $\Gamma_n^{(1)}$, where each edge is assigned a length of $1/n$.

Equip $\mathbb{N}$ with the partial order $n \preceq m$ if and only if $n$ divides $m$, and note that $(\mathbb{N}, \preceq)$ is a net. If $m = kn$, where $k,m,n \in \mathbb{N}$, define $\psi_{n,m}: G_n \rightarrow G_m$ as follows. If $y \in Y_A(n) \cup Y_X(n)$, set $\psi_{n,m}(y) \coloneqq y^k$. If $y_A \in Y_A(n), y_X \in Y_X(n)$, then 
\begin{align*} 
    \psi_{n,m}(r_n(y_A,y_X)) &= y_A^k \; \varphi_n(y_A)^{-1}(y_X)^k \; \varphi_n(y_X)^{-1}(y_A)^{-k} \; y_X^{-k} \\
    &= y_A^k \; \varphi_m(y_A)^{-k}(y_X)^k \; \varphi_m(y_X)^{-k}(y_A)^{-k} \; y_X^{-k}.
\end{align*}
Let $a \in A$ and $x \in X$ be such that $y_A \in e(a',a) - \{a'\}$ and $y_X \in e(x',x) - \{x'\}$ for some $a' \in A, x' \in X$.
Note that, since $y_A \in Y_A(n)$, for any $i \in \{0, \dots, k-1\}$ and for all $z_X \in Y_X(m)$ we have $\varphi_m(z_X)^i(y_A) \in e(a',a) - \{a'\}$ for some $a' \in A$. Therefore $\varphi_j(\varphi_m(z_X)^i(y_A)) = \varphi_j(y_A)$ for any $j \in \mathbb{N}$.
Similarly, $\varphi_j(\varphi_m(z_A)^i(y_X)) = \varphi_j(y_x)$ for any $z_A \in Y_A(m), i \in \{0, \dots, k-1\}$ and $j \in \mathbb{N}$. It follows that $\psi_{n,m}(r(y_A,y_X)) = \id$ (see Figure~\ref{fig: homomorphism}). Therefore $\psi_{n,m}$ extends a homomorphism $\psi_{n,m}: G_n \rightarrow G_m$. 

By construction, $\psi_{n,m}$ is an isometric embedding when restricted to balls of radius 1 in $G_n$. It extends naturally to a local isometry $\overline{\psi}_{n,m} : \Gamma_n \rightarrow \Gamma_m$, where the image $\overline{\psi}_{n,m}(\Gamma_n)$ is the convex hull of $\psi_{n,m}(G_n)$. Since both $\Gamma_n$ and $\overline{\psi}_{n,m}(\Gamma_n)$ are simply connected, this implies that $\psi_{n,m}$ is an isometric embedding. 

\begin{figure}
    \centering
    \scalebox{1.5}[1.5]{
    \begin{tikzpicture}
        \draw[thick,color=MidnightBlue!90,->] (0,0) -- (0,0.5) node[anchor=east] {\tiny $y_A$};
        \draw[thick,color=MidnightBlue!90,->] (0,0.4) -- (0,1.5) node[anchor=east] {\tiny $y_A$};
        \draw[thick,color=MidnightBlue!90] (0,1.4) -- (0,2.32);
        \draw[thick,color=MidnightBlue!90,dotted] (0,2.39) -- (0,2.65);
        \draw[thick,color=MidnightBlue!90,->] (0,2.68) -- (0,3.5) node[anchor=east] {\tiny $y_A$};
        \draw[thick,color=MidnightBlue!90] (0,3.4) -- (0,4);
        
        \draw[color=MidnightBlue!60,->] (1,0) -- (1,0.5);
        \node[color=MidnightBlue!60,anchor=east] at (1,0.4) {\tiny $y_{A,1}$};
        \draw[color=MidnightBlue!60,->] (1,0.4) -- (1,1.5);
        \node[color=MidnightBlue!60,anchor=east] at (1,1.4) {\tiny $y_{A,1}$};
        \draw[color=MidnightBlue!60] (1,1.4) -- (1,2.33);
        \draw[color=MidnightBlue!60,dotted] (1,2.4) -- (1,2.6);
        \draw[color=MidnightBlue!60] (1,2.67) -- (1,3);
        \draw[color=MidnightBlue!60,->] (1,3) -- (1,3.5);
        \node[color=MidnightBlue!60,anchor=east] at (1,3.4) {\tiny $y_{A,1}$};
        \draw[color=MidnightBlue!60] (1,3.4) -- (1,4);

        \draw[color=MidnightBlue!60,->] (2,0) -- (2,0.5);
        \node[color=MidnightBlue!60,anchor=east] at (2,0.4) {\tiny $y_{A,2}$};
        \draw[color=MidnightBlue!60,->] (2,0.4) -- (2,1.5);
        \node[color=MidnightBlue!60,anchor=east] at (2,1.4) {\tiny $y_{A,2}$};
        \draw[color=MidnightBlue!60] (2,1.4) -- (2,2.33);
        \draw[color=MidnightBlue!60,dotted] (2,2.4) -- (2,2.6);
        \draw[color=MidnightBlue!60] (2,2.67) -- (2,3);
        \draw[color=MidnightBlue!60,->] (2,3) -- (2,3.5);
        \node[color=MidnightBlue!60,anchor=east] at (2,3.4) {\tiny $y_{A,2}$};
        \draw[color=MidnightBlue!60] (2,3.4) -- (2,4);

        \draw[color=MidnightBlue!60,->] (3,0) -- (3,0.5);
        \node[color=MidnightBlue!60,anchor=east] at (3.1,0.4) {\tiny $y_{A,k-1}$};
        \draw[color=MidnightBlue!60,->] (3,0.4) -- (3,1.5);
        \node[color=MidnightBlue!60,anchor=east] at (3.1,1.4) {\tiny $y_{A,k-1}$};
        \draw[color=MidnightBlue!60] (3,1.4) -- (3,2.33);
        \draw[color=MidnightBlue!60,dotted] (3,2.4) -- (3,2.6);
        \draw[color=MidnightBlue!60] (3,2.67) -- (3,3);
        \draw[color=MidnightBlue!60,->] (3,3) -- (3,3.5);
        \node[color=MidnightBlue!60,anchor=east] at (3.1,3.4) {\tiny $y_{A,k-1}$};
        \draw[color=MidnightBlue!60] (3,3.4) -- (3,4);

        \draw[thick,color=MidnightBlue!90,->] (4,0) -- (4,0.5) node[anchor=west] {\tiny $y_{A,k}$};
        \draw[thick,color=MidnightBlue!90,->] (4,0.4) -- (4,1.5) node[anchor=west] {\tiny $y_{A,k}$};
        \draw[thick,color=MidnightBlue!90] (4,1.4) -- (4,2.32);
        \draw[thick,color=MidnightBlue!90,dotted] (4,2.39) -- (4,2.65);
        \draw[thick,color=MidnightBlue!90] (4,2.68) -- (4,3);
        \draw[thick,color=MidnightBlue!90,->] (4,3) -- (4,3.5) node[anchor=west] {\tiny $y_{A,k}$};
        \draw[thick,color=MidnightBlue!90] (4,3.4) -- (4,4);

        \draw[thick,Thistle,->] (0,0) -- (0.5,0) node[anchor=north] {\tiny $y_X$};
        \draw[thick,Thistle,->] (0.4,0) -- (1.5,0) node[anchor=north] {\tiny $y_X$};
        \draw[thick,Thistle] (1.4,0) -- (2.32,0);
        \draw[thick,Thistle,dotted] (2.39,0) -- (2.65,0);
        \draw[thick,Thistle] (2.68,0) -- (3,0);
        \draw[thick,Thistle,->] (3,0) -- (3.5,0) node[anchor=north] {\tiny $y_X$};
        \draw[thick,Thistle] (3.4,0) -- (4,0);

        \draw[color=Thistle!70,->] (0,1) -- (0.5,1) node[anchor=north] {\tiny $y_{X,1}$};
        \draw[color=Thistle!70,->] (0.4,1) -- (1.5,1) node[anchor=north] {\tiny $y_{X,1}$};
        \draw[color=Thistle!70] (1.4,1) -- (2.33,1);
        \draw[color=Thistle!70,dotted] (2.4,1) -- (2.6,1);
        \draw[color=Thistle!70] (2.67,1) -- (3,1);
        \draw[color=Thistle!70,->] (3,1) -- (3.5,1) node[anchor=north] {\tiny $y_{X,1}$};
        \draw[color=Thistle!70] (3.4,1) -- (4,1);

        \draw[color=Thistle!70,->] (0,2) -- (0.5,2) node[anchor=north] {\tiny $y_{X,2}$};
        \draw[color=Thistle!70,->] (0.4,2) -- (1.5,2) node[anchor=north] {\tiny $y_{X,2}$};
        \draw[color=Thistle!70] (1.4,2) -- (2.33,2);
        \draw[color=Thistle!70,dotted] (2.4,2) -- (2.6,2);
        \draw[color=Thistle!70] (2.67,2) -- (3,2);
        \draw[color=Thistle!70,->] (3,2) -- (3.5,2) node[anchor=north] {\tiny $y_{X,2}$};
        \draw[color=Thistle!70] (3.4,2) -- (4,2);

        \draw[color=Thistle!70,->] (0,3) -- (0.5,3) node[anchor=north] {\tiny $y_{X,k-1}$};
        \draw[color=Thistle!70,->] (0.4,3) -- (1.5,3) node[anchor=north] {\tiny $y_{X,k-1}$};
        \draw[color=Thistle!70] (1.4,3) -- (2.33,3);
        \draw[color=Thistle!70,dotted] (2.4,3) -- (2.6,3);
        \draw[color=Thistle!70] (2.67,3) -- (3,3);
        \draw[color=Thistle!70,->] (3,3) -- (3.5,3) node[anchor=north] {\tiny $y_{X,k-1}$};
        \draw[color=Thistle!70] (3.4,3) -- (4,3);

        \draw[thick,Thistle,->] (0,4) -- (0.5,4) node[anchor=south] {\tiny $y_{X,k}$};
        \draw[thick,Thistle,->] (0.4,4) -- (1.5,4) node[anchor=south] {\tiny $y_{X,k}$};
        \draw[thick,Thistle] (1.4,4) -- (2.32,4);
        \draw[thick,Thistle,dotted] (2.39,4) -- (2.65,4);
        \draw[thick,Thistle] (2.68,4) -- (3,4);
        \draw[thick,Thistle,->] (3,4) -- (3.5,4) node[anchor=south] {\tiny $y_{X,k}$};
        \draw[thick,Thistle] (3.4,4) -- (4,4);

        \filldraw[black] (0,0) circle (1pt);
        \filldraw[color=gray!120] (0,1) circle (1pt);
        \filldraw[color=gray!120] (0,2) circle (1pt);
        \filldraw[color=gray!120] (0,3) circle (1pt);
        \filldraw[black] (0,4) circle (1pt);
        \filldraw[color=gray!120] (1,0) circle (1pt);
        \filldraw[color=gray!120] (1,1) circle (1pt);
        \filldraw[color=gray!120] (1,2) circle (1pt);
        \filldraw[color=gray!120] (1,3) circle (1pt);
        \filldraw[color=gray!120] (1,4) circle (1pt);
        \filldraw[color=gray!120] (2,0) circle (1pt);
        \filldraw[color=gray!120] (3,0) circle (1pt);
        \filldraw[black] (4,0) circle (1pt);
        \filldraw[color=gray!120] (2,1) circle (1pt);
        \filldraw[color=gray!120] (3,1) circle (1pt);
        \filldraw[color=gray!120] (4,1) circle (1pt);
        \filldraw[color=gray!120] (2,2) circle (1pt);
        \filldraw[color=gray!120] (3,2) circle (1pt);
        \filldraw[color=gray!120] (4,2) circle (1pt);
        \filldraw[color=gray!120] (2,3) circle (1pt);
        \filldraw[color=gray!120] (3,3) circle (1pt);
        \filldraw[color=gray!120] (4,3) circle (1pt);
        \filldraw[color=gray!120] (3,4) circle (1pt);
        \filldraw[black] (4,4) circle (1pt);
        \filldraw[color=gray!120] (2,4) circle (1pt);
    \end{tikzpicture}
    }
    \caption{The $\psi_{n,m}$-image of $r_n(y_A,y_X)$, where $y_A \in Y_A(n)$ and $y_X \in Y_X(n)$. For each $i \in \{1, \dots, k\}$, we denote $y_{A,i} \coloneqq \varphi_m(y_X)^{-i}(y_A)$ and $y_{X,i} \coloneqq \varphi_m(y_A)^{-i}(y_X)$.}
    \label{fig: homomorphism}
\end{figure}
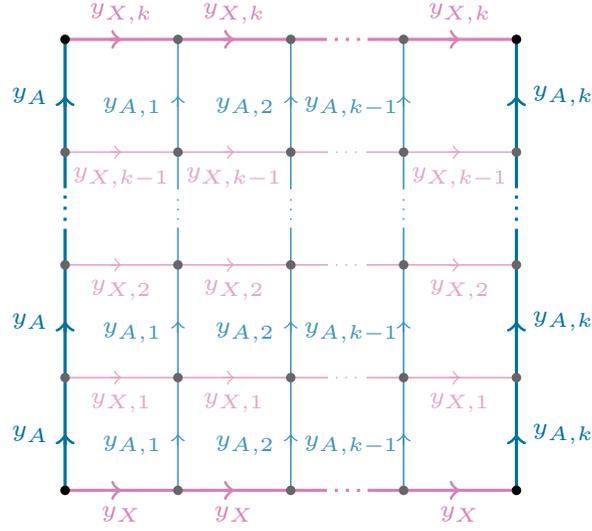

\begin{definition}
Let $G$ be the direct union of the system $\la G_n, \psi_{n,m} \ra$.

Since each $\psi_{n,m}$ is an isometric embedding, there is a metric $d:G^2 \rightarrow G$ such that the restriction of $d$ to $G_n^2$ is $d_n$ for any $n \in \mathbb{N}$. Let $X$ be the completion of $(G,d)$. 

For each $n \in \mathbb{N}$, let $\psi_n: \la Y_A(n) \cup Y_X(n) \ra \rightarrow G$ be the canonical embedding. It is a homomorphism and an isometric embedding.
\end{definition}

\begin{claim}
    $X$ is the $\ell^1$ product of two $\mathbb{R}$ trees $T_A \times T_X$. 
\end{claim}
\begin{proof}
\renewcommand{\qedsymbol}{$\blacksquare$}
    Let $W_A \coloneqq \cup_{n \in \mathbb{N}} Y_A(n)$ and $W_X \coloneqq \cup_{n \in \mathbb{N}} Y_X(n)$. Let $S_A \coloneqq \la W_A \ra \leq G$ and $S_X \coloneqq \la W_X \ra \leq G$. We will show that $S_A$ and $S_X$ are $\mathbb{Q}$-trees.

    Let $f,g,h \in S_A$. Then there exists $n \in \mathbb{N}$ such that $f,g,h \in \la Y_A(n)\ra$. Note that $d(f,g) = d_n(f,g) \in \mathbb{Z}/n \leq \mathbb{Q}$ so $S_A$ is indeed a $\mathbb{Q}$-metric space.
    Since $(\la Y_A(n) \ra, d_n)$ is a $\mathbb{Z}/n$-tree, there exists a median $p \in \la Y_A(n) \ra$ for $d_n$ and therefore for $d$. For all $m \succeq n$, the restriction of $\psi_{n,m}$ to $\la Y_A(n) \ra$ is an isometric embedding into $(\la Y_A(m), d_m \ra$. It follows that $p$ is the unique median of $\{f,g,h\}$ in $S_A$, so $S_A$ is median, and that $S_A$ has rank 1. Let $n \in \mathbb{N}$ and let $y_A \in Y_A(n)$. For each $p/q \in \mathbb{Q} \cap (0,1/n]$ such that $p$ and $q$ are coprime,  $\gamma(t) \coloneqq \psi_q(y_A^p)$. Let $\gamma(0) \coloneqq \id$. Then $\gamma$ is a $\mathbb{Q}$-geodesic in $G$ from $\id$ to $\psi_n(y_A)$. It follows that $G$ is a geodesic $\mathbb{Q}$-metric space. By Lemma~\ref{lem: tree characterisation}, this implies that $S_A$ is $\mathbb{Q}$-tree. A symmetric argument shows that $S_X$ is a $\mathbb{Q}$-tree. 

    For each $n \in \mathbb{N}$, Proposition~\ref{prop: BMW basics} implies that $(\la Y_A(n) \cup Y_X(n) \ra, d_n)$ is, as a $\mathbb{Z}/n$-metric space, the $\ell^1$ product of $\la Y_A(n) \ra$ and $\la Y_X(n) \ra$. It follows that $G$ is the $\ell^1$-product of $S_A$ and $S_X$ as a $\mathbb{Q}$-metric space. Let $T_A$ be the completion of $S_A$ and let $T_X$ be the completion of $S_X$. Then $X$ is the $\ell^1$ product $T_A \times T_X$.
\end{proof}

Since each element of $G$ has positive translation length in $G$, the action of $G$ on its completion $X$ is free.
This proves Item ii of the theorem. The injection $\psi_1$ verifies Item i, and naturally extends to an $H$-equivariant isometric embedding $\psi: \Cay(H,A \cup X) \hookrightarrow T_A \times T_X$, verifying Item iii.
The last part of the theorem is proved in the following claim:

\begin{claim} \label{claim: reducible embedding} 
    If $G$ contains a subgroup $K \leq G$ which splits non-trivially as a direct product such that the induced action $K \acting T_A \times T_X$ has dense orbits, then $H$ is reducible.
\end{claim}
\begin{proof}
\renewcommand{\qedsymbol}{$\blacksquare$}
    Note that $G \leq \Isom(T_A) \times \Isom(T_X)$. Let $p_A: G \rightarrow \Isom(T_A), p_X: G \rightarrow \Isom(T_X)$ be the canonical projections. Also recall that $(G,d)$ is isomorphic to the $\ell^1$ product $S_A \times S_X$. Let $\rho_A : G \rightarrow S_A, \rho_X: G \rightarrow S_X$ be the canonical projections.
    
    By Lemma~\ref{lem: well-behaved factors}, $K \cong K_A \times K_X$, where $K_A \leq \ker(p_X) \leq S_A, K_X \leq \ker(p_A) \leq S_X$, and $K_A$ is dense in $T_A$, and $K_X$ is dense in $T_X$. 

    Let $\Gamma_A, \Gamma_X$ be the Cayley graphs of the free groups $\la A \ra, \la X \ra$ with respect to the generating sets $A,X$. 
    Let $q_A: H \rightarrow \Isom(\Gamma_A)$ and $q_X: H \rightarrow \Isom(\Gamma_X)$ be the canonical projections. We will show that $q_A(H)$ and $q_X(H)$ are discrete with respect to the compact open topology on $\Isom(\Gamma_A)$ and $\Isom(\Gamma_X)$. By \cite[Proposition~1.2]{Burger-Moses}, this implies that $H$ is reducible.

    Let $h \in H$ be such that $q_X(h)$ fixes the ball of radius $1$ in $\Gamma_X$ around $\id_X$ pointwise. In particular, $q_X(h)$ fixes $\id_X$, so $h \in \la A \ra$. Moreover $h \cdot x = x$ for all $x \in X$. Then, for all $n \in \mathbb{N}$, the element $\varphi_n(\psi_{1,n}(h)) \in \Sym(Y_X(n))$ is the identity map. Let $f \coloneqq \psi_1(h)$. Then $p_X(f)(g) = g$ for all $g \in S_X$ with $d(g,\id_X) \leq 1$.
    
    Let $n \in \mathbb{N}$ and suppose that $p_X(f)(g) = g$ for all $g \in S_X$ such that $d(g,\id_X) < n$. Let $g \in S_X$ be such that $d(g,\id_X) < n+1$. There exists $k_X \in K_X$ such that $d(k_X, \id_X) < n$ and $d(k_X, g_X) < 1$. Then $p_X(f)(k_X) = k_X$, which, since $p_A(k_X)(f) = f$, implies that $f$ and $k_X$ commute. Let $g' \coloneqq k_X^{-1} g$, so $d(\id_x,g) < 1$. Then 
    \[
        p_X(f)(g)
        = \rho_X(f k_X g') 
        = \rho_X(k_X f g') 
        = k_X p_X(f)(g') 
        = k_X g'
        = g.
    \]
    It follows by induction on $n$ that $p_X(f)$ is the trivial map on $S_X$. Since $\psi$ is $H$-equivariant, this implies in particular that $q_X(h)$ is the identity map on $\Gamma_X$. Therefore the identity is isolated in $q_X(H)$, so $q_X(H)$ is discrete. A symmetric argument shows that $q_A(H)$ is discrete.
\end{proof}

\end{proof}

\bibliographystyle{alpha}
\bibliography{Biblio}

\bigskip
{\footnotesize
  \noindent
  {\textsc{University of Bristol, School of Mathematics, Bristol, UK}} \par\nopagebreak
  \texttt{penelope.azuelos@bristol.ac.uk}

\end{document}